\documentclass[10pt]{article}
\pdfoutput=1
\usepackage{geometry, amsmath, amsthm, latexsym, amssymb, graphicx, mathtools, tikz, enumerate, dsfont}
\usepackage{tikz-cd}
\usepackage[utf8]{inputenc}
\usepackage[OT2,T1]{fontenc}
\usepackage[shortlabels]{enumitem}
\setenumerate{topsep=0pt}

\usepackage{tkz-graph}
\usepackage{graphicx}
\usepackage{csquotes}
\newcommand{\git}{\mathbin{
  \mathchoice{/\mkern-6mu/}
    {/\mkern-6mu/}
    {/\mkern-5mu/}
    {/\mkern-5mu/}}}
\usepackage{lmodern}
\usepackage{arydshln}
\usepackage[shortlabels]{enumitem}
\usepackage{multirow}
\usepackage{mathdots}
\usepackage{hyperref}
\usepackage{authblk}
\usepackage{arydshln}
\usepackage{bigints}

\DeclareSymbolFont{cyrletters}{OT2}{wncyr}{m}{n}
\DeclareMathSymbol{\Sha}{\mathalpha}{cyrletters}{"58}

\theoremstyle{definition}
\newtheorem{defi}{Definition}[section]

\newtheorem{obs}[defi]{Remark}

\theoremstyle{plain}
\newtheorem{theorem}[defi]{Theorem}
\newtheorem{prop}[defi]{Proposition}
\newtheorem{lemma}[defi]{Lemma}
\newtheorem{cor}[defi]{Corollary}
\newtheorem*{idea*}{Idea}

\DeclareMathOperator{\Gal}{Gal}
\DeclareMathOperator{\SL}{SL}
\DeclareMathOperator{\GL}{GL}

\DeclareMathOperator{\So}{SO}
\DeclareMathOperator{\PSo}{PSO}
\DeclareMathOperator{\Po}{PO}
\DeclareMathOperator{\Sp}{Sp}
\DeclareMathOperator{\PSp}{PSp}

\DeclareMathOperator{\id}{id}

\DeclareMathOperator{\ord}{ord}

\DeclareMathOperator{\im}{Im}

\DeclareMathOperator{\Jac}{Jac}

\DeclareMathOperator{\Ad}{Ad}
\DeclareMathOperator{\Sym}{Sym}
\DeclareMathOperator{\Spec}{Spec}
\DeclareMathOperator{\Ht}{ht}

\DeclareMathOperator{\diag}{diag}
\DeclareMathOperator{\Stab}{Stab}

\DeclareMathOperator{\vol}{vol}
\DeclareMathOperator{\Sel}{Sel}
\DeclareMathOperator{\Res}{Res}

\DeclareMathOperator{\Mat}{Mat}

\newcommand{\lp}{\left(}
\newcommand{\rp}{\right)}

\newcommand{\la}{\langle}
\newcommand{\ra}{\rangle}
\newcommand{\Z}{\mathbb{Z}}
\newcommand{\Q}{\mathbb{Q}}
\newcommand{\R}{\mathbb{R}}

\newcommand{\F}{\mathbb{F}}

\newcommand{\hh}{\mathfrak{h}}

\newcommand{\fc}{\mathfrak{c}}

\newcommand{\cC}{\mathcal{C}}
\newcommand{\cA}{\mathcal{A}}

\newcommand{\cL}{\mathcal{L}}

\newcommand{\cS}{\mathcal{S}}
\newcommand{\cF}{\mathcal{F}}
\newcommand{\cJ}{\mathcal{J}}
\newcommand{\cB}{\mathcal{B}}
\newcommand{\cU}{\mathcal{U}}

\newcommand{\cT}{\mathcal{T}}
\newcommand{\cM}{\mathcal{M}}
\newcommand{\eps}{\varepsilon}

\newcommand{\ol}{\overline}

\usepackage[
backend=biber,
style=alphabetic,
sorting=nyt
]{biblatex}
\title{Arithmetic statistics of isogeny Selmer groups associated to hyperelliptic curves}
\author{Martí Oller}
\affil{\small Charles University, Faculty of Mathematics and Physics, Department of Algebra, Sokolovská 83, 186 75 Praha 8, Czech Republic}

\begin{document}

\maketitle
\begin{abstract}
We determine asymptotic results for the average size of Selmer groups
arising from certain isogenies related to Jacobians of
hyperelliptic curves of genus $g\geq 2$. Our results come from two different
routes. The first route is through the geometry-of-numbers methods
pioneered by Bhargava, where we obtain new parametrisations coming from Vinberg theory arising
from representations related to the Dynkin diagrams of type $B$ and $C$.
The second route uses recent results by Koymans--Smith, which relate
the average sizes of Selmer groups to Tamagawa ratios through a formula
by Greenberg--Wiles.
\end{abstract}
\tableofcontents

\section{Introduction}

\subsection{Statement of results}

Let $n \geq 2$. Fix $B = \Spec \Z[p_2,\dots,p_{4n}]$,
and for $b \in B(\R)$ consider the hyperelliptic
curve
\[
C_b \colon y^2 = x(x^{2n}+p_2(b)x^{2n-1}+\dots+p_{4n}(b)).
\]
Let $J_b$ be the Jacobian of $C_b$. We note that $(0,0) \in C_b(\Q)$,
and the difference of this rational point and the point at infinity generates
an order $2$ subgroup of $J_b[2]$, which we denote by $M$. Let $M^{\perp}$ be the orthogonal
complement of $M$ with respect to the Weil pairing. We note that $M \leqslant M^{\perp}$,
and that both $M$ and $M^{\perp}$ are stable under $\Gal(\ol{\Q}/\Q)$-action. Therefore,
there exists an abelian variety $A_b$ with maps
\begin{equation}
\label{eq:JA}
J_b \xrightarrow{\phi_M} A_b \xrightarrow{\phi} A_b^{\vee} \xrightarrow{\phi_M^{\vee}} J_b,
\end{equation}
such that if $\psi = \phi \circ \phi_M$, then $J_b[\phi_M] = M$, $J_b[\psi] = M^{\perp}$,
$A_b[\phi] = M^{\perp}/M$. Moreover, the total composition map $J_b \to J_b$ 
in \eqref{eq:JA} is the multiplication-by-$2$ map.

For an element $b \in B(\R)$, let us define its \emph{height} as
\[
\Ht(b) = \sup_{i=1,\dots,2n}\{|p_{2i}(b)|^{1/(2i)}\}.
\]

For $\phi$, which is a self-dual isogeny, we have the following result.
\begin{theorem}
\label{theo:mainPhi}
We have that
\[
\limsup_{X \to \infty}\frac{\sum_{\Ht(b) < X} \# \Sel_{\phi} A_b }{\sum_{\Ht(b) < X} 1} \leq 6.
\]
\end{theorem}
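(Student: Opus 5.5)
Our plan is to follow Bhargava's geometry-of-numbers strategy, using a Vinberg-type representation attached to a Dynkin diagram of type $B$ or $C$.

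\textbf{Step 1: the representation.} We use the $\Z/2\Z$-grading of a simple Lie algebra $\mathfrak{g}$ for which the invariant ring of $(G,V)$, with $G = \mathfrak{g}_0$ and $V = \mathfrak{g}_1$, is freely generated by polynomials of degrees $2,4,\dots,4n$ matching $p_2,\dots,p_{4n}$. We expect this to be a stable (non-split) grading of type $C_{2n}$ (or $B$), realised as $\Sp_{4n}$ or $\So$ acting on a space of self-adjoint operators. The distinguished invariant factor $x$ in $x\cdot f(x)$ should reflect the folding that produces the rational $2$-torsion point $M$. We then prove that for $b\in B(\Q)$ with nonzero discriminant, the stabiliser of a regular element $v$ with invariants $b$ is $A_b[\phi]=M^\perp/M$. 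This comes from the spectral curve/Jacobian computation restricted to the $\phi$-part. Consequently $G(\Q)$-orbits with invariants $b$ inject (via Galois cohomology and vanishing of $H^1(\Q,G)$ in the relevant form) into $H^1(\Q,A_b[\phi])$.

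\textbf{Step 2: integral orbits.} We show that every element of $\Sel_\phi A_b$ comes from a locally soluble $G(\Q)$-orbit, and that locally soluble orbits have integral representatives, possibly after scaling $b$ by a bounded constant. This needs a Kostant-type section $\sigma\colon B\to V$ and local integrality arguments at each prime $p$, including $p=2$. Thus $\#\Sel_\phi A_b$ is bounded by the number of $G(\Z)$-orbits on $V(\Z)$ that have invariants $b$ and are irreducible (i.e.\ not in the image of the section).

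\textbf{Step 3: counting.} Using Bhargava's averaging over a fundamental domain of $G(\Z)\backslash G(\R)$, we count irreducible integral orbits of height $<X$. The main term is $\tau(G)\cdot\vol$, and the Tamagawa number enters as $\tau(G)=2$ or $4$ depending on the group. We must show that cusp regions contain only reducible points, up to negligible error. Weighting by the local masses via a product formula for $\#A_b[\phi](\Q_v)/\#$-type local factors, and using the self-duality of $\phi$ (which makes the local Selmer conditions Lagrangian, so local mass ratios multiply to $1$), we get an average over irreducible orbits of $\tau(G)$. The reducible/distinguished orbits contribute the elements coming from the image of rational torsion: the identity class together with the class from $M^\perp/M$ rational points. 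The bound $6$ would then arise as $\tau(G)=4$ plus $2$ from the distinguished orbits; we would confirm the exact split once the group is pinned down. Uniformity estimates on the tail are only needed for an upper bound, so no sieve to squarefree discriminants is required; truncating at finitely many primes suffices for $\limsup\le$.

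\textbf{Main obstacle.} We expect the hardest parts to be the cusp analysis and the identification of the distinguished contribution. We must show that the only integral points in the cusp are ``reducible'', meaning points whose orbit corresponds to a class coming from rational torsion of $A_b$. We must also count these correctly, since $M^\perp/M$ may have extra rational points beyond those forced by the grading. We would attempt this by exhibiting the explicit weight decomposition of $V$ under a maximal torus, checking that the vanishing of certain top-weight coordinates forces a rational $\phi$-torsion factor in the corresponding polynomial.
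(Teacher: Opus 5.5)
There is a genuine gap in Step 1, and the rest of the plan inherits it. Neither stable grading with invariants of degrees $2,4,\dots,4n$ has generic stabiliser $A_b[\phi]=M^{\perp}/M$, which has order $2^{2n-2}$.
\begin{itemize}
\item For $B_{2n}$ you get $G_B=\So_{2n+1}\times\So_{2n}$ acting on $(2n+1)\times 2n$ matrices. The stabiliser is $(\Res_{L_2/K}\mu_2)_{N=1}\cong J_b[\psi]=M^{\perp}$.
\item For $C_{2n}$ you get $G_C=\GL_{2n}/\mu_2$; this is the fixed group of the involution, not $\Sp_{4n}$. The stabiliser is $(\Res_{L/K}\mu_2)/\mu_2\cong A_b[\psi^{\vee}]$.
\end{itemize}
Both stabilisers have order $2^{2n-1}$. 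So these representations parametrise $\Sel_{\psi}J_b$ and $\Sel_{\psi^{\vee}}A_b$, not $\Sel_{\phi}A_b$. Dividing $G_B$ by a central $\mu_2$ does not help, because $-1\in\So_{2n}$ acts as $-1$ on $V_B$.

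Step 3 would not produce a constant bound in either case.
\begin{itemize}
\item For $C_{2n}$, the central torus of $\GL_{2n}/\mu_2$ makes the number of irreducible integral orbits of height $<X$ grow like $X^{\dim V}\log X$. This is exactly why the paper only gets $\ll\log X$ for $\Sel_{\psi^{\vee}}A_b$.
\item For $B_{2n}$, the paper remarks that the cusp carries on the order of $X^{\dim V_B}\log X$ integral orbits, so the cusp-cutting you single out as the main obstacle is not available there.
\end{itemize}
The split $6=4+2$ in your sketch is therefore not yet backed by any representation.

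The missing idea is the resolvent map $A\mapsto A^{*}A$. It sends $V_B$ to the space $V_A=\Sym^2(2n)$ of antidiagonal-symmetric matrices and is equivariant along $\So_{2n+1}\times\So_{2n}\to\PSo_{2n}$. The target $(\PSo_{2n},\Sym^2(2n))$ is the $A_{2n-1}$ Vinberg representation studied by Shankar--Wang. Its invariants have degrees $1,\dots,2n$, half those on the $B$ side. Its stabiliser $(\Res_{L/K}\mu_2)_{N\equiv 1}/\mu_2$ is exactly $M^{\perp}/M$.

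The $B_{2n}$ representation serves only as a bridge, in three steps.
\begin{itemize}
\item A Bhargava--Gross pencil-of-quadrics argument shows that $A_b^{\vee}(K)/\psi(J_b(K))$ dies in $H^1(K,G_B)$.
\item The map $A_b^{\vee}(K)/\psi(J_b(K))\to A_b^{\vee}(K)/\phi(A_b(K))$ is surjective and compatible with $H^1(K,G_B)\to H^1(K,G_A)$. Hence the $\phi$-descent image dies in $H^1(K,\PSo_{2n})$, giving $\Sel_{\phi}A_b\hookrightarrow G_A(\Q)\backslash V_{A,b}(\Q)$.
\item Integral representatives in $\frac{1}{2}V_{A,b}(\Z)$ come from the ideal $I_2$ built on the $B$ side. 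At $p=2$ this needs the corrected Shankar--Wang classification with its evenness condition. Rescaling $b$ by $2$ then makes them integral.
\end{itemize}

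The counting, cusp included, is imported from Shankar--Wang and Laga. Your self-duality idea is right: it appears as $c_v(\phi)=1/|2^{n-1}|_v$, whose product over all places is $1$. Irreducible Selmer elements therefore average at most $\tau(\PSo_{2n})=4$. The reducible orbits account only for the subgroup $S_T$ generated by the rational torsion point $T_b$ of $A_b^{\vee}$, of order at most $2$.
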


For some of the other isogenies, we can obtain stronger theorems involving
the $\kappa$-moments of the Selmer groups for any real number $\kappa \geq 0$.

\begin{theorem}
\label{theo:mainMoments}
Let $\kappa \geq 0$. As $X \to \infty$, the following asymptotics hold:
\[
\frac{\sum_{\Ht(b) < X} (\# \Sel_{\phi_M} J_b)^{\kappa} }{\sum_{\Ht(b) < X} 1} \asymp_{\kappa} 1
\]
\[
\frac{\sum_{\Ht(b) < X} (\# \Sel_{\phi_M^{\vee}} A_b^{\vee})^{\kappa} }{\sum_{\Ht(b) < X} 1} \asymp_{\kappa} (\log X)^{2^{\kappa}-1}.
\]
\[
\frac{\sum_{\Ht(b) < X} (\# \Sel_{\psi^{\vee}} A_b)^{\kappa} }{\sum_{\Ht(b) < X} 1} \gg_{\kappa} (\log X)^{2^{\kappa}-1}.
\]
The implied constants depend on $\kappa$.
\end{theorem}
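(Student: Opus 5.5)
The plan is to follow the second route announced in the abstract. We reduce the moments of the Selmer groups for $\phi_M$, $\phi_M^{\vee}$ and $\psi^{\vee}$ to statistics of Tamagawa ratios, and then apply the Koymans--Smith machinery.

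\textbf{Step 1: the Greenberg--Wiles formula.} For an isogeny $f\colon A\to A'$ with dual $f^{\vee}$, the Greenberg--Wiles formula gives
\[
\frac{\#\Sel_f A}{\#\Sel_{f^{\vee}} A'^{\vee}} = \frac{\#A[f](\Q)}{\#A'^{\vee}[f^{\vee}](\Q)}\prod_{v}\frac{\#\im(\delta_v)}{\#A[f](\Q_v)} =: \cT(f).
\]
For $f=\phi_M$ we have $A[f]=M\cong\Z/2$, and the dual kernel is also $\Z/2$ with trivial Galois action. So $\Sel_{\phi_M}J_b$ and $\Sel_{\phi_M^{\vee}}A_b^{\vee}$ are subgroups of $H^1(\Q,\Z/2)=\Q^\times/\Q^{\times 2}$, cut out by local conditions of size $1$ or $2$ relative to the unramified conditions.

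\textbf{Step 2: compute the local conditions.} We compute the local images $\im(\delta_v)$ for $\phi_M$ explicitly in terms of $x(x^{2n}+\dots)$. Away from primes dividing the discriminant, and away from primes dividing $p_{4n}(b)$ (which controls the $2$-torsion point $(0,0)$), the local condition is unramified. At primes $p\mid p_{4n}(b)$ of odd multiplicity, one of $\phi_M$, $\phi_M^{\vee}$ gets the full local group $H^1(\Q_p,\Z/2)$ and the other gets the trivial group. Hence the Tamagawa ratio is roughly $2^{\pm\omega(p_{4n}(b))}$, up to bounded factors.

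\textbf{Step 3: invoke Koymans--Smith.} They show that for such families, $\#\Sel_f$ is bounded in all moments on average once the Tamagawa ratio is forced to be $\le$ constant in the appropriate direction. When the ratio is large, the Selmer group is essentially of size equal to it. Granting this, the $\phi_M$ side has bounded moments. This gives the first line of the theorem; the lower bound $\gg 1$ is trivial since the Selmer group contains the image of $M$'s rational point or is at least of size $1$.

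For $\phi_M^{\vee}$ we have $\#\Sel \asymp \#\Sel_{\phi_M}\cdot 2^{\omega(p_{4n}(b))}$. Its $\kappa$-th moment is then governed by the average of $2^{\kappa\omega(p_{4n})}$ over $b$ with $\Ht(b)<X$. Here $p_{4n}$ ranges over integers of size $X^{4n}$, essentially uniformly, with the other coefficients free, so this average is $\asymp(\log X)^{2^{\kappa}-1}$ by the standard Selberg--Delange mean-value estimate for $\sum_{m\le Y}2^{\kappa\omega(m)}$. The upper bound uses Hölder together with the bounded moments of $\Sel_{\phi_M}$; the lower bound uses $\#\Sel_{\phi_M^\vee}\ge\cT$.

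\textbf{Step 4: the third line.} There is an exact sequence relating $\Sel_{\phi_M^{\vee}}A_b^{\vee}\to\Sel_{\psi^{\vee}}A_b$, via $\psi^{\vee}=\phi_M^{\vee}\circ\phi^{\vee}$ composed appropriately. The kernel of this map is bounded by the image of $J_b(\Q)[\ldots]$, which is at most $2$-torsion of bounded size. Hence $\#\Sel_{\psi^\vee}A_b\gg\#\Sel_{\phi_M^{\vee}}A_b^{\vee}$, and the lower bound follows from Step 3.

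\textbf{Main obstacle.} I expect the hardest step to be verifying the hypotheses of Koymans--Smith uniformly in the family. This requires controlling the local conditions at $2$, at $\infty$, and at primes dividing the discriminant but not $p_{4n}$, so that their contribution to the Tamagawa ratio is uniformly bounded and independent enough. It also requires showing that the family is equidistributed in the sense their theorem needs, namely that the square classes of $p_{4n}$ and the factorisation behave as for random integers.
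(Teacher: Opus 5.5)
Your overall route is the paper's: use Greenberg--Wiles to write $\cT(A_b/J_b)=\#\Sel_{\phi_M^{\vee}}A_b^{\vee}/\#\Sel_{\phi_M}J_b=\prod_{p\le\infty}c_p(\phi_M^{\vee})$, then apply Koymans--Smith. However, Step 2 skips the one computation that determines the answer, and your ``main obstacle'' puts the difficulty in the wrong place.

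Your ``hence'' silently discards the primes $p\parallel\Delta(f)$ with $p\nmid f(0)$. These occur with density $\asymp 1/p$, so a typical $b$ has unboundedly many of them. No uniformity or equidistribution argument helps here. What is needed is the exact value $c_p(\phi_M^{\vee})=c_p(J_b)/c_p(A_b^{\vee})=1$ at such primes, which is a N\'eron-model computation and the only place where $n\geq 2$ enters.

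The paper shows $c_p(J_b)=1$, since the special fibre has one component. It shows $c_p(A_b)=c_p(A_b^{\vee})=1$ (Lemma~\ref{lemma:tamagawa}) because $M\subset\ol{\cM_f}$ while $M\cap\ol{\cM_t}=0$. The latter holds because the toric $2$-torsion is generated by $[(P_{2n-1})-\infty]+[(P_{2n})-\infty]\neq[(P_0)-\infty]$, where $x_{2n-1}\equiv x_{2n}$ are the colliding roots. For $n=1$ these two classes coincide, so $c_p(A_b)=2$ and the local factor is $1/2$. Then $\Sel_{\phi_M}$ has unbounded moments and the exponent becomes $2^{\kappa}+2^{-\kappa}-2$. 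Since nothing in your argument uses $n\geq 2$, as written it would ``prove'' this false genus-one statement.

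The factor $2$ at $p\parallel f(0)$, $p\nmid\Delta(f)$ is likewise only asserted. The paper gets it from $c_p(J_b)=2$ (one blow-up at $(0,0)$ gives two components meeting twice) and $c_p(A_b^{\vee})=1$ (because $M\cap\ol{\cM_f}=0$). These two computations give $\gamma(2)=\gamma(1)=1$ and $\gamma(t)=0$ otherwise. That is all Koymans--Smith's Proposition 6.16 needs to give $(\log X)^{2^{\kappa}-1}$ for the moments of $\cT$.

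Two further steps do not work as stated.

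First, H\"older cannot give the sharp upper bound. From $\#\Sel_{\phi_M^{\vee}}A_b^{\vee}=\cT\cdot\#\Sel_{\phi_M}J_b$, H\"older with exponent $r>1$ on $\cT^{\kappa}$ yields at best $(\log X)^{(2^{\kappa r}-1)/r}$. By strict convexity, $(2^{\kappa r}-1)/r>2^{\kappa}-1$. Letting $r\to 1$ sends the dual exponent to infinity, where you have no uniform control of the $\Sel_{\phi_M}$-moments. You need the joint statement of Koymans--Smith, their Theorem 6.4: the $\kappa$-moments of $\#\Sel_{\phi_M^{\vee}}A_b^{\vee}$ and of $\#\Sel_{\phi_M}J_b$ are $\asymp$ those of $\cT^{\kappa}$ and $\cT^{-\kappa}$ respectively. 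This is what the paper uses, and it makes H\"older unnecessary.

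Second, in Step 4 there is no natural map $\Sel_{\phi_M^{\vee}}A_b^{\vee}\to\Sel_{\psi^{\vee}}A_b$. The map induced by $\phi\colon A_b[\psi^{\vee}]\to A_b^{\vee}[\phi_M^{\vee}]$ goes the other way, and its cokernel is not controlled. Instead, apply Greenberg--Wiles to $\psi^{\vee}$ itself. The local factors are multiplicative, $c_v(\psi^{\vee})=c_v(\phi_M^{\vee})c_v(\phi)$, and $\prod_v c_v(\phi)=1$ by Lemma~\ref{lemma:selmerWeights}. Hence $\#\Sel_{\psi^{\vee}}A_b\geq\cT(A_b/J_b)/\#J_b[\psi](\Q)\gg\cT(A_b/J_b)$, which is the bound the paper uses for the third line. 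Your stronger inequality $\#\Sel_{\psi^{\vee}}A_b\gg\#\Sel_{\phi_M^{\vee}}A_b^{\vee}$ does hold, but only after also using the map $\Sel_{\phi_M}J_b\to\Sel_{\psi}J_b$, whose kernel has size at most $\#A_b[\phi](\Q)$.
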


Note that the last point of Theorem \ref{theo:mainMoments} does not give
information about an upper bound for the moments $\#\Sel_{\psi^{\vee}} A_b$.
We do, however, get a result for $\kappa = 1$:

\begin{theorem}
\label{theo:mainPsi}
As $X \to \infty$,
\[
\frac{\sum_{\Ht(b) < X} \# \Sel_{\psi^{\vee}} A_b}{\sum_{\Ht(b) < X} 1} \ll \log X.
\]
\end{theorem}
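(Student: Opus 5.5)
We bound $\#\Sel_{\psi^{\vee}} A_b$ by products of Selmer groups we already control. The isogeny $\psi^{\vee} = \phi_M^{\vee}\circ\phi^{\vee}$ factors through the self-dual isogeny $\phi$. Indeed $\phi^{\vee}=\pm\phi$ up to the identification $A_b\cong A_b^{\vee\vee}$, so $\psi^{\vee}$ is the composite $A_b\xrightarrow{\phi}A_b^{\vee}\xrightarrow{\phi_M^{\vee}}J_b$. The standard exact sequence for a composite of isogenies gives
\[
\Sel_{\phi}A_b \to \Sel_{\psi^{\vee}}A_b \to \Sel_{\phi_M^{\vee}}A_b^{\vee},
\]
exact in the middle. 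Hence
\[
\#\Sel_{\psi^{\vee}}A_b \le \#\Sel_{\phi}A_b\cdot\#\Sel_{\phi_M^{\vee}}A_b^{\vee}.
\]
We cannot simply average this product, since we only know the averages of the two factors separately. Instead we apply Hölder's inequality with exponents $p,q$, $1/p+1/q=1$:
\[
\sum_{\Ht(b)<X}\#\Sel_{\psi^{\vee}}A_b \le \Big(\sum (\#\Sel_{\phi}A_b)^{p}\Big)^{1/p}\Big(\sum(\#\Sel_{\phi_M^{\vee}}A_b^{\vee})^{q}\Big)^{1/q}.
\]
By Theorem~\ref{theo:mainMoments}, the $q$-th moment of $\Sel_{\phi_M^\vee}$ is $\asymp (\log X)^{2^q-1}$ times the count. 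Its $1/q$ power is then $(\log X)^{(2^q-1)/q}$, which tends to $(\log X)^{1}$ only as $q\to 1$. That would require $p\to\infty$, and we have no high moments of $\Sel_\phi$. So plain Hölder loses a power of $\log$, and the argument must be refined.

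The refinement uses that $\Sel_\phi A_b$ is uniformly small on the bulk of the family, and that this smallness is uniform in the local conditions. First, I would use the geometry-of-numbers bound of Theorem~\ref{theo:mainPhi}, which counts integral orbits in the Vinberg representation, in a version imposing finitely many local conditions. Such a version is normally proved alongside, via a sieve and uniformity estimate. Second, I would stratify the family by the set $S_b$ of primes $p$ at which $\Sel_{\phi_M^{\vee}}$ has extra local image. In the Koymans--Smith / Greenberg--Wiles route this set is essentially the primes dividing the constant coefficient $p_{4n}(b)$, that is, the discriminant of the factor $x^{2n}+\dots$ evaluated at $x=0$, with a suitable splitting behaviour. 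The size of $\Sel_{\phi_M^{\vee}}$ is then controlled by $2^{\omega'(b)}$, where $\omega'(b)$ counts these primes. The average of $2^{\omega'}$ is $\asymp\log X$, which is the first moment of Theorem~\ref{theo:mainMoments}.

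Next I would write
\[
\#\Sel_{\psi^{\vee}}A_b \le \#\Sel_\phi A_b\cdot 2^{\omega'(b)+O(1)}
\]
and swap the order of summation. Expanding $2^{\omega'(b)}=\sum_{d\mid \mathrm{rad}'(b)}1$ gives
\[
\sum_{d}\ \sum_{\Ht(b)<X,\ d\mid p_{4n}(b)}\#\Sel_\phi A_b .
\]
The inner sum is an average of $\Sel_\phi$ over a family defined by congruence conditions modulo $d$. The geometry-of-numbers method bounds it by $\ll \rho(d)/d$ times the total count, with constant $6$, uniformly for $d$ up to a small power of $X$. Summing $\rho(d)/d$ over squarefree $d\le X^{\delta}$ gives $\ll\log X$.

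The large-$d$ range is where I expect the main obstacle. It requires a uniformity (tail) estimate for Selmer elements of $\phi$ in families with $p_{4n}(b)$ divisible by a large modulus, which does not follow from a fixed finite set of local conditions. I would first try to avoid it: split $b$ according to whether $\omega'(b)$ exceeds $C\log\log X$. On the large-$\omega'$ part, I would combine the bound $\#\Sel_\phi A_b\ll X^{\epsilon}$ with the rarity of such $b$. That rarity follows from higher moments in Theorem~\ref{theo:mainMoments}, together with the standard bound $\#\Sel_\phi\le 2^{O(\omega(\mathrm{disc}))}$. On the complementary part, I would treat $d$ only up to $X^{\delta}$, bounding the contribution of large prime factors of $p_{4n}(b)$ trivially.
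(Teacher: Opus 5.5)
Your reduction $\#\Sel_{\psi^{\vee}}A_b\le\#\Sel_{\phi}A_b\cdot\#\Sel_{\phi_M^{\vee}}A_b^{\vee}$ is correct, and you are right that H\"older alone cannot recover $\log X$. The gap is that everything after this rests on an input that neither the paper nor your sketch supplies.

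You need a bound $\sum_{\Ht(b)<X,\ d\mid p_{4n}(b)}\#\Sel_{\phi}A_b\ll d^{-1}\,\#\{b:\Ht(b)<X\}$ holding \emph{uniformly} in squarefree $d\le X^{\delta}$. Its implied constant must not grow like $C^{\omega(d)}$, since $\sum_{d\le X^{\delta}}C^{\omega(d)}/d$ is already of order $(\log X)^{C}$. Theorem~\ref{theo:mainPhiCong} gives nothing of this kind. It is a limiting statement for each \emph{fixed} family defined by finitely many congruence conditions, proved by sieving to infinitely many local conditions with no error term. It therefore says nothing once the modulus grows with $X$. Your estimate would require a new counting theorem for $G_A(\Z)$-orbits on $V_A(\Z)$ in residue classes modulo $d$. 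That theorem would need power-saving error terms uniform in $d$, including in the cusp, and the correct local factor $\approx 1/p$ at each $p\mid d$.

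The large-divisor range is also not handled, since $p_{4n}(b)$ can be as large as $X^{4n}$. Bounding large prime factors trivially does not reduce you to $d\le X^{\delta}$. With a fixed threshold $X^{\delta'}$, the smooth part of $p_{4n}(b)$ can still exceed $X^{\delta}$ when $\omega'(b)\approx C\log\log X$. With a threshold shrinking like $X^{\delta/\log\log X}$, the trivial bound on the large part costs a power of $\log X$. You would need a Shiu/Nair--Tenenbaum-type sieve argument, again fed by the uniform input.

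On the set where $\omega'(b)>C\log\log X$, a rarity of order $(\log X)^{-A}$ cannot absorb $\#\Sel_{\phi}A_b\ll X^{\epsilon}$. You would instead need H\"older against $(\log X)^{O(1)}$ moment bounds for $2^{O(\omega(\Delta(b)))}$. Finally, the pointwise bound $\#\Sel_{\phi_M^{\vee}}A_b^{\vee}\le 2^{\omega'(b)+O(1)}$, with $\omega'$ essentially $\omega(p_{4n}(b))$, needs a local argument. You must show the Kummer image is unramified at odd $p\nmid p_{4n}(b)$, and this is not automatic at primes with $p^2\mid\Delta(f)$.

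The paper avoids all of this by not factoring $\psi^{\vee}$. It embeds $\Sel_{\psi^{\vee}}A_b$ directly into $G_C(\Q)$-orbits of the $C_{2n}$ Vinberg representation $V_C$, with $G_C=\GL_{2n}/\mu_2$, and shows these orbits have integral representatives. It then counts all irreducible $G_C(\Z)$-orbits of height $<X$, imposing no congruence conditions since only an upper bound is needed. The $\log X$ arises geometrically from the central torus $\Lambda$ of the non-semisimple group $G_C$. In the main body one integrates over $r^{-1/2}\ll\lambda\ll r^{1/2}$, which gives $\int_1^X\int_{r^{-1/2}}^{r^{1/2}}r^{\dim V}\,d^{\times}\lambda\,d^{\times}r\asymp X^{\dim V}\log X$. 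The cusp and the reducible orbits contribute $O(X^{\dim V})$, and the number of $b$ is $\asymp X^{\dim V}$ by Lemma~\ref{lemma:sum}. This direct parametrisation is the missing idea: with it, no uniformity in a growing modulus is ever required.
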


\begin{obs}
We stress that the following results work for genus at least $2$. For genus
$1$ we have $\phi = 1$, so in particular Theorem \ref{theo:mainPhi} is
trivially true. However, the analogue of Theorems \ref{theo:mainMoments} and
Theorem \ref{theo:mainPsi} is not true. By \cite[Example 7.1]{KS}, we
know that the average size of the Selmer group of a $2$-isogeny in the
given family $y^2 = x(x^2+ax+b)$ is $\asymp \sqrt{\log X}$ (more generally,
the $\kappa$-moment is $(\log X)^{2^{\kappa} + 2^{-\kappa} - 2}$ for any real $\kappa \geq 0$).
\end{obs}

\begin{obs}
We can compare the result of Theorem \ref{theo:mainPhi} with the Poonen--Rains
heuristics in \cite{PoonenRains}. These heuristics contain some predictions
for Selmer groups of self-dual isogenies $\lambda \colon A \to A^{\vee}$
which come from some symmetric line sheaf $\cL$ in $A$. This is the case 
for all of our isogenies $\phi \colon A_b \to A_b^{\vee}$: $\phi$ is 
self-dual by Lemma \ref{lemma:self-dual}, and the obstruction for $\phi$
to come from a symmetric line bundle is measured by an element $c_{\phi} \in H^1(A_b[\phi])$,
which is zero in our case by \cite[Proposition 3.12(f)]{PR2}. Then,
\cite[Theorem 4.14]{PoonenRains} identifies $\Sel_{\phi} J_b$\footnote{The cited \cite[Theorem 4.14]{PoonenRains} only identifies a \emph{quotient} of $\Sel_{\phi}J_b$ as an intersection, but that quotient is equal to $\Sel_{\phi} J_b$ $100\%$ of the time by \cite[Proposition 3.4]{PoonenRains}, using the fact that $A_b[\phi]$ is isomorphic to the $2$-torsion of the Jacobian of $y^2 = f(x)$.}
with an intersection of two maximal isotropic subspaces of an infinite-dimensional
quadratic space over $\F_2$. Then, Theorem \ref{theo:mainPhi} appears
to be consistent with the predictions of the Poonen--Rains heuristics:
the upper bound for our average size coincides with that of $2$-Selmer
groups of even hyperelliptic curves, which in both cases account for the
presence of a marked rational subgroup of the Selmer group.
However, we note that isogenies like $\psi$ or $\phi_M$ are not self-dual,
and thus do not fall within the framework of Poonen--Rains.
\end{obs}

\subsection{Method of proof}

Many results on the average size of Selmer groups of isogenies that are
multiplication-by-$n$ have appeared in the literature in the past years,
helped mainly by Bhargava's striking new methods in geometry-of-numbers:
as seen for instance in \cite{BSquartics, BScubics, LagaThesis}, among many others.
We will use these techniques, as well as results from a recent preprint
from Koymans--Smith \cite{KS}, who use a formula by Greenberg--Wiles 
to obtain statistical information on the size of Selmer groups
in many different settings.

The standard technique in ``Bhargavology'' is to parametrise the elements
of the Selmer groups by integral orbits of a representation $(G,V)$ of
a reductive group $G$ defined over $\Z$. Finding such parametrisations
is one of the main obstacles in obtaining more of these results. Previous
experience suggests that many representations used in arithmetic statistics
actually arise from Vinberg theory, or in other words the study of graded
Lie algebras. In \cite{ThorneThesis}, Thorne connected the Vinberg representations
associated to the $\Z/2\Z$-gradings of the simply laced Lie algebras (i.e.
those of type $A_n$, $D_n$ or $E_n$) with certain families of curves
arising as deformations of simple surface singularities, in such a way that
the orbits of the representation should give arithmetic information about
the constructed families of curves. This perspective has been used, implicitly
and explicitly, to obtain statistical results on the size of $2$-Selmer
groups in the past: all these results have been unified and reproved in
Laga's thesis \cite{LagaThesis}, which gives a uniform proof of all such results.

Other Vinberg representations have appeared in the literature, either coming
from either non-simply laced Dynkin diagrams or higher order gradings
(or both). In \cite{RTE8}, a $\Z/3\Z$-grading in $E_8$ is used to study
the $3$-Selmer group of odd genus $2$ curves. In \cite{3isoEC}, a $\Z/3\Z$-grading of $G_2$ was used to
study $3$-isogeny Selmer groups of the elliptic curves $y^2 = x^3+k$,
a perspective that was later generalised in \cite{3isoAV} for abelian
varieties. In \cite{LagaThesis}, a $\Z/2\Z$-grading in $F_4$ is used to
study $2$-Selmer groups of a family of Prym varieties, in a manner that
serves as a template for our results.

We now explain the structure of this paper. First, in Section \ref{subs:VinbergBC} we introduce the
representations $(G_B,V_B)$ and $(G_C,V_C)$ arising from the stable $2$-grading
of the Dynkin diagrams $B_{2n}$ and $C_{2n}$. Then, in Sections \ref{section:OrbitsB}
and \ref{section:OrbitsC} we will show how the $B$ and $C$-representations
(respectively) are connected to the geometric picture of \eqref{eq:JA}.
First, we observe that the rings of invariants $\Q[V_B]^{G_B}$ and $\Q[V_C]^{G_C}$
are both isomorphic to an affine space $\Q[p_2,\dots,p_{4n}]$, where $p_{2i}$ has degree $2i$.
Therefore, any element of $V_B(\Q)$ or $V_C(\Q)$ can be associated to the
hyperelliptic curve $C_b \colon y^2 = x(x^{2n}+p_2x^{2n-1}+\dots + p_{4n})$.
The most important results of Sections \ref{section:OrbitsB} and \ref{section:OrbitsC}
are the construction of embeddings
\[
\Sel_{\psi} J_b \xhookrightarrow{} \lp G_B(\Q)\backslash V_{B,b}(\Q)\rp \bigcap \frac{1}{2}V_B(\Z), \quad \Sel_{\psi^{\vee}} A_b \xhookrightarrow{} \lp G_C(\Q)\backslash V_{C,b}(\Q)\rp \bigcap \frac{1}{2}V_C(\Z),
\]
where $V_{B,b}$ and $V_{C,b}$ denote the elements of $V_B$ and $V_C$
having invariants $b \in B$.

Additionally, connected to the $B$-representation, in Section \ref{section:OrbitsA} we will consider a related
representation $(G_A,V_A)$, for which will have $V_A \git G_A \simeq B$
and a commutative diagram
\begin{equation*}
\begin{tikzcd}
\Sel_{\psi} J_b \arrow[r] \arrow[d] & \Sel_{\phi} A_b \arrow[d]  \\
G_B(\Q)\backslash V_{B,b}(\Q) \arrow[r] & G_A(\Q)\backslash V_{A,b}(\Q),
\end{tikzcd}
\end{equation*}
where again the rightmost map is injective and every element in its image
has a representative in $\frac{1}{2}V_{A,b}(\Z)$. It turns out that the
representation $(G_A,V_A)$ is the Vinberg representation associated to
the $\Z/2\Z$-grading on $A_{2n-1}$, which has already been studied in
\cite{SW}. Therefore, in Section \ref{section:mainProof} we can use
the counting results of \emph{loc. cit.} to prove Theorem \ref{theo:mainPhi}.
In Section \ref{section:countingC}, we develop the necessary methods in
geometry-of-numbers to prove the upper bound of Theorem \ref{theo:mainPsi}.
We note that $G_C = \GL_{2n}/\mu_2$ is not semisimple, something that
differs from the cases typically considered in the literature and which
introduces further technical complications. 

In Section \ref{section:tamagawa} we use results from Koymans--Smith \cite{KS}
to prove Theorem \ref{theo:mainMoments}. A simplified account of their
main conjecture would be that ``the moments of Selmer groups asymptotically
grow like the moments of the associated Tamagawa ratios''. These moments
of Tamagawa ratios are much easier to compute: in the case of elliptic
curves we have Tate's algorithm (cf. \cite[\S 7.1]{KS}), and in our situation
we just need to compute the two easiest non-trivial examples 
(Lemmas \ref{lemma:Tamagawa0} and \ref{lemma:tamagawa}).

\subsection{Acknowledgements}
This paper is an adaptation of part of the author's PhD thesis, written under the supervision
of Jack Thorne. I would like to thank him for his many helpful comments
and conversations. I would also like to thank Alex Bartel, Peter Koymans, Jef Laga and
Rong Zhou for interesting discussions related to the contents of this
paper.

The article was completed while the author was supported by the Czech 
Science Foundation GAČR, grant 26-20514S.

\section{Vinberg representations for $B_{2n}$ and $C_{2n}$}
\label{subs:VinbergBC}

We introduce some generalities about Vinberg representations, which will
be later specialised to the case of $\Z/2\Z$-gradings for the Dynkin 
diagrams of type $B_{2n}$ and $C_{2n}$. For more general context, the
reader may consult \cite{Vinberg, Panyushev, Gradings}.

Let $H$ be a connected simple reductive group of adjoint type over a
field $K$ of characteristic zero. Let $\theta \colon H \to H$ be an involution.
By taking differentials, it also induces an involution $d\theta$ on the
Lie algebra $\hh$ of $H$. By considering $\pm 1$ eigenspaces, we get a
decomposition
\[
\hh = \hh(0) \oplus \hh(1),
\]
where $\hh(0) = \hh^{d\theta = 1}$ and $\hh(1) = \hh^{d\theta = -1}$.
We observe that $[\hh(i),\hh(j)] \subset \hh(i+j)$. If we take $G = (H^{\theta})^{\circ}$
and $V = \hh(1)$, we get a representation $(G,V)$ by taking the restriction of
the adjoint representation. Of course, this representation depends on
the choice of $\theta$, but there turns out to be a ``canonical'' choice:
the one that makes the representation \emph{stable} in the following sense:
\begin{defi}
Suppose that $K$ is algebraically closed. Then a vector $v \in V$ is
\emph{stable} if the $G$-orbit of $v$ is closed and its stabiliser
$\Stab_G(v)$ is finite. We say $(G,V)$ is \emph{stable} if $V$ contains
stable vectors. Finally, if $K$ is not necessarily closed, we say that
$(G,V)$ is \emph{stable} if $(G_{K^s},V_{K^s})$ is.
\end{defi}
Stable vectors have a good invariant theory (cf. \cite[Proposition 2.11]{LagaThesis}):
\begin{prop}
Suppose that $\theta \colon H \to H$ is a stable involution, with associated
Vinberg representation $(G,V)$. Then the following properties are satisfied:
\begin{enumerate}
\item Let $\fc \subset V$ be a Cartan subalgebra of $\hh$.
Then the map $N_G(\fc) \to W_{\fc} := N_H(\fc)/Z_H(\fc)$
is surjective. Consequently, the inclusions $\fc \subset V \subset \hh$
induce isomorphisms
\[
\fc \git W_{\fc} \simeq V \git G \simeq \hh \git H.
\]
In particular, the above quotients are isomorphic to an affine space.
\item Let $K$ be algebraically closed and let $x,y \in V(K)$ be regular
semisimple elements. 
Then $x$ and $y$ are $G(K)$-conjugate if and only if they have the same
image in $V \git G$.
\end{enumerate}
\end{prop}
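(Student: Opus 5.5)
The plan is to follow the classical Kostant--Rallis theory of $\theta$-groups and to use stability as the key input. Fix a Cartan subspace $\fc \subset V$, i.e.\ a maximal abelian subspace of $V$ consisting of semisimple elements. Since $\theta$ is stable, every element of a Cartan subspace containing a stable vector is regular semisimple in $\hh$. Hence $\fc$ is a Cartan subalgebra of $\hh$, which forces $\dim \fc = \operatorname{rank} \hh$; this is the usual characterisation of stable involutions as those of maximal rank. We may pass to $K$ algebraically closed, since all statements either descend along field extensions or are stated over $\bar K$.

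For part (1), I would first show that $N_G(\fc) \to W_{\fc}$ is surjective. The image is the ``little Weyl group'' $W_\theta = N_G(\fc)/Z_G(\fc)$. Vinberg shows it is generated by reflections, namely the reflections in roots of $(\hh,\fc)$ that are restricted roots. Because $\fc$ is a full Cartan, $d\theta$ acts on $\fc$ by $-1$, so $\theta$ acts on root subspaces $\hh_\alpha \to \hh_{-\alpha}$. For each root $\alpha$, the $\theta$-stable $\mathfrak{sl}_2$-triple spanned by $\hh_{\pm\alpha}$ has a $\theta$-fixed part $\hh(0)\cap(\hh_\alpha\oplus\hh_{-\alpha})$, which is one-dimensional. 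Exponentiating a suitable element of this part, combined with a Cayley transform inside that $\SL_2$, gives an element of $G$ normalising $\fc$ and acting as the reflection $s_\alpha$. Since the $s_\alpha$ generate $W_{\fc}$, surjectivity follows.

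Next come the isomorphisms. Chevalley restriction for $(\hh,H)$ gives $\hh\git H \simeq \fc\git W_{\fc}$, and Vinberg's restriction theorem gives $V\git G \simeq \fc \git W_\theta$. The maps $\fc\git W_\theta \to V\git G \to \hh\git H$ compose to the Chevalley isomorphism. Since $W_\theta = W_{\fc}$ by surjectivity, the first map is already an isomorphism, so the second is too. Affineness then comes from Chevalley--Shephard--Todd.

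For part (2), take $x,y$ regular semisimple in $V$ with the same image in $V\git G$. Every semisimple element of $V$ is $G$-conjugate into $\fc$: this is Kostant--Rallis/Vinberg conjugacy of Cartan subspaces, proved by showing that $G$-conjugates of $\fc$ are exactly the centralisers in $V$ of regular semisimple elements, via a dimension count on the orbit map $G\times \fc\to V$. So we may assume $x,y\in\fc$. The invariant-theory isomorphism $V\git G\simeq \fc\git W_{\fc}$ then shows that $x,y$ lie in the same $W_{\fc}$-orbit. Surjectivity from part (1) lifts this to $G$-conjugacy.

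The main obstacle is the surjectivity $N_G(\fc)\to W_{\fc}$, specifically the realisation of each reflection inside the possibly disconnected-looking group $G=(H^\theta)^\circ$ rather than merely in $H^\theta$. If the $\SL_2$ argument fails, I would instead compare degrees: $\fc\git W_\theta \to \fc\git W_{\fc}$ is finite of degree $[W_{\fc}:W_\theta]$. I would then show that the map $V\git G\to\hh\git H$ is birational, by checking generic fibres using the regular semisimple locus. Together these force $W_\theta=W_{\fc}$.
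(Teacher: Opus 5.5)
Your proof is correct, but it takes a different route from the paper. The paper gives no argument of its own: it simply quotes \cite[Proposition 2.11]{LagaThesis}. That result rests on the general structure theory of Vinberg's $\theta$-groups, namely the restriction theorem, conjugacy of Cartan subspaces, and the identification of the little Weyl group of a stable grading with the centraliser in $W_{\fc}$ of the element by which $\theta$ acts on $\fc$. Here that element is $-1$, so the centraliser is everything.

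You replace this last ingredient with an explicit root-by-root construction. Normalise $e\in\hh_\alpha$ so that $(e,\theta(e),h_\alpha)$ is an $\mathfrak{sl}_2$-triple. Then $\theta$ corresponds, on the attached $\SL_2\to H$, to conjugation by $\bigl(\begin{smallmatrix}0&1\\1&0\end{smallmatrix}\bigr)$, and the image of $\bigl(\begin{smallmatrix}0&i\\ i&0\end{smallmatrix}\bigr)$ acts on $\fc$ as $s_\alpha$. You should say explicitly that this element lies in the fixed subgroup $\{\bigl(\begin{smallmatrix}a&b\\ b&a\end{smallmatrix}\bigr) : a^2-b^2=1\}\cong\mathbb{G}_m$ of $\SL_2$. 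That subgroup is connected, so the element maps into $(H^\theta)^\circ=G$. This settles the ``main obstacle'' you flag, so neither a Cayley transform nor your degree-counting fallback is needed.

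Your route gives an elementary, self-contained proof for involutions in which the $G$ versus $H^\theta$ issue is transparent. The cited theory instead works uniformly for stable gradings of any order, such as the $\Z/3\Z$-gradings mentioned in the introduction. There $\fc$ is a proper subspace of a Cartan subalgebra and the little Weyl group is a complex reflection group, so your argument, which uses $\theta|_{\fc}=-1$ essentially, has no analogue.

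Two minor remarks. First, the sentence ``every element of a Cartan subspace containing a stable vector is regular semisimple'' is false as written (take $0$). What you need is that a stable $v$ has $\mathfrak{z}_{\hh}(v)\cap\hh(0)=0$, so $\mathfrak{z}_{\hh}(v)$ is abelian and hence a Cartan subalgebra contained in $V$. Second, once surjectivity is known you can bypass Vinberg's restriction theorem. The restriction $K[V]^G\to K[\fc]^{W_{\fc}}$ is surjective because the composite $K[\hh]^H\to K[V]^G\to K[\fc]^{W_{\fc}}$ is Chevalley's isomorphism. It is injective because $G\cdot\fc$ is dense in $V$, since $\dim\hh(0)=\#\Phi^+$ gives $\dim\hh(0)+\dim\fc=\dim V$.
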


\begin{prop}{\cite[Lemma 2.6]{ThorneThesis}}
Let $K$ be algebraically closed. Then there exists a unique $H(K)$-conjugacy
class of stable involutions $\theta$.
\end{prop}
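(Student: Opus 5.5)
We prove existence and uniqueness separately. The natural tool for both is the criterion that an involution $\theta$ is stable if and only if $\hh(1)$ contains a regular semisimple element of $\hh$. Equivalently, some Cartan subalgebra $\fc$ of $\hh$ lies in $\hh(1)$, so that $d\theta$ acts as $-1$ on $\fc$. I would first establish this criterion, following Vinberg and Levy. If $x \in \hh(1)$ is regular semisimple, then $Z_H(x)^{\circ}$ is a maximal torus $T$. Since $\fc = \mathrm{Lie}(T)$ is abelian and consists of semisimple elements, and since $\hh(1)$ is a Cartan space containing $x$, we get $\fc \subset \hh(1)$. Hence $\theta$ acts on $T$ by inversion, so $\Stab_G(x) = G \cap Z_H(x)$ has identity component $(T^{\theta})^{\circ} = 1$ and is finite. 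The orbit $G\cdot x$ is closed because $x$ is semisimple. Conversely, if a stable vector $v$ exists, its orbit is closed, so $v$ is semisimple. Finiteness of the stabiliser then forces $\mathfrak{z}_{\hh}(v) \cap \hh(0) = 0$. Hence $\mathfrak{z}_{\hh}(v) \subset \hh(1)$, and this centraliser is a reductive subalgebra on which $d\theta = -1$. A reductive subalgebra on which $d\theta$ acts by $-1$ must be abelian, since $[\hh(1),\hh(1)] \subset \hh(0)$. So $\mathfrak{z}_{\hh}(v)$ is a Cartan subalgebra and $v$ is regular.

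For existence, fix a maximal torus $T$ and a pinning $(T,B,\{X_\alpha\})$ of $H$. Let $\theta_0$ be the pinned automorphism inducing $-w_0$ on the root datum; it is trivial when $-1 \in W$. Take $\theta = \theta_0 \circ \mathrm{Ad}(\dot w_0)$, with $\dot w_0$ a suitable lift. Then $\theta$ acts on $T$ as $\theta_0 \circ w_0 = -1$, which is inversion, so $\mathrm{Lie}(T) \subset \hh(1)$. It remains to adjust the choice of lift so that $\theta^2 = 1$. I expect this to be done via a Tits-type lift, using $\theta_0(\dot w_0)\dot w_0 \in T$ together with the surjectivity of $t \mapsto t\theta(t)^{-1}$ on $T$, which holds because $\theta$ inverts $T$. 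Concretely, replacing $\dot w_0$ by $s\dot w_0$ multiplies $\theta^2$ by $s^2$ up to conjugation effects, and $K$ is algebraically closed. Because $H$ is adjoint, an automorphism that is the identity on $T$ and on all root groups is trivial, so $\theta^2$ is determined by a torus element that we can kill.

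For uniqueness, let $\theta,\theta'$ be stable, with Cartan subalgebras $\fc \subset \hh(1)$ and $\fc' \subset \hh'(1)$. All Cartan subalgebras are $H(K)$-conjugate, so after conjugating $\theta'$ we may assume both act by $-1$ on the same $T$. Then $\theta' = \theta \circ \mathrm{Ad}(t)$ for some $t \in T$: the composite $\theta^{-1}\theta'$ fixes $T$ pointwise, and since $H$ is adjoint it is inner by an element of $Z_H(T) = T$. Choose $s \in T$ with $s^{2} = t$, which exists since $K$ is algebraically closed. Then $\mathrm{Ad}(s)\circ\theta\circ\mathrm{Ad}(s)^{-1} = \theta \circ \mathrm{Ad}(\theta^{-1}(s)s^{-1}) = \theta\circ \mathrm{Ad}(s^{-2})$, using $\theta^{-1}(s) = s^{-1}$. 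Taking $s$ with $s^{-2} = t$ instead gives $\theta'$. The main obstacle I anticipate is the existence step: arranging a genuine involution rather than merely an automorphism inverting $T$. The clean route is to pick $\theta_0 \circ \mathrm{Ad}(n)$ with $n$ in $N(T)$ and compute $\theta^2 = \mathrm{Ad}(\theta_0(n)n)$. Then $\theta_0(n)n \in T$, and the same square-root trick as in the uniqueness step lets us replace $n$ by $sn$ to make it central, hence trivial in adjoint $H$.
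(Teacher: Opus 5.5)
Your uniqueness half is correct and is the standard argument. The paper itself gives no proof and only cites Thorne; for $B_{2n}$ and $C_{2n}$ it obtains existence from the explicit involution $\theta=\Ad(\check\rho(-1))$.

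The genuine gap is in existence, at exactly the step you flagged. Replacing $n$ by $sn$ with $s\in T$ does not change $\theta_0(n)n$ at all:
\[
\theta_0(sn)\,sn=\theta_0(s)\bigl(\theta_0(n)\,s\,\theta_0(n)^{-1}\bigr)\theta_0(n)n=\theta_0(s)\,w_0(s)\,\theta_0(n)n=\theta_0(n)n .
\]
This holds because $\theta_0(n)$ again represents $w_0$, and $\theta_0$ acts on $T$ through $-w_0$, i.e.\ $\theta_0(s)=w_0(s)^{-1}$. Your own uniqueness computation shows the same thing: every $\theta\circ\Ad(t)$ with $t\in T$ is $T$-conjugate to $\theta$, and $T$-conjugation does not change $\theta^2=\Ad(t_0)$ with $t_0\in T$. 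So the square-root trick is vacuous here.

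The missing idea is that no adjustment is needed: any automorphism inverting a maximal torus $T$ is automatically an involution. Indeed, $\theta$ maps $\hh_\alpha$ to $\hh_{-\alpha}$. Write $\theta(X_\alpha)=aX_{-\alpha}$ and $\theta(X_{-\alpha})=bX_\alpha$. Invariance of the Killing form $\kappa$ gives $ab\,\kappa(X_{-\alpha},X_\alpha)=\kappa(X_\alpha,X_{-\alpha})\neq 0$, so $ab=1$. Hence $\theta^2$ is trivial on every root space and on $\mathrm{Lie}(T)$, and therefore $\theta^2=1$. Alternatively, the Tits lift satisfies $\theta_0(\dot w_0)=\dot w_0$ and $\dot w_0^2=(2\check\rho)(-1)$, which is trivial in the adjoint group $H$.

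There is also a false claim in your stability criterion, though it is not load-bearing. $\hh(1)$ containing a regular semisimple element of $\hh$ does not imply stability. Take $H=\mathrm{PGL}_3$ and $\theta=\Ad(\diag(1,1,-1))$. The element $x=E_{13}+E_{31}\in\hh(1)$ has distinct eigenvalues $1,-1,0$. Yet $\mathfrak{z}_{\hh}(x)$ contains $\diag(1,-2,1)\in\hh(0)$, and $\Stab_G(x)$ contains the one-dimensional torus $\{\diag(u,u^{-2},u)\}$. So your step ``$\hh(1)$ is a Cartan space containing $x$, hence $\fc\subset\hh(1)$'' is wrong: $\mathfrak{z}_{\hh}(x)$ is $d\theta$-stable but may have a nonzero $\hh(0)$-component.

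What you actually use is the criterion ``$\theta$ is stable iff $d\theta=-1$ on some Cartan subalgebra.'' Your converse argument proves the ``only if'' direction correctly. For ``if'', take $x$ regular in that Cartan subalgebra $\fc$; then $\mathfrak{z}_{\hh}(x)=\fc\subset\hh(1)$ and $\mathfrak{z}_{\hh(0)}(x)=0$. With these two repairs the proof goes through.
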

Even if $K$ is not necessarily algebraically closed, there always exists 
a choice of stable involution $\theta$ defined over $K$. This follows
from \cite[Example 2.20]{LR} and the fact that the outer automorphism
group of both $B_{2n}$ and $C_{2n}$ is trivial. Moreover, $\theta$ can
be constructed explicitly: to do so, fix a pinning $(T,P,\{X_{\alpha}\})$
of $H$, containing:
\begin{itemize}
\item $T \subset H$, a split maximal torus (determining a root system $\Phi_{H}$);
\item $P \subset H$, a Borel subgroup containing $T$ (determining a root basis $S_{H} \subset \Phi_{H}$);
\item $X_{\alpha}$, a generator for $\hh_{\alpha}$ for each $\alpha \in S_{H}$.
\end{itemize}
Let $\check{\rho}$ be the sum of fundamental coweights with respect to $S_{H}$,
and define
\[
\theta = \Ad(\check{\rho}(-1)).
\]
Then it follows from \cite[Corollary 14]{Gradings} that $\theta$ is a
stable grading. We now describe both representations explicitly, which we will
denote as $(G_B,V_B)$ and $(G_C,V_C)$.

\subsection{The $B_{2n}$ representation}
Let $J_m$ denote the $m\times m$ matrix with $1$s in the antidiagonal and
$0$s elsewhere. Define $H = \So(J_m) = \{ A \in \SL_m \mid {}^tAJA = J\}$.
From now on, we will simply denote $\So_m := \So(J_m)$. If $X$ is an $m \times n$
matrix, let us define $X^* = J_n {}^tX J_m$ (if $m = n$, this is simply
reflection across the antidiagonal). We have
\[
\hh = \left\{\begin{pmatrix} B & A \\ -A^* & C \end{pmatrix} \middle| A \in \Mat_{(2n+1)\times 2n}, B\in \Mat_{(2n+1) \times (2n+1)}, C\in \Mat_{2n \times 2n}, B = -B^*, C = -C^* \right\}.
\]
The $2$-grading is given by
\[
\hh(0) = \left\{\begin{pmatrix} B & 0 \\ 0 & C \end{pmatrix} \in \hh \right\}, \quad
\hh(1) = \left\{\begin{pmatrix} 0 & A \\ -A^* & 0 \end{pmatrix} \in \hh \right\}.
\]
The group $G_B$ can be identified with $\So_{2n+1} \times \So_{2n}$, and
$V_B= \hh(1)$ can be identified with $(2n+1) \boxtimes 2n$, the vector space
of $(2n+1) \times 2n$ matrices, and the action of $(g,h) \in G_B$ on a matrix $A \in V_B$
can be given by $gAh^{-1}$.

We note that if the $(2n) \times (2n)$ matrix $A^*A$ has characteristic polynomial
$f(x) = x^{2n}+p_2x^{2n-1}+\dots+p_{4n}$, then the $(2n+1) \times (2n+1)$ 
matrix $AA^*$ has characteristic polynomial $xf(x)$. The coefficients
$p_2,\dots,p_{4n}$ are all invariants of the representation, satisfying $p_{2i}(\lambda v) = \lambda^{2i}p_{2i}(v)$
for all $i = 1,\dots,2n$ and all $\lambda \in K^{\times}$. Let $B := V_B \git G_B = \Spec K[V_B]^{G_B}$ be the GIT
quotient. The following lemma holds due to general facts of Vinberg theory
(see \cite[Corollary 3.6]{Panyushev}):
\begin{prop}
We have that $B \cong \Spec K[p_2,\dots,p_{4n}]$.
\end{prop}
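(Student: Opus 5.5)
We prove that $K[V_B]^{G_B}$ is the polynomial ring in $p_2,\dots,p_{4n}$ by comparing with the invariant theory of the ambient Lie algebra $\hh = \mathfrak{so}_{4n+1}$ through a Cartan subspace. The Proposition quoted from \cite[Proposition 2.11]{LagaThesis} gives $V_B\git G_B \simeq \fc\git W_{\fc} \simeq \hh\git H$, and $\hh\git H$ is an affine space of dimension $\operatorname{rank} B_{2n} = 2n$. So it only remains to check that the specific invariants $p_2,\dots,p_{4n}$ generate. We may pass to $\ol{K}$, since formation of invariants commutes with flat base change.

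\textbf{Step 1: restriction to $\hh$.} For $X = \begin{pmatrix} 0 & A \\ -A^* & 0\end{pmatrix}\in V_B$ we have
\[
X^2 = \begin{pmatrix} -AA^* & 0 \\ 0 & -A^*A\end{pmatrix}.
\]
Hence the characteristic polynomial of $X$ is $\det(t^2 + AA^*)$ evaluated via $t^2$, up to sign conventions. Using that $AA^*$ has characteristic polynomial $xf(x)$, we get $\det(t - X) = t\,f(-t^2)$ up to sign normalisation. The basic invariants of $\mathfrak{so}_{4n+1}$ are the coefficients of the characteristic polynomial of the standard representation, namely the even-degree coefficients $c_2,\dots,c_{4n}$. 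Each of these restricts to $\pm p_{2i}$ on $V_B$. So the composite of $K[\hh]^H \to K[V_B]^{G_B}$ with the Chevalley isomorphism sends the free generators $c_{2i}$ to $\pm p_{2i}$.

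\textbf{Step 2: the restriction is an isomorphism.} The isomorphism $V_B\git G_B\simeq \hh\git H$ in the quoted Proposition is exactly this restriction map. Combining with Step 1, $K[V_B]^{G_B} = K[p_2,\dots,p_{4n}]$, with the $p_{2i}$ algebraically independent of degrees $2,4,\dots,4n$. These degrees match the degrees of the $B_{2n}$ Weyl group invariants, which is a consistency check.

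\textbf{Main obstacle.} The one point needing care is confirming that $\theta = \Ad(\check\rho(-1))$ really yields the block decomposition written above, so that the stability hypothesis of the Proposition applies to this $(G_B,V_B)$. This identification is given in the text via \cite[Corollary 14]{Gradings}. If one wanted to avoid it, the alternative is to verify surjectivity onto invariants directly:
\begin{itemize}
\item exhibit a $2n$-dimensional Cartan subspace $\fc\subset V_B$, consisting of $A$ with entries $a_1,\dots,a_{2n}$ along a suitable antidiagonal, on which the $p_{2i}$ become the elementary symmetric functions in the $a_j^2$;
\item check that the restriction $K[V_B]^{G_B}\to K[\fc]^{W}$ is injective, since $G_B\cdot\fc$ is dense in $V_B$ by a dimension count using generic finite stabilisers;
\item apply Chevalley's theorem for $W(B_{2n}) = (\Z/2)^{2n}\rtimes S_{2n}$, whose invariants are exactly the symmetric functions in the $a_j^2$.
\end{itemize}
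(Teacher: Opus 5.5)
Your argument is correct and is essentially the paper's: the paper simply appeals to general Vinberg theory \cite[Corollary 3.6]{Panyushev}, i.e.\ that for this stable grading the restriction $K[\hh]^H \to K[V_B]^{G_B}$ is an isomorphism. You additionally make explicit that the coefficients of $\det(t-X) = t f(-t^2)$ restrict to $(-1)^i p_{2i}$.

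One caveat on your optional alternative: in the $J$-bases, a monomial $A$ (entries $a_1,\dots,a_{2n}$ on an antidiagonal) makes $A^*A$ a weighted partial permutation matrix whose characteristic polynomial involves at most $n$ independent parameters, so such matrices never form a Cartan subspace. You would first have to pass to orthonormal bases over $\ol{K}$, but this does not affect your main proof.
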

We will write $\pi \colon V_B \to B$ for the invariant map, and we will
write $V_{B,b}(K)$ for those elements in $V(K)$ with invariants $b \in B(K)$.
We will also define the \emph{discriminant} $\Delta$ of an element 
$b \in B(K)$ corresponding to the polynomial $f(x) = x^{2n}+p_2x^{2n-1}+\dots+p_{4n}$ 
as the discriminant of the polynomial $xf(x^2)$, and similarly
define the discriminant of an element $v \in V_B(K)$ as the discriminant
of $\pi(v)$.
We will use the subscript $V_B^{rs}$ to distinguish those elements that 
are regular semisimple (i.e. $\Delta(v) \neq 0$), and we will also write $B^{rs}$ for $\pi(V_B^{rs})$,
which is equivalently the set of elements of $B$ with non-zero discriminant.

\subsection{The $C_{2n}$ representation}
Let $H := \PSp_{4n} = \Sp_{4n}/\mu_2$, where
\[
\Sp_{4n} := \{M \in \Mat_{4n \times 4n} \mid M^t \Omega M = \Omega\}, \quad \Omega = \begin{pmatrix} 0 & J_{2n} \\ -J_{2n} & 0 \end{pmatrix}.
\]
Let $\hh$ be the Lie algebra of $H$; explicitly:
\[
\hh = \left\{X = \begin{pmatrix} B & A \\ C & -B^* \end{pmatrix} \middle| A,B,C \in \Mat_{2n\times 2n}, A = A^*, C = C^* \right\}.
\]
Then the $\Z/2\Z$ grading is:
\[
\hh(0) =  \left\{\begin{pmatrix} B & 0 \\ 0 & -B^* \end{pmatrix} \in \hh \right\}, \quad \hh(1) = \left\{\begin{pmatrix} 0 & A \\ C & 0 \end{pmatrix} \in \hh \right\}.
\]
The Vinberg representation $(G_C,V_C)$ is obtained by taking $G_C = (H^{\theta})^0$,
here identified with $G_C = \GL_{2n}/\mu_2$ and $V_C = \hh(1)$, where $G$
acts by restriction of the adjoint representation.
Explicitly, given $g \in G_C$ and $(A,C) \in V_C$, the action is given by
$g \cdot (A,C) = (gAg^*,(g^{-1})^*Cg^{-1})$.

If the characteristic polynomial of the product of matrices $AC$ is of
the form $x^{2n}+p_{2}x^{2n-1}+\dots+p_{4n}$, then the coefficients
$p_2,\dots,p_{4n}$ are invariant under the action of $G_C$. Let $B := V_C \git G_C$.
Then, by \cite[Corollary 3.6]{Panyushev}, these are all the invariants of the representation,
i.e. $B = \Spec K[p_{2},\dots,p_{4n}]$, and the corresponding ring of
polynomials is freely generated. Note that $V_B \git G_B$ and $V_C \git G_C$
are isomorphic: we denote both spaces by $B$.

Similarly to before, we write $\pi \colon V_C \to B$ for the invariant map, and
$V_{C,b}(K)$ for the elements in $V_C(K)$ mapping to $b \in B(K)$.
The discriminant $\Delta$ of an element $b \in B(K)$ is the same as in
the $B_{2n}$ case, and we similarly define the discriminant of an element
$v \in V_C(K)$ as the discriminant of $\pi(v)$.
We will use the subscript $V_C^{rs}$ to distinguish those elements that 
are regular semisimple.

\section{Orbit parametrisations for $B_{2n}$}
\label{section:OrbitsB}

In this section, we classify the rational and integral orbits of the
representation $(G_B,V_B)$, and we connect these orbits to elements of
the Selmer group $\Sel_{\psi}J$ defined in the introduction. To lighten the notation, for this section
let us denote $(G,V) = (G_B,V_B)$. We fix throughout a field $K$ of characteristic
$0$.

\subsection{Rational orbits}

In general, a $G(K^s)$-orbit of elements in $V(K)$ might break up into
multiple $G(K)$-orbits in $V(K)$. We have the following general result from
arithmetic invariant theory (see \cite[Proposition 1]{AIT}) 
indicating how this phenomenon can be studied with Galois cohomology groups:
\begin{prop}
\label{prop:ait}
Let $v \in V(K)$. The set of $G(K)$-orbits in $V(K)$ which are $G(K^s)$-conjugate
to $v$ is in bijection with the kernel of the map
\[
H^1(K,\Stab_G(v)) \to H^1(K,G)
\]
of pointed sets.
\end{prop}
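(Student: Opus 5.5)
The plan is to prove this by direct descent using torsors. Fix $v \in V(K)$ and write $S = \Stab_G(v)$.

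First, take a $G(K)$-orbit of some $w \in V(K)$ that is $G(K^s)$-conjugate to $v$. Choose $g \in G(K^s)$ with $w = g v$, and attach to $w$ the map $\sigma \mapsto c_\sigma := g^{-1}\,{}^{\sigma}g$. Since $w$ and $v$ are both $K$-rational, ${}^{\sigma}g\, v = {}^{\sigma}w = w = g v$. Hence $c_\sigma \in S(K^s)$, and it is routine to check that $c$ is a continuous $1$-cocycle. The choices involved only change the class in the expected way:
\begin{itemize}
\item replacing $g$ by $g s$ with $s \in S(K^s)$ replaces $c$ by a cohomologous cocycle;
\item replacing $w$ by $h w$ with $h \in G(K)$ replaces $g$ by $h g$, which leaves $c$ unchanged.
\end{itemize}
So we get a well-defined map from the set of such orbits to $H^1(K,S)$. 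Its image lies in the kernel of $H^1(K,S) \to H^1(K,G)$, because in $G(K^s)$ the cocycle $c_\sigma = g^{-1}\,{}^{\sigma}g$ is visibly a coboundary.

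Next, injectivity. Suppose $w = g v$ and $w' = g' v$ give cohomologous classes, say $g'^{-1}\,{}^{\sigma}g' = s^{-1} g^{-1}\,{}^{\sigma}g\,{}^{\sigma}s$ for some $s \in S(K^s)$. Put $h = g' s^{-1} g^{-1}$. Then ${}^{\sigma}h = h$ for every $\sigma$, so $h \in G(K)$, and $h w = g' s^{-1} v = g' v = w'$. Thus $w$ and $w'$ lie in the same $G(K)$-orbit.

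Finally, surjectivity onto the kernel. Let $c$ be a cocycle with values in $S$ that becomes trivial in $H^1(K,G)$. Then $c_\sigma = g^{-1}\,{}^{\sigma}g$ for some $g \in G(K^s)$. Set $w = g v$. For every $\sigma$,
\[
{}^{\sigma}w = {}^{\sigma}g\, v = g\, c_\sigma\, v = g v = w,
\]
so $w \in V(K)$, it is $G(K^s)$-conjugate to $v$, and its class is $[c]$.

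The one step that needs care is the passage from Galois invariance to rationality, namely $({K^s})^{\Gal} = K$ on points of $G$ and $V$. This holds because $K$ has characteristic $0$, so $K^s = \bar K$ and Galois descent for points of varieties is standard. If $S$ is not smooth one should use fppf cohomology instead, but in characteristic $0$ this issue does not arise. Apart from that, everything is formal bookkeeping with nonabelian $H^1$.
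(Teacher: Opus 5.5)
Your argument is correct and is essentially the paper's approach: the paper gives no proof of its own and cites \cite[Proposition 1]{AIT}, whose proof is exactly this construction $\sigma \mapsto g^{-1}\,{}^{\sigma}g$, with the same checks of well-definedness, injectivity and surjectivity onto the kernel. One minor point: $G(K^s)^{\Gal(K^s/K)} = G(K)$ holds over any field, so characteristic $0$ is only used to identify the Galois cohomology of $\Stab_G(v)$ with its fppf cohomology.
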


In this section, we will first explain how to construct a given ``distinguished''
orbit $v$, and then we will show how to construct from it all the rational orbits with given
invariants from an element $\alpha \in \ker(H^1(K,\Stab_G(v)) \to H^1(K,G))$.

\subsubsection*{A distinguished orbit}
\label{subsubs:dist}

Let $b = (p_2,\dots,p_{4n}) \in B(K)$, and consider the polynomial
$f(x) = x^{2n}+p_2x^{2n-1}+\dots+p_{4n}$. Define the $K$-vector space $M = K[x]/(xf(x^2)) = K[\beta]$,
where $\beta$ denotes the image of $x$ inside $M$. Note that $M$ is spanned by the $K$-linear combinations of $1,\beta,\dots,\beta^{4n}$.
Define the bilinear form $(\cdot,\cdot) \colon M \times M \to K$ by:
\[
(\lambda,\mu) = \text{ coefficient of }\beta^{4n} \text{ in }\lambda\mu.
\]
Let $L_1 = K[x]/(xf(x))$ and let $L_2 = K[x]/(f(x))$. Then, $M$ is isomorphic
to $L_1 \oplus \beta L_2$, where there is a natural inclusion $L_1 \xhookrightarrow{} M$ 
by sending $x \mapsto x^2$. In other words, $L_1$ is the subspace spanned by
$\{1,\beta^2,\dots,\beta^{4n}\}$ and $\beta L_2$ is spanned by $\{\beta,\beta^3,\dots,\beta^{4n-1}\}$.
Then, the form $(\cdot,\cdot)$ splits as a direct sum of bilinear
forms in $L_1$ and $\beta L_2$. Using the explicit power bases, we can see
that both quadratic forms on $L_1$ and $L_2$ have discriminant $1$
and are in fact split, so we can isometrically identify $L_1$ with a
quadratic space $(W_1,J_{2n+1})$ of dimension $2n+1$ and $L_2$ with a
quadratic space $(W_2,J_{2n})$ of dimension $2n$.

Let $W$ be the quadratic space given by $(W_1,J_{2n+1}) \oplus (W_2,J_{2n})$,
and consider the multiplication-by-$\beta$ map $T_{\beta} \colon M \to M$, 
which can also be seen as a map from $W$ to $W$. Given that $T_{\beta}$
is self-adjoint with respect to $(\cdot,\cdot)$, we get that the matrix of
$T_{\beta}$ on $W$ is of the form
\[
  \left(\begin{array}{ c | c }
    0_{(2n+1)\times(2n+1)} & A \\
    \hline
    A^* & 0_{2n\times 2n}
  \end{array}\right),
\]
where $A \in \Mat_{(2n+1)\times (2n)}$. Thus, we get an element $v \in V(K)$
with invariants $b \in B(K)$ by construction. We also observe the following:
\begin{prop}
Let $v \in V(K)$ be the orbit previously constructed, and assume that
$\Delta(v) \neq 0$. Then, the stabiliser $\Stab_G(v)$ is isomorphic to
the kernel of the norm map $\Res_{L_2/K} \mu_2 \to \mu_2$.
\end{prop}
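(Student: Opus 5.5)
The plan is to identify $\Stab_G(v)$ with the centraliser of $T_\beta$ inside the block-diagonal subgroup of $\So(W)$, and then compute that centraliser using the algebra structure of $M$. Since $\operatorname{char}K=0$, all the group schemes involved are smooth. So it suffices to compute the $K^s$-points together with their $\Gal(K^s/K)$-action, and every step below is functorial in $K$.

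\emph{Step 1 (reduction to a centraliser).} Recall that $G=\So_{2n+1}\times\So_{2n}$ sits block-diagonally in $\So(W)$, where $W=(W_1,J_{2n+1})\oplus(W_2,J_{2n})$. For $(g,h)\in G$ we have
\[
(g,h)\begin{pmatrix}0&A\\A^*&0\end{pmatrix}(g,h)^{-1}=\begin{pmatrix}0&gAh^{-1}\\ hA^*g^{-1}&0\end{pmatrix},
\]
and orthogonality of $g,h$ gives $hA^*g^{-1}=(gAh^{-1})^*$. Hence $(g,h)$ fixes $v$ if and only if it commutes with $T_\beta$ as an endomorphism of $W\cong M$. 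So $\Stab_G(v)=G\cap Z_{\mathrm{O}(W)}(T_\beta)$.

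\emph{Step 2 (centraliser in $\mathrm{O}(W)$).} Since $\Delta(v)\neq 0$, the polynomial $xf(x^2)$ is separable. Therefore $M$ is étale and $\beta$ generates $M$, so $T_\beta$ is regular semisimple, and any $K$-linear endomorphism of $M$ commuting with $T_\beta$ is multiplication by some $\lambda\in M$. The form satisfies $(\lambda\mu,\lambda\nu)=(\lambda^2\mu,\nu)$ and is nondegenerate. Hence multiplication by $\lambda$ is orthogonal exactly when $\lambda^2=1$, and so $Z_{\mathrm{O}(W)}(T_\beta)=\Res_{M/K}\mu_2$.

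\emph{Step 3 (imposing block-diagonality and determinant one).} The decomposition $M=L_1\oplus\beta L_2$ is the eigenspace decomposition of the involution $\sigma\colon\beta\mapsto-\beta$. Multiplication by $\lambda$ preserves both pieces if and only if $\lambda$ is $\sigma$-invariant, i.e.\ $\lambda\in L_1$. Because $f(0)\neq0$, we have $L_1=K[x]/(xf(x))\cong K\times L_2$, so write $\lambda=(\lambda_0,\lambda_2)$ with $\lambda_0^2=1$ and $\lambda_2^2=1$.
\begin{itemize}
\item On $L_1$, multiplication by $\lambda$ has determinant $N_{L_1/K}(\lambda)=\lambda_0N_{L_2/K}(\lambda_2)$.
\item On $\beta L_2\cong L_2$, it acts as multiplication by $\lambda_2$, with determinant $N_{L_2/K}(\lambda_2)$.
\end{itemize}
Requiring both determinants to equal $1$ forces $N_{L_2/K}(\lambda_2)=1$ and $\lambda_0=1$. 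This gives $\Stab_G(v)\cong\ker(\Res_{L_2/K}\mu_2\xrightarrow{N}\mu_2)$, and the isomorphism is Galois-equivariant since everything was defined over $K$.

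\emph{Main obstacle.} The step needing most care is Step 3: correctly identifying how an element $\lambda\in L_1$ acts on the odd part $\beta L_2$, namely through the projection $L_1\to L_2$. This amounts to checking the compatibility of the isomorphism $L_1\cong K\times L_2$, induced by $K[x]/(xf(x))\to K[x]/(x)\times K[x]/(f(x))$, with the module structure of $\beta L_2$ over $L_1$ (via $x\mapsto\beta^2$). I would verify this directly on the power bases: $x$ acts on $\beta L_2$ as $\beta^2$, which corresponds to $x$ acting on $L_2$.
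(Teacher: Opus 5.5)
Your proposal is correct and follows the same route as the paper's proof: the centraliser of $T_\beta$ is $M^{\times}$, orthogonality forces $\lambda^2=1$, preserving $L_1$ and $\beta L_2$ forces $\lambda\in L_1$, and the two determinant conditions then give $\lambda_0=1$ and $N_{L_2/K}(\lambda_2)=1$. You also make explicit two points the paper leaves implicit: why stabilising $v$ is equivalent to commuting with $T_\beta$, and that $L_1$ acts on $\beta L_2$ through the projection $L_1\to L_2$.
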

\begin{proof}
Given that $v$ is regular semisimple, the centraliser of $T_{\beta}$
in $\GL(M)$ is $M^{\times}$. Since the centraliser actually lies inside
$\So(M)$, this forces elements $\lambda \in M^{\times}$ to satisfy $\lambda^2 = 1$.
Moreover, because $\lambda$ needs to preserve both $L_1$ and $\beta L_2$,
we see that $\lambda \in L_1^{\times}$. Finally, the fact that $\lambda \in \So(W_1) \times \So(W_2)$
forces $N_{L_1/K}(\lambda) = 1$ and $N_{L_2/K}(\ol{\lambda}) = 1$, where
$\ol{\lambda}$ is the image of $\lambda$ in $L_2$.

The conclusion is that the stabiliser is in bijection with the set of elements
of $ \Res_{L_1/K} \mu_2$ whose norm is $1$ and whose image in $L_2$ also
has norm $1$. This can be identified with $\ker(\Res_{L_2/K} \mu_2 \to \mu_2)$,
and so we are done.
\end{proof}

We will denote the kernel of the map $\Res_{L_2/K} \mu_2 \to \mu_2$ by 
$(\Res_{L_2/K} \mu_2)_{N = 1}$.

\subsubsection*{The other orbits}
\label{subsubs:other}

Let $G' = \So_{2n+1}\times \mathrm{O}_{2n}$. We will start by explaining
how to construct all the different $G'(K)$-orbits, and then we will specialise
to $G(K)$-orbits. Note that given $v \in V(K)$ with $\Delta(v) \neq 0$,
we have that $\Stab_{G'}(v) \cong \Res_{L_2/K}\mu_2$, and that
$H^1(K,\Res_{L_2/K}\mu_2) \cong L_2^{\times}/L_2^{\times 2}$. We also observe
that the pointed set $H^1(K,\So_{m})$ parametrises non-degenerate quadratic spaces of dimension $m$ and
discriminant $1$, and that the trivial element of $H^1(K,\So_{m})$ corresponds
to the (unique) split orthogonal space of dimension $m$. Similarly,
the pointed set $H^1(K,\mathrm{O}_{m})$ classifies non-degenerate 
quadratic spaces of dimension $m$, with a similar trivial element.
The map $H^1(K,\So_m) \to H^1(K,\mathrm{O}_m)$ has trivial kernel as a
map of pointed sets, as can be seen from the usual long exact sequence
in group cohomology of
\[
\begin{tikzcd}
1 \arrow[r] & \So_m \arrow[r] & \mathrm{O}_m \arrow[r, "det"] & \{\pm 1\} \arrow[r] & 1.
\end{tikzcd}
\]
In fact, $H^1(K,\So_m) \to H^1(K,\mathrm{O}_m)$ is injective (cf. \cite[\S 29.E]{Involutions}).

Given $\alpha \in (L_2^{\times}/L_2^{\times 2})$ mapping to the trivial 
element in $H^1(K,G')$, we will show how to construct a rational orbit from it.
An element $\alpha \in L_2^{\times}$ can be lifted to an element of 
$L_1^{\times} \cong K^{\times} \times L_2^{\times}$ by simply considering
$(1,\alpha) \in L_1^{\times}$. Moreover, as in last section, we can naturally
embed $L_1 \xhookrightarrow{} M$, so given $\alpha \in (L_2^{\times}/L_2^{\times 2})$
we can naturally consider it as an element of $M$. Under this identification,
consider the quadratic form $(\cdot,\cdot)_{\alpha} \colon M \times M \to K$
defined by
\begin{equation}
\label{eq:form}
(\lambda,\mu)_{\alpha} = \text{ coefficient of }\beta^{4n} \text{ in }\alpha^{-1}\lambda\mu.
\end{equation}
As before, this quadratic form splits as a direct sum of quadratic forms
in $L_1$ and $\beta L_2$. If $\alpha$ has norm $1$ (up to squares) in $L_2$,
then both forms have discriminant $1$, so they give
a well-defined map to $H^1(K,G')$. Unwinding the definitions similarly to \cite[\S 5]{AIT},
the condition that $\alpha$ lands in the kernel of $H^1(K,G')$ translates
precisely to both forms $(\cdot,\cdot)_{\alpha}|_{L_1}$ and $(\cdot,\cdot)_{\alpha}|_{\beta L_2}$
being split of discriminant $1$. Therefore, under appropriate change of bases in
$L_1$ and $L_2$, the map $T_{\beta}$ given an element of $V(K)$ in the
same way as in the distinguished case.

Thus, given $\alpha \in (L_2^{\times}/L_2^{\times 2})_{N \equiv 1}$ which
maps to the trivial element of $H^1(K,G')$, we have constructed a rational
$G'(K)$-orbit.
We now turn our attention to $G(K)$-orbits. Following \cite[\S 4.3]{AIT2},
there is a map 
\[
H^1(K,(\Res_{L_2/K} \mu_2)_{N = 1}) \xrightarrow{} (L_2^{\times}/L_2^{\times 2})_{N \equiv 1}
\]
which is either bijective or
$2$-to-$1$, according to whether $f(x)$ has an odd degree factor over $K$
or not, which in turn depends on whether $L_2^{\times}[2]$ has an element of
norm $-1$ or not. Therefore, a $G'(K)$-orbit in $V(K)$ splits in either
one or two $G(K)$-orbits. 
In the case where $f(x)$ does not have an odd degree factor over $K$, we
note that the stabiliser over $K$ of the constructed $v$ in $\So_{2n+1} \times \So_{2n}$
is the same as the stabiliser in $\So_{2n+1} \times \mathrm{O}_{2n}$.
By choosing $h \in \mathrm{O}_{2n}(K) \setminus \So_{2n}(K)$, we can obtain
a new orbit by just considering the element $(1,h) \cdot v$. If $f(x)$
has an odd degree factor over $K$, these two constructed orbits coincide.
We summarise our results as follows:
\begin{theorem}
\label{theo:Qorbits}
Let $b \in B(K)$ with $\Delta(b) \neq 0$. Then the set of $G(K)$-orbits
in $V_b(K)$ is in bijection with the set of equivalence classes $(\alpha,s)$,
where $\alpha \in (L_2^{\times})_{N\equiv1}$ maps to the
trivial element in $H^1(K,G)$ and $s \in K^{\times}$ satisfies $N(\alpha) = s^2$. Two pairs $(\alpha,s)$ and
$(\alpha',s')$ are equivalent if there exists $c \in L_2^{\times}$ such
that $\alpha' = c^2\alpha$ and $s' = N(c) s$. The stabiliser of such an
orbit is isomorphic to $(\Res_{L_2/K}\mu_2)_{N = 1}$.
\end{theorem}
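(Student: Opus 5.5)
Since $\Delta(b) \neq 0$, the plan is to apply Proposition \ref{prop:ait} to the distinguished element $v$ constructed above, whose $G(K^s)$-orbit contains all of $V_b(K)$. By part (2) of the invariant-theory proposition for stable gradings, any two regular semisimple elements with the same invariants are $G(K^s)$-conjugate. Hence the $G(K)$-orbits in $V_b(K)$ are in bijection with
\[
\ker\bigl(H^1(K,(\Res_{L_2/K}\mu_2)_{N=1}) \to H^1(K,G)\bigr),
\]
using the stabiliser computation $\Stab_G(v)\cong(\Res_{L_2/K}\mu_2)_{N=1}$. The statement about stabilisers follows at once: any other orbit is a twist of $v$ by a cocycle valued in an abelian group, so its stabiliser is the same group $(\Res_{L_2/K}\mu_2)_{N=1}$, with the same Galois action.

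The main task is to compute $H^1(K,(\Res_{L_2/K}\mu_2)_{N=1})$ explicitly. I will use the short exact sequence
\[
1 \to (\Res_{L_2/K}\mu_2)_{N=1} \to \Res_{L_2/K}\mu_2 \xrightarrow{N} \mu_2 \to 1,
\]
or more conveniently embed it in $1\to(\Res\mu_2)_{N=1}\to \Res_{L_2/K}\mathbb{G}_m\times\mathbb{G}_m\to \Res_{L_2/K}\mathbb{G}_m\times \mathbb{G}_m$. Concretely, consider the torus $T=\{(c,t)\in \Res_{L_2/K}\mathbb{G}_m\times\mathbb{G}_m : N(c)=t\}$ and the map $T\to T$, $(c,t)\mapsto(c^2,t)$. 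Note that $T\cong \Res_{L_2/K}\mathbb{G}_m$. The kernel of this map is exactly $\{(c,t): c^2=1,\ t=N(c)\}$. Here one would rather want $t$ to be a square root of the norm, so the better choice is the map $\Res_{L_2/K}\mathbb{G}_m\to T'=\{(\alpha,s): N(\alpha)=s^2\}$, $c\mapsto (c^2,N(c))$. Its kernel is $\{c: c^2=1, N(c)=1\}=(\Res\mu_2)_{N=1}$, and it is surjective on $K^s$-points: given $(\alpha,s)$, pick $c$ with $c^2=\alpha$; then $N(c)=\pm s$, and one adjusts the sign by multiplying $c$ by an element of $\Res\mu_2$ of norm $-1$, which exists over $K^s$.

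By Hilbert 90 for $\Res_{L_2/K}\mathbb{G}_m$, the long exact sequence gives
\[
H^1(K,(\Res\mu_2)_{N=1}) \cong T'(K)/\{(c^2,N(c)) : c\in L_2^\times\}.
\]
This is precisely the set of equivalence classes of pairs $(\alpha,s)$ with $N(\alpha)=s^2$, where $(\alpha,s)\sim(c^2\alpha,N(c)s)$. This recovers the fibres discussed via \cite[\S 4.3]{AIT2}: for fixed $\alpha$ there are two choices $\pm s$, identified exactly when some $c\in L_2^\times[2]$ has norm $-1$.

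It then remains to describe the kernel condition, i.e.\ that the class $(\alpha,s)$ maps to the trivial element of $H^1(K,G)$. The composite with $H^1(K,G)\to H^1(K,G')$ factors through $\alpha\in(L_2^\times/L_2^{\times2})_{N\equiv1}$ and is computed by the twisted forms $(\cdot,\cdot)_\alpha$ of \eqref{eq:form}. Since $H^1(K,\So_m)\to H^1(K,\mathrm{O}_m)$ is injective, triviality in $H^1(K,G)=H^1(K,\So_{2n+1})\times H^1(K,\So_{2n})$ depends only on $\alpha$, and amounts to both restricted forms being split. This justifies the phrasing ``$\alpha$ maps to the trivial element''.

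The main obstacle I expect is checking that this cohomological bijection matches the explicit construction given above, namely the twisted form together with the choice of $v$ versus $(1,h)\cdot v$ encoded by the sign of $s$. In particular one must verify that the connecting map sends $(\alpha,s)$ to the cocycle $\sigma\mapsto c^{-1}\sigma(c)$, where $c$ is a square root of $\alpha$ normalised by $N(c)=s$. Doing this requires unwinding the twisting as in \cite[\S 5]{AIT}.
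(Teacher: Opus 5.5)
Your argument is correct, but the key computation takes a different route from the paper.

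\textbf{The paper's route.} It first works with $G'=\So_{2n+1}\times\mathrm{O}_{2n}$, whose stabiliser is $\Res_{L_2/K}\mu_2$ with $H^1=L_2^{\times}/L_2^{\times 2}$. It builds the $G'(K)$-orbits explicitly as $T_\beta$ acting on the twisted quadratic spaces $(\cdot,\cdot)_\alpha$. It then cites \cite[\S 4.3]{AIT2} for the fact that $H^1(K,(\Res_{L_2/K}\mu_2)_{N=1})\to(L_2^{\times}/L_2^{\times 2})_{N\equiv 1}$ is bijective or $2$-to-$1$. Finally it exhibits the second $G(K)$-orbit inside a $G'(K)$-orbit as $(1,h)\cdot v$ with $h\in\mathrm{O}_{2n}(K)\setminus\So_{2n}(K)$. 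The pairs $(\alpha,s)$ and their equivalence relation are thus effectively imported from that citation.

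\textbf{Your route.} You compute $H^1$ of the stabiliser directly from $1\to(\Res_{L_2/K}\mu_2)_{N=1}\to\Res_{L_2/K}\mathbb{G}_m\to T'\to 1$, using Shapiro and Hilbert 90. This yields the pairs $(\alpha,s)$ with exactly the stated equivalence relation, and the $1$-or-$2$ fibre count follows as a corollary. In your argument $G'$ enters only to show that triviality in $H^1(K,G)$ depends on $\alpha$ alone. That step deserves one explicit sentence: your sequence maps to $1\to\Res_{L_2/K}\mu_2\to\Res_{L_2/K}\mathbb{G}_m\xrightarrow{c\mapsto c^2}\Res_{L_2/K}\mathbb{G}_m\to 1$ via the projection $(\alpha,s)\mapsto\alpha$. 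Hence the induced map on $H^1$ is $(\alpha,s)\mapsto\alpha$. Combined with the trivial kernel of $H^1(K,\So_{2n})\to H^1(K,\mathrm{O}_{2n})$, this gives what you need.

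\textbf{What each buys.} Your version is more self-contained and actually proves the parametrisation the paper takes from the literature. The paper's version provides explicit representatives for each class, and those are what is used later to match integral orbits with rational ones in Proposition \ref{prop:intOrbits}. The compatibility check you flag as the main obstacle (that the connecting map corresponds to the twisted-form construction) is needed only for that later use, not for the bijection asserted here.
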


\subsection{Connection with hyperelliptic curves}

Let $b \in B^{rs}(K)$ correspond to the polynomial $f(x) \in K[x]$ with 
$\deg f = 2n$. Consider the hyperelliptic curve
$C_b \colon y^2 = xf(x)$ and its Jacobian $J_b := \Jac(C_b)$. 
If the discriminant of $xf(x)$ is non-zero, then the roots $x_0 = 0, x_1,\dots,x_{2n}$
of $xf(x)$ over $\ol{K}$ are all different. If we denote $P_i = (x_i,0)$
and $\infty$ is the point at infinity, then $J_b[2](\ol{K})$ is generated 
by the elements $[(P_i) - \infty]$, with the only relation that $\sum_{i=0}^{2n}[(P_i)-\infty] = 0$.

Consider the order $2$ subgroup $M \subset J_b[2]$ generated by $P_0 = (0,0)$,
and consider its orthogonal complement $M^{\perp}$ under the Weil pairing.
For convenience, we can give an explicit description of $M^{\perp}(\ol{K})$:
every element of $J_{b}[2]$ can be written uniquely as a sum $[(P_0)-\infty] + \sum_{i \in I} [(P_i)-\infty]$
for some subset $I \subset \{1,\dots,2n\}$. Then, $M^{\perp}(\ol{K})$ consists
of elements of this form such that $|I|$ is even. Note that $M^{\perp}(\ol{K})$
has size $2^{2n-1}$ and that $M \leqslant M^{\perp}$.
We can construct some isogenies as in \eqref{eq:JA}:
\[
J_b \xrightarrow{\phi_M} A_b \xrightarrow{\phi} A_b^{\vee} \xrightarrow{\phi_M^{\vee}} J_b,
\]
with $J_b[\phi_M] = M$, if $\psi = \phi \circ \phi_M$ then $J_b[\psi] = M^{\perp}$
and the whole composition is the multiplication-by-$2$ map. 

\begin{prop}
\label{prop:StabJ}
Let $v \in V(K)$ with $\Delta(v) \neq 0$. Then $\Stab_G(v) \cong J_b[\psi]$.
\end{prop}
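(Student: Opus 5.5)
We have to show that for $v \in V(K)$ with $\Delta(v) \neq 0$ there is a Galois-equivariant isomorphism $\Stab_G(v) \cong J_b[\psi]$.

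\textbf{Reduction to the distinguished orbit.} Since $v$ is regular semisimple, it is $G(\ol{K})$-conjugate to the distinguished element $v_0$ with the same invariants, by the invariant-theory proposition of Section \ref{subs:VinbergBC}. A priori this only identifies $\Stab_G(v)$ and $\Stab_G(v_0)$ over $\ol{K}$. To upgrade to a $K$-isomorphism, I will use that the stabiliser of a regular semisimple element is the finite abelian group $(\Res_{L_2/K}\mu_2)_{N=1}$. For $g \in G(\ol{K})$ with $g v_0 = v$, the cocycle $\sigma \mapsto g^{-1}\,{}^{\sigma}g$ takes values in this abelian group. Hence conjugation by $g$ differs from its Galois conjugate by an inner automorphism of an abelian group, which is the identity, so the isomorphism descends to $K$. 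Alternatively, Theorem \ref{theo:Qorbits} gives the stabiliser of every orbit directly as $(\Res_{L_2/K}\mu_2)_{N=1}$. Either way, it suffices to prove
\[
(\Res_{L_2/K}\mu_2)_{N=1} \cong J_b[\psi] = M^{\perp}
\]
as Galois modules.

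\textbf{Computing the stabiliser over $\ol{K}$.} Over $\ol{K}$ we have $L_2 \otimes \ol{K} \cong \ol{K}^{2n}$, with factors indexed by the roots $x_1,\dots,x_{2n}$ of $f$, and Galois permuting them as it permutes the roots. So $\Res_{L_2/K}\mu_2(\ol{K}) = \{\pm1\}^{2n}$, and the norm-one subgroup consists of the tuples with an even number of $-1$'s. Equivalently, it is the set of even-cardinality subsets $I \subset \{1,\dots,2n\}$ under symmetric difference, which has order $2^{2n-1}$.

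\textbf{Matching with $M^{\perp}$.} By the explicit description of $M^{\perp}(\ol{K})$ above, each element of $J_b[2]$ can be written as $\epsilon[(P_0)-\infty]+\sum_{i\in I}[(P_i)-\infty]$ with $I \subset \{1,\dots,2n\}$. The only relation is that the sum over all $2n+1$ points vanishes. Using this relation, I normalise the $P_0$ coefficient to zero: every element of $J_b[2]$ is then uniquely $\sum_{i \in I}[(P_i)-\infty]$ for some $I\subset\{1,\dots,2n\}$. Its Weil pairing with $[(P_0)-\infty]$ is $(-1)^{|I|}$, since $e_2(P_i-\infty,P_j-\infty)=-1$ for $i\ne j$. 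Hence $M^{\perp}$ consists exactly of the even subsets $I$.

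The map sending an even subset $I$ to $\sum_{i\in I}[(P_i)-\infty]$ is therefore a bijective group homomorphism from the even subsets onto $M^{\perp}$. It is Galois-equivariant because both sides are defined by the same permutation action on the roots $x_1,\dots,x_{2n}$.

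\textbf{Expected obstacle.} The main point requiring care is the compatibility of the normalisations. The paper's description of $M^{\perp}$ uses the $P_0$-coefficient convention, while the computation needs the even-subset convention, and the two must be reconciled via the single relation in $J_b[2]$. I would check this carefully, together with the Galois descent of the conjugation isomorphism in the first step, which holds because the stabiliser is abelian.
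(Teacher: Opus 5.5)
Your proposal is correct and takes the same route as the paper. The paper reduces to the Galois-module isomorphism $J_b[\psi]=M^{\perp}\cong(\Res_{L_2/K}\mu_2)_{N=1}$, with the stabiliser already identified via Theorem \ref{theo:Qorbits}, and dismisses this as an elementary computation; your even-subset and Weil-pairing argument supplies exactly that computation, and your descent via the abelian-valued cocycle $\sigma\mapsto g^{-1}\,{}^{\sigma}g$ correctly justifies the step the paper takes for granted.
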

\begin{proof}
It suffices to show that $J_b[\psi] \cong (\Res_{L_2/K}\mu_2)_{N = 1}$,
which is an elementary computation.
\end{proof}

Note that we also have that we have the injective descent map $A_b^{\vee}(K)/\psi(J_b(K)) \xhookrightarrow{} H^1(K,J_b[\psi])$.
It is then natural to ask whether the elements of $A_b^{\vee}(K)/\psi(J_b(K))$ 
actually correspond to $G_b(K)$-orbits in $V_b(K)$. We now state the main
theorem of this section.
\begin{theorem}
\label{theo:comp}
The natural composition
\[
A_b^{\vee}(K)/\psi(J_b(K)) \xrightarrow{\eta_b} H^1(K,J_b[\psi]) \to H^1(K,G)
\]
is trivial.
\end{theorem}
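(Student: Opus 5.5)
The plan is to identify the composite concretely with the invariants of the quadratic spaces attached to $\alpha$ in Theorem \ref{theo:Qorbits}, and then to show that these spaces are split for every $\alpha$ coming from $A_b^{\vee}(K)$. Since $H^1(K,\So_m)\to H^1(K,\mathrm{O}_m)$ is injective, it suffices to show that the image in $H^1(K,G') = H^1(K,\So_{2n+1})\times H^1(K,\mathrm{O}_{2n})$ is trivial. Concretely, it suffices to show that both forms $(\cdot,\cdot)_\alpha|_{L_1}$ and $(\cdot,\cdot)_\alpha|_{\beta L_2}$ are split.

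\textbf{Step 1: the descent map.} We describe $\eta_b$ explicitly. By Proposition \ref{prop:StabJ}, $J_b[\psi]\cong(\Res_{L_2/K}\mu_2)_{N=1}$. The composite $H^1(K,J_b[\psi])\to H^1(K,\Res_{L_2/K}\mu_2)=L_2^\times/L_2^{\times2}$ should coincide with the classical $x-T$ map. Indeed, $\psi$ factors through multiplication by $2$: precomposing with $\phi_M^{\vee}$, we have $\psi\circ\phi_M^\vee = [2]$ on $A_b^\vee$. So for a point of $A_b^{\vee}(K)$ we pick a preimage under $\phi_M^\vee$ in $J_b(\bar K)$, or work at the level of $J_b(K)$. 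It is cleanest to use that $A_b^{\vee}(K)/\psi J_b(K)$ receives classes represented by divisors $D=\sum (Q_j)-\infty$ on $C_b$, or by degree-zero divisors over extensions. The image is then $\alpha=\prod (x(Q_j)-T)\in L_2^\times$, together with the appropriate $s$. I would check compatibility with the Kummer map for $[2]$ on $J_b$ (Schaefer's descent), projected away from the factor at the root $0$.

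\textbf{Step 2: splitness.} Given $\alpha = (x_0 - T)$ for a point $Q=(x_0,y_0)$ with $y_0\neq 0$, we exhibit an explicit isotropic subspace of dimension $n$ in $\beta L_2$ and of dimension $n$ in $L_1$ for the twisted form $(\lambda,\mu)_\alpha$. Following \cite[\S 5]{AIT} and Bhargava--Gross, write $xf(x)-y_0^2 = (x-x_0)g(x)$ and lift to $M=K[\beta]$ via $x\mapsto\beta^2$. Since $\beta^2 f(\beta^2)=0$ in $M$, we get
\[
(\beta^2-x_0)\,g(\beta^2) = -y_0^2 \quad\text{in } M.
\]
Hence $\alpha^{-1}$ equals, up to a square, $-g(\beta^2)/y_0^2$ on the relevant factor. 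One then needs polynomials $h$ with $h(\beta)^2\alpha^{-1}$ having vanishing coefficients in high degree. Following the Bhargava--Gross argument for $y^2=$ odd degree, the span of $\{h(\beta)\beta^{j}: j<n\}$ for a suitable $h$ of degree about $2n$ (coming from $y_0$ and the factorisation) is isotropic by a degree count on the $\beta^{4n}$-coefficient. I would do this separately for the even part $L_1$ and the odd part $\beta L_2$. This mirrors how the form splits as a sum, and it uses that $\alpha\in L_1$ is $(1,\alpha)$, so that the $K$-factor is harmless. Divisors defined over extensions (Galois orbits of points) are handled by the same argument with norms, or by the standard density argument: classes from generic points suffice since the map is continuous and the locus of split forms is open/closed.

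\textbf{Main obstacle.} The key difficulty is the explicit isotropic subspace construction and checking its dimension in both summands simultaneously, including the twist by $s$ for $\mathrm{O}$ versus $\So$. If the explicit construction proves awkward, I would instead deduce splitness from the $2$-descent case. We have a commutative square: $J_b(K)/2J_b(K)\to H^1(K,J_b[2])$ maps to $H^1(K,J_b[\psi])$, since $A_b^\vee(K)/\psi J_b(K)$ is compared via $\phi_M^\vee$. The known triviality result for the Vinberg representation of the odd hyperelliptic family \cite{LagaThesis} (type $A_{4n}$, $\SL_{4n+1}$-forms) then restricts along an embedding $G\hookrightarrow \So_{4n+1}$, which also maps $H^1(K,G)$ suitably. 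The remaining gap is whether every class in $A_b^\vee(K)/\psi J_b(K)$ lifts to $J_b(K)/2J_b(K)$, which fails in general, so the direct construction is preferred.
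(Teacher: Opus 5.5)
\textbf{Gap in the direct construction.} Your reduction to triviality in $H^1(K,G')$ is correct, but the central claim, that both twisted forms are split for \emph{every} class, is never established. The explicit construction is only sketched (the polynomial $h$ is never specified), and only for $\alpha=x_0-T$ coming from a single rational point $Q$. A general class corresponds, via $\phi_M^{\vee}$, to a point of $J_b(K)$ represented by $D-m\infty$, where $D$ is an effective $K$-rational divisor whose points are defined only over extensions. Neither of your bridges from single points to this case works.
\begin{itemize}
\item The assignment $\alpha\mapsto[(\cdot,\cdot)_\alpha]\in H^1(K,\So_m)$ is not multiplicative; for instance, the Hasse invariant of the twisted form depends quadratically on $\alpha$. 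So splitness for the factors of a product or norm says nothing about the product.
\item A density or continuity argument has no meaning over an arbitrary field of characteristic $0$. Even over a local field it would at best let you discard degenerate divisors; it would not reduce you to single rational points.
\end{itemize}
To make the direct route work you would need an isotropic subspace built from the Mumford representation $(u,v)$ of $D$, something like $\{u(T)a(T)+v(T)b(T)\}$ with suitable degree bounds, plus a separate treatment of divisors meeting the Weierstrass points. None of this is in the proposal. Also, in the $\beta L_2$-summand the natural construction only gives an $(n-1)$-dimensional isotropic subspace. The condition $N_{L_2/K}(\alpha)\in K^{\times 2}$ must enter to make the remaining $2$-dimensional piece hyperbolic, and your sketch never says where it does.

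\textbf{The route you discarded is the one that works.} This is the paper's argument, and you have the comparison map backwards: no lifting is needed. Since $\phi_M^{\vee}\circ\psi=[2]$ on $J_b$, the map $\phi_M^{\vee}$ induces $A_b^{\vee}(K)/\psi(J_b(K))\to J_b(K)/2J_b(K)$. Moreover $\delta_2\circ\phi_M^{\vee}=\iota\circ\delta_\psi$, where $\iota\colon H^1(K,J_b[\psi])\to H^1(K,J_b[2])$ comes from $J_b[\psi]\subset J_b[2]$. So the image of $\delta_\psi(P)$ in $H^1(K,G')$ is the image of the genuine $2$-descent class $\delta_2(\phi_M^{\vee}P)=(N_{L_2/K}(\alpha),\alpha)$, with $N_{L_2/K}(\alpha)\in K^{\times 2}$.

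The relevant input is then Bhargava--Gross for $J_b$ itself, i.e. the odd model $y^2=xf(x)$ with $\So_{2n+1}$ acting on $L_1$. It is not an $\So_{4n+1}$-representation for $y^2=xf(x^2)$, and in that setting $H^1(K,G)\to H^1(K,\So_{4n+1})$ can have nontrivial kernel anyway. Bhargava--Gross treats all classes at once and gives splitness of the $L_1$-form.

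For the $L_2$-form the paper uses the finer output of their proof: a subspace $Y\subset L_1\oplus K$ isotropic for both $Q_1(\lambda,a)=(\lambda,\lambda)^{(1)}_\alpha$ and $Q_2(\lambda,a)=(\lambda,\beta_1\lambda)^{(1)}_\alpha+a^2$. Set $Y'=Y\cap(L_1\oplus 0)$. Then $\beta_1Y'$ has dimension at least $n-1$ and is isotropic for the $L_2$-form, because $(\beta_1\mu,\beta_1\lambda)^{(2')}_\alpha=(\mu,\beta_1\lambda)^{(1)}_\alpha$. Discriminant $1$ then forces the form to be split.
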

\begin{defi}
\label{defi:soluble}
We will say an element $v \in V^{rs}(K)$ is \emph{$K$-soluble} if $v \in \eta_b(A_b^{\vee}(K)/\psi(J_b(K)))$.
\end{defi}

It is not so obvious what an explicit description of the map $A_b^{\vee}(K)/\psi(J_b(K)) \to H^1(K,J_b[\psi])$
should be. However, we can try to simplify the situation by trying
to relate it to the $2$-descent map $J_b(K)/2J_b(K) \to H^1(K,J_b[2])$.
Consider the group $G' = \So_{2n+1} \times \mathrm{O}_{2n}$: similarly
to Proposition \ref{prop:StabJ}, we can see that $\Stab_{G'}(v) \cong \Res_{L_1/K}(\mu_2) \cong J_b[2]$.
We then have the following commutative diagram:
\begin{equation}
\label{eq:diagram}
\begin{tikzcd}
A_b^{\vee}(K)/\psi(J_b(K)) \arrow[r] \arrow[d, "\delta_\psi"] & J_b(K)/2J_b(K) \arrow[d, "\delta_2"]  \\
H^1(K,J_b[\psi]) \arrow[r, "\iota"] \arrow[d] & H^1(K,J_b[2])  \arrow[d] \\
H^1(K,G) \arrow[r] & H^1(K,G')
\end{tikzcd}
\end{equation}
The map $H^1(K,J_b[2]) \to H^1(K,G') \cong H^1(K,\So_{2n+1}) \times H^1(K,\mathrm{O}_{2n})$
can be given using the same recipe as in Section \ref{subsubs:other}. 
Explicitly, given $\alpha \in H^1(K,J_b[2])$,
which can be viewed both as an element of $L_2^{\times}/L_2^{\times 2}$ and
as an element of $(L_1^{\times}/L_1^{\times 2})_{N \equiv 1}$ via 
$\alpha \mapsto (N_{L_2/K}(\alpha),\alpha) \in K^{\times} \times L_2^{\times} \cong L_1^{\times}$,
we obtain two quadratic spaces $(\cdot,\cdot)_{\alpha}^{(1)} \colon L_1 \times L_1 \to K$ and $(\cdot,\cdot)_{\alpha}^{(2)}\colon L_2 \times L_2 \to K$
given by
\[
(\mu,\lambda)_{\alpha}^{(1)} = \text{ coefficient of }\beta_1^{2n}\text{ in } \alpha^{-1}\mu\lambda \text{ (inside }L_1\text{)}
\]
and
\[
(\mu,\lambda)_{\alpha}^{(2)} = \text{ coefficient of }\beta_2^{2n-1}\text{ in } \alpha^{-1}\mu\lambda \text{ (inside }L_2\text{)},
\]
where we are writing $L_1 = K\langle 1,\beta,\dots,\beta^{2n} \rangle$
and $L_2 = K\langle 1,\beta,\dots,\beta^{2n-1} \rangle$.
Alternatively, if we consider the codimension $1$ vector subspace $\beta_1 L_1$
of $L_1$, we have that $(\cdot,\cdot)_{\alpha}^{(2)}$ is equivalent
to a form $(\cdot,\cdot)_{\alpha}^{(2')}\colon \beta_1 L_1 \times \beta_1 L_1 \to K$ given
by $(\beta_1\mu,\beta_1\lambda)_{\alpha}^{(2')} := (\mu,\beta_1\lambda)_{\alpha}^{(1)}$
(we can check that this is well-defined).

The image of $\alpha \in H^1(K,J_b[2])$ in $H^1(K,\So_{2n+1}) \times H^1(K,\mathrm{O}_{2n})$
is given by the quadratic spaces $(L_1,(\cdot,\cdot)_{\alpha}^{(1)})$ and
$(L_2,(\cdot,\cdot)_{\alpha}^{(2)})$, and these quadratic spaces will
correspond to the trivial element if and only if they are split of discriminant $1$.
We note that the discriminant of $(\cdot,\cdot)_{\alpha}^{(1)}$ is $1$, 
while the discriminant of $(\cdot,\cdot)_{\alpha}^{(2)}$ is equal to $N_{L_2/K}(\alpha)$.
Therefore, it is not necessarily the case that the composition $J_b(K)/2J_b(K) \xrightarrow{\delta_2} H^1(K,J_b[2]) \to H^1(K,G')$
is trivial: it is a necessary condition that $N_{L_2/K}(\alpha) \in K^{\times 2}$.

Recall that there is a surjective map $H^1(K,J_b[\psi]) \to (L_2^{\times}/L_2^{\times 2})_{N \equiv 1}$,
which is either bijective or $2$-to-$1$. Then, the map
$\iota \colon H^1(K,J_b[\psi]) \to H^1(K,J_b[2]) \cong L_2^{\times}/L_2^{\times 2}$
is just given by the natural inclusion $H^1(K,J_b[\psi]) \to (L_2^{\times}/L_2^{\times 2})_{N \equiv 1} \to L_2^{\times}/L_2^{\times 2}$.
\begin{lemma}
\label{lemma:comp}
Let $[D] \in J_b(K)/2J_b(K)$, and suppose that $\delta_2([D]) \in \im(\iota)$.
Then the image of $\delta_2([D])$ in $H^1(K,G')$ is trivial.
\end{lemma}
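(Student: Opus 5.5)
The plan is to reduce both splitting statements to the classical result for odd-degree hyperelliptic curves. For $y^2=h(x)$ with $\deg h$ odd, the $2$-descent image lands in the kernel of $H^1(K,J_b[2])\to H^1(K,\So_{2n+1})$ (Bhargava--Gross). Here this says that $(L_1,(\cdot,\cdot)^{(1)}_{\alpha'})$ is split for every $\alpha'=\delta_2([D'])$, $[D']\in J_b(K)/2J_b(K)$. I would take this as the input, recalling the standard argument: write $D'$ as a sum of points $(a,b)$, so that $\alpha'$ is built from $a-\beta_1$, and the functions vanishing on $D'$ produce an $n$-dimensional isotropic subspace. The first component $(\cdot,\cdot)^{(1)}_\alpha$ with $\alpha=\delta_2([D])$ is then split immediately. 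The work is for $(\cdot,\cdot)^{(2)}_\alpha$.

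\emph{Step 1: diagonalise along $L_1\cong K\times L_2$.} Write "coefficient of the top power of $\beta$" as a trace, $\lambda\mapsto \mathrm{Tr}(\lambda/h'(\beta))$, with $h=xf(x)$ for $L_1$ and $h=f$ for $L_2$. Since $h'=f+xf'$, at $x=0$ one gets $h'(0)=f(0)$, and at a root of $f$ one gets $h'(\beta)=\beta f'(\beta)$. Hence $(\cdot,\cdot)^{(1)}_\alpha$ splits orthogonally as $\langle c\rangle\oplus Q_\alpha$ with:
\[
c=(N(\alpha)f(0))^{-1}\ \text{(mod squares)},\qquad Q_\alpha(\mu)=\mathrm{Tr}_{L_2/K}\bigl(\alpha^{-1}\mu^2/(\beta f'(\beta))\bigr).
\]
In the same way, $(\cdot,\cdot)^{(2)}_\alpha$ is $Q'_\alpha(\mu)=\mathrm{Tr}_{L_2/K}(\alpha^{-1}\mu^2/f'(\beta))$, consistent with the description via $(\cdot,\cdot)^{(2')}_\alpha$. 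Thus $Q'_\alpha=Q_{\alpha\beta^{-1}}$, where the subscript means replacing $\alpha$ in the $L_2$-component.

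\emph{Step 2: shift by the rational $2$-torsion point.} Consider $[D]+[(P_0)-\infty]\in J_b(K)/2J_b(K)$. The $L_2$-component of $\delta_2([(P_0)-\infty])$ is $-\beta$ up to squares, since $x-T$ evaluated at $(0,0)$ is $-\beta$. So $\alpha'':=\delta_2([D]+[(P_0)-\infty])$ has $L_2$-component $-\alpha\beta$, which equals $-\alpha\beta^{-1}$ modulo squares. Therefore $Q_{\alpha''}=-Q'_\alpha$, and $Q'_\alpha$ is split if and only if $Q_{\alpha''}$ is, since scaling a form by $-1$ preserves splitness.

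\emph{Step 3: Witt cancellation.} By the input, $(L_1,(\cdot,\cdot)^{(1)}_{\alpha''})\cong \mathbb H^{n}\oplus\langle d\rangle$ is split, and also $\cong\langle c''\rangle\oplus Q_{\alpha''}$. By hypothesis $\alpha\in\im(\iota)$, so $N_{L_2/K}(\alpha)$ is a square. Hence $Q'_\alpha$, whose discriminant is $N_{L_2/K}(\alpha)$, has square discriminant, and so does $Q_{\alpha''}=-Q'_\alpha$ (it has even dimension $2n$). Comparing determinants of the two decompositions then forces $c''=d$ modulo squares. Witt cancellation gives $Q_{\alpha''}\cong\mathbb H^n$, so $(\cdot,\cdot)^{(2)}_\alpha$ is split of discriminant $1$. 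Together with the splitness of $(\cdot,\cdot)^{(1)}_\alpha$, this shows the image in $H^1(K,G')$ is trivial.

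The main obstacle is Step 1 together with the sign and normalisation bookkeeping. One must check that the orthogonal decomposition is exactly as claimed, and that the discriminant conventions (the factors $f(0)$, $(-1)^n$, and the precise $K$-component of $\delta_2([(P_0)-\infty])$) match, so that the determinant comparison in Step 3 really forces $c''\equiv d$. If the signs fail to line up, I would fall back on a direct verification: exhibit an explicit $n$-dimensional isotropic subspace of $(\cdot,\cdot)^{(2')}_\alpha$ inside $\beta_1L_1$. The candidate is the intersection of $\beta_1L_1$ with the isotropic subspace coming from $D+(P_0)-\infty$.
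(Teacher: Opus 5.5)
Your argument is correct, but it takes a different route from the paper's.

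\textbf{The paper's route.} The paper opens up the proof of Bhargava--Gross, Proposition 5.2. It takes the $n$-dimensional subspace $Y\subset L_1\oplus K$ that is isotropic for both $Q_1(\lambda,a)=(\lambda,\lambda)^{(1)}_\alpha$ and $Q_2(\lambda,a)=(\lambda,\beta_1\lambda)^{(1)}_\alpha+a^2$. Projecting $Y$ to $L_1$ splits the first form. It then notes that $Y'=Y\cap(L_1\oplus 0)$ has dimension at least $n-1$ and that $\beta_1Y'$ is isotropic for $(\cdot,\cdot)^{(2')}_\alpha$. Hence $(\cdot,\cdot)^{(2)}_\alpha\cong\mathbb{H}^{n-1}\perp V'$ with $V'$ binary of discriminant $1$, so $V'$ is hyperbolic.

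\textbf{Your route.} You use only the \emph{conclusion} of Bhargava--Gross, applied to both $[D]$ and $[D]+[(P_0)-\infty]$. You transfer it through the Euler-trace splitting $L_1\cong K\times L_2$, which identifies the $L_2$-block of $(\cdot,\cdot)^{(1)}_{\alpha''}$ with $-(\cdot,\cdot)^{(2)}_\alpha$. Witt cancellation then plays the role of the paper's dimension count. Both proofs use the hypothesis $\alpha\in\im(\iota)$ only to control a discriminant.

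\textbf{Your sign bookkeeping closes, so the fallback is not needed.} Since $N_{L_2/K}(-\alpha\beta)=f(0)\,N_{L_2/K}(\alpha)$, you get $c''\equiv N_{L_2/K}(\alpha)^{-1}$, which is a square. Also $d$ is a square, because $\det(\cdot,\cdot)^{(1)}_{\alpha''}\equiv(-1)^nN_{L_1/K}(\alpha'')^{-1}$ and $N_{L_1/K}(\alpha'')=\bigl(f(0)N_{L_2/K}(\alpha)\bigr)^2$. The component $-\beta$ coming from the $x-T$ convention is the right one. In any case, as you note, $\pm(\cdot,\cdot)^{(2)}_\alpha$ are simultaneously split in even dimension.

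\textbf{Comparison.} Your route is more modular and gives a conceptual explanation: the splitting of the second form is the splitting of the first form for $\alpha$ translated by the rational $2$-torsion point $P_0$. The paper's route avoids computing $\delta_2([(P_0)-\infty])$ and the trace decomposition. The price is that it depends on the internal structure of the Bhargava--Gross proof (the pencil of quadrics and its common isotropic subspace), not just its statement.

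Your one-line sketch of why the Bhargava--Gross result holds is not really their argument. Since you only use the result itself, this does not affect the proof.
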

\begin{proof}
We start by recounting the proof of \cite[Proposition 5.2]{BG}. Consider the two quadrics in
$L_1 \oplus K$ given by
\[
Q_1(\lambda,a) = (\lambda,\lambda)_{\alpha}^{(1)}, \quad Q_2(\lambda,a) = (\lambda,\beta_1\lambda)_{\alpha}^{(1)}+a^2.
\]
Then, it is shown in loc. cit. that there exists a rational $n$-dimensional subspace
$Y$ of $L_1 \oplus K$ which is isotropic with respect to both $Q_1$ and $Q_2$.
In particular, given that the line $0 \oplus K$ is not contained in $Y$,
we see that the projection of $Y$ to $L_1$ is an $n$-dimensional isotropic subspace of
$Q_1$, thus showing that $(\cdot,\cdot)_{\alpha}^{(1)}$ is split.

Now, consider the subspace $Y' = Y \cap (L_1 \oplus 0)$, of dimension at least $n-1$.
We see that $Y' \cap f(\beta_1)L_1 = \{0\}$, as $(f(\beta_1),f(\beta_1))_{\alpha}^{(1)} = N_{L_2/K}(\alpha^{-1})N_{L_2/K}(\beta_1) \neq 0$.
Therefore, the subspace $\beta_1 Y'$ of $\beta _1L_1$ has dimension at least
$n-1$, and it is also the case that for any $\beta_1\mu,\beta_1\lambda \in \beta_1 Y'$
we have that $(\beta_1\mu,\beta_1\lambda)_{\alpha}^{(2')} = (\mu,\beta_1\lambda)_{\alpha}^{(1)} = 0$ by construction.
Therefore, $(\cdot,\cdot)_{\alpha}^{(2)}$ has a rational isotropic space
of dimension $n-1$ and thus, as a quadratic space, we have that $(L_2,(\cdot,\cdot)_{\alpha}^{(2)}) \cong H^{n-1} \oplus V'$,
where $H \sim \left< \begin{pmatrix} 0 & 1 \\ 1 & 0 \end{pmatrix} \right>$.
But $V'$ is a quadratic space of dimension $2$ and discriminant $1$ (by hypothesis), and
therefore $V' \sim H$ as well, showing that $(\cdot,\cdot)_{\alpha}^{(2)}$ is split,
as wanted.
\end{proof}

\begin{proof}[Proof of Theorem \ref{theo:comp}]
First, note that the map $H^1(K,G) \to H^1(K,G')$ has a trivial pointed kernel, which is
equivalent to $H^1(K,\So_{2n}) \to H^1(K,\mathrm{O}_{2n})$ having a trivial pointed kernel,
as noted in Section \ref{subsubs:other}. Then, the proof follows from 
Lemma \ref{lemma:comp} and the commutativity of the diagram \eqref{eq:diagram}.
\end{proof}

\begin{obs}
Let $A_b^{\vee}[\hat{\psi}] = \{0,T_A\}$. Then, both $0$ and $T_A$ give distinguished
orbits of $G(K) \backslash V_b(K)$. Whether or not these two orbits coincide
depends on whether $T_A \in \psi(J_b(K))$.
\end{obs}

\begin{cor}
\label{cor:Selmer}
Let $K$ be a number field, and let $b \in B(K)$ with $\Delta(b) \neq 0$. Then there is an embedding
\[
\Sel_{\psi}(J_b) \xhookrightarrow{} G(K) \backslash V_b(K).
\]
\end{cor}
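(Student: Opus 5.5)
The plan is the usual local-to-global argument, combining Proposition \ref{prop:ait} with Theorem \ref{theo:comp} applied at every completion $K_v$, and then a Hasse principle for $H^1(K,G)$. Fix $b \in B(K)$ with $\Delta(b) \neq 0$ and take the distinguished orbit $v_0 \in V_b(K)$ constructed above. By Proposition \ref{prop:StabJ} we have $\Stab_G(v_0) \cong J_b[\psi]$, and this isomorphism is compatible with the Galois action because both sides are identified with $(\Res_{L_2/K}\mu_2)_{N=1}$. By Proposition \ref{prop:ait} and Theorem \ref{theo:Qorbits}, the $G(K)$-orbits in $V_b(K)$ are in bijection with $\ker\bigl(H^1(K,J_b[\psi]) \to H^1(K,G)\bigr)$; over an algebraically closed field all regular semisimple elements with invariants $b$ are conjugate, so every orbit is $G(\ol{K})$-conjugate to $v_0$. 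Since $\Sel_\psi(J_b) \subset H^1(K,J_b[\psi])$, it suffices to show that every Selmer element lies in this kernel. The resulting map to orbits is then automatically injective.

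Now let $c \in \Sel_\psi(J_b)$. By definition, for every place $v$ of $K$ the restriction $c_v$ lies in $\delta_\psi\bigl(A_b^\vee(K_v)/\psi(J_b(K_v))\bigr)$. Theorem \ref{theo:comp} applies over any field of characteristic zero, in particular over $K_v$, so the image of $c_v$ in $H^1(K_v,G)$ is trivial. Since $H^1$ is functorial with respect to restriction to $K_v$, the image of $c$ in $H^1(K,G)$ therefore lies in
\[
\ker\Bigl(H^1(K,G) \to \prod_v H^1(K_v,G)\Bigr).
\]

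The main obstacle is to show that this localisation kernel is trivial. I would do this through $G' = \So_{2n+1}\times \mathrm{O}_{2n}$. The map $H^1(K,G) \to H^1(K,G')$ has trivial kernel (indeed it is injective, as recalled in Section \ref{subsubs:other}), so it suffices that the image class in $H^1(K,G')$ is trivial. That class is a pair of quadratic spaces over $K$, namely the forms $(\cdot,\cdot)^{(1)}_\alpha$ and $(\cdot,\cdot)^{(2)}_\alpha$. Each of them is split over every completion $K_v$, so by the Hasse--Minkowski theorem each is split over $K$. This argument also handles the $\mathrm{O}_{2n}$ factor, because the discriminant, being a square locally everywhere, is a square globally. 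Hence the class in $H^1(K,G')$ is trivial, so the image of $c$ in $H^1(K,G)$ is trivial.

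The point to watch is that we use the injectivity of $H^1(K,\So_m) \to H^1(K,\mathrm{O}_m)$, and not merely the triviality of its kernel. Only the class of $c$ in $H^1(K,G)$ matters, and that class is compared with the trivial class, so trivial pointed kernel is in fact enough here. Composing the inclusion of $\Sel_\psi(J_b)$ into the kernel with the bijection of Proposition \ref{prop:ait} then gives the desired embedding.
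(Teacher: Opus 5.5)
Your proof is correct and follows the same outline as the paper's proof. Theorem~\ref{theo:comp}, applied over each completion, kills a Selmer class in every $H^1(K_v,G)$. A Hasse principle for $H^1(K,G)$ then places the class in $\ker\bigl(H^1(K,J_b[\psi])\to H^1(K,G)\bigr)$, and Proposition~\ref{prop:ait} identifies this kernel with $G(K)\backslash V_b(K)$.

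The one real difference is how the Hasse principle is obtained. The paper simply cites \cite[Proposition 6.8]{LagaThesis}. You derive it directly: pass to $G'=\So_{2n+1}\times\mathrm{O}_{2n}$, then apply Hasse--Minkowski to the resulting pair of quadratic spaces. As you eventually note, only the trivial pointed kernel of $H^1(K,G)\to H^1(K,G')$ is needed; your remark about needing injectivity rather than trivial kernel is muddled but harmless. This makes the argument self-contained, using only the quadratic-form description the paper already sets up.

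This step genuinely needs the archimedean places. For example, $x_1^2+\dots+x_9^2$ over $\Q$ is isometric to the split form $J_9$ over every $\Q_p$, but not over $\R$. So your argument relies on the Selmer condition at the infinite places. You rightly include these as part of the standard definition, whereas the paper's proof writes its product over finite places only.
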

\begin{proof}
Consider the commutative diagram
\[
\begin{tikzcd}
A_b^{\vee}(K)/\psi(J_b(K)) \arrow[r] \arrow[d] & H^1(K,J_b[\psi]) \arrow[r] \arrow[d] & H^1(K,G)\arrow[d] \\
\prod_v A_b^{\vee}(K_v)/\psi(J_b(K_v)) \arrow[r, "\delta_{\psi,v}"] & \prod_v H^1(K_v,J_b[\psi]) \arrow[r] & \prod_v H^1(K_v,G),
\end{tikzcd}
\]
where the product is taken over all finite places $v$ of $K$. Recall
that $\Sel_\psi(J_b)$ is defined as the kernel of the map $H^1(K,J_b[\psi]) \to \prod_v H^1(K_v,J_b[\psi])/(\im (\delta_{\psi,v}))$.
Our statement then follows from the fact that the composition of maps in
the second row is trivial by Theorem \ref{theo:comp}, and the fact that
the map $H^1(K,G) \to \prod_v H^1(K_v,G)$ has trivial kernel by \cite[Proposition 6.8]{LagaThesis}.
\end{proof}

\subsection{Integral orbits}

To prove our main theorems, we will require an integral version of 
Corollary \ref{cor:Selmer}. We remark that even though we have originally
constructed our representation over $K$, a field of characteristic zero,
we could also have constructed $(G,V)$ over $\Z$. In this case, we
also have $V \git G = B = \Spec \Z[p_2,\dots,p_{4n}]$.

\begin{theorem}
\label{theo:integral}
Every element in the image of the map
\[
\Sel_{\psi}(J_b) \xhookrightarrow{} G(\Q) \backslash V_b(\Q).
\]
has a representative in $\frac{1}{2}V_b(\Z)$.
\end{theorem}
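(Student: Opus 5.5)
The argument is local-to-global, following the standard template of \cite[\S 5]{BG} and \cite{LagaThesis}. First reduce to a local statement: for each prime $p$, every $G(\Q_p)$-orbit in $V_b(\Q_p)$ that is $\Q_p$-soluble (Definition \ref{defi:soluble}) contains an element of $\frac{1}{2}V_b(\Z_p)$. Granting this, take $c \in \Sel_\psi(J_b)$ with image orbit $G(\Q)\cdot v$. For every $p$, the localisation of $c$ lies in $\eta_b(A_b^\vee(\Q_p)/\psi(J_b(\Q_p)))$. Hence there is $g_p \in G(\Q_p)$ with $g_p v \in \frac{1}{2}V(\Z_p)$, and for almost all $p$ we may take $g_p \in G(\Z_p)$, since $v$ is already integral at almost all primes. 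To glue these into a single rational element, we need that $G$ has class number one, i.e.\ $G(\Q)\backslash G(\mathbb{A}_f)/\prod_p G(\Z_p)$ is a point. For $G = \So_{2n+1}\times\So_{2n}$ with split forms $J_m$ this holds by strong approximation for the simply connected covers (spin groups), combined with the fact that the spinor norm images over $\Z_p$ are surjective enough; this is the argument used in \cite[Proposition 6.8]{LagaThesis} and \cite{BG}. Then $g_p = \gamma h_p$ with $\gamma \in G(\Q)$ and $h_p \in G(\Z_p)$, so $\gamma^{-1} v$ lies in $\frac12 V(\Z_p)$ for all $p$, hence in $\frac12 V(\Z)$.

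For the local statement at a prime $p$, I would use an explicit model in the spirit of \cite[Proposition 5.2]{BG}, rather than a general Néron/Lie-theoretic integral argument. The soluble orbits come from $\delta_2$-images via the diagram \eqref{eq:diagram}. The orbit attached to $\alpha$ is realised by $T_\beta$ acting on $L_1 \oplus \beta L_2$ with the forms $(\cdot,\cdot)_\alpha$. To make $T_\beta$ integral, find lattices $\Lambda_1 \subset L_1$ and $\Lambda_2 \subset L_2$ that are self-dual (or, at $p = 2$, with dual index controlled by a factor $2$) for $(\cdot,\cdot)^{(1)}_\alpha$ and $(\cdot,\cdot)^{(2)}_\alpha$, such that multiplication by $\beta$ maps $\Lambda_1$ into $\Lambda_2$ and back. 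Such a pair gives a matrix $A$ with $A$, and hence $v$, integral. In \cite{BG} the relevant lattice is built from the ideal $I$ associated to a divisor $D$ with $\delta_2(D)=\alpha$: writing $\alpha = N(I)^{-1}\cdot(\text{unit}) \cdot I^2$-type data, one uses $I \subset R_1=\Z_p[x]/(xf(x))$ and then obtains the $L_2$-lattice as the image of $\beta I$. We may first reduce to $b$ integral by scaling, using $p_{2i}(\lambda v)=\lambda^{2i}p_{2i}(v)$.

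For the lattice construction itself, I plan to adapt Lemma \ref{lemma:comp}. The bi-isotropic subspace $Y$ there has an integral analogue: $Y \cap (\Lambda\oplus\Z_p)$ is saturated of rank $n$. Its projections give isotropic sublattices, which combine with the unimodularity of the power-basis forms (discriminant $1$) to yield split integral structures on both pieces. The factor $\frac12$ absorbs the usual $p=2$ defect coming from the quadric $Q_2$, which contains $a^2$ and is not unimodular in the even sense at $2$.

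The main obstacle is exactly this integrality step, especially at $p=2$ and when $p\mid\Delta(b)$. It requires either an explicit integral description of $\delta_2(D)$ in terms of ideals of $R_1$ (the Bhargava–Gross method), or a direct argument that the $G(\Q_p)$-orbit meets $V(\Z_p)$ using integral Kostant-type sections. I would first try the Bhargava–Gross ideal construction on $J_b$ and then push it through $\iota$ using $A_b^\vee(\Q_p)/\psi \to J_b(\Q_p)/2$. If that fails, the fallback is to accept denominators $2$ uniformly, which is what the statement allows.
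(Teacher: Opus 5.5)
\textbf{Verdict: there is a genuine gap in the local integrality step.}

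Your local-to-global reduction via class number one matches the paper. So does your starting point for the local step: the Bhargava--Gross ideal $I_1\subset L_1$ with $I_1^2\subset\alpha R_1$ and $N(I_1)^2=N(\alpha)$, which is available since $\iota\circ\delta_\psi=\delta_2\circ\phi_M^\vee$. One slip in the gluing: it should read $(g_p)_p=(h_p)_p\,\gamma$, so that $\gamma v=h_p^{-1}g_pv\in\frac12V(\Z_p)$.

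The gap is the construction of the second lattice $I_2\subset L_2$. Taking ``the image of $\beta I$'' means $I_2=\ol{I_1}$, and this lattice is not self-dual in general. Using $(\mu,\lambda)^{(2)}_\alpha=(\mu',\beta_1\lambda')^{(1)}_\alpha$ and the self-duality of $I_1$, the dual of $\ol{I_1}$ for $(\cdot,\cdot)^{(2)}_\alpha$ is $\ol{I_1}'=\{\nu\in L_2\mid(0,\beta_2\nu)\in I_1\}$. This can be strictly larger than $\ol{I_1}$. For example, $I_1=f(0)\Z_p\times\beta_2R_2\subset\Q_p\times L_2$ is self-dual for $\alpha=\beta_2$ (lifted to $(f(0),\beta_2)$), yet $\ol{I_1}=\beta_2R_2\subsetneq R_2=\ol{I_1}'$ whenever $p\mid f(0)$.

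Your alternative, an integral version of the bi-isotropic $Y$ from Lemma \ref{lemma:comp}, does not close this gap. A saturated isotropic sublattice of half rank gives no unimodularity. Moreover, the power-basis lattices are not self-dual for the $\alpha$-twisted forms, so no self-dual, $\beta$-compatible pair comes out of it.

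What is missing is the observation that $\beta_2\ol{I_1}'\subset\ol{I_1}$. Hence \emph{every} lattice $\Lambda$ with $\ol{I_1}\subset\Lambda\subset\ol{I_1}'$ is automatically an $R_2$-module. The task then reduces to finding a self-dual $\Lambda$ in this sandwich, i.e.\ a Lagrangian in $\ol{I_1}'/\ol{I_1}$. This is where the splitness of $(L_2,(\cdot,\cdot)^{(2)}_\alpha)$ from Theorem \ref{theo:comp} enters:
\begin{itemize}
\item For $p$ odd, take a diagonal Jordan basis of $\ol{I_1}$ \cite[Lemma 3.4]{Cassels} and normalise the exponents to $\{0,1\}$. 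Check via discriminants and Hasse invariants that both the unimodular and the $p$-modular constituents are split over $\Q_p$. Then rescale half of a hyperbolic basis of the $p$-modular part.
\item For $p=2$, argue in the same way using \cite[Lemma 4.1]{Cassels}.
\end{itemize}

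This also corrects your account of the factor $\frac12$. It does not come from the $a^2$ term in $Q_2$. It comes from the fact that the self-dual $\Lambda$ obtained at $p=2$ may be odd, whereas $(L_{2n},J_{2n})$ is even. The two are related only by an element of $\mathrm{O}_{2n}(\Q_2)$ with entries in $\frac12\Z_2$ (cf.\ Lemma \ref{lemma:latticep2}).

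Your fallback of ``accepting denominators $2$ uniformly'' does not help. At odd $p$ it says nothing, since $\frac12V(\Z_p)=V(\Z_p)$. At $p=2$ it still requires exhibiting such a lattice.
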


Because $G$ has class number $1$ (cf. \cite[Proposition 7.2]{LagaThesis}),
it will suffice to see that the map
\[
A_b^{\vee}(\Q_p)/\psi(J(\Q_p)) \xhookrightarrow{} G(\Q_p) \backslash V_b(\Q_p)
\]
falls inside the image of the inclusion map $\frac{1}{2}V_b(\Z_p) \to G(\Q_p) \backslash V_b(\Q_p)$ 
for all primes $p$. We start by giving an ideal parametrisation
of integral orbits inside $\Z_p$, in an analogous way to other results in
the literature, such as \cite[Proposition 6.7]{ShankarD2n+1}.
For the proof of Theorem \ref{theo:integral}, we will need to know when 
a $\Z_p$-lattice $L$ of dimension $m$ with a given symmetric bilinear
form $L \times L \to \Z_p$ is isometric over $\Z_p$ to an $m$-dimensional 
lattice $L_m$ with matrix $J_m$. We summarise known results on this
next lemma:
\begin{lemma}
\label{lemma:latticep2}
Let $I$ be a free $\Z_p$-module of rank $m$ equipped with a symmetric bilinear
form $\varphi \colon I \times I \to \Z_p$ of discriminant $1$.
\begin{itemize}
\item If $p \neq 2$, then $I$ is isometric to $L_m$ over $\Z_p$.
\item If $p = 2$ and $m = 2m'+1$, then $I$ is isometric to $L_m$ over $\Z_2$
if $I \otimes \Q_2$ is a split orthogonal space.
\item If $p = 2$ and $m = 2m'$, then $I$ is isometric to $L_m$ over $\Z_2$
if $I \otimes \Q_2$ is a split orthogonal space and $I$ is an even lattice
(i.e. $\varphi(x,x) \in 2\Z_2$ for all $x \in I$).
\end{itemize}
\end{lemma}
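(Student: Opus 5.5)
The plan is to reduce each case to the classification of unimodular (discriminant-unit) quadratic lattices over $\Z_p$. I read "discriminant $1$" as $\det\varphi = \pm 1$ up to the sign convention of $J_m$; in particular $\varphi$ is unimodular, since its Gram determinant is a unit.

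\emph{Case $p\neq 2$.} Since $2$ is a unit, $\varphi$ diagonalises over $\Z_p$: pick $x\in I$ with $\varphi(x,x)\in\Z_p^\times$, split off $\Z_p x$ (orthogonal complements split off because $\varphi(x,x)$ is a unit), and induct on rank. This gives $\varphi\cong\langle u_1,\dots,u_m\rangle$ with $u_i\in\Z_p^\times$.
\begin{itemize}
\item Any binary unimodular form $\langle u,v\rangle$ represents every unit, by Hensel's lemma and counting points of a conic over $\F_p$. Hence unimodular $\Z_p$-lattices are classified by rank and determinant alone: $\langle u_1,\dots,u_m\rangle\cong\langle 1,\dots,1,\prod u_i\rangle$.
\item $L_m$ has the same rank and determinant $\det J_m$, and by hypothesis $\det I=\det J_m$ modulo squares.
\end{itemize}
So $I\cong L_m$.

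\emph{Case $p=2$, $m$ even.} I would invoke the standard classification of unimodular even lattices over $\Z_2$. Such a lattice is an orthogonal sum of binary blocks, each $H=\begin{pmatrix}0&1\\1&0\end{pmatrix}$ or $A=\begin{pmatrix}2&1\\1&2\end{pmatrix}$. Using $A\perp A\cong H\perp H$, we get $I\cong H^{m/2}$ or $I\cong H^{m/2-1}\perp A$.
\begin{itemize}
\item These two options are distinguished rationally: $A\otimes\Q_2$ is the anisotropic norm form of $\Q_2(\sqrt{-3})$.
\item Note that $\det A=3\equiv -1$ modulo squares in the determinant-matching sense, so the determinant alone does not separate them; the hypothesis "$\mathrm{disc}=1$ and split" is what rules out $A$.
\end{itemize}
Hence $I\cong H^{m/2}\cong L_m$, since $J_m$ is itself an orthogonal sum of hyperbolic planes after reordering the basis.

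\emph{Case $p=2$, $m=2m'+1$.} The key step is to exhibit an even unimodular sublattice of rank $m-1$ that splits off.
\begin{enumerate}
\item The map $x\mapsto\varphi(x,x)\bmod 2$ is $\F_2$-linear on $I/2I$ and is the pairing with a characteristic vector. Its kernel $I_0$ is the even sublattice, of index $2$ in $I$, because $m$ is odd and so $\varphi$ cannot be even.
\item I would instead use the structure theorem for odd unimodular $\Z_2$-lattices: $I\cong\langle u\rangle\perp E$ or $I\cong\langle u_1,u_2,u_3\rangle\perp E$ with $E$ even unimodular, and diagonal forms of rank $\le 3$ can be normalised.
\item Since $J_m=H^{m'}\perp\langle 1\rangle$, it suffices to show $I\cong H^{m'}\perp\langle u\rangle$ with $u$ determined by the determinant.
\item Rational splitness means $I\otimes\Q_2\cong H^{m'}\perp\langle d\rangle$.
\item Then use the O'Meara/Conway--Sloane invariants: the $2$-adic genus symbol of a unimodular lattice is determined by rank, determinant, parity (type), and oddity. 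Match each invariant of $I$ to that of $L_m$ using rational splitness (Hasse invariant) and the determinant.
\end{enumerate}

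This oddity/Hasse bookkeeping in the odd-rank dyadic case is the main obstacle. The cleanest route may be to cite O'Meara's theorem 93:16 or the Conway--Sloane classification of $2$-adic unimodular forms, rather than reprove it.
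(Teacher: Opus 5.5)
Your proposal is correct and takes the same route as the paper, which gives no argument of its own and simply appeals to the classical classification of unimodular $\Z_p$-lattices. In the odd-rank dyadic case no oddity bookkeeping is actually needed: $m$ odd forces $I$ to be odd, $L_m$ is odd, and the dyadic classification you cite (unimodular $\Z_2$-lattices on the same quadratic space with the same parity are isometric) gives $I\cong L_m$ directly.

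One correction in the even-rank case: $\det A=3$ is \emph{not} $-1$ modulo squares, since $-3\equiv 5 \pmod 8$ is not a square in $\Z_2$. So discriminant $1$ alone already excludes $H^{m/2-1}\perp A$, and the splitness hypothesis is redundant there; your Witt-cancellation argument via the anisotropy of $A\otimes\Q_2$ still works, so this is not a gap.
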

In the last item, the condition that $I$ is an even lattice is necessary,
as $(L_m,J_m)$ admits both an even and an odd lattice over $\Z_2$. These
two lattices can be transformed to one another via an element of
$\mathrm{O}_m(\Q_2)$ with coefficients in $\frac{1}{2}\Z_2$.

\begin{prop}
\label{prop:intOrbits}
Let $b \in B(\Z_p)$ with $\Delta(b) \neq 0$. Then the set of orbits
$G(\Z_p)\backslash V_b(\Z_p)$ is in bijection with the set of equivalence classes
of $(I_1,I_2,\alpha,s)$, where $I_1$ is a fractional ideal of $R_1$, $I_2$ is 
a fractional ideal of $R_2$, $\alpha \in (L_2^{\times})_{N\equiv 1}$ and
$s \in \Q_p^\times$; satisfying:
\begin{enumerate}
\item $I_1^2 \subset \alpha R_1$ and $N(I_1)^2 = N_{L_1/\Q_p}(\alpha)$, where
$\alpha$ can be interpreted as an element of $L_1 \cong \Q_p \times L_2$ via
$\alpha \mapsto (N_{L_2/\Q_p}(\alpha),\alpha)$.
\item $I_2^2 \subset \alpha R_2$ and $N(I_2)^2 = N_{L_2/\Q_p}(\alpha)$.
\item Let $\ol{I_1}$ denote the projection of $I_1$ in $L_2$, and let
$\ol{I_1}' = \{\gamma \in L_2 \mid (0,\beta_1\gamma) \in I_1\}$.
Then $\ol{I_1} \subset I_2 \subset \ol{I_1}'$.
\item The forms $(\cdot,\cdot)_{\alpha}^{(1)}$ and $(\cdot,\cdot)_{\alpha}^{(2)}$
are split of discriminant $1$ over $\Q_p$.
\item $I_2$ is even with respect to $(\cdot,\cdot)_{\alpha}^{(2)}$.
\item $N_{L_2/\Q_p}(\alpha) = s^2$.
\end{enumerate}
Two such tuples $(I_1,I_2,\alpha,s)$ and $(I_1',I_2',\alpha',s')$ are equivalent
if and only if there exists an element $c \in L_2^{\times}$ such that
$I_1 = cI_1'$, $I_2 = cI_2'$, $\alpha = c^2\alpha'$ and $s = N_{L_2/\Q_p}(c)s'$.
An integral orbit $(I_1,I_2,\alpha,s)$ corresponds to the rational orbit
given by $(\alpha,s)$. 
\end{prop}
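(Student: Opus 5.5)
We follow the rational construction of Section \ref{subsubs:other}, made integral, as in \cite[Proposition 6.7]{ShankarD2n+1}. Here $R_1 = \Z_p[x]/(xf(x))$ and $R_2 = \Z_p[x]/(f(x))$ are the monogenic orders in $L_1$ and $L_2$.

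\emph{From data to orbits.} Given a tuple $(I_1,I_2,\alpha,s)$, equip $I_1$ with $(\cdot,\cdot)^{(1)}_\alpha$ and $I_2$ with $(\cdot,\cdot)^{(2)}_\alpha$. Condition (1), $I_1^2\subset \alpha R_1$, says these forms are $\Z_p$-valued on $I_1$: for $\lambda\in R_1$, the coefficient of $\beta_1^{2n}$ in $\lambda$ is integral because $1,\beta_1,\dots,\beta_1^{2n}$ is a $\Z_p$-basis of $R_1$. The norm condition $N(I_1)^2 = N(\alpha)$ is exactly the statement that the Gram determinant is a unit. One computes this by comparing with $R_1$, where the power basis gives a discriminant-$1$ form. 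The same holds for $I_2$ with condition (2).

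Lemma \ref{lemma:latticep2}, together with (4) and (5), then gives $\Z_p$-bases of $I_1$ and $I_2$ in which the Gram matrices are $J_{2n+1}$ and $J_{2n}$. Since $2n+1$ is odd, evenness is needed only for $I_2$.

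Next we need $T_\beta$ to map $I_1\oplus\beta I_2$ to itself, so that its block matrix $A$ lies in $\Mat(\Z_p)$. Multiplication by $\beta$ sends $\beta I_2$ into $L_1$ via $\beta\cdot\beta\gamma = \beta^2\gamma$, that is, $\gamma\mapsto (0,\beta_1\gamma)$ under $L_1\cong \Q_p\times L_2$. Conversely it sends $I_1$ into $\beta L_2$ via the projection $\lambda\mapsto \ol{\lambda}$. So integrality is precisely $\ol{I_1}\subset I_2$ and $\beta_1 I_2\subset I_1$, i.e.\ $I_2\subset \ol{I_1}'$, which is condition (3). The resulting matrix lies in $V_b(\Z_p)$. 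Changing the chosen bases moves it by an element of $\So_{2n+1}(\Z_p)\times \mathrm{O}_{2n}(\Z_p)$, and the sign $s$ (condition (6)) pins down the $\So_{2n}$ versus $\mathrm{O}_{2n}$ component, as in Theorem \ref{theo:Qorbits}. Concretely, $s$ records the orientation of the basis of $I_2$ relative to $\det$ of the multiplication-by-$\sqrt{\cdot}$ identification.

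\emph{From orbits to data.} Given $v\in V_b(\Z_p)$, Theorem \ref{theo:Qorbits} attaches to it a rational pair $(\alpha,s)$ and an isometry $\Q_p^{2n+1}\oplus\Q_p^{2n}\cong (L_1\oplus\beta L_2,(\cdot,\cdot)_\alpha)$ intertwining $v$ with $T_\beta$. Take $I_1$ and $I_2$ to be the images of the standard lattices $\Z_p^{2n+1}$ and $\Z_p^{2n}$. Stability under $T_\beta$ makes $I_1\oplus \beta I_2$ a module over $\Z_p[\beta]$. Hence $I_1$ is an $R_1$-module and $I_2$ is an $R_2$-module, since $\beta^2$ acts through $x$. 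Because they are lattices, they are fractional ideals. Integrality and unimodularity of the form then give (1)–(2), where the inclusion $I^2\subset\alpha R$ follows from the trace-dual description of the form. Condition (3) comes from $T_\beta$-stability, and (4)–(6) follow from the Gram matrix being $J$, which is even, and from the rational data.

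\emph{Equivalence.} The equivalence relation arises from the freedom in the identification, which is the rational stabiliser $\Stab_G(v)$ of Theorem \ref{theo:Qorbits} together with changes by $c\in L_2^\times$. The two constructions are inverse to each other up to this relation, and the last sentence of the statement is then immediate.

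The main obstacle is the precise bookkeeping in the unimodularity equivalence. One must show that $I^2\subset\alpha R$ together with the norm equality is equivalent to the form being $\Z_p$-valued of unit discriminant, i.e.\ that $I$ is self-dual. This requires identifying the dual of $R$ under the ``coefficient of top power'' pairing as $R$ itself, which holds because $R$ is monogenic with dual basis given by the reversed powers. Care is also needed with the $L_1\cong\Q_p\times L_2$ embedding of $\alpha$ and with the factor $s$ distinguishing $\So$ from $\mathrm{O}$.
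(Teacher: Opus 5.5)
Your proposal is correct and follows the same route as the paper. In one direction you use Lemma \ref{lemma:latticep2} to choose bases in which the matrix of $T_\beta$ is integral; in the other you read off $I_1$ and $I_2$ from the $\Z_p[\beta]$-module structure of the lattice $I_1\oplus\beta I_2$. You also fill in details the paper leaves implicit: condition (3) is exactly integrality of $T_\beta$, and (1)--(2) amount to self-duality because the monogenic orders $R_1, R_2$ are self-dual under the top-coefficient pairing.
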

\begin{proof}
First, we start with a tuple $(I_1,I_2,\alpha,s)$ and we construct an
orbit in $G(\Z_p)\backslash V_b(\Z_p)$. First, we note that the forms
$(\cdot,\cdot)_{\alpha}^{(1)}$ and $(\cdot,\cdot)_{\alpha}^{(2)}$, when
restricted to $I_1$ and $I_2$ respectively, take integral values and
are split of discriminant $1$. Additionally, $I_2$ is even with respect
to $(\cdot,\cdot)_{\alpha}^{(2)}$. Therefore, by Lemma \ref{lemma:latticep2} we can find $\Z_p$-bases for 
$I_1$ and $I_2$ such that the forms have Gram matrices $J_{2n+1}$ and $J_{2n}$
respectively. Then, also by construction we have that the matrix of $T_{\beta}$ 
has values in $\Z_p$, so it gives an element of $V_b(\Z_p)$.

Now, suppose that we start with an orbit in $G(\Z_p)\backslash V_b(\Z_p)$.
Theorem \ref{theo:Qorbits} gives $(\alpha,s)$ and hence properties 4 and 6. We
recall that such an orbit can be constructed as the matrix of $T_{\beta}$
in $M = \Q_p[x]/(xf(x^2))$. Given a basis $\{e_1,\dots,e_{4n+1}\}$ of $M$,
the action of $T_{\beta}$ realises $J = \Z_p\langle e_1,\dots,e_{4n+1}\rangle$
as an $R = \Z_p[x]/(xf(x^2))$-submodule. Note that $R \cong R_1 \oplus \beta R_2$.
The fact that $T_{\beta}$ respects
$R_1$ and $R_2$ implies that $J = I_1 + \beta I_2$ for some fractional
ideals $I_1$ in $R_1$ and $I_2$ in $R_2$, which necessarily satisfy 
$\ol{I_1} \subset I_2 \subset \ol{I_1}'$. The fact that the
forms $(\cdot,\cdot)_{\alpha}^{(1)}$ and $(\cdot,\cdot)_{\alpha}^{(2)}$
have to be self-dual with respect to $I_1$ and $I_2$ with matrices isometric
to $J_{2n+1}$ and $J_{2n}$ over $\Z_p$, respectively, give the rest of
the hypotheses.

These two constructions are inverse to each other, and so we are done.
\end{proof}

\begin{proof}[Proof of Theorem \ref{theo:integral}]
It suffices to show that for every element of $\hat{A_b}(\Q_p)/\psi(J_b(\Q_p))$
there is a tuple $(I_1,I_2,\alpha,s)$ satisfying the conditions
of Proposition \ref{prop:intOrbits}.
We note that splitness of the forms over $\Q_p$ follows from Theorem
\ref{theo:comp}. Furthermore, by \cite[Lemma 4.9]{LTA2n} (cf. \cite[Proposition 8.5]{BG}),
there exists a fractional ideal $I_1$ in $R_1$ such that $I_1^2 \subset \alpha R_1$
with $N(I_1)^2 = N_{L_1/\Q_p}(\alpha)$. 

We can observe that when taking the image under the tautological map
$L_1 \to L_2$, the lattices $\ol{I_1}$ and 
$\ol{I_1}'$ are dual to each other with respect to the form 
$(\cdot,\cdot)_{\alpha}^{(2)}$. This follows from observing that for any
$\mu,\lambda \in L_2$ with liftings $\mu',\lambda' \in L_1$ we have that
\[
(\mu,\lambda)_{\alpha}^{(2)} = (\mu',\beta\lambda')_{\alpha}^{(1)}.
\]
Then, the process of finding a fractional
ideal $I_2$ with the required conditions reduces to finding a lattice
$\ol{I_1} \subset \Lambda \subset \ol{I_1}'$ which is self-dual
and is stable under multiplication by $\beta_2$, up to considerations at $p = 2$. We further
observe that any lattice $\Lambda$ satisfying $\ol{I_1} \subset \Lambda \subset \ol{I_1}'$
is automatically stable under $\times \beta_2$, so it automatically is
a fractional ideal.

We split our the rest of our proof into the two cases $p \neq 2$ and $p = 2$,
in a similar way to \cite[Propositions 6.9, 6.11]{ShankarD2n+1}.
\begin{itemize}
\item $p \neq 2$: By \cite[Lemma 3.4]{Cassels} we can find a basis $(f_i)$ of $\ol{I_1}$
such that its Gram matrix with respect to $(\cdot,\cdot)_{\alpha}^{(2)}$
is
\[
\begin{pmatrix}
u_1p^{a_1} & & & \\
& u_2p^{a_2} & & \\
& & \ddots & \\
& & & u_{2n}p^{a_{2n}}\\
\end{pmatrix}
\]
where the $a_i$ are non-negative integers and $u_i \in \Z_p^{\times}$.
By replacing $f_i$ with $p^{-\lfloor \frac{a_i}{2} \rfloor}f_i$, we may
assume that $a_i \in \{0,1\}$, and the resulting lattice $\Lambda$ still
satisfies $\ol{I_1} \subset \Lambda \subset \ol{I_1}'$.
Write $\Lambda = \Lambda_0 \oplus \Lambda_1$, where $\Lambda_i$ is the span
of those $f_j$ with $b_j = i$ ($i = 0,1$). Given that the discriminant of
the form is $1$ modulo squares, the dimensions of both $\Lambda_0$ and
$\Lambda_1$ have to be even. We will now see that both $\Lambda_0 \otimes \Q_p$
and $\Lambda_1 \otimes \Q_p$ are split quadratic spaces.

Let $\Lambda_0$ be spanned by $(f_1,\dots,f_{2a})$ and
let $\Lambda_1$ be spanned by $(f_{2a+1},\dots,f_{2n})$.
Then, the discriminant of $\Lambda_0 \otimes \Q_p$ is $(-1)^a \prod_{i=1}^{2a} u_i$
and the Hasse invariant is $1$, as $(u_i,u_j)_p = 1$ for all $u_i,u_j \in \Z_p^{\times}$.
On the other hand, the discriminant of $\Lambda_1\otimes \Q_p$ is $(-1)^{n-a} \prod_{i=2a+1}^{2n}u_i$
and its Hasse invariant is $(-1)^{(n-a)(p-1)/2} \prod_{i=2a+1}^{p} \lp \frac{u_i}{p}\rp$.
A straightforward computation shows that the Hasse invariant of $(\Lambda_0 \oplus \Lambda_1) \otimes \Q_p$
is equal to the Hasse invariant of $\Lambda_1 \otimes \Q_p$, so both
these invariants are equal to $1$. Given that $(-1)^{(n-a)(p-1)/2}$ is 
equal to $(-1)^{n-a}$ up to squares (indeed, both these quantities are
equal to $(-1)^{n-a}$ if $p \equiv 3 \pmod{4}$ or equal to $1$ modulo
squares if $p \equiv 1 \pmod{4}$), this forces the discriminant of $\Lambda_1 \otimes \Q_p$
to be equal to $1$. Since the discriminant of $(\Lambda_0 \oplus \lambda_1) \otimes \Q_p$
is $1$, and also the product of discriminants in $\Lambda_0$ and $\Lambda_1$,
this implies that the discriminant of $\Lambda_0 \otimes \Q_p$
is also $1$, proving our claim that both $\Lambda_0 \otimes \Q_p$
and $\Lambda_1 \otimes \Q_p$ are split quadratic spaces.

Thus, we can choose a basis of $\Lambda_0$ such that its Gram matrix is
$J_{2a}$, and we can choose a basis of $\Lambda_1$ such that its Gram matrix
is $pJ_{2(n-a)}$. By replacing the elements of the basis $f_{2a+1},\dots,f_{2n}$ by $f_{2a+1}/p, \dots, f_{a+n}/p, f_{a+n+1}, \dots, f_{2n}$,
we get the matrix $J_{2(n-a)}$. Therefore, we obtain a self-dual lattice
$\Lambda = \Lambda_0 \oplus \Lambda_1$ with the desired inclusion conditions.

\item $p = 2$: In this situation, by \cite[Lemma 4.1]{Cassels} we can find a basis of $\ol{I_1}$ such
that its Gram matrix with respect to $(\cdot,\cdot)_{\alpha}^{(2)}$ is
\[
\begin{pmatrix}
2^{a_1}Q_1 & & & \\
& 2^{a_2}Q_2 & & \\
& & \ddots & \\
& & & 2^{a_{k}}Q_k\\
\end{pmatrix},
\]
where $a_i \geq 0$ and the $Q_i$ are either $1 \times 1$ matrices with
an entry in $\Z_p^{\times}$ or $2 \times 2$ matrices of the form
\[
Q_i = \begin{pmatrix} 0 & 1 \\ 1 & 0 \end{pmatrix} \quad \text{or} \quad Q_i = \begin{pmatrix} 2 & 1 \\ 1 & 2 \end{pmatrix}.
\]
As before, we may assume that $a_i \in \{0,1\}$. For the $2 \times 2$ matrices,
we may further assume that $a_i = 0$: if $a_i = 1$, we may substitute $e_1$
for $e_1/2$ to get a self-dual lattice. Therefore, we may assume that the
Gram matrix is of the form
\[
\begin{pmatrix}
2U & & & \\
& Q_2 & & \\
& & \ddots & \\
& & & Q_k\\
\end{pmatrix},
\]
where $U$ is a diagonal matrix of size $2a \times 2a$ with unit entries,
and the $Q_i$ are either $1 \times 1$ or $2 \times 2$ matrices with unit
determinant. Finally, we notice that for a matrix $\begin{pmatrix} 2u_1 & 0 \\ 0 & 2u_2 \end{pmatrix}$
with $u_1,u_2 \in \Z_p^{\times}$, the basis spanned by $(e_1+e_2)/2$ and
$(e_1-e_2)/2$ gives a self-dual lattice. We can conclude that there exists
a self-dual lattice $\ol{I_1} \subset \Lambda \subset \beta_2^{-1}\ol{I_1}$.

It is not necessarily the case that $I_2$ is even with respect to the form
$(\cdot,\cdot)_{\alpha}^{(2)}$, so it might not be the case that this lattice
is isometric to $(L_{2n},J_{2n})$ over $\Z_2$. However, it is the case that both
lattices are isometric under a matrix in $\mathrm{O}_{2n}(\Q_2)$ with coefficients
in $\frac{1}{2}\Z_2$. Thus, the given tuple $(I_1,I_2,\alpha,s)$ yields
an orbit in $\frac{1}{2}V(\Z)$.
\end{itemize}
\end{proof}

\section{The resolvent form}
\label{section:OrbitsA}

We keep the notation $(G,V) = (G_B,V_B)$ from last section.
As before, let $K$ be a field of characteristic zero.
Consider an element $A \in V(K)$ as a $(2n+1) \times 2n$ matrix with entries
in $K$ and associated characteristic polynomial $f(x)$. Then, $A^*A$ is 
a $2n \times 2n$ matrix that is symmetric along the antidiagonal and has
characteristic polynomial $f(x)$. Further, $(g,h) \in \So_{2n+1}\times \So_{2n}$
acts on $A^*A$ by $(g,h) \cdot (A^*A) = hA^*Ah^{*}$.

Let $\PSo_{2n} = \So_{2n}/\mu_2$. Then, we define the representation
$(G_A,V_A) = (\PSo_{2n},\Sym^2(2n))$, where $\Sym^2(2n)$ denotes the $2n \times 2n$
matrices that are symmetric along the antidiagonal, and $G_A$ acts by
conjugation on these matrices. This is (up to a trace zero condition that ultimately does not matter)
the same representation that was studied in \cite{SW}, corresponding to
the Dynkin diagram $A_{2n-1}$. The ring of invariants
of $(G_A,V_A)$ is generated by the coefficients of the characteristic polynomials
of the matrices of $V_A$, and hence we have an isomorphism between $V_A \git G_A$
and $B$. We note, however, that the degrees of the elements of $V_A \git G_A$ are
half the degrees of the corresponding invariants in $B$.

\subsection{Rational orbits}

We recall \cite[Proposition 2.1]{SW}.
\begin{prop}
Let $b \in B^{rs}(K)$ with $\Delta(b) \neq 0$. If the associated characteristic
polynomial is $f(x)$, write $L = K[x]/(f(x))$. Then if $v \in V_{A,b}(K)$,
then $\Stab_{G_A}(v) \cong (\Res_{L/K} \mu_2)_{N \equiv 1}/\mu_2$.
\end{prop}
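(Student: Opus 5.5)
The plan is to compute the stabiliser first in $\So_{2n}$ and then pass to the quotient $\PSo_{2n} = \So_{2n}/\mu_2$, working over $K^s$ and checking Galois-equivariance. This parallels the computation of $\Stab_G(v)$ for the distinguished orbit in Section \ref{section:OrbitsB}.

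Let $v \in V_{A,b}(K)$, viewed as a $2n\times 2n$ matrix $T$ that is self-adjoint for the split form $J_{2n}$ and has characteristic polynomial $f(x)$. Since $\Delta(b)\neq 0$, the roots of $f$ are distinct. Hence $T$ is regular semisimple, and its centraliser in $\GL_{2n}$ is the torus $\Res_{L/K}\mathbb{G}_m$. Concretely, $K^{2n}$ becomes a free rank one $L$-module via $x\mapsto T$, and commuting matrices are multiplications by $\lambda\in L^\times$.

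First I compute the centraliser of $T$ in $\mathrm{O}_{2n}$. Self-adjointness of $T$ means that multiplication by $\lambda$ has adjoint multiplication by $\lambda$. The condition ${}^t g J g = J$ therefore becomes $\lambda^2 = 1$, so the centraliser is $\Res_{L/K}\mu_2$. Imposing $\det = 1$ is the same as $N_{L/K}(\lambda)=1$, since the determinant of multiplication by $\lambda$ is its norm. Thus the centraliser in $\So_{2n}$ is $(\Res_{L/K}\mu_2)_{N=1}$. This is exactly the argument in the proof of the stabiliser proposition for $v\in V_B$, now with only $L_2 = L$ present.

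Next I pass to $\PSo_{2n}$. An element $\bar g\in \PSo_{2n}(K^s)$ fixes $v$ under conjugation exactly when a lift $g\in\So_{2n}(K^s)$ satisfies $gTg^{-1}=T$, since conjugation by the central $-1$ is trivial. So there is a surjection from the stabiliser in $\So_{2n}$ to the stabiliser in $\PSo_{2n}$ whose kernel is the image of $\mu_2$. The scalar $-1$ corresponds to $\lambda=-1\in L$, which has norm $(-1)^{2n}=1$, so $\mu_2$ indeed lies in $(\Res_{L/K}\mu_2)_{N=1}$. This gives $\Stab_{G_A}(v)\cong(\Res_{L/K}\mu_2)_{N=1}/\mu_2$ as finite étale group schemes, because all the identifications are defined over $K$.

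The only subtle point is the identification as group schemes over $K$ rather than just on $K^s$-points. For this I would note that the map $L\to \mathrm{End}(K^{2n})$, $x\mapsto T$, is defined over $K$, which makes the isomorphism Galois-equivariant. Alternatively, one can simply cite \cite[Proposition 2.1]{SW}, as the paper does.
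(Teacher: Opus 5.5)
The paper gives no argument of its own here beyond citing \cite[Proposition 2.1]{SW}. Your proof is correct and is the same centraliser computation the paper uses for $\Stab_{G_B}(v)$ in Section~\ref{section:OrbitsB}: centraliser $L^{\times}$, orthogonality forcing $\lambda^2=1$, determinant giving the norm condition. You also pass to $\PSo_{2n}$ correctly, since the central $\mu_2$ acts trivially by conjugation and lies in the norm-one subgroup because $(-1)^{2n}=1$.
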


Therefore, by Proposition \ref{prop:ait}, if $b \in B^{rs}(K)$ the 
$G_A(K)$-orbits in $V_{A,b}(K)$ are in bijection with the pointed kernel of
\[
H^1(K,(\Res_{L/K} \mu_2)_{N \equiv 1}/\mu_2) \to H^1(K,\So_{2n}).
\]
Similarly to $(G,V)$, there is a map $H^1(K,(\Res_{L/K} \mu_2)_{N \equiv 1}/\mu_2) \to (L^{\times}/K^{\times}L^{\times 2})_{N \equiv 1}$
which is bijective or $2$-to-$1$ according to whether the norm map
$N \colon (\Res_{L/K} \mu_2)\mu_2 (K) \to \mu_2$ is surjective or not
(see \cite[Proposition 2.2]{SW} for a more explicit description).

Let $\beta$ denote the image of $x$ inside $L$, so that $L$ has a $K$-basis
$1,\beta,\dots,\beta^{2n-1}$. Given $\alpha \in (L^{\times}/K^{\times}L^{\times 2})_{N \equiv 1}$,
we can define the form $(\cdot,\cdot)_{\alpha} \colon L \times L \to K$ by
\[
(\mu,\lambda)_{\alpha} = \text{coefficient of }\beta^{2n-1}\text{ in }\alpha^{-1}\mu\lambda.
\]
This form has discriminant $1$, up to squares. Then, we have (cf. \cite[Theorem 2.6]{SW}):
\begin{theorem}
There is a bijection between:
\begin{itemize}
\item $\Po(K)$-orbits in $V_{A,b}(K)$; and,
\item elements $\alpha \in (L^{\times}/K^{\times}L^{\times 2})_{N \equiv 1}$
such that $(\cdot,\cdot)_{\alpha}$ is split. 
\end{itemize}
These $\Po(K)$-orbits split into one
or two $\PSo(K)$-orbits according to whether the norm map on $(\Res_{L/K} \mu_2)_{N \equiv 1}/\mu_2$
is surjective or not, respectively.
\end{theorem}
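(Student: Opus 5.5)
We follow the same strategy as for Theorem \ref{theo:Qorbits}, but for the larger group $\Po_{2n} = \mathrm{O}_{2n}/\mu_2$ acting by conjugation on $V_A$. The statement splits into two parts: a bijection at the level of $\Po(K)$-orbits, and the refinement to $\PSo(K)$-orbits.

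For the bijection, we first fix $b \in B^{rs}(K)$ and take $v \in V_{A,b}(K)$. Since $f$ is separable, $v$ is regular semisimple and its centraliser in $\GL_{2n}$ is $L^{\times}$. An element $\lambda$ of this centraliser lies in $\mathrm{O}_{2n}$ exactly when $\lambda^2 = 1$, because $v$ is self-adjoint for $J_{2n}$. So $\Stab_{\Po}(v) \cong \Res_{L/K}\mu_2/\mu_2$.

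Proposition \ref{prop:ait} then identifies $\Po(K)$-orbits in the geometric orbit with $\ker\bigl(H^1(K,\Res_{L/K}\mu_2/\mu_2) \to H^1(K,\Po_{2n})\bigr)$. To make this explicit:
\begin{enumerate}
\item Use the exact sequence $1\to\mu_2\to\Res_{L/K}\mu_2\to\Res_{L/K}\mu_2/\mu_2\to1$ and Hilbert 90 to get $H^1(K,\Res_{L/K}\mu_2/\mu_2) \supseteq L^{\times}/K^{\times}L^{\times2}$. The image of $H^1$ is cut out by the condition that the class dies in $H^2(K,\mu_2)$, which will be handled together with the splitness condition.
\item Compute the image of a class $\alpha$ in $H^1(K,\mathrm{O}_{2n})$. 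This is the quadratic space $(L,(\cdot,\cdot)_{\alpha})$, where the twist by $K^{\times}$ only rescales the form, so it does not change whether it is split.
\item Check that the discriminant of $(\cdot,\cdot)_{\alpha}$ is $N(\alpha)$ times that of the untwisted form. The untwisted form has discriminant $1$ by the power basis computation already used in Section \ref{subsubs:dist}. Hence triviality in $H^1$ forces $N(\alpha)\in K^{\times2}$, which explains the subscript $N\equiv1$.
\end{enumerate}

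Conversely, given $\alpha$ with $(\cdot,\cdot)_{\alpha}$ split, we choose an isometry $(L,(\cdot,\cdot)_\alpha)\cong (K^{2n},J_{2n})$. Multiplication by $\beta$ is self-adjoint for this form, so its matrix lies in $V_{A,b}(K)$. Changing the isometry changes the matrix by $\mathrm{O}_{2n}(K)$, and rescaling $\alpha$ by $K^{\times}$ changes it by a similitude that becomes trivial in $\Po$. Together with the first direction, this gives the bijection.

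The main obstacle is the passage from $\Po$ to $\Po_{2n}$, and the associated $H^2(K,\mu_2)$ issue. Because $\Po_{2n}$ is $\mathrm{O}_{2n}/\mu_2$ rather than $\mathrm{O}_{2n}$, one must show that the kernel into $H^1(K,\Po_{2n})$ matches the kernel into $H^1(K,\mathrm{O}_{2n})$ on classes coming from $L^{\times}/K^{\times}L^{\times2}$. The plan is to argue as in \cite[Proposition 2.2, Theorem 2.6]{SW} via the diagram comparing $H^1(K,\mathrm{O}_{2n})\to H^1(K,\Po_{2n})$. Its fibres are twists by $K^{\times}/K^{\times2}$ acting as scaling of the form, and scaling is exactly absorbed by $K^{\times}$ in the quotient.

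For the refinement to $\PSo$, the $\Po(K)$-orbit of $v$ is a single $\Po(K)\cdot v$. It breaks into $[\Po(K):\PSo(K)\Stab_{\Po}(v)(K)]$ many $\PSo(K)$-orbits. The determinant of $\lambda\in(\Res_{L/K}\mu_2)(K)$ is $N(\lambda)$, so this index is $1$ or $2$ according to whether the norm on the stabiliser is surjective onto $\mu_2$. This is the same argument as the one following Theorem \ref{theo:Qorbits}, where $h\in\mathrm{O}_{2n}(K)\setminus\So_{2n}(K)$ produces the second orbit.
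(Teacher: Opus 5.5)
The paper does not prove this statement; it quotes it from \cite[Theorem 2.6]{SW}. Your argument is a correct, self-contained reconstruction. It uses the same mechanism the paper uses for the $B_{2n}$ and $C_{2n}$ parametrisations:
\begin{itemize}
\item the stabiliser $\Res_{L/K}\mu_2/\mu_2$ together with Proposition \ref{prop:ait};
\item compatibility of the connecting maps to $H^2(K,\mu_2)$ for $\Res_{L/K}\mu_2\to\Res_{L/K}\mu_2/\mu_2$ and for $\mathrm{O}_{2n}\to\Po_{2n}$. This is what forces kernel classes to lie in $L^{\times}/K^{\times}L^{\times 2}$, and you should say so explicitly rather than saying it is ``handled together with splitness'';
\item the fact that the fibres of $H^1(K,\mathrm{O}_{2n})\to H^1(K,\Po_{2n})$ are orbits of $K^{\times}/K^{\times 2}$ acting by scaling. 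Hence the kernel consists of forms similar to the split form, and these are split.
\end{itemize}
Compared with the bare citation, this shows why $K^{\times}$ is quotiented out and where the $\PSo$ criterion comes from. You also rightly read that criterion as the norm on the $\Po$-stabiliser $\Res_{L/K}\mu_2/\mu_2$: on the norm-one group written in the statement, the norm is trivially non-surjective.

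Two points need correcting.

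\emph{The similitude is not trivial in $\Po$.} In the converse construction, the similitude relating the isometries for $\alpha$ and $c\alpha$ does not ``become trivial in $\Po$''. It is a nontrivial $K$-point. Indeed $\Po_{2n}=\mathrm{O}_{2n}/\mu_2\cong\mathrm{GO}_{2n}/\mathbb{G}_m$, so $\Po(K)=\mathrm{GO}_{2n}(K)/K^{\times}$, which is in general strictly larger than $\mathrm{O}_{2n}(K)/\{\pm1\}$. That is exactly why $\Po(K)$-orbits are indexed modulo $K^{\times}$ rather than by $L^{\times}/L^{\times2}$.

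\emph{The $\PSo$ refinement uses the wrong group of $K$-points.} You compute determinants only on $(\Res_{L/K}\mu_2)(K)=\mu_2(L)$. But by the exact sequence $\mu_2(L)\to(\Res_{L/K}\mu_2/\mu_2)(K)\to K^{\times}/K^{\times 2}\to L^{\times}/L^{\times 2}$, the group $\Stab_{\Po}(v)(K)$ also contains classes of $g\in L^{\times}$ with $g^2=c\in K^{\times}\setminus K^{\times 2}$. These are similitudes commuting with $v$. The correct step is to identify the character $\Po\to\Po/\PSo\cong\mu_2$, restricted to $\Res_{L/K}\mu_2/\mu_2$, with the map induced by $N$ as a morphism of algebraic groups (check this on $\ol{K}$-points), and only then take $K$-points.

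This matters. Suppose every factor field of $L$ contains $\sqrt{c}$, and take $g=\sqrt{c}$. Then $\det(g/\sqrt{c})=N(g)/c^{n}=(-1)^{n}$. So for odd $n$ the $\Po(K)$-orbit remains a single $\PSo(K)$-orbit even when $f$ has no odd-degree factor. Your closing analogy with the argument after Theorem \ref{theo:Qorbits} therefore should not be taken literally: there the stabiliser has no $\mu_2$-quotient, and the criterion really is the existence of an odd-degree factor.
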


Let $b \in B^{rs}(K)$ correspond to the invariant polynomial $f(x)$,
and consider the hyperelliptic curve $C_b \colon y^2 = xf(x)$ with Jacobian
$J_b = \Jac(C_b)$. We recall the setup of \eqref{eq:JA}:
\[
J_b \xrightarrow{\phi_M} A_b \xrightarrow{\phi} A_b^{\vee} \xrightarrow{} J_b
\]
where $J_b[\phi_M] = M$, $J_b[\phi \circ \phi_M] = M^{\perp}$ and the whole
composition $J_b \to J_b$ is multiplication-by-$2$. In particular, we have
that $A_b[\phi]$ is isomorphic to $M^{\perp}/M$. First, we note the
following fact:
\begin{lemma}
\label{lemma:self-dual}
The isogeny $\phi \colon A_b \to A_b^{\vee}$ is self-dual.
\end{lemma}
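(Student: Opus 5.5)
We must show $\phi^{\vee} = \phi$ under the canonical identification $A_b^{\vee\vee} = A_b$. The plan is to reduce this to the fact that multiplication by $2$ on $J_b$ is symmetric, i.e.\ $[2] = \lambda^{-1}\circ[2]^{\vee}\circ\lambda$ for the principal polarisation $\lambda\colon J_b \to J_b^{\vee}$, which we suppress by identifying $J_b = J_b^{\vee}$. This identification is how $\phi_M^{\vee}\colon A_b^{\vee} \to J_b$ in \eqref{eq:JA} is to be read.

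First, dualise the identity $\phi_M^{\vee}\circ\phi\circ\phi_M = [2]_{J_b}$. Duality is contravariant, $[2]^{\vee} = [2]$, and $\phi_M^{\vee\vee} = \phi_M$, so we get
\[
\phi_M^{\vee}\circ\phi^{\vee}\circ\phi_M = [2]_{J_b} = \phi_M^{\vee}\circ\phi\circ\phi_M .
\]
Here the identification $J_b = J_b^{\vee}$ via $\lambda$ is used on both ends. This is precisely where one uses that $\lambda$ is symmetric, $\lambda^{\vee} = \lambda$ under the biduality identification, and I would check this bookkeeping carefully.

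Next, cancel the outer factors. Since $\phi_M$ is an isogeny, hence surjective on geometric points, we get $\phi_M^{\vee}\circ\phi^{\vee} = \phi_M^{\vee}\circ\phi$. So $\phi_M^{\vee}\circ(\phi^{\vee}-\phi) = 0$ as homomorphisms $A_b \to J_b$. The image of $\phi^{\vee}-\phi$ is then a connected subgroup contained in the finite group $\ker\phi_M^{\vee}$. As $A_b$ is connected, that image is trivial, and therefore $\phi^{\vee} = \phi$.

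The only genuinely delicate point is the bookkeeping in the first step, namely making sure that the $\phi_M^{\vee}$ in \eqref{eq:JA} really is the dual of $\phi_M$ composed with $\lambda^{-1}$. Equivalently, we need the Weil-pairing description: $\psi$ has kernel $M^{\perp}$, and the isogeny $A_b = J_b/M \to A_b^{\vee}$ induced by $\lambda$ is the quotient by $M^{\perp}/M$. Here $A_b^{\vee} = J_b^{\vee}/M^{\perp}$ by the standard duality $(J/M)^{\vee} = J^{\vee}/M^{\perp}$. With this identification fixed, the argument above is formal.

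As a sanity check, one can alternatively write $\phi$ as the isogeny induced on $J_b/M$ by the polarisation $2\lambda$, which descends because $M$ is isotropic for the pairing $e_2$. An isogeny induced by a polarisation is symmetric, which would give another proof.
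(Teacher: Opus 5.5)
Your argument is correct, but it differs from the paper's proof, which is essentially the alternative you sketch in your last paragraph.

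\textbf{The paper's route.} It applies Mumford's descent criterion. Since $M \subset \ker(2\lambda) = J_b[2]$ and $M$ is isotropic for $e_{2\lambda}$, the polarisation $2\lambda$ descends to an ample line bundle $\cL$ on $A_b$. The associated polarisation $\phi_{\cL}$ satisfies $2\lambda = \phi_M^{\vee}\circ\phi_{\cL}\circ\phi_M$ and has kernel $M^{\perp}/M$. The paper then identifies $\phi$ with $\phi_{\cL}$ and concludes because polarisations are symmetric.

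\textbf{Your route.} You argue purely formally: you dualise $\phi_M^{\vee}\circ\phi\circ\phi_M = 2\lambda$ and use only $\lambda^{\vee} = \lambda$. You then cancel $\phi_M$ on the right, since it is surjective, and $\phi_M^{\vee}$ on the left, since it has finite kernel and $A_b$ is connected. This avoids descent theory entirely. It also shows more generally that whenever a symmetric isogeny factors as (dual of an isogeny) $\circ$ (middle term) $\circ$ (the isogeny), the middle term is symmetric.

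\textbf{What each buys.} Your cancellation step makes explicit the uniqueness that the paper uses tacitly when it identifies $\phi$ with $\phi_{\cL}$. Matching kernels alone would determine $\phi$ only up to an automorphism of $A_b^{\vee}$. That is exactly why your insistence that the last arrow in \eqref{eq:JA} is $\lambda^{-1}\circ\phi_M^{\vee}$ is the right point to pin down. In exchange, the paper's route gives the stronger conclusion that $\phi$ is a polarisation, i.e.\ positive and not merely symmetric. Your argument yields only symmetry, which is all the lemma claims.
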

\begin{proof}
As suggested by the notation, the abelian varieties $A_b$ and $A_b^{\vee}$
are indeed dual to each other. It is a general fact of principally
polarised abelian varieties that $(J_b/M)^{\vee} \simeq J_b/M^{\perp}$,
which follows from the properties of the Weil pairing.

We will show that, in fact, $\phi$ is a polarisation. Consider the canonical
principal polarisation $\lambda_J \colon J_b \to J_b^{\vee}$ given by the
theta divisor. Then, the associated polarisation $2 \lambda_J$ has kernel 
$J_b[2]$. By \cite[\S 23, Corollary to Theorem 2]{Mumford}, the polarisation
$2 \lambda_J$ descends to a polarisation on $A_b$ (i.e. there is an ample line bundle
$\cL$ on $A_b$ such that its pullback along $\phi_M \colon J_b \to A_b$ is the line bundle associated with $2 \lambda_J$)
if and only if the following two conditions are satisfied:
\begin{itemize}
\item $M$ is contained in the kernel of $2 \lambda_J$; true as $M \leqslant J_b[2]$.
\item $M$ is isotropic with respect to the Weil pairing $e_{2\lambda_J}$;
true as $M \leqslant M^{\perp}$.
\end{itemize}
Therefore, we get a polarisation $\phi_{\cL} \colon A_b \to A_b^{\vee}$.
Unravelling the definitions, $\phi_{\cL}$ satisfies $2 \lambda_J = \phi_M^{\vee} \circ \phi_{\cL} \circ \phi_M$
(viewing $\phi_M^{\vee}$ as a map between $A_b^{\vee}$ and $J_b^{\vee}$),
and the kernel of $\phi_{\cL}$ as an isogeny is isomorphic to $M^{\perp}/M$.
Thus we can identify $\phi = \phi_{\cL}$, and so we are done.
\end{proof}
We also observe the following fact about the stabiliser:
\begin{lemma}
Under the above notation, we have $(\Res_{L/K} \mu_2)_{N \equiv 1}/\mu_2 \cong M^{\perp}/M$.
\end{lemma}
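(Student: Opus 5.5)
We will prove the isomorphism as Galois modules by writing both sides explicitly in terms of the roots $x_1,\dots,x_{2n}$ of $f(x)$ over $\ol{K}$, and then checking that the identification is compatible with the $\Gal(K^s/K)$-action. Since both sides are finite étale group schemes over $K$, this suffices.

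\textbf{The left-hand side.} We have $L\otimes_K K^s \cong (K^s)^{2n}$, with coordinates indexed by the roots $x_i$. Hence $(\Res_{L/K}\mu_2)(K^s)$ is the $\F_2$-vector space $\F_2^{\{1,\dots,2n\}}$. The Galois group acts on it by permuting coordinates, exactly as it permutes the roots. Identify a vector with the subset $I\subset\{1,\dots,2n\}$ where its entry is $-1$. Then:
\begin{itemize}
\item the norm-one condition is that $|I|$ is even;
\item the diagonal subgroup $\mu_2$ is $\{\emptyset,\{1,\dots,2n\}\}$.
\end{itemize}
So the left side is $\{I : |I| \text{ even}\}$ modulo $I\sim I^c$. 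This has order $2^{2n-1}/2=2^{2n-2}$.

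\textbf{The right-hand side.} Use the description of $J_b[2]$ recalled before Proposition \ref{prop:StabJ}. Let $e_i=[(P_i)-\infty]$. Then $J_b[2](\ol K)$ is $\F_2^{\{0,\dots,2n\}}$ modulo the all-ones vector. We will define the map
\[
I \longmapsto \sum_{i\in I} e_i \qquad (I\subset\{1,\dots,2n\},\ |I| \text{ even}).
\]

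The key points to check are as follows.
\begin{itemize}
\item \emph{Image in $M^\perp$.} Compute $e_{2}(e_0,\sum_{i\in I}e_i)$ via the standard formula: for distinct $i,j$, $e_2(e_i,e_j)=-1$, and $e_2(e_i,e_i)=1$. Then $e_0$ pairs trivially with $\sum_{i\in I}e_i$ exactly when $|I|$ is even. Equivalently, in the paper's description, these are the elements $\sum_{i\in I}e_i$ with $|I|$ even, up to the relation.
\item \emph{Injectivity into $J_b[2]$.} The only relation in $J_b[2]$ is that $\sum_{i=0}^{2n}e_i=0$. This relation involves $e_0$, so sets $I\subset\{1,\dots,2n\}$ give distinct elements. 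The map is therefore an injective homomorphism from $\{|I| \text{ even}\}$, a group of order $2^{2n-1}$, into $M^\perp$, which has order $2^{2n-1}$. Hence it is an isomorphism.
\item \emph{Passing to the quotient.} The set $I=\{1,\dots,2n\}$ maps to $\sum_{i=1}^{2n}e_i=-e_0=e_0$, which generates $M$. So the diagonal $\mu_2$ maps onto $M$, and we get an induced isomorphism $(\Res_{L/K}\mu_2)_{N\equiv1}/\mu_2 \cong M^\perp/M$.
\item \emph{Galois equivariance.} The Galois group permutes $x_1,\dots,x_{2n}$ and fixes $x_0=0$. It therefore acts on both sides by the same permutation of indices.
\end{itemize}

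\textbf{Expected difficulty.} The main care point is the bookkeeping of the relation and the sign conventions for the Weil pairing. This is essentially the same elementary computation as in Proposition \ref{prop:StabJ}. Indeed, one can deduce the present lemma from that proposition: it identifies $(\Res_{L_2/K}\mu_2)_{N=1}$ with $J_b[\psi]=M^\perp$, and quotienting by the diagonal $\mu_2$ corresponds to quotienting by $M$. Here $L=L_2$.
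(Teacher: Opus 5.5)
Your proof is correct and takes the same route as the paper. The paper deduces the lemma directly from Proposition \ref{prop:StabJ}, whose proof is exactly the root-by-root identification of $J_b[\psi]=M^{\perp}$ with $(\Res_{L_2/K}\mu_2)_{N=1}$ that you carry out. Your explicit check that the diagonal $\mu_2$ maps onto $M=\langle e_0\rangle$ supplies the one detail needed to pass to the quotient, which the paper leaves implicit.
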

This follows immediately from Proposition \ref{prop:StabJ}. Therefore,
we have a map
\[
A_b^{\vee}(K)/\phi(A_b(K)) \xhookrightarrow{} H^1(K,(\Res_{L/K} \mu_2)_{N \equiv 1}/\mu_2).
\]
\begin{theorem}
\label{theo:comp2}
The composition
\[
A_b^{\vee}(K)/\phi(A_b(K)) \xhookrightarrow{} H^1(K,(\Res_{L/K} \mu_2)_{N \equiv 1}/\mu_2) \xrightarrow{} H^1(K,G_A)
\]
is trivial.
\end{theorem}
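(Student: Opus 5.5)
The plan is to deduce Theorem \ref{theo:comp2} from Theorem \ref{theo:comp} by comparing the two descent maps through a commutative diagram. The resolvent construction $A \mapsto A^*A$ is a morphism $V_B \to V_A$. It is equivariant for the projection $G_B = \So_{2n+1}\times\So_{2n} \to \So_{2n} \to \PSo_{2n} = G_A$ and compatible with the invariant maps to $B$. On stabilisers of regular semisimple elements it induces the quotient map $J_b[\psi] = M^{\perp} \to M^{\perp}/M = A_b[\phi]$. In the explicit description this is $(\Res_{L_2/K}\mu_2)_{N=1} \to (\Res_{L_2/K}\mu_2)_{N=1}/\mu_2$. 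I would check this compatibility directly on the distinguished orbit, where $T_\beta^2$ restricted to $\beta L_2$ is multiplication by $\beta^2$, i.e.\ by $x$ on $L_2$.

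On the geometric side, $\phi_M \colon J_b \to A_b$ induces a map $A_b^{\vee}(K)/\psi(J_b(K)) \to A_b^{\vee}(K)/\phi(A_b(K))$. Since $\psi = \phi\circ\phi_M$, this map is surjective: it is just the quotient by the larger subgroup $\phi(A_b(K)) \supseteq \psi(J_b(K))$. The connecting maps are compatible with $H^1(K,J_b[\psi]) \to H^1(K,A_b[\phi])$ induced by $\phi_M|_{M^{\perp}}$. This follows from the morphism of short exact sequences $0\to M^{\perp}\to J_b \xrightarrow{\psi} A_b^{\vee}\to 0$ and $0\to M^\perp/M \to A_b \xrightarrow{\phi} A_b^{\vee}\to 0$, whose vertical maps are $\phi_M$ and the identity on $A_b^{\vee}$.

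Assembling these gives a commutative diagram. The top row is the surjection $A_b^{\vee}(K)/\psi(J_b(K)) \twoheadrightarrow A_b^{\vee}(K)/\phi(A_b(K))$. The middle row is $H^1(K,J_b[\psi]) \to H^1(K,A_b[\phi])$. The bottom row is $H^1(K,G_B) \to H^1(K,G_A)$, induced by $G_B \to G_A$. The lower vertical maps are the maps $H^1(\Stab)\to H^1(G)$ from Proposition \ref{prop:ait}. The bottom square commutes because the stabiliser inclusions are compatible with $G_B\to G_A$, which we get from the equivariance of the resolvent map.

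Given any $x \in A_b^{\vee}(K)/\phi(A_b(K))$, lift it to $\tilde x$ in $A_b^{\vee}(K)/\psi(J_b(K))$. By Theorem \ref{theo:comp}, $\tilde x$ maps to the trivial class in $H^1(K,G_B)$. Pushing to $H^1(K,G_A)$ preserves triviality, so by commutativity $x$ maps to the trivial class as well.

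The main obstacle is verifying that the stabiliser map really is the quotient $M^\perp\to M^\perp/M$. Concretely, one must show that the kernel of $\Stab_{G_B}(v)\to\Stab_{G_A}(A^*A)$ is exactly the diagonal $\mu_2$, i.e.\ the element $-1$ of $L_2$ (which, as a point of $\Res_{L_1/K}\mu_2$, is $(1,-1)$). This must then be matched Galois-equivariantly with $M$ under the isomorphism of Proposition \ref{prop:StabJ}. I would do this by identifying $M$ with the image of $(1,-1)\in \Res_{L_1/K}\mu_2 \cong J_b[2]$, which corresponds to the point $P_0$ under the standard description $J_b[2]\cong (\Res_{L_1/K}\mu_2)_{N=1}$.

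A fallback, if that identification proves awkward, is a direct quadratic-form argument. The image of $\alpha$ in $H^1(K,G_A)$ is the form $(\cdot,\cdot)_\alpha$ on $L$, and this coincides with $(\cdot,\cdot)^{(2)}_\alpha$. Lemma \ref{lemma:comp} already shows that this form is split for elements in the image of $\delta_2$, and every class in $A_b^{\vee}(K)/\phi(A_b(K))$ comes from such an element.
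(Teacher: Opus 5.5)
Your argument is correct and is essentially the paper's proof. Both use the same three-row commutative diagram: the first column is trivial by Theorem~\ref{theo:comp}, and the top row $A_b^{\vee}(K)/\psi(J_b(K)) \to A_b^{\vee}(K)/\phi(A_b(K))$ is surjective. You go further than the paper in justifying why the diagram commutes, which the paper simply asserts: you check equivariance of $A \mapsto A^*A$, and you identify the stabiliser map with $M^{\perp}\to M^{\perp}/M$ via $(1,-1)\leftrightarrow[(P_0)-\infty]$. You also rightly use only that the bottom row sends the trivial class to the trivial class, not its surjectivity.
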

\begin{proof}
Note that there's a commutative diagram
\[
\begin{tikzcd}
A_b^{\vee}(K)/\psi(J_b(K)) \arrow[r] \arrow[d] & A_b^{\vee}(K)/\phi(A_b(K)) \arrow[d] \\
H^1(K,J_b[\psi]) \arrow[r] \arrow[d] & H^1(K,A_b[\phi]) \arrow[d] \\
H^1(K,G) \arrow[r]  & H^1(K,G_A) 
\end{tikzcd}
\]
Theorem \ref{theo:comp} shows that the composition in the first column is trivial. The map in
the first row is surjective, and the map in the last row is the surjective
forgetful map $H^1(K,\So_{2n+1}\times \So_{2n}) \to H^1(K,\So_{2n})$.
The result follows.
\end{proof}
Therefore, for all $b \in B^{rs}(K)$ there is a map
\begin{equation}
\label{eq:starSoluble}
A_b^{\vee}(K)/\phi(A_b(K)) \xhookrightarrow{\eta_{A,b}} G_A(K) \backslash V_{A,b}(K),
\end{equation}
and similarly to the last section we will call a $G_A(K)$-orbit in $V_{A,b}(K)$
\emph{$K$-soluble} if it intersects the image of $\eta_{A,b}$. If $K$ is a number
field, we say that an orbit is \emph{locally soluble} if it is $K_v$ soluble
for all completions $K_v$.
The same proof as in Corollary \ref{cor:Selmer} yields:
\begin{cor}
Let $K$ be a number field. Then for $b \in B^{rs}(K)$ we have
\[
\Sel_{\phi}(A_b) \xhookrightarrow{} G_A(K) \backslash V_{A,b}(K).
\]
\end{cor}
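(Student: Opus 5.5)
The plan is to mimic the proof of Corollary \ref{cor:Selmer}, with $(G,V)$ replaced by $(G_A,V_A)$ and $\psi$ replaced by $\phi$. By Proposition \ref{prop:ait} and the computation of the stabiliser, the $G_A(K)$-orbits in $V_{A,b}(K)$ correspond bijectively to the pointed kernel of
\[
H^1(K,A_b[\phi]) \to H^1(K,G_A).
\]
Here we use the identification $\Stab_{G_A}(v) \cong (\Res_{L/K}\mu_2)_{N\equiv 1}/\mu_2 \cong M^{\perp}/M \cong A_b[\phi]$. It therefore suffices to show that $\Sel_\phi(A_b) \subset H^1(K,A_b[\phi])$ lies in this kernel. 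The embedding is then the restriction of this bijection, and it is automatically injective.

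Take $c \in \Sel_\phi(A_b)$. We have the commutative square formed by $H^1(K,A_b[\phi]) \to H^1(K,G_A)$ on top and $\prod_v H^1(K_v,A_b[\phi]) \to \prod_v H^1(K_v,G_A)$ below, with localisation as the vertical maps. For each place $v$, the class $c_v$ lies in the image of $A_b^{\vee}(K_v)/\phi(A_b(K_v))$ by the definition of the Selmer group. Applying Theorem \ref{theo:comp2} over $K_v$ (which is a field of characteristic zero), the image of $c_v$ in $H^1(K_v,G_A)$ is trivial. Hence the image of $c$ in $H^1(K,G_A)$ lies in the kernel of the localisation map $H^1(K,G_A) \to \prod_v H^1(K_v,G_A)$.

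The remaining and main step is to show that this localisation map has trivial kernel for $G_A = \PSo_{2n}$. In Corollary \ref{cor:Selmer} this was quoted from \cite[Proposition 6.8]{LagaThesis}. Here I would either cite the analogous Hasse principle used in \cite{SW}, or argue directly as follows.

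\begin{itemize}
\item The class of $c$ maps into $H^1(K,\So_{2n})$: the stabiliser lifts to $(\Res_{L/K}\mu_2)_{N\equiv 1}$ by the description via $\alpha$.
\item An element of $H^1(K,\So_{2n})$ corresponds to a quadratic space of discriminant $1$, namely $(\cdot,\cdot)_\alpha$. Such a space is split if and only if it is split at every place, by Hasse--Minkowski. Only finite places are needed provided the real places are handled, so I would check that $(\cdot,\cdot)_\alpha$ is also split at real places, or include archimedean places in the product.
\end{itemize}

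The delicate point is the passage between $\So_{2n}$ and $\PSo_{2n}$. I would use the exact sequence $1 \to \mu_2 \to \So_{2n} \to \PSo_{2n} \to 1$ and compare kernels. If that comparison is awkward, I would fall back on the quadratic-form description in \cite[Theorem 2.6]{SW}: there the orbit condition is exactly that $(\cdot,\cdot)_\alpha$ is split, so the local-to-global statement reduces to Hasse--Minkowski applied to this form.
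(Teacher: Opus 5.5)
Your proposal is correct and is essentially the paper's proof. The paper simply says the argument of Corollary~\ref{cor:Selmer} carries over: Theorem~\ref{theo:comp2} gives local triviality at every place, and the map $H^1(K,G_A)\to\prod_v H^1(K_v,G_A)$ has trivial kernel, as quoted from \cite[Proposition 6.8]{LagaThesis}.

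If you prove that Hasse principle directly instead, the lift to $H^1(K,\So_{2n})$ does not follow from the $\alpha$-description alone. It follows from Brauer--Hasse--Noether applied to the obstruction in $H^2(K,\mu_2)$; the lift is then locally split (since $aH^n\simeq H^n$) and Hasse--Minkowski applies, with archimedean places included in both steps, as you suspected.
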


In \cite{SW}, a $G_A(K)$-orbit in $V_{A,b}(K)$ is called \emph{reducible} (or distinguished)
if it maps to the element $\alpha = 1$ in 
\[
H^1(K,(\Res_{L/K} \mu_2)_{N \equiv 1}/\mu_2) \to (L^{\times}/K^{\times}L^{\times 2})_{N \equiv 1}.
\]
More precisely, in \cite[\S 2.2]{SW} a distinguished orbit $v_b$ is constructed,
and a $\PSo_{2n}(K)$-orbit is called distinguished if it is $\Po_{2n}(K)$-equivalent
to the constructed orbit $v_b$. This corresponds precisely to the orbits
that map to $1 \in (L^{\times}/K^{\times}L^{\times 2})_{N \equiv 1}$, of
which there are at most two.

\subsection{Integral representatives}

We will prove the equivalent of Theorem \ref{theo:integral}. To do so,
we will use the description of integral orbits in \cite[\S 2.4]{SW}. We
note, however, that there is an oversight in loc. cit. in the case
$p = 2$; the amended statement should read as follows:
\begin{theorem}
\label{theo:intSW}
Let $b \in B^{rs}(\Z_2)$ with invariant polynomial $f(x)$, and let
$L = \Q_2[x]/(f(x))$ and $R = \Z_2[x]/(f(x))$.
There is a bijection between $\mathrm{O}_{2n}(\Z_2)$-orbits in $V_{A,b}(\Z_2)$
and equivalence classes of $(I,\alpha)$, where $\alpha \in L^{\times}$
and $I$ is a fractional ideal of $R$ satisfying $I^2 \subset \alpha R$
and $N(I)^2 = N(\alpha)$, \textbf{which is even with respect to the form $(\cdot,\cdot)_{\alpha}$}. 
Two pairs $(I_1,\alpha_1)$ and $(I_2,\alpha_2)$
are equivalent if there exists $c \in L^{\times}$ such that $I_1 = cI_2$
and $\alpha_1 = c^2 \alpha_2$.
\end{theorem}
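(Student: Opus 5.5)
The plan is to redo the lattice-theoretic argument of \cite[\S 2.4]{SW}, keeping the extra evenness condition visible at $p = 2$. In the forward direction I start from a pair $(I,\alpha)$ with $I^2 \subset \alpha R$, $N(I)^2 = N(\alpha)$ and $I$ even for $(\cdot,\cdot)_{\alpha}$. Then $(\cdot,\cdot)_{\alpha}$ restricted to $I$ takes values in $\Z_2$, because $\alpha^{-1}\mu\lambda \in R$ and the coefficient of $\beta^{2n-1}$ of an element of $R = \Z_2[\beta]$ is integral. The norm condition, together with the standard computation that $R$ is self-dual up to the different $f'(\beta)$ for the trace form ``coefficient of $\beta^{2n-1}$'', shows that $I$ is unimodular, i.e. of discriminant a unit square class. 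The ideal condition also makes $I$ stable under $T_\beta$.

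Evenness is now exactly the hypothesis needed in the third item of Lemma \ref{lemma:latticep2}, applied with $m = 2n$. The other hypothesis of that item, splitness of $I \otimes \Q_2$, should be automatic: a unimodular even lattice of even rank $2n$ and discriminant $1$ (with the sign $(-1)^n$ built into our convention for $J_{2n}$) is a sum of hyperbolic planes $\begin{pmatrix}0&1\\1&0\end{pmatrix}$ and $\begin{pmatrix}2&1\\1&2\end{pmatrix}$. The discriminant condition then forces an even number of copies of the second type, and pairs of those are hyperbolic. So Lemma \ref{lemma:latticep2} gives a $\Z_2$-basis of $I$ with Gram matrix $J_{2n}$. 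In this basis $T_\beta$ is an integral matrix, self-adjoint along the antidiagonal, with characteristic polynomial $f$, so it defines an element of $V_{A,b}(\Z_2)$. Changing the basis by an isometry changes the matrix by $\mathrm{O}_{2n}(\Z_2)$-conjugation, and replacing $(I,\alpha)$ by $(cI,c^2\alpha)$ gives an isometric lattice, so the map is well defined on equivalence classes.

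For the reverse direction I take $v \in V_{A,b}(\Z_2)$ and run the rational theory (\cite[Theorem 2.6]{SW}) to realise $v$ as $T_\beta$ on $(L,(\cdot,\cdot)_\alpha)$ for some $\alpha$, well defined up to $c^2$. The standard lattice $\Z_2^{2n}$ becomes a $T_\beta$-stable lattice $I \subset L$, hence a fractional $R$-ideal. Self-duality of $I$ under $(\cdot,\cdot)_\alpha$ translates into $I^2 \subset \alpha R$ and $N(I)^2 = N(\alpha)$, exactly as in \emph{loc. cit.}

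The one new point is evenness, and here the argument is direct. The form $J_{2n}$ on $\Z_2^{2n}$ has $x^tJ_{2n}x = 2\sum_{i} x_i x_{2n+1-i} \in 2\Z_2$, so $I$ is automatically even. This also shows why the original statement is wrong: an odd unimodular $I$ satisfies all the other conditions but is not isometric to $(L_{2n},J_{2n})$. Injectivity and the matching of equivalence relations carry over verbatim from \cite{SW}, since two $\mathrm{O}_{2n}(\Z_2)$-conjugate matrices give $R$-isomorphic ideals, and $R$-isomorphisms of fractional ideals are multiplication by some $c \in L^\times$.

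The step I expect to be the main obstacle is the one in the forward direction at $p=2$: checking that unimodular and even together really imply that $I \otimes \Q_2$ is split. That requires care with the sign convention for the discriminant and with the $2$-adic Hasse invariant of $\begin{pmatrix}2&1\\1&2\end{pmatrix}$. If the sign bookkeeping misbehaves, I would fall back on keeping the splitness of $(\cdot,\cdot)_\alpha$ over $\Q_2$ as a hypothesis, which is anyway implied by $\alpha$ coming from a rational orbit.
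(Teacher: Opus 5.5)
Your proposal is correct and follows essentially the paper's approach: the paper gives no separate proof, but takes the lattice parametrisation of \cite[\S 2.4]{SW} and adds evenness because, by the third item of Lemma \ref{lemma:latticep2}, a unimodular rank-$2n$ lattice over $\Z_2$ can be isometric to $(L_{2n},J_{2n})$ only if it is even, which is exactly what you check in both directions. Your extra observation that $\Q_2$-splitness is then automatic is also correct: an even unimodular $\Z_2$-lattice is a sum of hyperbolic planes and at most one copy of $\begin{pmatrix}2&1\\1&2\end{pmatrix}$, and that copy is ruled out by the discriminant since $-3\notin\Q_2^{\times 2}$, provided the norm condition is read as forcing $N(\alpha)\in\Q_2^{\times 2}$, as you flag.
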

\begin{obs}
There's a small convention difference in \cite{SW}, where they take 
$\alpha^{-1}$ where we take $\alpha$.
\end{obs}
The condition of $I$ being even with respect to the form is necessary, and in some cases the constructed
ideals in \cite[\S 2.4]{SW} need not be even. In that case, it can only
be guaranteed that the orbit will fall inside $\frac{1}{2}V_{A,b}(\Z_2)$.

\begin{theorem}
\label{theo:intVstar}
Let $b \in B^{rs}(\Z)$. Every locally soluble orbit in $V_{A,b}(\Q)$ has a
representative in $\frac{1}{2}V_{A,b}(\Z)$.
\end{theorem}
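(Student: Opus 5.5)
Following the proof of Theorem \ref{theo:integral}, I will reduce to a local statement and then produce suitable ideals prime by prime. Since $G_A = \PSo_{2n}$ has class number $1$ (its action factors through $\So_{2n}$, whose class number over $\Z$ is $1$; cf. \cite[Proposition 7.2]{LagaThesis}), a rational orbit in $V_{A,b}(\Q)$ has a representative in $\frac{1}{2}V_{A,b}(\Z)$ as soon as it has one in $\frac12 V_{A,b}(\Z_p)$ for every prime $p$. For $p \neq 2$ this must hold with $V_{A,b}(\Z_p)$ itself. So it suffices to show the following: for every prime $p$, every $\Q_p$-soluble orbit, i.e.\ every orbit in the image of $\eta_{A,b}$ of \eqref{eq:starSoluble} over $\Q_p$, meets $V_{A,b}(\Z_p)$ when $p\neq 2$, and meets $\frac12 V_{A,b}(\Z_2)$ when $p=2$.

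The cleanest route is to deduce this from what was already proved for $(G_B,V_B)$, rather than redoing the ideal-theoretic construction. Over $\Q_p$ the first row of the diagram in the proof of Theorem \ref{theo:comp2} is surjective. Hence every soluble $G_A(\Q_p)$-orbit is the image of a soluble $G_B(\Q_p)$-orbit under the resolvent map $A \mapsto A^*A$, which is $G_B$-equivariant through the projection $G_B \to \So_{2n} \to \PSo_{2n}$. To use this I must check that this resolvent map on orbits is compatible with the descent maps $\eta_b$ and $\eta_{A,b}$. At the level of Galois cohomology it is the map $H^1(K,J_b[\psi]) \to H^1(K,A_b[\phi])$ induced by $M^\perp \to M^\perp/M$, followed by $H^1(G)\to H^1(G_A)$; so the compatibility amounts to comparing stabilisers, which I expect to be a routine unwinding of Proposition \ref{prop:ait}.

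The local proof of Theorem \ref{theo:integral} gives a representative $A \in V_b(\Z_p)$ for $p \neq 2$, and $A\in \frac12 V_b(\Z_2)$ for $p=2$. The resolvent $A^*A$ then lies in $V_{A,b}(\Z_p)$ for odd $p$, which settles those primes. At $p=2$, however, the bound only gives $A^*A \in \frac14 V_{A,b}(\Z_2)$, which is too weak. This is the main obstacle. To handle it I will instead work directly with Theorem \ref{theo:intSW}. I take the tuple $(I_1,I_2,\alpha,s)$ constructed in the proof of Theorem \ref{theo:integral}: there $I_1$ is integral, and $I_2$ is a self-dual lattice for $(\cdot,\cdot)^{(2)}_\alpha$ stable under $\beta_2$, but possibly odd. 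The pair $(I_2,\alpha)$ satisfies $I_2^2\subset\alpha R$ and $N(I_2)^2=N(\alpha)$, since self-duality and $\beta$-stability give these conditions as in \cite[\S 2.4]{SW}. Choosing a $\Z_2$-basis of $I_2$ with Gram matrix equal to the split matrix $J_{2n}$ up to an element of $\mathrm{O}_{2n}(\Q_2)$ with entries in $\frac12\Z_2$ (Lemma \ref{lemma:latticep2} and the remark after it), the matrix of multiplication by $\beta$ becomes a symmetric matrix conjugate by such an element to an integral one. Finally, I must check that this conjugation only costs a factor $\frac12$ rather than $\frac14$. I would try to arrange this by using the explicit change of basis $e_1\mapsto e_1/2$ only on one hyperbolic plane, so that entries acquire a single factor of $2$ in the denominator; if this fails, I would choose between the even and odd lattices to absorb one factor.

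Lastly, passing from $\mathrm{O}_{2n}$-orbits to $\PSo_{2n}$-orbits is harmless. An element of $\mathrm{O}_{2n}(\Z_2)\setminus\So_{2n}(\Z_2)$ preserves integrality, so the $\PSo_{2n}(\Q_2)$-orbit, not just the $\Po$-orbit, contains a representative in $\frac12 V_{A,b}(\Z_2)$. Combining the local statements with class number one then gives the theorem.
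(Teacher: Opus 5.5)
Your route is the paper's: class number one reduces the statement to local ones, odd $p$ is handled by applying $A\mapsto A^*A$ to the $\Z_p$-representative from Theorem \ref{theo:integral}, and at $p=2$ one feeds $(I_2,\alpha)$ from Proposition \ref{prop:intOrbits} into Theorem \ref{theo:intSW}. The gap is the last step at $p=2$. Theorem \ref{theo:intSW} needs $I_2$ to be even, which the proof of Theorem \ref{theo:integral} does not provide, and you leave the passage from an odd $I_2$ to a $\tfrac{1}{2}$-integral representative as ``I must check\dots; if this fails\dots''. Neither of your suggested fixes works.

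To see why, write $I=I_2$, $(\cdot,\cdot)=(\cdot,\cdot)^{(2)}_\alpha$, and $\gamma$ for multiplication by $\beta_2$. Let $I_0=\{x\in I:(x,x)\in 2\Z_2\}$, and let $\tilde c\in I$ lift the characteristic vector, so that $(x,x)\equiv(x,\tilde c)\pmod 2$ for all $x\in I$.
\begin{itemize}
\item Take any even unimodular $\Lambda$ with $[\Lambda:\Lambda\cap I]=2$; this covers your one-plane modification $e_1\mapsto e_1/2$ and both even neighbours. Then $\Lambda\cap I=I_0$ and $\Lambda=I_0+\Z_2\tilde c/2$ for a suitable lift $\tilde c\in I_0$.
\item Since $2\gamma I_0\subset 2I\subset I_0$, one gets $2\gamma\Lambda\subset\Lambda$ iff $\gamma\tilde c\in I_0$ iff $(\gamma\tilde c,\tilde c)\in 2\Z_2$.
\item In coordinates: the transition matrix has its $\tfrac{1}{2}$ in one row and its inverse has its $\tfrac{1}{2}$ in one column. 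The single entry where they meet is a priori only in $\tfrac{1}{4}\Z_2$.
\item This parity is unchanged when $\tilde c$ is altered by $2I$, so it is the same for every such $\Lambda$. No choice of even lattice absorbs the extra factor.
\end{itemize}

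The parity can genuinely be odd. Suppose $R/2R$ is \'etale. Then every self-dual $R$-lattice is principal, so changing $I_2$ does not help, and one may take $I=R$ with $(x,y)=\mathrm{Tr}_{L/\Q_2}(\delta xy)$, $\delta\in R^\times$. In that case $c=\bar\delta^{-1/2}$ in $R/2R$, and $(\gamma\tilde c,\tilde c)\equiv\mathrm{Tr}(\gamma)=-p_2\pmod 2$. So for, e.g., $f=x^4+x^3+1$ your construction only yields a representative in $\tfrac{1}{4}V_{A,b}(\Z_2)$, which is no better than $A^*A$. This is consistent with the fact that $V_{A,b}(\Z_2)=\emptyset$ when $p_2$ is odd, since an antidiagonally symmetric matrix has even trace.

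The paper's own argument at $p=2$ is the same as yours. It rests on the unproved remark after Theorem \ref{theo:intSW} that a non-even $I$ still gives a $\tfrac{1}{2}$-integral representative, so you have not overlooked an argument the paper supplies; but the step remains open.

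To close it you need a lattice that is not a neighbour of a $\gamma$-stable lattice for the same form. One possibility is to use $\PSo_{2n}(\Q_2)=\mathrm{GSO}_{2n}(\Q_2)/\Q_2^\times$ to rescale the form by $2$. For $n=1$ and $p_2$ odd, with $I$ as above, one checks that no even unimodular lattice for $(\cdot,\cdot)$ is $2\gamma$-stable, whereas one for $2(\cdot,\cdot)$ is. If you go this way, the local-to-global step needs class number one for $\PSo_{2n}$ itself, not for $\So_{2n}$ as your parenthetical suggests, because the local elements are then similitudes. Alternatively, settle for $\tfrac{1}{4}V_{A,b}(\Z)$. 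That is what your argument actually proves, and it would serve equally well in Section \ref{section:mainProof} with $4\cdot\cB$ in place of $2\cdot\cB$.
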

\begin{proof}
As in the proof of Theorem \ref{theo:integral}, it is enough to see that
for all $p$, the map
\[
A_b^{\vee}(\Q_p)/\phi(A_b(\Q_p)) \xhookrightarrow{} G_A(\Q_p) \backslash V_{A,b}(\Q_p)
\]
always intersects $\frac{1}{2}V_{A,b}(\Z_p)$. For $p \neq 2$, this is immediate;
if a $(2n+1) \times 2n$ matrix $A$ has entries in $\Z_p$, then $A^*A$
also does. For $p = 2$, we note that by Theorem \ref{theo:integral} and
Proposition \ref{prop:intOrbits} there exists an ideal called $I_2$ in
there satisfying the hypotheses of Theorem \ref{theo:intSW} with the
corresponding $\alpha$ of the rational orbit.
\end{proof}

\subsection{Proof of Theorem \ref{theo:mainPhi}}
\label{section:mainProof}

We are now in a position to prove Theorem \ref{theo:mainPhi}, 
where we will use Theorem \ref{theo:intVstar}
in conjunction with the counting results of \cite{SW} (which are a particular
case of \cite[\S 8]{LagaThesis}). 
We start by noting the following result from \cite[Lemma 7.1]{LagaPrym}:
\begin{lemma}
\label{lemma:selmerWeights}
Let $K = \R$ or $\Q_p$ for some prime $p$, and write $|\cdot|_K \colon K^{\times} \to \R_{> 0}$
for the normalised absolute value of $K$. Let $A$ be an abelian variety
over $K$ with dual abelian variety $A^{\vee}$, and let $\lambda \colon A \to A^{\vee}$ be
a self-dual isogeny, which will have degree $m^2$ for some $m \in \Z_{\geq 1}$. 
Then the quantity
\[
c(\lambda) := \frac{\# (A^{\vee}(K)/\lambda(A(K)))}{\# A[\lambda](K)}
\]
satisfies $c(\lambda) = 1/|m|_{K}$.
\end{lemma}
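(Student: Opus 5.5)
The strategy is to compute the numerator and the denominator separately and take their ratio. Both are controlled by the image of the connecting map $\delta \colon A^{\vee}(K)/\lambda(A(K)) \hookrightarrow H^1(K,A[\lambda])$ and by the Euler characteristic of $A[\lambda]$. First we use the duality built into the self-dual situation. Since $\lambda^{\vee} = \lambda$, the Weil pairing $A[\lambda] \times A^{\vee}[\lambda^{\vee}] \to \mu$ is a perfect pairing $A[\lambda]\times A[\lambda]\to \mu_m$, so $A[\lambda]$ is its own Cartier dual. Local Tate duality then gives a perfect pairing on $H^1(K,A[\lambda])$. The image of the Kummer map $\delta$ is its own exact annihilator: this is the standard fact that the images for $\lambda$ and $\lambda^{\vee}$ are mutual annihilators (Tate; see also Poonen--Rains). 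Consequently
\[
\#\big(A^{\vee}(K)/\lambda(A(K))\big)^2 = \# H^1(K,A[\lambda]).
\]

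Second, we compute $\#H^1(K,A[\lambda])$ with Tate's local Euler characteristic formula. For $K=\Q_p$ this reads
\[
\frac{\#H^0\,\#H^2}{\#H^1} = |\#A[\lambda]|_{K} = |m^2|_K .
\]
By local duality, $H^2(K,A[\lambda]) \cong H^0(K,A[\lambda]^\vee)^\vee$, and self-duality identifies this with $A[\lambda](K)$. Hence
\[
\#H^1 = \frac{\#A[\lambda](K)^2}{|m|_K^2},
\]
and taking square roots gives $c(\lambda)=1/|m|_K$. When $p\nmid m$ one can instead argue directly. In that case $\lambda$ is étale on the Néron model, so $\#\big(A^\vee(K)/\lambda A(K)\big) = \#A[\lambda](K)$ once one compares the finite-index pieces: the formal groups are uniquely $\lambda$-divisible, and the component groups have equal orders since $A$ and $A^\vee$ are isogenous. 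This case check is a useful consistency test.

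Third, we treat $K=\R$, where the Euler characteristic formula takes a Tate-cohomology form. Here one computes directly that $\#\big(A^\vee(\R)/\lambda A(\R)\big)\,/\,\#A[\lambda](\R) = 1/|m|_\R$ (with $|\cdot|_\R$ the usual absolute value). The argument uses the exact sequence
\[
0\to A[\lambda](\R)\to A(\R)\to A^\vee(\R)\to A^\vee(\R)/\lambda A(\R)\to 0 .
\]
On identity components the middle map is a surjective isogeny of real tori of the same dimension, with kernel of order $\#\big(A[\lambda](\R)\cap A(\R)^\circ\big)$. The remaining contribution comes from component groups, which are $2$-groups, and one compares it via the Herbrand quotient of $A[\lambda]$ under complex conjugation. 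Self-duality forces $\#\hat H^0(\R,A[\lambda]) = \#H^1(\R,A[\lambda])$. Combined with the identity-component count, this gives $1/m$, which also absorbs the archimedean factor $|m|_\R^{-1}$ appearing in the product formula.

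The main obstacle is the self-annihilation claim in the first step. The delicate point is that the Tate pairing on $H^1(K,A[\lambda])$ induced by a self-dual isogeny need not be alternating: this failure is measured by the class $c_\lambda$ of Poonen--Rains. Fortunately, the lemma only needs the image to be maximal isotropic up to the count, and the exact-annihilator property of Kummer images for $\lambda$ and $\lambda^\vee$ gives this regardless. So the first thing to try is to cite that general duality statement (Milne, \emph{Arithmetic Duality Theorems}, I.3) rather than prove it from scratch.
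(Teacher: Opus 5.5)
The paper does not prove this lemma itself; it cites \cite[Lemma 7.1]{LagaPrym}. Your treatment of $K=\Q_p$ is correct and is the standard argument: the Kummer image is its own exact annihilator, so $\#(A^\vee(K)/\lambda A(K))^2=\#H^1(K,A[\lambda])$, and then Tate's Euler characteristic formula together with $H^2(K,A[\lambda])\cong A[\lambda](K)^\vee$ gives the result.

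The case $K=\R$, however, is asserted rather than proved. The one cohomological identity your Step~3 invokes, $\#\hat H^0(\R,A[\lambda])=\#H^1(\R,A[\lambda])$, holds for every finite $\Z/2\Z$-module, since the Herbrand quotient of a finite module is $1$. So it is not where self-duality enters, and it cannot by itself produce the factor $m$. The ``identity-component count'' that is supposed to produce $m$ is never carried out. Along your route you would need two separate facts, $\#\pi_0A(\R)=\#\pi_0A^\vee(\R)$ and $\#\bigl(A[\lambda](\R)\cap A(\R)^\circ\bigr)=m$, and you justify neither.

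The fix is to run your Step~1 over $\R$ as well.
\begin{enumerate}
\item Since $\lambda(A(\R)^\circ)=A^\vee(\R)^\circ=N(A^\vee(\mathbb{C}))$, the Kummer image equals the image of Tate cohomology $\hat H^0(\R,A^\vee)$. Tate duality over $\R$ then again makes this image its own annihilator, so $\#(A^\vee(\R)/\lambda A(\R))^2=\#H^1(\R,M)$ with $M=A[\lambda]$.
\item Write $M^{\pm}$ for the $\pm1$-eigenspaces of complex conjugation $\sigma$. Then $\#H^1(\R,M)=\#\hat H^0(\R,M)=\#M^+/\#(1+\sigma)M=\#M^+\,\#M^-/\#M$.
\item This is where self-duality is genuinely used. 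We have $M\cong\mathrm{Hom}(M,\mathbb{C}^\times)$ equivariantly, and $\sigma$ inverts roots of unity. Hence a homomorphism $f$ is $\sigma$-invariant if and only if it kills $(1+\sigma)M$, which gives $\#M^+=\#M/\#(1+\sigma)M=\#M^-$.
\item Therefore $\#H^1(\R,M)=\#M(\R)^2/m^2$, and $c(\lambda)=1/m=1/|m|_\R$.
\end{enumerate}

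Separately, your $p\nmid m$ consistency check has a wrong justification. Equality of component-group orders does not follow from $A$ and $A^\vee$ being isogenous, because Tamagawa numbers are not isogeny invariants; Section~7 of the paper rests on $c_p(J_b)\neq c_p(A_b)$. The correct reason is that Grothendieck's pairing $\Phi_A\times\Phi_{A^\vee}\to\Q/\Z$ is perfect.
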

In particular, for our map of interest $\phi \colon A_b \to A_b^{\vee}$,
we will have that
\[
c_p(\phi_b) = \begin{cases}
2^{-(n-1)} & \text{if } p = \infty, \\
2^{(n-1)} & \text{if } p = 2, \\
1 & \text{otherwise}.
\end{cases}
\]
We turn our interest to counting $G_A(\Z)$-orbits in $V_A(\Z)$. We will
do so by imposing infinitely many congruence conditions:
\begin{defi}
\label{defi:acceptable}
A map $w \colon V_A(\Z) \to [0,1]$ is said to be \emph{defined by infinitely many congruence conditions}
if for each prime $p$ there exist functions $w_p \colon V_A(\Z_p) \to [0,1]$ such that
\begin{itemize}
\item $w_p$ is $G_A(\Z_p)$-invariant;
\item $w_p$ is locally constant outside the subset $\{v \in V_A(\Z_p) \mid \Delta(v) = 0\}$;
\end{itemize}
which satisfy $w = \prod_p w_p$. We additionally say that
$w$ is \emph{acceptable} if the product
\[
\prod_{p} \int_{v \in V_A(\Z_p)} w(v) dv
\]
does not diverge to zero, where $dv$ is normalised so that the volume
of $V_A(\Z_p)$ is equal to $1$ for all $p$.
\end{defi}
\begin{obs}
The acceptability condition is achieved in many instances in the literature
by guaranteeing that $1-\int_{v \in V_A(\Z_p)} w(v) dv = O(p^{-2})$
for large enough $p$.
\end{obs}

Consider a $G_A(\Z)$-invariant subset $A \subset V_A(\Z)$, and let 
$w\colon V_A(\Z) \to \R$ be an acceptable function defined by infinitely many 
congruence conditions. We denote
\[
N_w^*(A,X) = \sum_{v \in G_A(\Z) \backslash A_{<X}} \frac{w(v)}{\# \Stab_{G_A(\Z)}(v)}.
\]
Recall that a $G_A(\R)$-orbit in $V_{A,b}(\R)$ is called $\R$-soluble if it
falls in the image of $\eta_{A,b}$ in \eqref{eq:starSoluble}. Observe that
the number of $G_A(\R)$-soluble orbits in $V_{A,b}(\R)$ is $\# A_b^{\vee}(\R)/\phi(A_b(\R))$.
Then, analogously to \cite[Theorem 8.18]{LagaThesis} we get that
\[
N_w^*(A,X) \leq \lp \prod_{p} \int_{v \in V_A(\Z_p)} w(v) dv \rp \frac{|W_1|}{2^{n-1}} \vol(G_A(\Z)\backslash G_A(\R))\vol(B(\R)_{<X}) + o(X^{\dim V_A}),
\]
where $W_1 \in \Q^{\times}$ is a fixed scalar number, and where $w = \prod_p w_p$
are the congruence conditions defining $w$. 

To estimate the size of $\Sel_{\phi}(A_b)$,
we note that the non-trivial torsion point $T_b \in A_b^{\vee}[\hat{\psi}]$
generates a subgroup $S_T$ in $\Sel_{\phi}(A_b)$ of order dividing $2$.
In the map
\[
\Sel_{\phi}(A_b) \xhookrightarrow{} G_A(K) \backslash V_{A,b}(K),
\]
the elements of $\Sel_{\phi}(A_b)$ which intersect the reducible orbits
correspond exactly to the subgroup $S_T$, and the complement of $S_T$
falls entirely in the irreducible orbits. Given that $\# S_T \leq 2$, it
suffices to bound $\Sel_{\phi}(A_b) \setminus S_T$ by looking at irreducible
orbits.

We can prove our results in higher generality by imposing congruence
conditions on $B$. We say that a set $\cB \subset B(\Z)^{rs}$ is \emph{defined by finitely many congruence conditions}
if it is the preimage of the reduction map $B(\Z)^{rs} \to B(\Z/N\Z)$
for some $N \geq 1$. We will prove the following:
\begin{theorem}
\label{theo:mainPhiCong}
Let $\cB \subset B(\Z)$ be defined by finitely many congruence conditions.
Then
\[
\lim_{X \to \infty} \frac{\sum_{b \in \cB, \, \Ht(b) < X} \# (\Sel_{\phi}(A_b) \setminus S_T)}{\#\{b \in \cB \mid \Ht(b) < X\}} \leq 4.
\] 
\end{theorem}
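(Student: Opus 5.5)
We follow the standard Bhargava-style argument, as in \cite[\S 8]{LagaThesis} and \cite{SW}. By the corollary embedding $\Sel_{\phi}(A_b)$ into $G_A(\Q)\backslash V_{A,b}(\Q)$ and Theorem \ref{theo:intVstar}, every element of $\Sel_{\phi}(A_b)\setminus S_T$ is a locally soluble, irreducible $G_A(\Q)$-orbit with a representative in $\frac12 V_{A,b}(\Z)$. To land in $V_A(\Z)$ we replace $v$ by $2v$. This maps $V_{A,b}$ to $V_{A,2\cdot b}$, scaling the invariants by $2^{\deg}$, so the height is multiplied by a fixed constant. It also transports the parametrisation to the family $b' = 2\cdot b$, and since $\cB$ is defined by finitely many congruence conditions, so is its image.

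Next we define a weight function $w = \prod_p w_p$ on $V_A(\Z)$ so that the weighted count of $G_A(\Z)$-orbits equals the number of $G_A(\Q)$-orbits meeting $V_A(\Z)$ that are locally soluble. Concretely, $w_p(v)$ is the indicator of $\Q_p$-solubility and of $b\in\cB$ locally, divided by the number of $G_A(\Z_p)$-orbits in the $G_A(\Q_p)$-orbit of $v$ intersected with $V_A(\Z_p)$, with the $\#\Stab$ factors absorbed as usual. We then check that $w$ is acceptable, i.e.\ $1-\int w_p = O(p^{-2})$ for large $p$. This holds because for $p\nmid 2N$ and $p^2\nmid\Delta$ every integral orbit is soluble, which is the usual argument using that $A_b^\vee(\Q_p)/\phi(A_b(\Q_p))\cong H^1_{\mathrm{ur}}$ when $b$ has good reduction.

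Applying the counting bound for irreducible integral orbits (the displayed inequality after Definition \ref{defi:acceptable}), and using the product formula, the local integrals convert into the mass formula. For each $b$, the $p$-adic volume is $\# (A_b^\vee(\Q_p)/\phi A_b(\Q_p))/\# A_b[\phi](\Q_p)$ times $|W_1|_p^{-1}$-type factors, by the standard change-of-variables and Jacobian argument (\cite[Prop.~8.?]{LagaThesis}). By Lemma \ref{lemma:selmerWeights} these local factors are $c_p(\phi)$. Their product over all places, including the factor $2^{-(n-1)}$ at infinity coming from the count of $\R$-soluble orbits, is $\prod_v c_v(\phi)=1$. What remains is $\tau(G_A)\cdot\#\text{(component factor)}$. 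The Tamagawa number of $\PSo_{2n}$ is $4$ (with $\mu_2$ absorbed), giving the average bound $\tau(G_A)=4$ for the irreducible part, exactly as in \cite{SW}.

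The main obstacle is the non-compact cusp and uniformity. We must show that irreducible integral points in the cusp contribute negligibly, which we take from \cite{SW}. We must also establish the tail estimate needed to pass from finitely many to infinitely many congruence conditions, namely the uniformity estimate bounding $v$ with $p^2\mid\Delta(v)$ for large $p$. We plan to cite the corresponding results of \cite{SW} and \cite[\S 8]{LagaThesis} for $\PSo_{2n}$ acting on $\Sym^2$. A subtler point is the factor-$2$ rescaling at $p=2$: it changes the $2$-adic weights, but since only local densities enter and these are computed intrinsically via the Selmer map, the final product formula is unaffected. We will verify this carefully, as it is where an extra power of $2$ could appear.
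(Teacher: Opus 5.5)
Your route is the paper's. Selmer elements outside $S_T$ become locally soluble irreducible orbits with representatives in $\frac12 V_A(\Z)$. You then pass to the congruence family $2\cdot\cB$, which is a tidy way to handle the powers of $2$. After that you count with $w=\prod_p w_p$ via \cite[\S 8]{LagaThesis}, convert local integrals to $\int_b c_p(\phi_b)\,db$ by the mass formula, use $\prod_v c_v(\phi)=1$, and end with $\tau(\PSo_{2n})=4$.

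\textbf{The gap: your acceptability argument.} You justify $1-\int w_p=O(p^{-2})$ by saying integral orbits are soluble when $p^2\nmid\Delta$, via the good-reduction identification of the Kummer image with $H^1_{\mathrm{ur}}$. But bad reduction is not a density-$O(p^{-2})$ phenomenon in this family. $J_b$ has bad (toric) reduction for every $b$ with $p\parallel f(0)\Delta(f)$; for instance $C_b$ is nodal at $(0,0)$ modulo $p$ when $p\parallel f(0)$ and $p\nmid\Delta(f)$. This set of $b$ has density $\gg 1/p$.

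\begin{itemize}
\item If your condition $p^2\nmid\Delta$ excludes this set, the excluded set is not $O(p^{-2})$. This is what happens for the paper's $\Delta(b)=\mathrm{disc}(xf(x^2))$, which is divisible by $f(0)^3\Delta(f)^2$.
\item If it includes this set, as for $\Delta(f)$, the good-reduction argument says nothing there.
\end{itemize}

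Either way you only get $1-\int w_p=O(1/p)$. Since $\prod_p(1-c/p)$ diverges to $0$, this is exactly what Definition \ref{defi:acceptable} has to rule out. Solubility alone also only gives $w_p(v)=1/m_p(v)$, not $1$, so you would additionally need one integral orbit per soluble rational orbit.

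\textbf{The fix.} This is what the paper does, and it uses a formula you already invoke. The mass formula \cite[Lemma 8.5]{LagaThesis} gives $\int w_p=|W_1|_p\vol(G_A(\Z_p))\int_b c_p(\phi_b)\,db$. Lemma \ref{lemma:selmerWeights} gives $c_p(\phi_b)=1$ for \emph{every} $b$ at every odd $p$, whatever the reduction type. Hence for all $p$ outside a finite set, $\int w_p=\vol(G_A(\Z_p))=1-O(p^{-2})$ by a point count of $\PSo_{2n}$ over $\F_p$. Without the mass formula you would need a component-group analysis at $p\parallel f(0)\Delta(f)$, as in Lemma \ref{lemma:tamagawa}, to show the Kummer image is still unramified there.

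\textbf{Two smaller points.}
\begin{itemize}
\item Since only an upper bound is claimed, you need no uniformity or tail estimate: $w\le\prod_{p<Y}w_p$, and you let $Y\to\infty$.
\item The weighted count is $\sum 1/\#\Stab_{G_A(\Q)}(v)$, not the number of orbits. To bound Selmer elements from above you need the rational stabiliser to be trivial $100\%$ of the time, which the paper takes from \cite[Proposition 23]{SW}.
\end{itemize}
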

\begin{proof}
Let $\cB_p$ denote the closure of $\cB$ inside $B^{rs}(\Z_p)$. For our counting
result, it will suffice to count those irreducible $G_A(\Z)$-orbits in
$V_A(\Z)$ corresponding to Selmer elements, as given by Theorem \ref{theo:intVstar}.
Given that we are only guaranteed to have orbits in $\frac{1}{2}V_A(\Z)$,
and that $\Sel_{\phi}(A_b) \simeq \Sel_{\phi}(A_{\lambda\cdot b})$ for
any $\lambda \in \Q^{\times}$, it will suffice to look at orbits with
invariants in $2 \cdot \cB$, for which Selmer elements will always have
integral representatives.
We choose the counting function
\[
w(v) = \begin{cases}
\lp \sum_{v' \in G_A(\Z) \backslash(G_A(\Q) \cdot v \cap V_A(\Z))} \frac{\# \Stab_{G_A(\Q)}(v')}{\# \Stab_{G_A(\Z)}(v')}\rp^{-1} & \text{if } \pi(v) \in 2 \cdot \cB \text{ and } v \text{ is locally soluble,} \\
0 & \text{otherwise.}
\end{cases}
\]
This is defined by congruence conditions by the functions
\[
w_p(v) = \begin{cases}
\lp \sum_{v' \in G_A(\Z_p) \backslash(G_A(\Q_p) \cdot v \cap V_A(\Z_p))} \frac{\# \Stab_{G_A(\Q_p)}(v')}{\# \Stab_{G_A(\Z_p)}(v')}\rp^{-1} & \text{if } \pi(v) \in 2 \cdot \cB_p \text{ and } v \text{ is soluble,} \\
0 & \text{otherwise,}
\end{cases}
\]
by an analogous argument to \cite[Proposition 3.6]{BSquartics}. The last
part of \cite[Lemma 8.5]{LagaThesis} gives
\begin{align*}
\int_{v \in V_A(\Z_p)} w(v) dv &= |W_1|_p \vol(G_A(\Z_p)) \int_{b \in 2\cdot \cB_p} \frac{\# A_b^{\vee}(\Q_p)/\phi(A_b(\Q_p))}{\# A_b[\phi](\Q_p)}db \\
&= |W_1|_p \vol(G_A(\Z_p)) |2^{-(n-1)}|_p |2^{n(2n+1)}|_p\vol(\cB_p),
\end{align*}
using Lemma \ref{lemma:selmerWeights} in the last line. An explicit
computation on $\vol(G_A(\Z_p))$ shows that is it $1 - O(p^{-2})$, where
the implicit constant is independent of $p$. Therefore, $w$ is acceptable
in the sense of Definition \ref{defi:acceptable}.
Under this counting function, we have that for any given locally soluble
$v \in V_A(\Z)$ with $\pi(v) \in \cB$:
\[
\sum_{v' \in G_A(\Q)v \cap V_A(\Z)} \frac{w(v')}{\# \Stab_{G_A(\Z)}(v')} = \frac{1}{\# \Stab_{G_A(\Q)}(v)}.
\]
$100\%$ of the time, this quantity is equal to $1$ by \cite[Proposition 23]{SW}.
Thus, we have that
\[
\sum_{b \in \cB_{<X}} \#(\Sel_{\phi}(A_b)\setminus S_T) = N_w^*(V_A(\Z)^{irr} \cap V_A(\R)^{sol},2X) + o(X^{n(2n+1)}).
\]
With an elementary point-counting argument, we can see that
\[
\lim_{X \to \infty} \frac{\prod_p \vol(\cB_p) \vol(B(\Z)_{<2X})}{\#\{b \in \cB \mid \Ht(b) < X\}} = 2^{n(2n-1)}.
\]
Putting it all together, we have that
\[ 
\frac{N_w^*(V_A(\Z)^{irr} \cap V_A(\R)^{sol},2X)}{\#\{b\in \cB \mid \Ht(b) < X\}} \leq \vol(G_A(\Z)\backslash G_A(\R)) \prod_p \vol(G_A(\Z_p)),
\]
which equals the Tamagawa number of $G_A = \PSo_{2n}$, which is $4$.
This concludes the proof.
\end{proof}
Theorem \ref{theo:mainPhi} then follows from Theorem \ref{theo:mainPhiCong},
since $S_T$ has size at most $2$.

\section{Orbit parametrisations for $C_{2n}$}
\label{section:OrbitsC}

This section is equivalent to Section \ref{section:OrbitsB}, but now in
the $C_{2n}$ case. For this section, let us denote $(G,V) := (G_C,V_C)$
and fix a field $K$ of characteristic zero.

\subsection{Rational orbits}

Similarly to the $B_{2n}$ case, we will start by constructing a distinguished
orbit, from which we will obtain the other ones.

\subsubsection*{The distinguished orbit}

Let $b = (p_2,\dots,p_{4n}) \in B(K)$, and suppose that $\Delta(b) \neq 0$.
Consider the polynomial $f(x) = x^{2n}+p_2x^{2n-1}+\dots+p_{4n}$, which is separable
by hypothesis. We consider the étale algebras
\[
M = \frac{K[x]}{(f(x^2))} = K[\beta], \quad L = \frac{K[x]}{(f(x))} = K[\gamma].
\]
We note that $M = K\langle 1,\beta,\dots,\beta^{4n-1} \rangle$, and that
there is a decomposition as $K$-vector spaces $M = L \oplus \beta L$,
corresponding to the even and odd-degree part.

Consider the involution $\eps \colon M \to M$ given by $\beta \mapsto -\beta$,
and let 
\begin{align*}
(\cdot,\cdot) \colon M \times M & \to K \\
(\mu,\lambda) & \mapsto \text{ coefficient of }\beta^{4n-1}\text{ in } \lambda \eps(\mu).
\end{align*}
Note that $(L,L) = (\beta L, \beta L) = 0$. The Gram matrix of this form
can be written as $G = \begin{pmatrix} 0 & G_1 \\ G_2 & 0\end{pmatrix}$,
where $G_1 = -{}^tG_2$. Then, there exists a change-of-basis matrix of the
form $S = \begin{pmatrix} S_1 & 0 \\ 0 & S_2\end{pmatrix}$ such that
$S^t G S = \begin{pmatrix} 0 & J_{2n} \\ -J_{2n} & 0\end{pmatrix}$ and
$\det(S_1) = \det(S_2) = 1$ (e.g. by taking $S_1 = J_{2n}G_1^{-1}$, $S_2 = \id$).

The map $T_{\beta} \colon M \to M$ given by multiplication by $\beta$ is
anti-self-adjoint with respect to the form $(\cdot,\cdot)$. With respect
to the above basis, the matrix of $T_{\beta}$ is an element of $V(K)$.

\begin{lemma}
Let $v \in V^{rs}(K)$. Then $\Stab_{G}(v) \cong (\Res_{L/K} \mu_2)/\mu_2$.
\end{lemma}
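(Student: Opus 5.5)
The plan is to imitate the proof of the analogous stabiliser computation in the $B_{2n}$ case: first compute the stabiliser for the distinguished orbit constructed above, then transfer the answer to every regular semisimple $v$.

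\textbf{Reduction to the distinguished orbit.} Since $V\git G\simeq \hh\git H$ and regular semisimple elements with the same invariants are $G(\ol K)$-conjugate, every $v\in V^{rs}(K)$ becomes conjugate to the distinguished element over $\ol K$. Conjugation by such an element gives an isomorphism of stabilisers over $\ol K$. To see that this isomorphism descends to $K$, I will note that the stabiliser is abelian. Then the Galois twist coming from the cocycle acts by inner automorphisms, hence trivially. So it suffices to treat the distinguished element $v=T_\beta$ acting on $M=L\oplus\beta L$ with the symplectic form $(\cdot,\cdot)$.

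\textbf{Stabiliser in $\Sp(M)$.} It is cleaner to compute first in $\Sp_{4n}$ and then pass to $\PSp_{4n}$.
\begin{enumerate}
\item Since $\Delta(b)\neq0$, the polynomial $f(x^2)$ is separable, so $T_\beta$ is regular semisimple in $\mathfrak{gl}(M)$ with centraliser $\Res_{M/K}\mathbb{G}_m$ acting by multiplication.
\item The element $\theta$ preserving the grading is conjugation by $\diag(1,-1)$ on $L\oplus\beta L$, i.e. by the automorphism $\eps$. An element $\lambda\in M^\times$ preserves $L$ and $\beta L$ exactly when $\lambda\in L^\times$.
\item For $\lambda\in L$ we have $\eps(\lambda)=\lambda$. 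The symplectic condition $(\lambda\mu,\lambda\nu)=(\mu,\nu)$ reads $\mathrm{coeff}_{\beta^{4n-1}}(\lambda^2\nu\eps(\mu))=\mathrm{coeff}_{\beta^{4n-1}}(\nu\eps(\mu))$. Nondegeneracy of the trace-type form then forces $\lambda^2=1$.
\end{enumerate}
The upshot is that the stabiliser in $\GL_{2n}\subset\Sp_{4n}$ (the block-diagonal part) is $\Res_{L/K}\mu_2$.

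\textbf{Passage to $G=\GL_{2n}/\mu_2$.} An element of $G(\ol K)$ fixing $v$ lifts to $g\in\GL_{2n}(\ol K)$ with $g\cdot v=\pm v$ in the adjoint action. The main subtlety is this sign, since in $\PSp$ a lift need only fix $v$ up to the centre. However, the centre acts trivially on $\hh$, so $\Ad(g)v=v$ holds exactly. Hence the stabiliser in $G$ is the image of $\Res_{L/K}\mu_2$, namely $\Res_{L/K}\mu_2/\mu_2$, where $\mu_2$ is embedded diagonally as $\pm1$.

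Two points still need checking. The first is connectivity: that $(H^\theta)^\circ$ meets the stabiliser in all of this image. This holds because the image lies in the block-diagonal $\GL_{2n}/\mu_2$. The second is that no component of $H^\theta$ outside the identity component is relevant, and this follows because $G$ is by definition the identity component. I expect the main obstacle to be ensuring that the identification of $\GL_{2n}/\mu_2$ with the block-diagonal part is compatible with the chosen basis change $S$. This is routine, since $S$ is block-diagonal.
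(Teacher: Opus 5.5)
Your proposal is correct and follows the paper's argument. The paper's steps are: the centraliser of $T_\beta$ in $\GL(M)$ is $M^\times$; preserving $L$ and $\beta L$ forces $\lambda\in L^\times$; the symplectic condition gives $\lambda^2=1$; and passing to $G=\GL_{2n}/\mu_2$ produces the $\mu_2$ quotient. Your one addition is an explicit reduction from an arbitrary $v\in V^{rs}(K)$ to the distinguished orbit, using Galois descent and the commutativity of the stabiliser, which the paper leaves implicit. As you note yourself, the worry about a lift sending $v$ to $\pm v$ does not arise, because $\mu_2$ acts trivially on $V$.
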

\begin{proof}
Given that $v$ is regular semisimple, the centraliser of $T_{\beta}$
in $\GL(M)$ is $M^{\times}$. An element of the centraliser has to respect
the $L$ and $\beta L$ parts, so it actually has to lie in $L^{\times}$.
The condition that $\lambda \in L^{\times}$ has to respect the symplectic
form translates to $\lambda^2 = 1$.
Therefore, elements of the stabiliser lie in $\Res_{L/K} \mu_2$. The condition
that $G = \GL_{2n}/\mu_2$ (as opposed to $\GL_{2n}$) introduces the $\mu_2$ quotient
in the statement.
\end{proof}

\subsubsection*{The other orbits}

By Proposition \ref{prop:ait}, any other rational orbits that are $G(K)$-conjugate
to the constructed rational orbit are in bijection with elements in the
kernel of the map $H^1(K,\Stab_G(v)) \to H^1(K,G)$. By looking at the
commutative diagram
\[
\begin{tikzcd}
H^1(K,\mu_2) \arrow[r] \arrow[d] & H^1(K,\Res_{L/K} \mu_2) \arrow[r] \arrow[d] & H^1(K,(\Res_{L/K} \mu_2)/\mu_2)\arrow[r] \arrow[d] & H^2(K,\mu_2) \arrow[d]\\
H^1(K,\mu_2) \arrow[r] & H^1(K,\GL_{2n}) \arrow[r] & H^1(K,\GL_{2n}/\mu_2)\arrow[r] & H^2(K,\mu_2),
\end{tikzcd}
\]
any element in the kernel of $H^1(K,\Stab_G(v)) \to H^1(K,G)$ maps to
the trivial element in $H^2(K,\mu_2)$, and therefore comes from an element
of $H^1(K,\Res_{L/K} \mu_2) \cong L^{\times}/L^{\times 2}$ up to
an element of $H^1(K,\mu_2) \cong K^{\times}/K^{\times 2}$. Moreover,
any element coming from $L^{\times}/(K^{\times} L^{\times 2})$ must map
to the trivial element in $H^1(K,\GL_{2n}/\mu_2)$, as $H^1(K,\GL_{2n}) = \{1\}$
by Hilbert's Theorem 90. Therefore, we conclude that the rational orbits
of given invariant are in bijection with the set $L^{\times}/(K^{\times} L^{\times 2})$.

Given an element $\alpha \in L^{\times}/(K^{\times} L^{\times})$, 
consider the form
\begin{align*}
(\cdot,\cdot) \colon M \times M & \to K \\
(\mu,\lambda) & \mapsto \text{ coefficient of }\beta^{4n-1}\text{ in } \alpha^{-1}\lambda \eps(\mu).
\end{align*}
Then, a procedure analogous to the one in Section \ref{section:OrbitsB} 
gives rise to a distinct rational orbit.

Given $b \in B(K)$, let $C_b \colon y^2 = xf(x)$ be the corresponding
hyperelliptic curve and $J_b = \Jac(C_b)$. Recall the following composition
of isogenies
\[
J_b \xrightarrow{\phi_M} A_b \xrightarrow{\phi} A_b^{\vee} \xrightarrow{\phi_M^{\vee}} J_b.
\]
Denote $\psi^{\vee} = \phi_M^{\vee} \circ \phi$.
By construction, we get:
\begin{prop}
We have an isomorphism $A_b[\psi^{\vee}] \cong (\Res_{L/K} \mu_2)/\mu_2$.
\end{prop}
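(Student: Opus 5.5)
The plan is to identify $A_b[\psi^{\vee}]$ with a quotient of $J_b[2]$ by Cartier duality, and then compute that quotient explicitly from the Weierstrass points; everything is an isomorphism of finite étale group schemes over $K$, i.e.\ of $\Gal(\ol{K}/K)$-modules.

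\emph{Step 1 (duality).} The isogeny $\psi^{\vee} = \phi_M^{\vee} \circ \phi \colon A_b \to J_b$ is, up to the identifications $A_b \simeq (A_b^{\vee})^{\vee}$ (from Lemma \ref{lemma:self-dual}) and $J_b^{\vee} \simeq J_b$ (the principal polarisation), the dual of $\psi = \phi \circ \phi_M$. For any isogeny, the kernel of the dual isogeny is the Cartier dual of the kernel. Hence
\[
A_b[\psi^{\vee}] \cong \mathrm{Hom}(J_b[\psi],\mu_2) = \mathrm{Hom}(M^{\perp},\mu_2).
\]
The Weil pairing on $J_b[2]$ is perfect and Galois-equivariant. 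It therefore identifies $\mathrm{Hom}(M^{\perp},\mu_2)$ with $J_b[2]/(M^{\perp})^{\perp} = J_b[2]/M$. So it suffices to prove $J_b[2]/M \cong (\Res_{L/K}\mu_2)/\mu_2$.

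\emph{Step 2 (explicit $2$-torsion).} Let $x_1,\dots,x_{2n}$ be the roots of $f$, and write $e_i = [(P_i)-\infty]$ for $0 \le i \le 2n$. By the description recalled before Proposition \ref{prop:StabJ}, the only relation among the $e_i$ is $\sum_{i=0}^{2n} e_i = 0$. Thus $e_1,\dots,e_{2n}$ form an $\F_2$-basis of $J_b[2](\ol{K})$, and Galois acts on this basis through its permutation action on the roots of $f$. This gives a Galois-equivariant isomorphism
\[
J_b[2] \cong \F_2^{\{x_1,\dots,x_{2n}\}} \cong \Res_{L/K}\mu_2 .
\]
Under this isomorphism, the generator $e_0 = \sum_{i\ge1} e_i$ of $M$ goes to the all-ones vector, which is the image of the diagonal embedding $\mu_2 \hookrightarrow \Res_{L/K}\mu_2$. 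Therefore $J_b[2]/M \cong (\Res_{L/K}\mu_2)/\mu_2$, and combined with Step 1 this proves the proposition.

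The main point requiring care is Step 1: one must check that $\psi^{\vee}$ as defined (with $\phi$ self-dual) really is the dual of $\psi$ under the chosen identifications. One must also check that the Weil pairing used to define $M^{\perp}$ is the pairing that realises Cartier duality $J_b[2] \simeq \mathrm{Hom}(J_b[2],\mu_2)$. Both facts follow from the standard functoriality $(\phi\circ\phi_M)^{\vee} = \phi_M^{\vee}\circ\phi^{\vee}$ together with $\phi^{\vee}=\phi$.

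As a consistency check, $\#A_b[\psi^{\vee}] = 2^{2n-1} = \#M^{\perp}$, as it should be. The result is also compatible with the stabiliser computed in the preceding lemma, which should be recovered as the restriction of $\mu_2 \to \GL_{2n}/\mu_2$.
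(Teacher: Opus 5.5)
Your proof is correct. The paper gives no argument beyond ``by construction'', so the fair comparison is with the argument it implicitly relies on. In the next theorem the paper uses a map $H^1(K,J_b[2]) \to H^1(K,A_b[\psi^{\vee}])$ compatible with the projection $J_b(F)/2J_b(F)\to J_b(F)/\psi^{\vee}(A_b(F))$; that is the map induced by $\phi_M$ on coefficients.

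This suggests a shorter replacement for your Step 1. Since $\psi^{\vee}\circ\phi_M=\phi_M^{\vee}\circ\phi\circ\phi_M$ is multiplication by $2$, $\phi_M$ sends $J_b[2]$ into $A_b[\psi^{\vee}]$ with kernel $M$. Moreover $\#A_b[\psi^{\vee}]=\deg\psi^{\vee}=\deg\psi=\#M^{\perp}=2^{2n-1}=\#(J_b[2]/M)$, so $\phi_M$ induces $J_b[2]/M\cong A_b[\psi^{\vee}]$. Your Step 2 then finishes exactly as you wrote it.

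What each route buys:
\begin{itemize}
\item The direct route needs neither Lemma \ref{lemma:self-dual} nor Cartier duality. It also produces the specific isomorphism that the subsequent diagram requires.
\item Your abstract isomorphism does agree with that one, but seeing this takes an extra compatibility of Weil pairings, $e_2(x,y)=e_{\psi}(x,\phi_M(y))$ for $x\in M^{\perp}$ and $y\in J_b[2]$.
\item Your route is more conceptual. It exhibits $A_b[\psi^{\vee}]$ as the Cartier dual of $J_b[\psi]\cong(\Res_{L/K}\mu_2)_{N=1}$. This explains why the $C_{2n}$ stabiliser is the quotient by the diagonal $\mu_2$, dual to the norm-one stabiliser on the $B_{2n}$ side.
\end{itemize}

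Finally, the second point you flag as delicate is not one. The identification $\mathrm{Hom}(M^{\perp},\mu_2)\cong J_b[2]/M$ only uses that the pairing defining $M^{\perp}$ is perfect and Galois-equivariant. No compatibility with the intrinsic Cartier duality of $\ker\psi$ is needed for an abstract isomorphism.
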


Consequently, we get an inclusion $J_b(F) / \phi^{\vee}(A_b(F)) \xhookrightarrow{} H^1(K,A_b[\psi^{\vee}])$.

\begin{theorem}
The composition
\[
J_b(F) / \psi^{\vee}(A_b(F)) \xhookrightarrow{} H^1(K,A_b[\psi^{\vee}]) \to H^1(K,\GL_{2n}/\mu_2)
\] 
is trivial.
\end{theorem}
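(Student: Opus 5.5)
The strategy is to factor the descent map for $\psi^{\vee}$ through the $2$-descent map of $J_b$, and then use that everything coming from $H^1(K,\Res_{L/K}\mu_2)$ dies in $H^1(K,\GL_{2n})=\{1\}$ by Hilbert's Theorem 90. This parallels the role of diagram \eqref{eq:diagram} in the proof of Theorem \ref{theo:comp}, but here no quadratic-form argument like Lemma \ref{lemma:comp} should be needed. The target group $\GL_{2n}/\mu_2$ has a simply connected-like cover with trivial $H^1$, so the only obstruction is a class in $H^2(K,\mu_2)$. I will show that this class vanishes for soluble elements.

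\textbf{Step 1: identify $J_b[2]$ and the map to $A_b[\psi^{\vee}]$.} Since $L_1 \cong K \times L$ and the norm-one condition determines the $K$-component, we have $J_b[2] \cong (\Res_{L_1/K}\mu_2)_{N=1} \cong \Res_{L/K}\mu_2$. The composition $\psi^{\vee}\circ \phi_M = \phi_M^{\vee}\circ\phi\circ\phi_M = [2]$ shows that $\phi_M$ maps $J_b[2]$ onto $A_b[\psi^{\vee}]$ with kernel $M$. This gives an exact sequence
\[
0 \to M \to J_b[2] \to A_b[\psi^{\vee}] \to 0 .
\]
I will check that, under the isomorphisms above and the isomorphism $A_b[\psi^{\vee}] \cong (\Res_{L/K}\mu_2)/\mu_2$ of the preceding Proposition, this sequence is identified Galois-equivariantly with
\[
1 \to \mu_2 \to \Res_{L/K}\mu_2 \to (\Res_{L/K}\mu_2)/\mu_2 \to 1 .
\]
Concretely, the generator $[(P_0)-\infty]$ of $M$ should correspond to the diagonal element $-1 \in L$. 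I will also check that this is compatible with the inclusions $\Res_{L/K}\mu_2 \subset \GL_{2n}$ and $\Stab_G(v) \subset \GL_{2n}/\mu_2$ coming from $M^{\times}$ acting on $L$. I expect this bookkeeping to be the main obstacle. The stabiliser was computed via $T_\beta$ on $M=K[x]/(f(x^2))$, whereas $J_b[2]$ is described via roots of $xf(x)$. One has to match the two descriptions, for instance through the explicit description of $M^{\perp}$ as even subsets. The key point is that the all-ones subset $\{1,\dots,2n\}$ is equivalent to $P_0$ modulo the relation $\sum_{i=0}^{2n}[(P_i)-\infty]=0$.

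\textbf{Step 2: factor the descent map.} Take $P \in J_b(K)$, choose $Q \in A_b(\ol K)$ with $\psi^{\vee}(Q)=P$, and then $R \in J_b(\ol K)$ with $\phi_M(R)=Q$. Then $2R=P$, and the cocycle $\sigma \mapsto \sigma Q - Q$ equals $\phi_M(\sigma R - R)$. Hence $\delta_{\psi^{\vee}}(P)$ is the image of $\delta_2(P)\in H^1(K,J_b[2])$ under $H^1(K,J_b[2]) \to H^1(K,A_b[\psi^{\vee}])$.

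\textbf{Step 3: conclude.} By Step 1, $\delta_{\psi^{\vee}}(P)$ lies in the image of $H^1(K,\Res_{L/K}\mu_2) \to H^1(K,(\Res_{L/K}\mu_2)/\mu_2)$. By the commutative diagram displayed before the statement, its image in $H^1(K,\GL_{2n}/\mu_2)$ therefore factors through $H^1(K,\GL_{2n})$, which is trivial by Hilbert 90. Hence the composition is trivial.
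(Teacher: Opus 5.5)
Your proposal is correct and is essentially the paper's proof. The paper uses the same diagram, with the surjection $J_b(K)/2J_b(K) \twoheadrightarrow J_b(K)/\psi^{\vee}(A_b(K))$ on top, $H^1(K,J_b[2]) \to H^1(K,A_b[\psi^{\vee}])$ in the middle and $H^1(K,\GL_{2n}) \to H^1(K,\GL_{2n}/\mu_2)$ at the bottom, and concludes by Hilbert 90; your Steps 1--2 spell out what the paper leaves implicit, namely that $\delta_{\psi^{\vee}}$ factors through $\delta_2$ and that $M \subset J_b[2] \cong \Res_{L/K}\mu_2$ is the diagonal $\mu_2$, which maps to the centre of $\GL_{2n}$.
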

\begin{proof}
If we look at $\GL_{2n}$-orbits, the stabiliser in that case is isomorphic
to $\Res_{L/K} \mu_2 \cong J_b[2]$, and we have a commutative diagram
\[
\begin{tikzcd}
J_b(F) / 2J_b(F) \arrow[r] \arrow[d] &  J_b(F)/\psi^{\vee}(A_b(F)) \arrow[d] \\
H^1(K,J_b[2]) \arrow[r] \arrow[d] &  H^1(K,A_b[\psi^{\vee}]) \arrow[d] \\
H^1(K,\GL_{2n}) \arrow[r] & H^1(K,G) \\
\end{tikzcd}
\]
Because $H^1(K,\GL_{2n}) = \{1\}$, and because the top map is surjective,
we conclude that the composition in the rightmost column is trivial.
\end{proof}

Therefore, we get an inclusion
\begin{equation}
\label{JacC}
J_b(F) / \psi^{\vee}(A_b(F)) \xhookrightarrow{} G(F) \backslash V_b(F).
\end{equation}

By a similar argument as in Corollary \ref{cor:Selmer}, we get:

\begin{cor}
We have an inclusion
\[
\Sel_{\psi^{\vee}}A_b \xhookrightarrow{} G(\Q) \backslash V_b(\Q).
\]
\end{cor}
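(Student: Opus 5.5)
The plan is to copy the proof of Corollary \ref{cor:Selmer} verbatim, now using the $C_{2n}$ inputs: the local inclusion \eqref{JacC} and the triviality of $J_b(F)/\psi^{\vee}(A_b(F)) \to H^1(F,\GL_{2n}/\mu_2)$, both established just above for any field $F$ of characteristic zero. Throughout, $G = \GL_{2n}/\mu_2$ and $\Stab_G(v) \cong (\Res_{L/\Q}\mu_2)/\mu_2 \cong A_b[\psi^{\vee}]$.

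First I would write down the commutative diagram
\[
\begin{tikzcd}
H^1(\Q,A_b[\psi^{\vee}]) \arrow[r] \arrow[d] & H^1(\Q,G)\arrow[d] \\
\prod_v H^1(\Q_v,A_b[\psi^{\vee}]) \arrow[r] & \prod_v H^1(\Q_v,G),
\end{tikzcd}
\]
with the local Kummer maps $\delta_{\psi^{\vee},v}\colon J_b(\Q_v)/\psi^{\vee}(A_b(\Q_v)) \to H^1(\Q_v,A_b[\psi^{\vee}])$ feeding into the bottom row. Take $\xi \in \Sel_{\psi^{\vee}}A_b$. Its image in each $H^1(\Q_v,A_b[\psi^{\vee}])$ lies in $\im\delta_{\psi^{\vee},v}$, and the theorem applied with $F=\Q_v$ shows that this image dies in $H^1(\Q_v,G)$. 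So the image of $\xi$ in $H^1(\Q,G)$ lies in the kernel of $H^1(\Q,G)\to\prod_v H^1(\Q_v,G)$.

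Next I would show this kernel is trivial. Here there is no need to cite \cite[Proposition 6.8]{LagaThesis}, which is stated for semisimple groups, because the statement is direct. From $1\to\mu_2\to\GL_{2n}\to G\to 1$ and $H^1(\Q,\GL_{2n})=1$, the map $H^1(\Q,G)\hookrightarrow H^2(\Q,\mu_2)=\mathrm{Br}(\Q)[2]$ is injective, and likewise for each $\Q_v$. The Hasse principle for the Brauer group (injectivity of $\mathrm{Br}(\Q)\to\bigoplus_v\mathrm{Br}(\Q_v)$) then gives the trivial kernel. The only care needed is to include the real place, which is harmless.

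Finally, $\xi$ lies in the kernel of $H^1(\Q,\Stab_G(v))\to H^1(\Q,G)$, where $v$ is the distinguished orbit. By Proposition \ref{prop:ait} this kernel is in bijection with $G(\Q)$-orbits in $V_b(\Q)$, which defines the map. Injectivity follows because Proposition \ref{prop:ait} is a bijection and the Selmer group sits inside $H^1(\Q,A_b[\psi^{\vee}])$ by definition.

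The only subtle point is the local-global step for the non-semisimple $G$, which the Brauer group argument above handles. I also need to check that the identification $\Stab_G(v)\cong A_b[\psi^{\vee}]$ is Galois-equivariant, so that the Kummer classes land in the correct $H^1$.
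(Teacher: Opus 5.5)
Your proposal is correct and follows essentially the same route as the paper. Both arguments use the local triviality of $J_b(F)/\psi^{\vee}(A_b(F)) \to H^1(F,G)$ and Proposition \ref{prop:ait} to reduce the corollary to showing that $H^1(\Q,G)\to\prod_p H^1(\Q_p,G)$ has trivial kernel, and both prove this from $1\to\mu_2\to\GL_{2n}\to G\to 1$, Hilbert's Theorem 90 and the Albert--Brauer--Hasse--Noether theorem. Strictly, the long exact sequence only shows that $H^1(\Q,G)\to H^2(\Q,\mu_2)$ has trivial kernel, not that it is injective, but trivial kernel is all the argument needs.
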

\begin{proof}
It suffices to see that the map $H^1(\Q,G) \to \prod_{p}H^1(\Q_p,G)$
has a trivial pointed kernel. In this situation, we can't directly apply
\cite[Proposition 6.8]{LagaThesis} as $G$ is not semisimple, so we 
instead do a direct proof. The exact sequence
\[
\begin{tikzcd}
1 \arrow[r] & \mu_2 \arrow[r] & \GL_{2n} \arrow[r] & G \arrow[r] & 1
\end{tikzcd}
\]
gives the following commutative diagram with exact rows:
\[
\begin{tikzcd}
H^1(\Q,\GL_{2n}) \arrow[r] \arrow[d] & H^1(\Q,G) \arrow[r] \arrow[d] & H^2(\Q,\mu_2) \arrow[d] \\
\prod_{p} H^1(\Q_p,\GL_{2n}) \arrow[r] & \prod_{p} H^1(\Q_p,G) \arrow[r] & \prod_{p} H^2(\Q_p,\mu_2) \\
\end{tikzcd}
\]
By Hilbert's Theorem 90, $H^1(\Q,\GL_{2n})$ and $H^1(\Q_p,\GL_{2n})$ are
trivial. Moreover, the Albert--Brauer--Hasse--Noether theorem implies 
that the map $H^2(\Q,\mu_2) \to \prod_{p} H^2(\Q_p,\mu_2)$
is injective. Thus, if an element $c \in H^1(\Q,G)$ maps to the trivial
element of $\prod_{p} H^1(\Q_p,G)$, then it also maps to the trivial element
of $\prod_{p} H^2(\Q_p,\mu_2)$, and the injectivity of both maps $H^1(\Q,G) \to H^2(\Q,\mu_2) \to \prod_{p} H^2(\Q_p,\mu_2)$
means that $c$ is trivial, as desired.
\end{proof}

\subsection{Integral orbits}

We wish to show that any rational orbit corresponding to a Selmer element
has integral representatives. In order to do this, first we parametrise
what the integral representatives can be.

Define $R =  \Z_p[\gamma] = \Z_p\cdot\la 1,\gamma,\dots,\gamma^{2n-1}\ra$,
an order inside $L$.

\begin{prop}
\label{prop:intC}
Let $b \in B(\Z_p)$. The set of $G(\Z_p)$-orbits in $V_b(\Z_p)$ is in bijection
with equivalence classes of tuples $(I_1,I_2,\alpha)$, where $\alpha \in L^{\times}$
and $I_1,I_2$ are fractional ideals of $R$ satisfying:
\begin{enumerate}
\item $I_1 I_2 \subset \alpha R$;
\item $N(I_1)N(I_2) = N(\alpha)$;
\item $I_1 \subset I_2 \subset \gamma^{-1}I_1$;
\end{enumerate}
Two tuples $(I_1,I_2,\alpha)$, $(I_1',I_2',\alpha')$ are equivalent if there exists 
$c \in L^{\times}$ such that $(I_1,I_2,\alpha) = (cI_1',cI_2',c^2\alpha')$.
\end{prop}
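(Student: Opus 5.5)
We follow the template of Proposition \ref{prop:intOrbits}: build an element of $V_b(\Z_p)$ from a tuple $(I_1,I_2,\alpha)$, then recover such a tuple from an integral element, and check that the two constructions are mutually inverse up to the stated equivalences.

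\emph{From a tuple to an orbit.} Put $\Lambda = I_1 \oplus \beta I_2 \subset M = L \oplus \beta L$, and twist the pairing by $\alpha$:
\[
\langle \mu,\lambda\rangle_\alpha = \text{coefficient of }\beta^{4n-1}\text{ in }\alpha^{-1}\lambda\eps(\mu).
\]
This pairing vanishes on $L\times L$ and on $\beta L\times\beta L$. For $x\in L$ and $y\in L$ one has
\[
\langle x,\beta y\rangle_\alpha = \text{coefficient of }\gamma^{2n-1}\text{ in }\alpha^{-1}xy,
\]
which is the trace form $\mathrm{Tr}_{L/K}(\alpha^{-1}xy/f'(\gamma))$. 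Three observations then do the work:
\begin{enumerate}
\item Condition 1 ($I_1I_2\subset\alpha R$), together with the fact that $R$ is monogenic (so $R^\vee = f'(\gamma)^{-1}R$), gives integrality of the pairing between $I_1$ and $I_2$.
\item Condition 2 (the norm condition) upgrades this integral pairing to a perfect one, by comparing the index of $I_2$ in the dual of $I_1$ with $N(\alpha)N(I_1)^{-1}N(I_2)^{-1}$.
\item Since the pairing $I_1\times I_2\to\Z_p$ is perfect, choosing a basis of $I_1$ and the dual basis of $I_2$ (adjusted by $J_{2n}$) puts the Gram matrix in the form $\Omega$. There is no lattice-classification or parity subtlety at $p=2$ here, because any perfect pairing between two free modules can be brought to standard form. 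This is the main simplification relative to the $B$ case.
\end{enumerate}
Multiplication by $\beta$ sends $I_1\mapsto \beta I_1\subset\beta I_2$ (since $I_1\subset I_2$) and $\beta I_2\mapsto \gamma I_2\subset I_1$ (since $I_2\subset\gamma^{-1}I_1$). Hence the matrix of $T_\beta$ has $\Z_p$-entries of the required off-diagonal shape, giving an element of $V_b(\Z_p)$. Changing the bases by elements of $\SL$/$\GL_{2n}(\Z_p)$ preserving $\Omega$ changes this element by $G(\Z_p)$. Replacing $(I_1,I_2,\alpha)$ by $(cI_1,cI_2,c^2\alpha)$ gives an isometric lattice via multiplication by $c$, so the orbit depends only on the equivalence class.

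\emph{From an orbit to a tuple.} Given $v\in V_b(\Z_p)$, identify it rationally with $T_\beta$ on $M$ for some $\alpha$, using the rational classification above. The $\Z_p$-span $\Lambda$ of the standard basis is a $\Z_p[\beta]$-module respecting the decomposition $L\oplus\beta L$, so $\Lambda = I_1\oplus\beta I_2$ with $I_1,I_2$ $R$-modules, i.e. fractional ideals. Stability under $\beta$ gives condition 3. Unimodularity of the form on $\Lambda$, via the trace-form identity above, gives conditions 1 and 2.

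\emph{Checking the bijection.} The constructions are inverse to each other. The remaining point is the scaling ambiguity. Rational orbits are parametrised by $L^\times/K^\times L^{\times2}$, but the tuples allow only $c\in L^\times$, so we must check that the $K^\times$ and $\mu_2$ ambiguity is absorbed. This comes from $G=\GL_{2n}/\mu_2$. An element of $G(\Z_p)$ may lift only to $\GL_{2n}(\Z_p[\sqrt{u}])$ rather than to $\GL_{2n}(\Z_p)$, which corresponds to rescaling $\alpha$ by a scalar $t\in\Q_p^\times$, with $I_1,I_2$ rescaled suitably (e.g. $I_2\mapsto tI_2$). This does not look like an equivalence of the form stated, so I expect it to be the delicate step. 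I would resolve it either by showing that $G(\Z_p)$-orbits correspond to $\GL_{2n}(\Z_p)$-orbits up to this scalar twist and verifying that the twist is realised by some $c$, or by noting that the equivalence must implicitly permit it.
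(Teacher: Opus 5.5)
Your two constructions are the paper's: the lattice $I_1\oplus\beta I_2\subset M$ with the $\alpha$-twisted form, condition 3 read as integrality of $T_\beta$, and conditions 1--2 read as unimodularity. Your supporting steps are all correct: the trace-form identity, the monogenic dual $R^\vee=f'(\gamma)^{-1}R$ giving $I_2=\alpha(R:I_1)$, and the dual-basis choice. The dual-basis choice forces $A=A^*$ and $C=C^*$ automatically and has no $p=2$ subtlety. Together these make precise what the paper only asserts by choosing $(S_1,S_2)$ of equal determinant $\pm1$.

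The step you flag is a real issue, but your first proposed fix cannot work. Take $u\in\Z_p^\times$. The element $\sqrt{u}\,I_{2n}$ defines a point of $G(\Z_p)$; the image of $\GL_{2n}(\Z_p)$ has index $\#\Z_p^\times/\Z_p^{\times2}$ in $G(\Z_p)$. This point acts by $(A,C)\mapsto(uA,u^{-1}C)$. Rescaling the basis of $I_1$ by $u^{-1}$ shows that it sends the class of $(I_1,I_2,\alpha)$ to that of $(I_1,I_2,u^{-1}\alpha)$; since $u$ is a unit, the ideals do not change. Now take $(R,R,1)$ with $f$ split over $\Q_p$ and $u$ a non-square. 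No $c\in L^\times$ satisfies $c^2=u$, so $(R,R,1)$ and $(R,R,u^{-1})$ are inequivalent in the stated sense, although their $G(\Z_p)$-orbits coincide. They are not even $\GL_{2n}(\Q_p)$-conjugate, because $\alpha\in L^\times/L^{\times2}$ is a $\GL_{2n}(\Q_p)$-invariant.

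So you must take your second option. What your argument, and the paper's, actually proves is the bijection for $\GL_{2n}(\Z_p)$-orbits, i.e. orbits of the image of $\GL_{2n}(\Z_p)$ in $G(\Z_p)$. This reading is consistent with the paper's Iwasawa decomposition of $G(\R)$ with $K\cong \mathrm{O}_{2n}/\mu_2$. For the full group of $\Z_p$-points of $\GL_{2n}/\mu_2$, the equivalence must be enlarged to $(cI_1',cI_2',uc^2\alpha')$ with $u\in\Z_p^\times$.

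The paper's proof does not notice this: its phrase ``obtain $\alpha$ from the rational orbit'' only pins $\alpha$ down modulo $\Q_p^\times L^{\times2}$. The issue is harmless for the later use of the proposition, which only needs the tuple-to-orbit direction.
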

\begin{proof}
Let $I = I_1 \oplus \beta I_2$ be a lattice inside $M$. If we consider
$M$ with basis given by any given $\Z$-bases of $I_1,I_2$, the operator $T_{\beta}$
is an integral operator (by hypothesis 3) with associated matrices $(A,C)$.
We now need to show that there exist
$\Z$-bases of $I_1,I_2$ such that $A = A^*$ and $C = C^*$.

Using last section's notation, we see that the form $(\cdot,\cdot)_{\alpha}$ has
an integral Gram matrix (by hypothesis 1), and that the associate matrices
$(G_1,G_2)$ have the same determinant $\pm 1$ (by hypothesis 2). Then,
it is safe to choose change-of-bases matrices $(S_1,S_2)$ with same
determinant $\pm 1$ as appropriate to make $A$ and $C$ self-adjoint with respect
to the usual inner product.

Conversely, given two matrices $(A,C) \in V_b(\Z)$, we construct $I_1,I_2$
as follows. First, construct $R = \Z[x]/g(x) = \Z[\gamma]$, and let $I_2 \cong \Z^n$
as abelian groups. To make $I_2$ into an $R$-module, we need to specify
how $\gamma$ acts: by viewing elements of $I_2$ as column vectors, we 
let $\gamma$ act on $I_2$ as the matrix $CA$ (this action is well-defined,
because $g(CA) = 0$ by the Cayley--Hamilton theorem). We then let $I_1$ be the
submodule $C\cdot I_2$, and obtain $\alpha$ from the rational orbit of
$(A,C)$. If we choose $\cB_1$ and $\cB_2$ to be $\Z$-bases of $I_1$ and
$I_2$, then the matrix of the operator $T_{\beta}$ on the basis of $M$ 
given by $\cB_1 \cup \beta \cB_2$ is the one corresponding to $(A,C)$.
The fact that the lattice $I_1\oplus\beta I_2$ of $M$ is self-dual to the form $(\cdot,\cdot)_{\alpha}$
translates to the hypotheses $I_1I_2 \subset \alpha R$ and $N(I_1)N(I_2) = N(\alpha)$.

Therefore, the two constructions $(A,C) \mapsto (I_1,I_2,\alpha)$ and
$(I_1,I_2,\alpha) \mapsto (A,C)$ are inverse to each other, as desired.
\end{proof}

\begin{theorem}
The image of the map
\[
J_b(\Q_p) / \psi^{\vee}(A_b(\Q_p)) \xhookrightarrow{} G(\Q_p) \backslash V_b(\Q_p).
\]
always intersects $V_b(\Z_p)$.
\end{theorem}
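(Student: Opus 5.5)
By Proposition \ref{prop:intC}, it suffices to find, for every class $[D] \in J_b(\Q_p)/\psi^{\vee}(A_b(\Q_p))$ with image $\alpha \in L^{\times}/(\Q_p^{\times}L^{\times 2})$, a tuple $(I_1,I_2,\alpha')$ with $\alpha' \equiv \alpha$ modulo $\Q_p^{\times}L^{\times 2}$ satisfying conditions 1--3. The plan is to first solve the simpler problem $I_1 = I_2 = I$. Choose a fractional ideal $I$ of $R$ with $I^2 \subset \alpha R$ and $N(I)^2 = N(\alpha)$. Such an $I$ should be obtained by lifting $[D]$ along the surjection $J_b(\Q_p)/2J_b(\Q_p) \to J_b(\Q_p)/\psi^{\vee}(A_b(\Q_p))$. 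The class in $J_b(\Q_p)/2J_b(\Q_p)$ has a $2$-descent image $\alpha_1 \in (L_1^{\times}/L_1^{\times 2})_{N\equiv 1}$, where $L_1 = \Q_p[x]/(xf(x))$. The $L$-component of $\alpha_1$ maps to $\alpha$ under the natural map, by the same compatibility used in the proof of the previous theorem.

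To produce $I$, apply \cite[Lemma 4.9]{LTA2n} (cf. \cite[Proposition 8.5]{BG}), exactly as in the proof of Theorem \ref{theo:integral}. This gives a fractional ideal $I_1^{(1)}$ of $R_1 = \Z_p[x]/(xf(x))$ with $(I_1^{(1)})^2 \subset \alpha_1 R_1$ and $N(I_1^{(1)})^2 = N(\alpha_1)$. Then take $I$ to be either its projection $\ol{I_1^{(1)}}$ to $L$ or the lattice $\ol{I_1^{(1)}}'$, which is stable under $R$.

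If we set $I_1 = I_2 = I$, conditions 1 and 2 hold, and condition 3 holds trivially because $\gamma I \subset I$. This is the crucial simplification compared with the $B_{2n}$ case. There, a self-dual intermediate lattice had to be found, with extra trouble at $p=2$ from evenness. Here the symplectic form imposes no parity condition, since $H^1(\Q_p,\mathrm{Sp})$ is trivial and symplectic lattices of unit discriminant over $\Z_p$ are always standard. That is why I expect $V_b(\Z_p)$ to be reached rather than $\frac12 V_b(\Z_p)$.

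The main obstacle is matching norms. Condition 2 is required with the exact $\alpha$, not just up to $\Q_p^{\times}L^{\times 2}$. Changing $\alpha$ by a scalar $t \in \Q_p^{\times}$ multiplies $N(\alpha)$ by $t^{2n}$, and $I$ can be rescaled by any element of $L^{\times}$. So the real content is to verify that $N(I)^2/N(\alpha)$ is an $n$-th power of a square, up to units, after adjusting $\alpha$ by scalars.

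I would first try to get the norm identity directly from \cite[Lemma 4.9]{LTA2n} applied to $\alpha_1$. It gives $N_{L_1}(I_1^{(1)})^2 = N_{L_1}(\alpha_1) = N_L(\alpha)^2$, and projecting from $R_1$ to $R$ changes norms only by a controlled factor involving $f(0)$, which can be absorbed by the freedom in $\alpha$ modulo $\Q_p^{\times}$. If this bookkeeping fails, the fallback is to allow $I_1 \subsetneq I_2$. Take $I_1 = \gamma I$ and $I_2 = I$, which changes $N(I_1)N(I_2)$ by $N(\gamma) = f(0)$ while keeping $I_1 \subset I_2 \subset \gamma^{-1}I_1$. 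Then replace $\alpha$ by $\gamma\alpha$ or $\alpha/\gamma$ as needed to fix the parity.
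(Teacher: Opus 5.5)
\textbf{There is a genuine gap.} Requiring $I_1=I_2$, or $I_1=\gamma I_2$ in your fallback, cannot be arranged in general, and the norm mismatch is not a scalar issue.

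Condition 3 of Proposition \ref{prop:intC} forces $[I_2:I_1]=p^k$ with $0\le k\le v_p(N(\gamma))=v_p(f(0))$. Condition 2 then forces $2v_p(N(I_1))-k=v_p(N(\alpha))$, so $k\equiv v_p(N(\alpha)) \pmod 2$. This parity depends only on the class of $\alpha$ in $L^{\times}/\Q_p^{\times}L^{\times 2}$, because $t\in\Q_p^{\times}$ changes $v_p(N(\alpha))$ by $2n\,v_p(t)$. Consequently:
\begin{itemize}
\item $I_1=I_2$ requires $v_p(N(\alpha))$ to be even;
\item $I_1=\gamma I_2$ requires $v_p(N(\alpha))\equiv v_p(f(0)) \pmod 2$.
\end{itemize}
Both can fail for soluble classes. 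Take $p$ odd, $v_p(f(0))=2$ and $f'(0)=1$. Then $pw\,f(pw)=p^2(w^2+O(p))$ is a square, so there is a point $P=(pw,y_0)\in C_b(\Q_p)$. Its class gives $\alpha=pw-\gamma$ with $N(\alpha)=f(pw)$ of valuation $1$, so one needs exactly $k=1$, which neither of your options allows.

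In particular, your claim that the projection $\ol{I_1^{(1)}}$ satisfies $N(\ol{I_1^{(1)}})^2=N(\alpha)$ after rescaling $\alpha$ is false. That lattice is not self-dual for the pairing $(\mu,\lambda)\mapsto$ coefficient of $\gamma^{2n-1}$ in $\alpha^{-1}\mu\lambda$. Its dual is $\ol{I_1^{(1)}}'$, which is strictly larger whenever $v_p(N(\alpha))$ is odd. Separately, literally replacing $\alpha$ by $\gamma^{\pm1}\alpha$ would change the rational orbit unless $\gamma\in\Q_p^{\times}L^{\times 2}$.

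\textbf{The missing idea} is to use the two lattices you already wrote down \emph{together}:
\begin{itemize}
\item $I_1=\ol{I_1^{(1)}}$;
\item $I_2=\ol{I_1^{(1)}}'=\{\lambda\in L \mid (0,\gamma\lambda)\in I_1^{(1)}\}$.
\end{itemize}
The identity relating the form on $L$ to the form on $L_1$, used in the proof of Theorem \ref{theo:integral}, shows that these two lattices are mutually dual under the pairing above. That pairing is exactly the restriction of the symplectic form to $L\times\beta L$: the coefficient of $\beta^{4n-1}$ in $\alpha^{-1}\beta\lambda\mu$ equals the coefficient of $\gamma^{2n-1}$ in $\alpha^{-1}\lambda\mu$. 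Hence $I_1\oplus\beta I_2$ is self-dual, which gives conditions 1 and 2. Condition 3 reduces to $\gamma I_2\subset I_1$, which is immediate from the definition of $I_2$. This is the paper's argument.

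The real simplification in the $C_{2n}$ case is not that condition 3 becomes trivial. It is that the symplectic form vanishes on $L\times L$ and on $\beta L\times\beta L$, so no single self-dual lattice is ever needed. By imposing $I_1=I_2$ you reintroduce exactly the self-duality problem of the $B_{2n}$ case, and here it is obstructed by valuation parity.
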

\begin{proof}
Let $L_1 = \frac{K[x]}{(xf(x))} = K[\beta_1]$, which can be decomposed
as $L_1 \cong \Q_p \times L$. By \cite[Lemma 4.9]{LTA2n},
there exists a fractional ideal $\tilde{I_1}$ of the order $R_1 = \Z_p[\beta_1]$
satisfying $\tilde{I_1}^2 \subset \tilde{\alpha}R_1$ (where $\tilde{\alpha}$
is a lifting of $\alpha$ to $L_1$) and $N(\tilde{I_1})^2 = N(\alpha)$,
where the norms are taken with respect to $R_1$.

Take $I_1$ to be the reduction of $\tilde{I_1}$ to $R$. For an element
$\lambda \in L_2$, define $\tilde{\lambda} \in L_1$ to be $\tilde{\lambda} = (0,\lambda) \in \Q_p \times L$.
Then, similarly to the proof of Theorem \ref{theo:integral}, consider
\[
I_1' = \{\lambda \in L_2 \mid \beta \tilde{\lambda} \in I_1\}.
\]
This is a fractional ideal of $R$. We now claim that taking $I_2 = I_1'$
satisfies the conditions of Proposition \ref{prop:intC}. Indeed, by the
same proof as Theorem \ref{theo:integral}, the ideals $I_1$ and $I_2$
are dual to each other with respect to $(\cdot,\cdot)_{\alpha}$, so this
guarantees the first two points of the proposition. The final point
is an easy computation.
\end{proof}

\section{Counting orbits for $C_{2n}$}
\label{section:countingC}

Fix $(G,V) = (G_C,V_C)$.
In this section, we develop the necessary geometry-of-numbers methods
we need to count $G(\Z)$-orbits in $V(\Z)$. We note that 
$G = \GL_{2n}/\mu_2$ is not semisimple, a fact that will introduce some
important technical differences with respect to similar arguments found in
the literature.

\subsection{Measures}

Recall that $B \cong \Spec \Z[b_2,\dots,b_{4n}]$.
We start by recording a useful numerological fact.

\begin{lemma}
\label{lemma:sum}
$2+4+\dots+4n = \dim_{\Q} V$.
\end{lemma}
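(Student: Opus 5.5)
The proof is a direct computation of both sides, which I will then compare.

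\emph{The left-hand side.} It is an arithmetic progression. We have
\[
2+4+\dots+4n = 2\sum_{i=1}^{2n} i = 2\cdot\frac{2n(2n+1)}{2} = 2n(2n+1).
\]

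\emph{The right-hand side.} I would use the explicit description of $V = V_C = \hh(1)$ given in Section \ref{subs:VinbergBC}. An element of $V$ is a pair $(A,C)$ of $2n\times 2n$ matrices with $A = A^*$ and $C = C^*$. Here $X^* = J\,{}^tX J$, so the condition $A = A^*$ says that $A$ is symmetric with respect to reflection across the antidiagonal. Such a matrix is determined by its entries on and above the antidiagonal, and there are $2n(2n+1)/2 = n(2n+1)$ of these. The same count applies to $C$. Therefore
\[
\dim_{\Q} V = 2\cdot n(2n+1) = 2n(2n+1),
\]
which agrees with the left-hand side.

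\emph{Cross-check via Lie theory.} As a consistency check, one can compute $\dim V$ as $\dim \hh - \dim \hh(0)$. Here $\dim \mathfrak{sp}_{4n} = 2n(4n+1)$ and $\dim \hh(0) = \dim \mathfrak{gl}_{2n} = 4n^2$, so $\dim V = 8n^2+2n-4n^2 = 4n^2+2n$, the same number.

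\emph{Conceptual reason.} This is the standard Vinberg numerology for stable gradings. The invariants $p_2,\dots,p_{4n}$ have degrees $2,4,\dots,4n$, and for a stable $\Z/2\Z$-grading the sum of the degrees of the basic invariants equals $\dim V$. Since $G_C$ is not semisimple, I would not rely on that general fact here; the direct count above suffices.

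There is no real obstacle. The only step needing care is to count the antidiagonal-symmetric matrices correctly, using the definition of $X^*$ from the paper.
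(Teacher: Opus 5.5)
Your proof is correct: both sides equal $2n(2n+1)$. The paper records this lemma as a numerological fact without proof, and your count of the entries $a_{i,j},c_{i,j}$ with $i+j\le 2n+1$ is exactly the implicit verification; this is the same index set the paper later uses for the weights $\Phi_V$. Your caution about the conceptual argument is unnecessary, since the identity $\sum_i d_i = \#\Phi^+ + \mathrm{rank} = \dim \hh(1)$ depends only on the stable grading of $\hh=\mathfrak{sp}_{4n}$ and its Weyl group, so it does not matter that $G_C$ is not semisimple.
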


To obtain a measure on $G = \GL_{2n}/\mu_2$, we start by explicitly
writing down the Iwasawa decomposition of $G(\R)$. We have that
$G(\R) = \Lambda N T K$, where $\Lambda = \{\lambda I_{2n} \mid \lambda > 0\}$,
$N$ consists of unipotent lower triangular matrices, $T = \{\diag(t_1,\dots,t_{2n}) \mid t_i > 0, \, t_1\dots t_{2n} = 1\}$
and $K$ is a maximal compact subgroup isomorphic to $\textrm{O}_{2n}/\mu_2$.
Then, it follows that the map
\[
\Lambda \times N(\R) \times T(\R) \times K \to G(\R)
\]
is a diffeomorphism by an argument similar to \cite[Chapter 3, \S 1]{SL2}.
We can explicitly determine a Haar measure for $G$ in terms of $\Lambda,N,T,K$:

\begin{prop}
A Haar measure $dg$ on $G(\R)$ can be given by $dg = \delta^{-1}(t) \, d^{\times}\lambda \, dn \, d^{\times}t \, dk$,
where $d^{\times}{\lambda} = \frac{d\lambda}{\lambda}$, $dn$ and $dk$ are Haar
measures on $N(\R)$ and $K$ respectively, $d^{\times}t = \prod_{i=1}^{2n-1} \frac{dt_i}{t_i}$
and $\delta^{-1}(t) = \prod_{i = 1}^{2n-1} t_i^{4n-2i}$.
\end{prop}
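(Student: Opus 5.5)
The plan is to reduce the statement to the standard description of Haar measure in Iwasawa coordinates. The only computation is the modular factor coming from the conjugation action of $T$ on $N$.

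\textbf{Step 1: split off the centre.} The subgroup $\Lambda$ is central in $G(\R)$. Moreover $G(\R)^1 := N(\R)T(\R)K$ is a closed subgroup with $G(\R) = \Lambda \times G(\R)^1$; for instance, $\lambda$ can be read off from $|\det|$ of any lift to $\GL_{2n}(\R)$, which is well defined modulo $\mu_2$. Hence a Haar measure on $G(\R)$ is $d^\times\lambda$ times a Haar measure on $G(\R)^1$. By the stated diffeomorphism $\Lambda \times N(\R)\times T(\R)\times K \to G(\R)$, it remains to treat $G(\R)^1 = PK$, where $P = N(\R)T(\R)$ and $P\cap K$ is trivial.

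\textbf{Step 2: the general lemma.} We use the standard lemma (cf.\ \cite[Chapter 3]{SL2}): if a unimodular group $G$ factors as $G = PK$ with $K$ compact and $P \cap K$ trivial, then $d_Lp\,dk$ is a Haar measure on $G$. Here $d_Lp$ is a left Haar measure on $P$. The proof is the usual one. The product measure is invariant under $K$ acting on the right and under $P$ acting on the left. By uniqueness of Haar measure on the homogeneous space $G/K \cong P$, it is therefore a left Haar measure. Unimodularity of $G(\R)^1$ holds because it is reductive; concretely, it is a finite quotient of $\SL_{2n}^{\pm}(\R)$.

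\textbf{Step 3: left Haar measure on $P = NT$.} Write $p = nt$. Left multiplication by $n_0$ preserves $dn$. Left multiplication by $t_0$ sends $nt \mapsto (t_0 n t_0^{-1})(t_0 t)$, which rescales $dn$ by the Jacobian $\delta(t_0) = |\det \Ad(t_0)|_{\mathrm{Lie}\,N}|$. Consequently $\delta(t)^{-1}\,dn\,d^\times t$ is left invariant. Since $N$ is lower unipotent, the root space of entry $(i,j)$ with $i>j$ is scaled by $t_i/t_j$, so
\[
\delta(t) = \prod_{i>j} \frac{t_i}{t_j} = \prod_{i=1}^{2n} t_i^{2i-1-2n}.
\]
We now use $t_{2n} = (t_1\cdots t_{2n-1})^{-1}$ to eliminate $t_{2n}$, whose exponent is $2n-1$. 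This gives
\[
\delta(t) = \prod_{i=1}^{2n-1} t_i^{2i-4n},
\]
which is exactly the claimed $\delta^{-1}(t) = \prod_{i=1}^{2n-1} t_i^{4n-2i}$. The measure $d^\times t = \prod_{i<2n} dt_i/t_i$ is a Haar measure on $T \cong \R_{>0}^{2n-1}$ in these coordinates.

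\textbf{Main obstacle.} The main difficulty is bookkeeping rather than substance. One must be careful about the order of the factors, since the coordinates are $\lambda n t k$ rather than $k t n$; this order determines whether $\delta$ or $\delta^{-1}$ appears. One must also handle the $\mu_2$ quotient, which only affects $K = \mathrm{O}_{2n}/\mu_2$ and does not interact with the connected groups $\Lambda, N, T$.
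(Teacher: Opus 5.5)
Your proposal is correct and follows the standard Iwasawa-coordinates argument that the paper takes for granted; the paper states the proposition without proof and cites Lang's $\SL_2(\R)$ only for the decomposition. Splitting off the central $\Lambda$, using $dg = d_Lp\,dk$ on the unimodular group $NTK$, and computing the modular factor $\prod_{i>j} t_i/t_j = \prod_{i<2n} t_i^{2i-4n}$ for lower-unipotent $N$ in the order $ntk$ reproduces the paper's $\delta^{-1}(t)=\prod_{i=1}^{2n-1} t_i^{4n-2i}$ exactly.
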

If $T = \diag(t_1,\dots,t_{2n})$, write the following change of variables
for $1 \leq m \leq 2n$:
\[
t_m = \prod_{k=1}^{m-1} s_k^{-k}\prod_{k=m}^{2n-1} s_k^{2n-k}.
\]
The conditions that $t_k/t_{k+1} < c$ for all $k \in \{1,\dots,2n-1\}$
translate to $s_k < c'$ for all $k \in \{1,\dots,2n-1\}$ and some $c' > 0$.

Using this decomposition, we can obtain a fundamental domain for the
action of $G(\Z)$ on $G(\R)$. In fact, we can choose it to be \emph{box-shaped at infinity};
a notion that we now explain.
Given a positive constant $c > 0$,
define $T_c = \{ \diag(t_1,\dots,t_{2n}) \in T(\R) \mid t_2/t_1 < c, \dots, t_{2n}/t_{2n-1} < c \}$.
Then, define a \emph{Siegel set} to be a set of the form $\cS = \Lambda \omega T_c K$,
where $\omega$ is a compact subset of $N(\R)$ and $c > 0$. Then, we say
that a set $\cF \subset G(\R)$ is \emph{box-shaped at infinity} if there
exist Siegel subsets $\cS_1,\cS_2$ with $\cS_1 \subset \cF \subset \cS_2$
and satisfying:
\begin{itemize}
\item There exists an open subset $\cU_1 \subset \cS_1$ of full measure
such that every $G(\Z)$-orbit in $G(\R)$ intersects $\cU_1$ at most once.
\item Every $G(\Z)$-orbit in $G(\R)$ intersects $\cU_2$ at least once.
\item There exists a sufficiently small $c$ such that $\cS_1 \cap \Lambda NT_cK = \cS_2 \cap \Lambda NT_cK$.
\end{itemize}
\begin{prop}
There exists a box-shaped fundamental domain for the action of $G(\Z)$
on $G(\R)$.
\end{prop}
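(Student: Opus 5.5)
We want a box-shaped fundamental domain for $G(\Z)$ acting on $G(\R)$, where $G=\GL_{2n}/\mu_2$. The plan is to reduce to the classical reduction theory of $\GL_{2n}(\Z)$, equivalently $\SL_{2n}^{\pm}$. The center of $G(\R)$ contains $\Lambda$, the positive scalars, and $\Lambda$ meets $G(\Z)$ trivially. Indeed, $G(\Z)$ contains the image of $\GL_{2n}(\Z)$ with finite index: it may also contain classes of matrices $g$ with $g^2$-scaled integrality, but its elements all have determinant $\pm1$ up to the $\mu_2$ ambiguity.

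So the first step is to determine $G(\Z)$ concretely. Since $G(\Z)=G(\Q)\cap G(\Z_p)$ for all $p$, I would use $H^1(\Z_p,\mu_2)$ and Hilbert 90 (the Picard group of $\Z$ is trivial) to show that $G(\Z)$ is the image of $\GL_{2n}(\Z)$ together with classes of $g\in\GL_{2n}(\Q)$ with $g=\sqrt{\pm1}\cdot h$, which is impossible over $\Q$ except for $\pm1$. Hence $G(\Z)=\GL_{2n}(\Z)/\{\pm1\}$, possibly up to a finite-index issue that I would double-check. This step is routine.

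Next, the map $G(\R)\to \Lambda\times (\SL_{2n}^{\pm}(\R)/\{\pm1\})$, $g\mapsto(|\det g|^{1/2n},\,g/|\det g|^{1/2n})$, is a $G(\Z)$-equivariant isomorphism with $G(\Z)$ acting trivially on the first factor. A fundamental domain is therefore $\Lambda\times\cF_0$, where $\cF_0$ is a fundamental domain for $\GL_{2n}(\Z)$ on $\SL^{\pm}_{2n}(\R)$. For $\cF_0$ I would take the classical Minkowski/Siegel construction $\cF_0\subset\omega T_cK$: by Borel's reduction theory, $\SL_{2n}(\R)=\SL_{2n}(\Z)\,\omega T_{c_0}K$ for suitable compact $\omega$ and constant $c_0$, with only finitely many $\gamma$ such that $\gamma\mathcal S\cap\mathcal S\ne\emptyset$ (the Siegel finiteness property). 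The determinant $-1$ component is absorbed into $K\cong \mathrm{O}_{2n}/\mu_2$. This gives $\cS_2=\Lambda\omega T_{c_0}K$, which meets every orbit.

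The main obstacle is the box-shaped condition: we need $\cS_1\subset\cF\subset\cS_2$ with $\cS_1$ meeting orbits at most once on an open subset of full measure, and with $\cS_1$ and $\cS_2$ agreeing deep in the cusp. I would follow the standard argument in \cite[\S 8]{LagaThesis} and \cite{BSquartics}. Take $\cF$ to be the set of points of $\cS_2$ that are reduced in a Minkowski-type sense, i.e. the representative of maximal height ($\prod$ of successive minima ordering) with a tie-breaking rule. Deep in the cusp, where some $t_k/t_{k+1}$ is small, the Siegel finiteness shows that the only elements $\gamma$ identifying points of $\Lambda NT_cK$ lie in the parabolic $P(\Z)$ stabilising the corresponding flag. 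Reducing modulo $P(\Z)$ amounts to reducing $N(\R)$ modulo $N(\Z)$ and permuting signs, which is achieved by a fixed compact $\omega$ (a fundamental domain for $N(\Z)\backslash N(\R)$). This yields the equality of $\cS_1$ and $\cS_2$ on $\Lambda NT_cK$ for small $c$. The boundary of $\cF$ consists of finitely many real-analytic hypersurfaces, which gives the full-measure open set $\cU_1$. Verifying the parabolic-only claim uniformly in the cusp is where the real work lies, and I would cite Borel's \emph{Introduction aux groupes arithmétiques} for it rather than reprove it.
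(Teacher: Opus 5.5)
Your strategy matches the paper's: split off the central factor $\Lambda$ and apply reduction theory to the part of determinant $\pm1$. The paper takes a box-shaped domain $\cF'$ for $\SL_{2n}/\mu_2$ from its earlier work and sets $\cF=\Lambda\cdot\cF'$, leaving implicit that an element of $G(\Z)$ of determinant $-1$ covers the rest. You instead use $\SL^{\pm}_{2n}$ and $\GL_{2n}(\Z)$, which handles the determinant sign explicitly, and you sketch the Siegel-set argument that the paper only cites. The cusp sketch is fine once you note that the agreement of $\cS_1$ and $\cS_2$ on $\Lambda NT_cK$ concerns points where \emph{all} ratios $t_{k+1}/t_k$ are small. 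Only then is the relevant parabolic the Borel, so that reducing modulo $P(\Z)$ means reducing modulo $N(\Z)$ and diagonal signs. If only some ratio is small, $P(\Z)$ has a Levi part $\GL_k(\Z)\times\GL_{2n-k}(\Z)$, and a fixed compact $\omega$ does not suffice.

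However, the step you call routine is wrong. An element of $G(\Q)$ only has to lift to $\GL_{2n}(\Q(\sqrt d))$, so ``impossible over $\Q$'' rules nothing out. The sequence $1\to\mu_2\to\GL_{2n}\to G\to 1$ gives $\GL_{2n}(\Z)/\{\pm1\}\hookrightarrow G(\Z)\to H^1(\Z,\mu_2)\to H^1(\Z,\GL_{2n})=1$. By Kummer theory and $\mathrm{Pic}(\Z)=0$, $H^1(\Z,\mu_2)=\Z^{\times}/\Z^{\times2}=\{\pm1\}$. Hence $\GL_{2n}(\Z)/\{\pm1\}$ has index $2$ in $G(\Z)$, and the other coset is represented by $z=[\sqrt{-1}\,I_{2n}]$. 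Concretely, $z$ acts on $V$ as $-1$, whereas no $h\in\GL_{2n}(\R)$ does: that would give $hJ_{2n}h^*=-J_{2n}$, i.e.\ $h\,{}^th=-I_{2n}$.

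The same computation over $\R$ gives $G(\R)=\GL_{2n}(\R)/\{\pm1\}\sqcup z\cdot\GL_{2n}(\R)/\{\pm1\}$. On the second piece your map $g\mapsto(|\det g|^{1/2n},g/|\det g|^{1/2n})$ does not land in $\SL^{\pm}_{2n}(\R)/\{\pm1\}$, so the claimed isomorphism is false. Fortunately the two errors cancel. Since $z\in G(\Z)$ lies in the non-identity coset, any fundamental domain for $\GL_{2n}(\Z)/\{\pm1\}$ acting on $\GL_{2n}(\R)/\{\pm1\}$ is also one for $G(\Z)$ acting on $G(\R)$; in particular your $\Lambda\times\cF_0$ still works. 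You should replace the assertion $G(\Z)=\GL_{2n}(\Z)/\{\pm1\}$ with this observation. (The paper's description $K\cong\mathrm{O}_{2n}/\mu_2$ passes over the same coset.)
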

\begin{proof}
For $G' = \SL_{2n}/\mu_2$, the proof of \cite[\S 4.3]{Oller} shows that
there exists a box-shaped fundamental domain $\cS_1' \subset \cF' \subset \cS_2'$ for $G'$ (for which
the definition is the same as above, removing $\Lambda$). Considering
$\cF := \Lambda \cdot \cF'$, $\cS_1 := \Lambda \cdot \cS_1'$ and $\cS_2 := \Lambda \cdot \cS_2'$
works.
\end{proof}

Within this set-up, we have the following change-of-measure formula:
\begin{lemma}
\begin{enumerate}
\item Let $p$ be a prime, and let $m_p \colon V^{rs}(\Z_p) \to \R_{\geq 0}$
be defined as
\[
m_p(v) = \sum_{v' \in G(\Z_p)\backslash (G(\Q_p)\cdot v \cap V(\Z_p))} \frac{\# \Stab_{G(\Q_p)}(v')}{\# \Stab_{G(\Z_p)}(v')}.
\]
Then $m_p$ is locally constant.
\item Let $\psi_p \colon V^{rs}(\Z_p) \to \R_{\geq 0}$ be a bounded, locally
constant function satisfying $\psi_p(v) = \psi_p(v')$ whenever $v,v' \in V(\Z_p)$
are $G(\Q_p)$-conjugate. Then there exists $W_0 \in \Q^{\times}$, independent
of $p$, such that
\[
\int_{v \in V^{rs}(\Z_p)} \psi_p(v) dv = |W_0| \vol(G(\Z_p)) \int_{b \in B^{rs}(\Z_p)} \sum_{v \in G(\Q_p) \backslash V_b(\Z_p)} \frac{m_p(v)\psi_p(v)}{\# \Stab_{G(\Q_p)}(v)}.
\]
\end{enumerate}
\end{lemma}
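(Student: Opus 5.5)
We treat the two parts separately, following the standard template of \cite[Proposition 3.6]{BSquartics} and \cite[Lemma 8.5]{LagaThesis}, with a few changes because $G=\GL_{2n}/\mu_2$ is not semisimple.

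For part 1, we show that $m_p$ is locally constant on $V^{rs}(\Z_p)$. Fix $v\in V^{rs}(\Z_p)$ with invariant $b=\pi(v)$. We aim to show that there is an open neighbourhood $U$ of $v$ and, for each $v''\in U$, an element $h\in G(\Z_p)$ close to the identity and a bijection between the $G(\Z_p)$-orbits in $G(\Q_p)v\cap V(\Z_p)$ and those in $G(\Q_p)v''\cap V(\Z_p)$ that preserves the ratios of stabiliser orders. The natural route uses the ideal description in Proposition \ref{prop:intC}. For $b'$ near $b$ in $B^{rs}(\Z_p)$, the orders $R_{b'}$ and the étale algebras $L_{b'}$ are identified compatibly. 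We can see this by Krasner's lemma, or equivalently because the roots of $f$ move continuously while the discriminant is bounded away from zero. Under this identification, tuples $(I_1,I_2,\alpha)$ correspond, and so do the rational orbit classes $\alpha\in L^\times/(K^\times L^{\times 2})$. The stabilisers $(\Res_{L/\Q_p}\mu_2)/\mu_2$ depend only on $L$, and so does the set of their $\Z_p$-points. It follows that $m_p$ is constant on the fibres over a neighbourhood of $b$.

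It remains to handle variation within a fibre $V_b$. The map $G(\Q_p)\times V_b\to V_b$ is an open orbit map on $V_b^{rs}$, and $G(\Z_p)$-orbits are open in $V_b(\Z_p)$. Together with the continuity of $\pi$, this gives local constancy.

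For part 2, we compare the measure $dv$ with the measure $dg\,db$ through the orbit map $G\times\Sigma\to V$, where $\Sigma$ is a local section of $\pi$. The first step is to prove a Jacobian identity: for any section $s\colon B\to V^{rs}$ over a neighbourhood, the map $(g,b)\mapsto g\cdot s(b)$ satisfies $d(g\cdot s(b)) = |W_0|\,dg\,db$ for some constant $W_0\in\Q^\times$ independent of $p$ and of $s$. To prove this, we observe that $\omega_V$ and $\omega_G\wedge\omega_B$ are both $G$-invariant top forms on $G\times B^{rs}$. Their ratio is therefore a regular invariant function, which must be a polynomial in $p_2,\dots,p_{4n}$. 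Comparing degrees using Lemma \ref{lemma:sum}, where $\sum \deg p_{2i}=\dim V$, shows that this ratio is homogeneous of degree $0$. It is therefore a constant, defined over $\Q$.

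The point that needs care is that $G$ has a central torus $\Lambda$. The action of the scalar $\lambda I$ on $V_C$ is $(A,C)\mapsto(\lambda^2A,\lambda^{-2}C)$, which is nontrivial. We must check two things: that the generic stabiliser is still finite, which holds by the Lemma computing $\Stab_G(v)$, and that the form $\omega_G$ is invariant under both left and right translation. The latter holds because $G$ is reductive and hence unimodular. Once these are in place, the degree argument goes through unchanged.

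We then integrate. Partition $V^{rs}(\Z_p)$ according to $b$ and to the $G(\Q_p)$-orbit. On each piece, the pullback of $dv$ under $G(\Z_p)\cdot v'\to V$ has fibres of size $\#\Stab_{G(\Z_p)}(v')$. Summing over the $G(\Z_p)$-orbits $v'$ inside a given $G(\Q_p)$-orbit with weight $\#\Stab_{G(\Q_p)}(v')/\#\Stab_{G(\Z_p)}(v')$, and dividing by $\#\Stab_{G(\Q_p)}(v)$, produces exactly the factor $m_p(v)/\#\Stab_{G(\Q_p)}(v)$ together with $\vol(G(\Z_p))$. Here we use that $\psi_p$ is constant on $G(\Q_p)$-classes.

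We expect the main obstacle to be the Jacobian constant for the non-semisimple group. In the semisimple case one cites \cite[Proposition 3.11]{BSquartics} directly. Here we must verify that including the $\Lambda$-direction still gives an isomorphism of tangent spaces at regular semisimple points. This amounts to checking that $\mathrm{Lie}(G)\to T_vV/T_v V_b$ is injective, which follows from finiteness of the stabiliser. We also have to normalise $dg$ so that $\vol(G(\Z_p))$ appears correctly, since the central torus contributes the factor $|\det|$.
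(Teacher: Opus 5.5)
Your overall route is the paper's. The paper proves this lemma by citing \cite[Proposition 3.3]{RTE78} together with the numerology $\sum_i \deg p_{2i}=\dim V$ of Lemma~\ref{lemma:sum}, which is the Bhargava--Shankar/Laga template you invoke. However, the Jacobian identity, which is the step that carries the content, is not correctly argued. You take an arbitrary section $s$ over a neighbourhood and claim that the ratio of $\mu_s^*\omega_V$ to $\omega_G\wedge\omega_B$ is a polynomial in $p_2,\dots,p_{4n}$. For such a section the ratio is only a function on that neighbourhood: analytic over $\Q_p$, or at best of the form $P/\Delta^k$ if $s$ is algebraic on $B^{rs}$. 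The scaling action need not preserve the domain of $s$, and degree-$0$ homogeneity would not force $P/\Delta^k$ to be constant anyway.

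The degree argument needs a section defined on all of $B$. The distinguished orbit of Section~\ref{section:OrbitsC} supplies one: it gives $\kappa\colon B\to V$ over $\Z$, polynomial because the Gram block $G_1$ is anti-triangular with $1$'s on the antidiagonal. Rescaling the power basis shows that $\kappa(t\cdot b)$ is $G$-conjugate to $t\,\kappa(b)$. For $\mu_\kappa(g,b)=g\cdot\kappa(b)$ the ratio is then a regular function on $G\times B$. It is left-invariant (given the determinant check below), hence a polynomial $F(b)$. Bi-invariance of $\omega_G$ gives $F(t\cdot b)=t^{\dim V-\sum_i 2i}F(b)=F(b)$, so $F$ is a constant $W_0\in\Q^\times$, non-zero because $\mu_\kappa$ is étale over $B^{rs}$. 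Independence of the section then follows because the Jacobian at $v$ is intrinsic to $v$, via $0\to\mathfrak g\to T_vV\to T_{\pi(v)}B\to 0$.

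Your tangent-space check is also misstated. Since $\mathfrak g\cdot v\subset T_vV_b$, the map $\mathfrak g\to T_vV/T_vV_b$ is zero. What you need is $\mathfrak g\xrightarrow{\sim}T_vV_b$ (finite stabiliser plus $\dim G=\dim V-\dim B$) together with surjectivity of $d\pi_v$.

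Second, the one place where non-semisimplicity genuinely matters is the one you skip. You assert that $\omega_V$ is $G$-invariant, but $G=\GL_{2n}/\mu_2$ has non-trivial characters, so you must check that $\det(g|_V)=1$. Without it, neither the left-invariance above nor the $G(\Q_p)$-invariance of $dv$ used when integrating over orbits holds. It is true here: $(A,C)\mapsto(AJ_{2n},J_{2n}C)$ identifies $V$ with $\Sym^2\oplus(\Sym^2)^\vee$ of the standard representation. The determinant is therefore $\det(g)^{2n+1}\det(g)^{-(2n+1)}=1$. By contrast, finiteness of stabilisers and unimodularity, which you single out, are automatic.

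Finally, in part 1, openness of $G(\Z_p)$-orbits inside a fibre plus continuity of $\pi$ does not give local constancy. Your identification $L_{b''}\cong L_b$ only matches the \emph{sets} of orbits over $b''$ and $b$. You still need that a point $v''$ near $v$ lies in the orbit matched with that of $v$. One standard way is as follows. Write the $G(\Z_p)$-orbits in $G(\Q_p)v\cap V(\Z_p)$ as $g_iv$, and note that $g_iv''\in V(\Z_p)$ for $v''$ close to $v$. Then show, by a slice/compactness argument, that these exhaust $G(\Q_p)v''\cap V(\Z_p)$ up to $G(\Z_p)$ with the same stabiliser ratios. Alternatively, show that the class $\alpha(v'')\in L^\times/\Q_p^\times L^{\times 2}$ is locally constant in $v''$.
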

\begin{proof}
This is analogous to \cite[Proposition 3.3]{RTE78}, using Lemma \ref{lemma:sum}.
\end{proof}

We can also construct special subsets of $V^{rs}(\R)$ to serve as fundamental
domains of the action of $G(\R)$ on $V^{rs}(\R)$. Similarly to \cite[\S 2.9]{ThorneE6},
we can find open subsets $L_1,\dots,L_k$ of $\{b \in B^{rs}(\R) \mid \Ht(b) = 1\}$
together with sections $s_i \colon L_i \to V(\R)$ of the invariant map
$\pi \colon V \to B$ satisfying the following properties:
\begin{itemize}
\item For each $i$, the set $L_i$ is connected and semialgebraic, and
$s_i$ is a semialgebraic map with bounded image.
\item Let $D = \R_{> 0}$. We then have an equality:
\[
V^{rs}(\R) = \bigcup_{i=1}^k G(\R) \cdot  D \cdot s_i(L_i).
\]
\end{itemize}

\subsection{Averaging and reductions}

We now carry out the core of our geometry-of-numbers arguments to count
$G(\Z)$-orbits in $V(\Z)$. Many of the steps are completely analogous
to the case where $G$ is semisimple, and will be duly omitted. The reader
can check \cite[\S 2.3]{BSquartics} for further context.

Let $A \subset V(\Z)$ be a $G(\Z)$-invariant set, and denote
\[
N(A,X) = \sum_{v \in G(\Z) \backslash A_{< X}} \frac{1}{\# \Stab_{G(\Z)}(v)}.
\]
Let $F$ be a field of characteristic zero. We say that an element $v \in V(F)$
with $b = \pi(v)$ is:
\begin{itemize}
\item \emph{$F$-reducible} if $\Delta(b) = 0$ or if there exists an element
$w = (A,C)$ in the $G(\Q)$-orbit of $v$ such that either $A = J_{2n}$
or $C = J_{2n}$; and \emph{$F$-irreducible} otherwise. 
\item $F$-soluble if $\Delta(b) \neq 0$ and $v$ lies in the image of
\[
J_b(F)/\psi^{\vee}(A_b^{\vee}(F)) \xhookrightarrow{} G(F) \backslash V_b(F)
\]
of \eqref{JacC}.
\end{itemize}
Given $A \subset V(\Z)$, we will denote by $A^{irr}$ its set of $\Q$-irreducible
elements, and we will also denote by $V(\R)^{sol}$ the set of $\R$-soluble
elements of $V(\R)$.

\begin{theorem}
\label{theo:count1}
There exists a constant $C > 0$ such that
\[
N(V(\Z)^{irr} \cap V(\R)^{sol}, X) = C X^{\dim V} \log X + O(X^{\dim V})
\]
as $X \to \infty$.
\end{theorem}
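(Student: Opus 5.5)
We follow Bhargava's averaging method from \cite[\S 2.3]{BSquartics}, adapted to the non-semisimple group $G = \GL_{2n}/\mu_2$. For each $i$, we average the count of integral points in $\cF \cdot D\cdot s_i(L_i)$ over translates $\cF h$, with $h$ ranging over a compact set $G_0 \subset G(\R)$ of nonempty interior. This expresses $N(V(\Z)^{irr}\cap V(\R)^{sol},X)$ as
\[
\frac{1}{\vol(G_0)}\int_{g\in\cF}\#\{v \in (g G_0 \cdot D s_i(L_i))_{<X}\cap V(\Z)^{irr}\}\,dg,
\]
summed over the $i$ with $s_i(L_i)$ soluble. We then use the Iwasawa coordinates $g=\lambda n t k$ with Haar measure $\delta^{-1}(t)\,d^\times\lambda\,dn\,d^\times t\,dk$.

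The key point is the new direction $\Lambda$. The centre $\lambda I_{2n}$ acts on $(A,C)$ by $(\lambda^2A,\lambda^{-2}C)$, and this action preserves the invariants. Since $\Lambda$ is non-compact and $\Stab_{G(\R)}(v)$ is finite, every $G(\R)$-orbit is unbounded along $\Lambda$. The $\lambda$-integral therefore does not converge, and this is the source of the $\log X$. I would proceed in three steps.

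\emph{Step 1 (cusp in $\lambda$).} Observe that if $|\lambda|$ is large (or small), then every element of $V(\Z)$ in the region has $C=0$ (respectively $A=0$) in the leading coordinates. More precisely, the coordinate $C_{11}$, or $A_{11}$ on the other side, has weight $\lambda^{-2}$ times a torus character. The plan is to show that for $v=(A,C)$ in the region, the entries of $A$ have size $\ll X\lambda^2\cdot(\text{torus factor})$ and the entries of $C$ have size $\ll X\lambda^{-2}\cdot(\text{torus factor})$. Irreducibility forces both $A$ and $C$ to be nondegenerate: if $A$ were singular, say, the discriminant would vanish, since $\det(AC)=p_{4n}$. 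Consequently $\lambda$ ranges over an interval $X^{-1}\ll\lambda^2\ll X$ up to constants, because a nonzero integral symmetric $C$ needs some entry of size $\ge1$. Within this range, the lattice-point count for fixed $\lambda,t$ is $\vol+O(\text{lower-dimensional projections})$ by Davenport's lemma. The main term contributes $X^{\dim V}\int d^\times\lambda \asymp X^{\dim V}\log X$, because the volume of $(g G_0\cdot D s_i(L_i))_{<X}$ is independent of $\lambda$ by unimodularity. This gives $C=\frac{c}{\vol(G_0)}\sum_i\vol(\cF'\cdot G_0\cdot Ds_i(L_i))_{<1}$, up to a normalising factor of $2$ from $\lambda^2$.

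\emph{Step 2 (cusp in $t$).} As in the semisimple case, we bound the contribution of the main body and the cusp in $T_c$. We show that the integral points in the cusp, where $t_{2n}/t_1$ is large, with $A_{11}$ or $C_{2n,2n}$ forced to vanish, are either reducible or contribute $O(X^{\dim V-\delta})$ after integrating over $t$. The relevant sets are those where some corner block of $A$ or of $C$ vanishes. Elements there have isotropic subspaces and hence a factorisation of $f$ or $J_{2n}$-type representatives. Following \cite[\S 4]{Oller} for $\SL_{2n}/\mu_2$, this should go through uniformly in $\lambda$.

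\emph{Step 3 (error terms).} The Davenport error terms must be integrated against $d^\times\lambda$ over the full range $\lambda^2\in[X^{-1},X]$. This is the main obstacle. The projections onto lower-dimensional coordinate subspaces have volumes involving $\lambda^{\pm 2k}$, and these can be as large as $X^{\dim V}$ near the ends of the $\lambda$-range. I would handle this by splitting the $\lambda$-range. In the middle range $X^{-1+\epsilon}\ll\lambda^2\ll X^{1-\epsilon}$, the errors are $O(X^{\dim V-\delta})$ per unit of $\log\lambda$. In the two end ranges, of total logarithmic length $O(\epsilon\log X)$ or $O(1)$ after choosing the threshold as $\lambda^2\asymp X/C_0$, we simply bound the number of points trivially by $O(X^{\dim V})$. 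The contribution of each boundary region of bounded logarithmic length is then $O(X^{\dim V})$, which matches the stated error term.
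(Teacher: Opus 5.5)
\textbf{Verdict: genuine gap.} Your architecture matches the paper's. You average over $\cF$, use the non-compact centre $\Lambda$ (acting by $\lambda^{\pm 2}$ on $A$ and $C$) as the source of $\log X$, cut $\lambda$ off to $d^{-1}\ll\lambda^2\ll d$, and apply Davenport's lemma on the main body. Your Step~1 cutoff via $\det(AC)=p_{4n}\neq 0$ is correct, and simpler than the paper's. The gap is Step~2, which is where the paper's new work lies.

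Your claim that cuspidal irreducible points contribute $O(X^{\dim V-\delta})$ uniformly in $\lambda$, by transplanting the $\SL_{2n}/\mu_2$ argument, is false. Take $\lambda^2\asymp K/X$ with $K$ a large constant, and $t$ in a compact part of $T_c$. Then every coordinate of $A$ ranges over only $O(K)$ integers, while those of $C$ are huge. So a proportion $\asymp 1/K$ of the lattice points have $a_{1,2n}=0$, and these are generically irreducible: $(-1)^n\det A$ is generically not a square, and a vanishing corner in only one of $A$, $C$ forces reducibility only when it is the whole top-right $n\times n$ block (Lemma \ref{lemma:cuspRed}); smaller blocks must vanish in both (Lemma \ref{lemma:cusp}). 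Hence the cusp contributes $\gg X^{\dim V}/K$ there. Its total is genuinely of order $X^{\dim V}$, concentrated at the ends of the $\lambda$-range.

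The cusp estimates must therefore measure $A$-entries at scale $d\lambda^{2}$ and $C$-entries at scale $d\lambda^{-2}$, and stay integrable against $d^{\times}\lambda$. The paper does this with a modified Bhargava trick (Proposition \ref{prop:cuspFunc}). The exponents $p$ on $M_1$ must also satisfy the balancing condition $\sum_{a\in M_1\cap\Phi_A}p(a)-\sum_{a\in M_1\cap\Phi_C}p(a)=m_a-m_c$, so that the powers of $\lambda$ cancel; then $p$ is replaced by $(1-\eps)p$ to remove the logarithm. Producing such a balanced $p$ for every $M_0\in\cC$ is a new combinatorial step: the explicit table for $n=2$, then induction on $n$, using Lemmas \ref{lemma:cusp} and \ref{lemma:cuspRed} to put the relevant $a_{k,k+1}$ or $c_{k,k+1}$, and $a_{n,n+1},c_{n,n+1}$, in $M_1$. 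Nothing in the semisimple case supplies this.

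Step~3 also fails as written, in two ways.
\begin{itemize}
\item With thresholds $X^{\pm(1-\epsilon)}$, the end ranges have logarithmic length $\asymp\epsilon\log X$, so a bound of $X^{\dim V}$ per unit of $\log\lambda$ gives $\epsilon X^{\dim V}\log X$, not $O(X^{\dim V})$.
\item With thresholds $X/C_0$, the Davenport error near the edges of the middle range is $\asymp X^{\dim V}/C_0$ per unit of $\log\lambda$, not $O(X^{\dim V-\delta})$.
\end{itemize}
Nor is the end-range bound ``trivial'': the $t$-integral there still needs the cusp analysis, exactly where $A$ and $C$ are most unbalanced. No splitting is needed. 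On the main body the dominant Davenport error (dropping $a_{1,2n}$ or $c_{1,2n}$) is $\ll d^{\dim V-1}(\lambda^{2}+\lambda^{-2})$ after the $t$-integral, with $d\in D$ the scaling parameter. Integrating against $d^{\times}\lambda$ over $d^{-1}\ll\lambda^{2}\ll d$ gives $O(d^{\dim V})$ directly, because this error decays geometrically away from the endpoints.

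Finally, you never remove the reducible and big-stabiliser points from the main body. The paper handles these by the arguments of \cite[Propositions 8.16 and 8.21]{LagaThesis}.
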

Similarly to \cite[Theorem 8.8, Proposition 8.10]{LagaThesis}, it suffices to prove the following result. Let $I$
be a subset of $\{1,\dots,k\}$, and denote $L_I = \cap_{i \in I} \pi (G(\R) \cdot s_i(L_i))$.
Also denote $s_I$ to be the restriction of some $s_i$ for $i \in I$ (which
might depend on the choice of $i$, but ultimately that choice does not
matter). The following theorem implies Theorem \ref{theo:count1}.

\begin{theorem}
\label{theo:count2}
Suppose $I$ is a non-empty subset of $\{1,\dots,k\}$ and let $(L,s) = (L_I,s_I)$.
Then there exists a constant $C_I$ such that
\[
N(V(\Z)^{irr} \cap V(\R)^{sol}, X) = C_I X^{\dim V} \log X + O(X^{\dim V})
\]
as $X \to \infty$.
\end{theorem}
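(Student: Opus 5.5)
We follow Bhargava's averaging method, taking care of the extra central factor $\Lambda$ in $G(\R)=\Lambda N T K$. Fix a box-shaped fundamental domain $\cF=\Lambda\cdot\cF'$ and a compact open $G_0\subset G(\R)$ of positive measure. As in \cite[\S 2.3]{BSquartics}, we write
\[
N(V(\Z)^{irr}\cap V(\R)^{sol}\cap G(\R)DsL,\,X)=\frac{1}{\vol(G_0)}\int_{g\in \cF}\#\{x\in V(\Z)^{irr}\cap g G_0 D s(L),\ \Ht(x)<X\}\,dg,
\]
up to the standard stabiliser weights, which are generically trivial. The element $\lambda I_{2n}\in\Lambda$ acts on $(A,C)$ by $(\lambda^2A,\lambda^{-2}C)$, so it fixes the invariants. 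Hence for a fixed height bound the integrand does not decay in $\lambda$ automatically. This is the source of the $\log X$: the region $\cF\cdot G_0DsL_{<X}$ is not relatively compact in the $\Lambda$-direction. Instead it has ``cusps'' where $\lambda\to\infty$ or $\lambda\to 0$, and in these cusps $A$ or $C$ is forced to be small.

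The key steps are as follows.

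(i) Bounding coordinates. Write $v=(A,C)$ with coordinates $a_{ij},c_{ij}$ carrying weights under $\lambda$ and $t$. For $g=\lambda n t k$ and $\Ht<X$, each coordinate of $g\cdot G_0Ds(L)$ is bounded by $X$ times its weight. Concretely this gives $|a_{ij}|\ll X\lambda^2 t_i t_{2n+1-j}$ and $|c_{ij}|\ll X\lambda^{-2}t_i^{-1}t_{2n+1-j}^{-1}$ for indices in the appropriate triangles.

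(ii) The main body. Restrict to the range $X^{-1}\ll\lambda^2\ll X$ and to $s_k$ bounded, where no coordinate is forced to vanish. Davenport's lemma then shows the count is $\vol(g G_0 D s(L)_{<X})+O(\text{lower order})$. Since $\Lambda$ preserves volume in $V$, integrating over $\lambda$ in this range gives $\int d^\times\lambda\asymp\log X$ times $X^{\dim V}$ (using Lemma \ref{lemma:sum} for the exponent). Boundary effects in $\lambda$ contribute $O(X^{\dim V})$. This produces $C_I X^{\dim V}\log X$.

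(iii) Cusp in $\lambda$. If $\lambda^2\gg X$, all $c_{ij}$ have bound $<1$, so $C=0$. Such an element has discriminant zero, hence is reducible and is excluded. Symmetrically, $\lambda^2\ll X^{-1}$ forces $A=0$.

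(iv) Cusp in $T$ (the $s_k\to\infty$ region). Here we follow the standard argument. When the top-left entry $a_{11}$ or the corresponding $c$-coordinate is forced to vanish, we show that the points are either reducible or contribute $O(X^{\dim V-\delta}\log X)$. For each subset of vanishing coordinates we bound the count by $\prod(\text{coordinate bounds})$ and integrate against $\delta^{-1}(t)\,d^\times t$, using the change of variables $s_k$. As in \cite[\S 4]{Oller} for $\SL_{2n}/\mu_2$, this is done uniformly in $\lambda$, and the extra $\lambda$-integral costs at most a $\log X$. Elements with $a_{11}=0$ (or the analogous $c$ entry) and a suitable isotropic flag should be shown to be $\Q$-reducible, i.e. $G(\Q)$-equivalent to one with $A=J_{2n}$ or $C=J_{2n}$, or to have $\Delta=0$.

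I expect step (iv) to be the main obstacle, specifically the interaction between the $\lambda$-direction and the torus cusp. In the cusp one must check that the bounds for $A$ and $C$ together still give a saving of $X^{-\delta}$. Otherwise the two degenerate directions would compound into $(\log X)^2$ terms. My first attempt is to pair each $a$-coordinate with the $c$-coordinate of opposite weight, so that the products $a_{ij}c_{i'j'}$ are independent of $\lambda$, and reduce to the semisimple computation for $\SL_{2n}/\mu_2$. Finally, summing over $I$ by inclusion–exclusion gives Theorem \ref{theo:count1}.
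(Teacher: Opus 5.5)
Your outline (averaging over $\cF=\Lambda\cdot\cF'$, the $\log X$ coming from the $\Lambda$-integral, Davenport in the main body, then cutting off the cusp) matches the paper's, and step (ii) is in the spirit of Proposition \ref{prop:mainBody}. However, steps (iii)--(iv), which you flag yourself, contain a genuine gap, and this is exactly where non-semisimplicity matters.

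Step (iii) is only valid for $t$ in a compact set. On the Siegel set the largest torus weight occurring in $C$ (resp.\ $A$) is unbounded, so $\lambda^2\gg X$ does not force $C=0$. Step (iv) then tacitly assumes that in the torus cusp $\lambda$ still runs over a window of logarithmic length. The missing idea is that irreducibility confines $\lambda$ uniformly in the cusp:
\begin{itemize}
\item By Lemmas \ref{lemma:cusp} and \ref{lemma:cuspRed}, an upward-closed vanishing set $M_0$ containing any of $a_{1,1},\dots,a_{n,n},a_{n,n+1}$ kills a whole top-right block of $A$. This forces $\Delta=0$ or reducibility.
\item Hence each of $a_{1,1},\dots,a_{n-1,n-1},a_{n,n+1}$ is dominated in the weight order by a nonzero coordinate $v_i\in\lambda(M_0)\cap\Phi_A$.
\item The weights of these $n$ positions multiply to $t_n/t_{n+1}$, which is bounded on the Siegel set. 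Multiplying the $n$ conditions $d\lambda^2w(v_i)\gg1$ therefore gives $\lambda^2\gg d^{-1}$.
\item The same argument in $C$ gives $\lambda^2\ll d$.
\end{itemize}

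Even with this window, the claim that ``the extra $\lambda$-integral costs at most $\log X$'' is false. For a vanishing set $M_0$ with $m_a$ entries in $A$ and $m_c$ in $C$, the Davenport bound is $d^{\dim V-\#M_0}\lambda^{2(m_c-m_a)}\prod_{a\in M_0}w(a)^{-1}$. Integrating over $d^{-1/2}\ll\lambda\ll d^{1/2}$ produces $d^{|m_c-m_a|}$, which is a power, not a logarithm. So when the $s$-exponents are not all positive, Bhargava's trick must use weights $p$ on the nonvanishing entries that also satisfy the balancing condition $\sum_{M_1\cap\Phi_A}p-\sum_{M_1\cap\Phi_C}p=m_a-m_c$ (Proposition \ref{prop:cuspFunc}). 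Constructing such $p$ for every $M_0$ is the real content of the cusp analysis. The paper does it by induction on $n$ from an explicit $n=2$ table, with a case split on $m_a+m_c$ versus $2n$ using identities such as $w(a_{k,k+1})w(c_{k,k})=s_{2n-k}^{2n}$ and $w(a_{n,n+1})w(c_{n,n+1})=s_n^{4n}$.

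Your pairing idea does not supply this. There is also no semisimple cusp computation to reduce to: the paper uses \cite{Oller} only for the fundamental domain, and restricting to $\SL_{2n}/\mu_2$ changes the invariant theory, since $\det A$ becomes an invariant.

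Your $C$-weights are also off: $c_{ij}$ has weight $t_{2n+1-i}^{-1}t_j^{-1}$. Consequently the torus cusp shrinks $a_{1,2n}$ and $c_{1,2n}$ simultaneously; these, not $a_{11}$, are the first coordinates forced to vanish. This is why mixed sets like $\{a_{1,2n},c_{1,2n}\}$ arise, and why rescaling by $\lambda$ cannot compensate.

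Finally, do not aim for a power saving in the cusp. Already for $M_0=\{a_{1,2n}\}$ the bound above is of size $X^{\dim V}$, coming from the range $\lambda^2\asymp d^{-1}$ where $A$ is bounded. $O(X^{\dim V})$ is all the statement needs. You also still need to discard reducible and big-stabiliser points from the main-body count, as in Proposition \ref{prop:cusp3}.
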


From now on, let us fix $(L,s) = (L_I,s_I)$ for some choice of $I$.
We now carry out the main steps of the proof of Theorem \ref{theo:count2}
(and therefore of Theorem \ref{theo:count1}). We begin with an averaging
trick, which works just the same as in \cite[\S 2.3]{BSquartics}. Fix $G_0 \subset G(\R) \times \R_{> 0}$,
a compact, semialgebraic subset of non-empty interior satisfying $KG_0 = G_0$,
$\vol(G_0) = 1$ and $G_0 = G_0' \times [1,K_0]$ for some $G_0' \subset G(\R)$
and $K_0 > 1$.
Let $A \subset V(\Z) \cap G(\R) \cdot D \cdot s(L)$
be a $G(\Z)$-invariant set. Then
\begin{equation}
\label{N(A,X)}
N(A,X) = \frac{1}{r} \int_{d > 0} \int_{g \in \cF} \# [A \cap (g d \cdot s(L))_{< X}] \, dg \, d^{\times}d.
\end{equation}
If we take a set $A \subset V(\Z) \cap G(\R) \cdot D \cdot s(L)$ that is
not necessarily $G(\Z)$-invariant, we \emph{define} $N(A,X)$ to be the
expression in \eqref{N(A,X)}.
We will need some reductions. Let us denote:
\begin{itemize}
\item $V(\Z)^{cusp}$ to be the \emph{cuspidal region}, which is the subset
of $(A,C) \in V(\Z)$ such that $a_{1,2n} = 0$ or $c_{1,2n} = 0$.
\item $V(\Z)^{main} = V(\Z) \setminus V(\Z)^{cusp}$.
\item $V(\Z)^{bigstab}$ to be the subset of elements $v\in V(\Z)$ such
that $\# \Stab_{G(\Q)}(v) > 1$.
\end{itemize}
\begin{prop}
\label{prop:cusp3}
The following hold:
\begin{enumerate}
\item $N(V(\Z)^{cusp} \cap V(\Z)^{irr}, X) = O(X^{\dim V})$
\item $N(V(\Z)^{main} \cap V(\Z)^{red}, X) = o(X^{\dim V}\log X)$.
\item $N(V(\Z)^{bigstab} \cap V(\Z)^{irr}, X) = o(X^{\dim V}\log X)$.
\end{enumerate}
\end{prop}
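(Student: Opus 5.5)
We prove the three estimates separately, working throughout with the averaged expression \eqref{N(A,X)}. For $g = \lambda n t k \in \cF$ we write $t$ in the coordinates $s_1,\dots,s_{2n-1}$ introduced above. The domain $\cF$ is box-shaped at infinity, so for the purposes of upper bounds we may replace it by a Siegel set $\Lambda\,\omega\, T_c K$. Since $s(L)$ is bounded, the region $g d\cdot s(L)$ then lies in a box. In this box each coordinate $a_{ij}$ of $A$ has size $\ll \lambda^2 d\, t_i t_{2n+1-j}\cdot(\text{const})$, and each coordinate $c_{ij}$ of $C$ has size $\ll \lambda^{-2} d\, t_i^{-1}t_{2n+1-j}^{-1}$, with the roles of $t$ for $C$ inverted according to the action $g\cdot(A,C)=(gAg^*,(g^{-1})^*Cg^{-1})$. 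The source of the extra $\log X$ is the new non-compact direction $\Lambda$: the scaling $\lambda$ multiplies $A$ by $\lambda^2$ and $C$ by $\lambda^{-2}$ while fixing the invariants. The integral over $\lambda$ is therefore bounded only by the requirement that both $A$ and $C$ have nonzero integral points, which gives a range of length $\asymp \log X$ in $\log\lambda$ once $d\asymp X$. The main-body count is the Davenport volume $\asymp X^{\dim V}$ per unit of $\log\lambda$, integrated over this range.

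\textbf{Part (1), the cusp.} We bound the contribution of irreducible points with $a_{1,2n}=0$; the case $c_{1,2n}=0$ is symmetric under $(A,C)\mapsto(C,A)$ composed with $\lambda\mapsto\lambda^{-1}$. The entry $a_{1,2n}$ is the smallest coordinate of $A$ in the box, and its weight is $\lambda^2 d\,t_1t_1$ up to the $J$-twist. We split according to whether this box coordinate is $<1$. If some additional leading entries of $A$ also vanish, for instance the entire top-left antidiagonal corner, then $A$ has an isotropic flag. By the ideal description of Proposition \ref{prop:intC}, this forces $v$ to be reducible, since it produces a representative with $A$ or $C$ equal to $J_{2n}$ over $\Q$. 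We would first try the following approach, following \cite[\S 10]{BScubics} and the analogous $\SL$-type cusp arguments in \cite{Oller}. For each admissible pattern of vanishing coordinates $S$ that does not force reducibility, we bound the number of lattice points by $\prod_{v\notin S} \max(1,w(v))$. We then check that the integral over $s_k<c'$, $d<X$ and the $\lambda$-range contributes only $O(X^{\dim V})$ with no logarithm. The point is that imposing $a_{1,2n}=0$ kills the $\lambda$-direction in which $A$ is large, so the $\lambda$-integral converges. This combinatorial verification over all patterns $S$ is the main obstacle. As a first attempt we would reduce to the single minimal pattern $\{a_{1,2n}\}$ together with its pattern in $C$, using the fact that the exponent of $s_k$ in the weight of $a_{1,2n}$ dominates.

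\textbf{Part (2), reducible points in the main body.} In the main body every coordinate weight is $\gg 1$, so the count is governed by volume. The plan is to show that reducible points form a thin set. For a prime $p$, an element of $V(\Z)$ that is $\Q$-reducible is reducible mod $p$ in the sense that its reduction lies in a $G(\F_p)$-orbit meeting $\{A \text{ or } C \text{ degenerate}\}$ or has $p\mid\Delta$. The density of such reductions is bounded by some $1-\eta_p$, with $\prod_p(1-\eta_p)=0$. Then the standard sieve of \cite[Lemma 2.3]{BSquartics}, applied uniformly in $\lambda$, shows that the main-body count of these points is $o(X^{\dim V})$ per unit of $\log\lambda$, hence $o(X^{\dim V}\log X)$ in total. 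Uniformity in $\lambda$ follows because the error terms in Davenport's lemma are controlled by projections, which are dominated by the main term in the main body.

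\textbf{Part (3), points with large stabiliser.} We argue in the same way as Part (2). If $\#\Stab_{G(\Q)}(v)>1$, then $L^\times$ contains a nontrivial element of $\Res_{L/\Q}\mu_2/\mu_2$ over $\Q$, so $f$ factors over $\Q$. This is a condition detected modulo $p$ with density bounded away from $1$ for a positive proportion of primes, since reducible $f$ has roots or factorisation mod $p$ with the wrong frequency. The same sieve then gives $o(X^{\dim V}\log X)$.
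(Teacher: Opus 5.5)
\textbf{Item (1) has a genuine gap.} Your framework (upward-closed vanishing patterns plus Davenport's lemma on a Siegel set) is the paper's. But the step you call ``the main obstacle'' is the entire content of item (1), and the shortcut you propose does not work. That shortcut is to reduce to the minimal pattern $\{a_{1,2n}\}$ because its $s$-exponents dominate. For deeper patterns $M_0$, the naive bound $d^{\dim V-\#M_0}\lambda^{2(m_c-m_a)}\prod_{a\in M_0}w(a)^{-1}\delta^{-1}(s)$ has non-positive exponents in some $s_k$, so the integral over the Siegel set diverges. Already for $n=2$, the pattern $\{a_{1,4},a_{1,3},a_{1,2},c_{1,4},c_{1,3}\}$ gives exponents $(-2,0,-2)$.

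The paper handles these patterns with Bhargava's trick. It multiplies by $\prod_{a}(d\lambda^{\pm2}w(a))^{p(a)}\gg1$ over the nonzero boundary entries $a\in\lambda(M_0)$, subject to two conditions: $\sum p(a)\le\#M_0$, and the balance condition $\sum_{\Phi_A}p-\sum_{\Phi_C}p=m_a-m_c$. The balance condition cancels the power of $\lambda$ and has no analogue in the semisimple case. The paper then constructs such $p$ for every $n$ by induction on the inner $(2n-2)\times(2n-2)$ block (Proposition \ref{prop:cuspFunc} and the proposition after it). The deepest patterns are discarded by elementary linear algebra, not by the ideal description:
\begin{itemize}
\item if $a_{i,i}\in M_0$ with $i\le n$, the first $i$ rows of $A$ are supported on $i-1$ columns, so $\det A=0$ and $\Delta(v)=0$;
\item if $a_{n,n+1}\in M_0$, the top-right $n\times n$ block vanishes and $v$ is reducible by Lemma \ref{lemma:cuspRed}.
\end{itemize}

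\textbf{Your $\lambda$ heuristic is backwards.} You say $a_{1,2n}=0$ ``kills the $\lambda$-direction, so the $\lambda$-integral converges.'' In the main body it is precisely $a_{1,2n}\neq0$ that gives the lower bound $d\lambda^2t_1^2\gg1$. On the cusp this bound is lost, so as $\lambda\to0$ the $C$-boxes grow like $\lambda^{-2}$ and the $\lambda$-integral diverges unless irreducibility is used. The missing idea, which is exactly where the non-semisimplicity of $G$ enters, is to bound $\lambda$ on the cusp via irreducibility:
\begin{itemize}
\item Nondegeneracy and non-splitness of $A$ force nonzero entries $v_1,\dots,v_n$ of $A$ with $w(a_{i,i})\le w(v_i)$ for $i<n$ and $w(a_{n,n+1})\le w(v_n)$.
\item Since $\prod_{i<n}w(a_{i,i})\cdot w(a_{n,n+1})=t_n/t_{n+1}$ is bounded on the Siegel set, you get $1\ll\prod_i d\lambda^2w(v_i)\ll d^n\lambda^{2n}$, i.e.\ $\lambda^2\gg d^{-1}$.
\item Symmetrically, $C$ gives $\lambda^2\ll d$.
\end{itemize}
Only with this range does the final integral come out as $O(X^{\dim V})$.

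\textbf{Items (2) and (3).} Your sieve, run uniformly over the main body, is what the paper intends (it defers to Laga's Propositions 8.16 and 8.21). Two claims need correcting.
\begin{itemize}
\item What $\Q$-reducibility implies mod $p$ is that $\bar A$ or $\bar C$ is \emph{split or singular} (or $p\mid\Delta$). A $\Q$-split $A$ stays nonsingular mod almost all $p$, so ``degenerate'' alone is not implied. With the correct condition the local density is about $3/4$, and the sieve goes through.
\item In (3), $\#\Stab_{G(\Q)}(v)>1$ does not force $f$ to factor over $\Q$. From $1\to\mu_2\to\Res_{L/\Q}\mu_2\to\Res_{L/\Q}\mu_2/\mu_2\to1$, the group $(\Res_{L/\Q}\mu_2/\mu_2)(\Q)$ surjects onto $\ker(\Q^{\times}/\Q^{\times2}\to L^{\times}/L^{\times2})$. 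This kernel is nontrivial whenever $L$ is a field with a quadratic subfield, i.e.\ when $f$ splits into two conjugate degree-$n$ factors over a quadratic field. That case is still thin and is detected mod $p$ (it excludes Frobenius of cycle type $(2n-1,1)$), so your argument survives once you add it.
\end{itemize}
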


We will prove the first item in Section \ref{subs:cusp}, while the
second and third item follow from the same proof as in \cite[Propositions 8.16 and 8.21]{LagaThesis}.

The consequence of these reductions is that to count irreducible elements,
it suffices to count elements in the main body of the representation.
This will be done using geometry-of-numbers, and more specifically the
following version of Davenport's lemma, due to Barroero--Widmer \cite[Theorem 1.3]{BWDav}:
\begin{lemma}
\label{lemma:davenport}
Let $m,n \geq 1$ be integers and let $Z \subset \R^{m+n}$ be a semialgebraic
subset. For $T \in \R^{m}$, let $Z_T = \{x \in \R^n \mid (T,x) \in Z\}$,
and suppose that all such sets $Z_T$ are bounded. Then for any unipotent
upper-triangular matrix $u \in \GL_n(\R)$, we have
\[
\#(Z_T \cap u\Z^n) = \vol(Z_T) + O(\max\{1,Z_{T,j}\}),
\]
where $Z_{T,j}$ runs over all orthogonal projections of $Z_T$ to all 
$j$-dimensional coordinate subspaces, ($1 \leq j \leq n-1$). Moreover,
the implied constant depends only on $Z$.
\end{lemma}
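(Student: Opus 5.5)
Since this is \cite[Theorem 1.3]{BWDav}, we could simply cite it, but here is how we would prove it directly, following Davenport's original slicing argument with the lattice $u\Z^n$ in place of $\Z^n$. The argument is an induction on $n$. We prove the slightly stronger statement where $u\Z^n$ is replaced by an arbitrary translate $u\Z^n + y$ with $y \in \R^n$, with the implied constant uniform in $y$; enlarging the parameter space from $\R^m$ to $\R^{m+n}$ keeps the family semialgebraic.

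The uniformity input comes from o-minimality. Hardt triviality (or a cylindrical cell decomposition) of the semialgebraic family $Z$ gives a bound $N_Z$, depending only on $Z$, on three things:
\begin{itemize}
\item the number of connected components of every fibre $Z_T$;
\item the number of components of every slice $Z_T \cap \{x_n = c\}$;
\item the number of components of every intersection of $Z_T$ with a line parallel to a coordinate axis.
\end{itemize}
The same bound applies to the projections of these sets. The base case $n=1$ is then immediate: $Z_T \cap (\Z + y)$ is a union of at most $N_Z$ intervals, so the count differs from the length by at most $N_Z = O(1)$.

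For the inductive step we use that $u$ is unipotent upper triangular. A point $x \in u\Z^n + y$ has $x_n \in \Z + y_n$. For each such value $k$ of $x_n$, the first $n-1$ coordinates range over a translate $u'\Z^{n-1} + y'(k)$, where $u'$ is the upper-left $(n-1)\times(n-1)$ block of $u$, again unipotent upper triangular. We apply the induction hypothesis to the semialgebraic family of slices $(Z_T)_k = \{x' \mid (x',k) \in Z_T\}$, which is parametrised by $(T,k,y')$. This gives
\[
\#\big((Z_T)_k \cap (u'\Z^{n-1}+y'(k))\big) = \vol_{n-1}((Z_T)_k) + O\big(\max\{1, \vol \text{ of projections of } (Z_T)_k\}\big).
\]
Summing over $k$ raises two issues. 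First, there are only $O(1 + \text{length of the projection of } Z_T \text{ to the } x_n\text{-axis})$ non-empty slices. Second, each projection of a slice to a $j$-dimensional coordinate subspace of $\R^{n-1}$ is contained in the projection of $Z_T$ to that subspace. We then compare the sum with the corresponding integral in $x_n$, again via o-minimality. Summing a projected $j$-volume over integer slices gives at most a constant times the $(j+1)$-dimensional projection onto the subspace spanned by those coordinates together with $e_n$, plus the $j$-dimensional projection itself. The resulting errors are therefore all of the allowed form $O(\max\{1, \vol(Z_{T,j})\})$.

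Finally we must show that $\sum_{k} \vol_{n-1}((Z_T)_k) = \vol(Z_T) + O(\text{error})$. The function $F(c) = \vol_{n-1}((Z_T)_c)$ is definable in an o-minimal expansion, so it is piecewise monotone with a uniformly bounded number of pieces. On each monotone piece, the difference between the sum over the translate $\Z + y_n$ and the integral is bounded by the supremum of $F$. In turn, $\sup F$ is bounded by the volume of the projection of $Z_T$ onto $e_n^{\perp}$, which is one of the allowed $(n-1)$-dimensional coordinate projections.

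We expect this last step to be the main obstacle. $F$ is a volume function, not a semialgebraic function, so its piecewise monotonicity needs either an o-minimal structure with integration (for instance Lion–Rolin type results) or a more careful cell-by-cell argument. The cell-by-cell route works with a cylindrical decomposition in which each cell is a graph or band over a lower-dimensional cell, and it is the approach we would try first.
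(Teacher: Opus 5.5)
Your proposal is correct in outline, but it takes a different route from the paper. The paper gives no argument and simply quotes Barroero--Widmer's Theorem 1.3. That theorem treats an arbitrary lattice $\Lambda$, and definable families in any o-minimal structure, with error $c_Z\sum_{j=0}^{n-1}V_j(Z_T)/(\lambda_1\cdots\lambda_j)$. The stated form follows because $\det(u\Z^n)=1$ and every successive minimum of $u\Z^n$ is at least $1$: if $k$ is the last index with $z_k\neq 0$, then $(uz)_k=z_k$. To keep coordinate projections for a general lattice, Barroero--Widmer also need a comparison between projections onto arbitrary subspaces and projections onto coordinate subspaces.

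You instead run Davenport's induction directly. You use the triangular shape of $u$ so that the slices $x_n=k$ meet $u\Z^n+y$ in translates of $u'\Z^{n-1}$. This is self-contained and needs no lattice reduction, but it only covers triangular lattices, which is all the paper uses. The other steps all work:
\begin{itemize}
\item the reduction of translates to the untranslated case via the family $\{(T,y,x): x+y\in Z_T\}$;
\item the base case;
\item the count of non-empty slices;
\item the use of uniform finiteness.
\end{itemize}

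The step you flag as the main obstacle is much easier than you fear. You need neither Lion--Rolin nor piecewise monotonicity of $F$. That route would also work, since fibre volumes of a semialgebraic family are definable in the o-minimal structure $\R_{\mathrm{an},\exp}$, but it is far heavier than needed. Just interchange sum and integral, which is Davenport's own device:
\[
\sum_{k\in\Z+y_n}\vol_{n-1}\big((Z_T)_k\big)=\int_{\R^{n-1}}\#\{k\in\Z+y_n : (x',k)\in Z_T\}\,dx' .
\]
For each $x'$, the vertical line meets $Z_T$ in at most $N_Z$ intervals of total length $\ell(x')$. So the integrand differs from $\ell(x')$ by at most $N_Z$ on $\pi_{e_n^\perp}(Z_T)$ and vanishes off it. Since $\int\ell=\vol(Z_T)$, the discrepancy is at most $N_Z\vol_{n-1}(\pi_{e_n^\perp}(Z_T))$.

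The same computation handles the summed error terms. Take $W=\pi_{I\cup\{n\}}(Z_T)$ and use $\pi_I((Z_T)_k)=W_k$. This gives $\sum_k\vol_j(\pi_I((Z_T)_k))\le\vol_{j+1}(W)+N_Z\vol_j(\pi_I(Z_T))$, which is exactly the bound you wanted. The only o-minimal input is then a uniform bound on the number of intervals in axis-parallel line sections of $Z_T$ and of its coordinate projections.
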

We can then estimate the number of points inside the main body. This
will be done similarly to \cite[Proposition 8.15]{LagaThesis}, with a 
key difference introduced by $G$ not being semisimple.
\begin{prop}
\label{prop:mainBody}
Let $A = V(\Z)^{main} \cap G(\R)\cdot D \cdot s(L)$.
There exists a constant $C > 0$ such that
$N(A,X) = CX^{\dim V}\log X + O(X^{\dim V})$.
\end{prop}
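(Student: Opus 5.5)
We start from the averaging formula \eqref{N(A,X)} and use the Iwasawa decomposition $G(\R)=\Lambda N T K$ together with the box-shaped fundamental domain $\cF$. Since $KG_0=G_0$ and $s(L)$ is bounded, we write $g=\lambda n t k$ and absorb $k$ and the compact $\omega\subset N(\R)$ into a bounded semialgebraic region. This gives
\[
N(A,X)=\frac{1}{r}\int_{\lambda>0}\int_{t\in T_c}\int_{n\in\omega}\int_{d>0}\#\bigl[A\cap(\lambda n t d\cdot B_0)_{<X}\bigr]\,\delta^{-1}(t)\,d^{\times}d\,dn\,d^{\times}t\,d^{\times}\lambda ,
\]
where $B_0=G_0\cdot s(L)$ is a bounded semialgebraic set. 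The key structural point, absent in the semisimple case, is the central torus $\Lambda$. The scalar $\lambda$ acts on $(A,C)$ by $(\lambda^2A,\lambda^{-2}C)$. It therefore leaves the invariants unchanged, and the height cut-off $\Ht<X$ imposes no constraint on $\lambda$. So the $\lambda$-integral is \emph{a priori} infinite. It is cut off only by integrality, namely the requirement that the ``top'' coordinates $a_{1,2n}$ and $c_{1,2n}$ are nonzero integers, i.e. have absolute value $\ge 1$. The claimed $\log X$ should come exactly from the length of the allowed $\lambda$-range.

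The steps are as follows. First, for fixed $(\lambda,t,d)$ we apply Lemma \ref{lemma:davenport} to count lattice points in $\lambda t d\cdot B_0$. The main term is $\vol(\lambda n t d B_0)=d^{\dim V}\vol(B_0)$, which is independent of $\lambda$ and $t$ because $G$ preserves volume on $V$ (the determinant of the action is trivial, since $\det(g)^{2n+1}$ and $\det(g)^{-(2n+1)}$ cancel between the $A$ and $C$ parts). The error is bounded by the largest projection volumes. Second, we determine the effective range of integration. Because $A\subset V(\Z)^{main}$, a point can exist only if the coordinate weights of $a_{1,2n}$ and $c_{1,2n}$ both satisfy
\[
\lambda^2 d^2\, w_A(t)\gg 1 \quad\text{and}\quad \lambda^{-2}d^2\, w_C(t)\gg 1,
\]
where $w_A(t)=t_1t_{2n}$ and $w_C(t)=(t_1t_{2n})^{-1}$ (up to the $s_k$ change of variables). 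With $d\asymp X$, this confines $\log\lambda$ to an interval of length $\asymp \log X+O(\sum_k|\log s_k|)$, and confines $t$ to a region where the $s_k$ are bounded below by powers of $X^{-1}$. Third, we integrate the main term over this region. The $\lambda$-integral contributes $c\log X$ plus terms linear in $\log s_k$. Integrating these against $\delta^{-1}(t)\,d^{\times}t$ over $T_c$ converges, since $\delta^{-1}$ decays as $s_k\to 0$ and $\log s_k$ grows only logarithmically. This produces $CX^{\dim V}\log X+O(X^{\dim V})$, where $\dim V=2+4+\dots+4n$ by Lemma \ref{lemma:sum}, and where the contribution of the $d$-integral near $X$ follows from the homogeneity of $\Ht$.

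Fourth, we must show that the Davenport error terms and the boundary effects of the truncation are $O(X^{\dim V})$. Each projection volume is a product of coordinate weights $\lambda^{\pm2}d^2\,t_it_j^{\pm1}$ omitting at least one coordinate. On the main body, the omitted coordinate has weight $\gg1$, so each projection is bounded by $X^{\dim V}$ times a negative power of that weight. We then integrate over $\lambda$ and $t$. For the $\lambda$ direction, the weight is $\lambda^2$ on $A$-coordinates and $\lambda^{-2}$ on $C$-coordinates, so omitting a coordinate yields a geometric decay in $|\log\lambda-\text{endpoint}|$. This decay makes the $\lambda$-integral of the error bounded rather than logarithmic. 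I expect this step to be the main obstacle: one must check that for \emph{every} coordinate projection the saving in $\lambda$ or in $t$ beats both the $\delta^{-1}(t)$ growth and the $\log X$ length. The plan is to handle this as in \cite[Proposition 8.15]{LagaThesis}, but with the extra bookkeeping that the central direction has no Siegel-set decay and only the integrality of $a_{1,2n}$ and $c_{1,2n}$ cuts it off.
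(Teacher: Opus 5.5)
Your proposal is correct and follows essentially the same route as the paper: the central direction $\Lambda$ is cut off only by $a_{1,2n}\neq 0$ and $c_{1,2n}\neq 0$, which produces the $\log X$, and Lemma~\ref{lemma:davenport} with the projections omitting $a_{1,2n}$ or $c_{1,2n}$ controls the error (these two weights are minimal in their blocks on $T_c$, so every weight is $\gg 1$ on the main body, which is the justification your ``omitted coordinate has weight $\gg 1$'' needs). The paper instead uses the $t$-uniform window $d^{-1}\ll\lambda^{2}\ll d$ together with positivity of the $s_k$-exponents of $w(a_{1,2n})^{-1}\delta^{-1}(t)$, whereas your $t$-dependent window makes the error step you flag as the main obstacle immediate, since the $\lambda$-integral of $(d\lambda^{2}t_1^{2})^{-1}$ over the window is $O(1)$ uniformly in $t$; note, however, that the top weights are $d\lambda^{2}t_1^{2}$ and $d\lambda^{-2}t_{2n}^{-2}$ (because $g^*=J\,{}^tg\,J$), not $\lambda^{\pm 2}d^{2}(t_1t_{2n})^{\pm 1}$, and only with these does the window depend on $t$ and confine the $s_k$ as you claim.
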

\begin{proof}
In the main body, an element $v = (A,C) \in V(\Z)^{main}$ has that
$a_{1,2n} \neq 0$ and $c_{1,2n} \neq 0$. Given that the space $\omega \cdot G_0 \cdot s(L)$
is bounded, there exists a constant $J > 0$ such that $|a_{i,j}|,|c_{i,j}| \leq J$
for all $(A,C) \in \omega \cdot G_0 \cdot s(L)$. Then, the condition that
$a_{1,2n} \neq 0$ and $c_{1,2n} \neq 0$ for an element $v = (A,C) \in (d\lambda n t \cdot s(L))_{< X}$
translates to asking that $d\lambda^2 t_1^2 \leq 1/J$ and
$d\lambda^{-2} t_{2n}^{-2} \leq 1/J$. Note that $t_1 = \prod_{k=1}^{2n-1} s_k^{2n-k} \gg 1$
and $t_{2n} = \prod_{k=1}^{2n-1} s_k^{-k} \ll 1$, so these two last conditions
imply that $d^{-1} \ll \lambda^2 \ll d$.

We wish to apply Davenport's lemma (Lemma \ref{lemma:davenport}). First, we estimate the size of the
error term. Let $\Phi_V$ be the set of weights of $V$ under the action
of $T$, which correspond to the distinct matrix entries of the corresponding
matrices $(A,C)$. Let $\Phi_A$ denote the weights falling in the matrix
$A$, and let $\Phi_C$ denote those falling in $C$.
Let $M \subset \Phi_V$ be a subset, and define $V(M) = \{v \in V(\R) \mid v_a = 0 \, \forall a \in M\}$.
Let $m_a = \# M \cap \Phi_A$ and $m_c = \# M \cap \Phi_C$.
The volume of the projection to $V(M)(\Z)$ is of the order of
\[
d^{\dim V - \# M} \lambda^{2m_c - 2m_a} \prod_{w \in M} w(t)^{-1}
\]
We can compare the weights to the highest weights, which correspond to
the entries $a_{1,2n}$ and $c_{1,2n}$. Then, the volume of the projection
is
\[
\ll d^{\dim V} (d\lambda^2 t_1 t_{2n})^{-m_a} (d\lambda^{-2} (t_1 t_{2n})^{-1})^{-m_c}.
\]
This is at its biggest when $\{m_a,m_c\} = \{0,1\}$, in either order,
which corresponds to the projections in the case when $M = \{a_{1,2n}\}$
or $M = \{c_{1,2n}\}$. In the first of these cases (the second one is
analogous), we can compute that $a_{1,2n}(t) \delta^{-1}(t)$ can be written
as a product of $s_i$ with strictly positive exponents. Given that
$d^{-1} \ll \lambda^2 \ll d$, we get that
\[
\int_{d = 1}^X \int_{d^{-1/2} \ll \lambda \ll d^{1/2}} \int_{t_{i+1}/t_{i} \ll 1} d^{\dim V - 1} \lambda^{-2} (t_1t_{2n})^{-1} \, \delta^{-1}(t) \, d^{\times}\lambda \, d^{\times} t \,  d^{\times}d \ll X^{\dim V}.
\]
As for the main body, we compute that it is of the order of
\[
\int_{d = 1}^X \int_{\lambda = d^{-1/2}}^{d^{1/2}} d^{\dim V} d^{\times}\lambda d^{\times} d  \sim  X^{\dim V}\log X.
\]
This concludes the proof.
\end{proof}

The proof of Theorem \ref{theo:count2} (and therefore of Theorem \ref{theo:count1})
then follows from Proposition \ref{prop:mainBody} and the three items of Proposition \ref{prop:cusp3}.

\subsection{Cutting off the cusp}
\label{subs:cusp}

In this section, we prove the first item of Proposition \ref{prop:cusp3}. We start with some
reducibility conditions:
\begin{lemma}
\label{lemma:cusp}
Let $v = (A,C) \in V(\Q)$ and $1 \leq k \leq n$. Suppose that both
$A$ and $C$ contain a $k \times (2n-k)$ top-right block of entries equal
to zero. Then $\Delta(v) = 0$.
\end{lemma}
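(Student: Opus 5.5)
The plan is to exhibit a flag of subspaces of $K^{2n}$ that is stable under both $A$ and $C$, and then use the self-adjointness of $A$ and $C$ to show that the characteristic polynomial $f$ of $AC$ has a repeated factor. This forces $\Delta(v)=0$, since $\Delta(v)$ is the discriminant of $xf(x^2)$, which vanishes whenever $f$ is inseparable.

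\emph{Step 1: an invariant flag.} Let $e_1,\dots,e_{2n}$ be the standard basis, and let $\langle\cdot,\cdot\rangle$ be the bilinear form with Gram matrix $J_{2n}$, so $\langle e_i,e_j\rangle=1$ exactly when $i+j=2n+1$. Put
\[
U=\langle e_{k+1},\dots,e_{2n}\rangle, \qquad W=\langle e_{2n+1-k},\dots,e_{2n}\rangle .
\]
The hypothesis says $a_{ij}=c_{ij}=0$ for $i\le k<j$. Hence every column $j>k$ of $A$ and of $C$ lies in $U$, so $U$ is stable under $A$ and $C$.

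The condition $A=A^*$ means $a_{ij}=a_{2n+1-j,\,2n+1-i}$. It therefore transports the zero block to $a_{i'j'}=0$ for $i'\le 2n-k<j'$, and likewise for $C$. This says exactly that $W$ is stable under $A$ and $C$. Since $k\le n$, we have $2n+1-k\ge k+1$, so $W\subseteq U$. One checks directly that $U=W^{\perp}$ with respect to $\langle\cdot,\cdot\rangle$.

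\emph{Step 2: comparing the outer pieces.} The flag $0\subseteq W\subseteq U\subseteq K^{2n}$ is stable under $AC$. Hence
\[
f=\chi_{AC|_W}\cdot\chi_{AC|_{U/W}}\cdot\chi_{AC|_{K^{2n}/U}} .
\]
The condition $X=X^*$ is equivalent to $X$ being self-adjoint for $\langle\cdot,\cdot\rangle$. So the adjoint of $AC$ is $CA$. Because $U=W^\perp$, the form induces a perfect pairing $W\times K^{2n}/U\to K$. Under this pairing, the action of $AC$ on $K^{2n}/U$ is the transpose of the action of $CA$ on $W$. Therefore
\[
\chi_{AC|_{K^{2n}/U}}=\chi_{CA|_W}=\chi_{C|_W A|_W}=\chi_{A|_W C|_W}=\chi_{AC|_W},
\]
using that $A$ and $C$ both preserve $W$ and that $XY$ and $YX$ have the same characteristic polynomial for square $X,Y$.

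\emph{Step 3: conclusion.} Since $\dim W=k\ge1$, the degree-$k$ polynomial $\chi_{AC|_W}$ divides $f$ with multiplicity at least two. So $f$ is not separable and $\Delta(v)=0$. When $k=n$ the middle piece $U/W$ is zero, and the argument is unchanged.

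The only point needing care is the bookkeeping in Step 1: correctly translating the antidiagonal symmetry $A=A^*$ into the second zero block, and checking that $W\subseteq U$ and $U=W^\perp$. These are index checks. The rest is formal linear algebra.
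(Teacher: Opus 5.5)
Your proof is correct and is essentially the paper's argument. The paper observes that $AC$ is block lower-triangular with diagonal blocks of sizes $k$, $2n-2k$, $k$, and asserts that the two outer blocks have the same characteristic polynomial. Your invariant flag $W \subseteq U = W^{\perp}$, together with the perfect pairing $W \times K^{2n}/U \to K$ under which $AC$ is dual to $CA$, is a clean coordinate-free way of doing the ``elementary computation'' the paper leaves to the reader.
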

\begin{proof}
In this situation, the product $AC$ is a block lower-triangular matrix
whose diagonals are square matrices of size $k$, $2n-2k$ and $k$. In particular,
the characteristic polynomial of $AC$ factors as the product of the three
(two if $k = n$) characteristic polynomials of the blocks. It is an elementary
computation to show that the characteristic polynomials of the first and
the last block are the same, and therefore that the total invariant
polynomial has discriminant zero.
\end{proof}
\begin{lemma}
\label{lemma:cuspRed}
Let $v = (A,C) \in V(\Q)$. Suppose that the top-right  $n \times n$ block of 
either $A$ or $C$ is zero. Then $v$ is reducible.
\end{lemma}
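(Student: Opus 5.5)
The plan is to reduce the statement to a fact about quadratic forms over $\Q$: the condition on the top-right block means a certain symmetric form has an $n$-dimensional isotropic subspace. By symmetry of the roles of $A$ and $C$ (the action on $C$ is the contragredient one, $C \mapsto (g^{-1})^*Cg^{-1}$), it suffices to treat the case where the top-right $n\times n$ block of $A$ vanishes. If $\det A = 0$, then $p_{4n}(v) = \det(AC) = 0$. So $x=0$ is a repeated root of $xf(x^2)$, hence $\Delta(v) = 0$ and $v$ is reducible by definition. From now on I assume $\det A \neq 0$.

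First I will translate the action on $A$ into congruence of symmetric bilinear forms. Since $A = A^* = J_{2n}{}^tAJ_{2n}$, the matrix $S_A := AJ_{2n}$ is symmetric. Using $g^* = J_{2n}{}^tgJ_{2n}$, the action $A \mapsto gAg^*$ becomes
\[
S_A \mapsto g\, S_A\, {}^tg .
\]
So the $G(\Q)$-orbit of $A$ (ignoring $C$) is governed by the isometry class of the nondegenerate quadratic space $(\Q^{2n}, S_A)$. The passage to $G = \GL_{2n}/\mu_2$ does not change this, since $\pm g$ act identically. Right-multiplying $A$ by $J_{2n}$ reverses the order of its columns, so the top-right $n\times n$ block of $A$ becomes the top-left $n\times n$ block of $S_A$. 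The vanishing hypothesis therefore says exactly that $\operatorname{span}(e_1,\dots,e_n)$ is totally isotropic for $S_A$.

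Next, a nondegenerate quadratic space of dimension $2n$ with an $n$-dimensional totally isotropic subspace is hyperbolic, i.e. isometric to $H^n$ where $H$ is the hyperbolic plane. This is standard Witt theory: pick a complement dual to the isotropic subspace and adjust it to be isotropic, as in the splitting arguments of Lemma \ref{lemma:comp}. Hence there is $g \in \GL_{2n}(\Q)$ carrying $S_A$ to any fixed split form of dimension $2n$. The remaining point is to check that the reference matrix $A = J_{2n}$ in the definition of reducibility has split associated form. This should come from the same normalisation used in the distinguished-orbit construction of Section \ref{section:OrbitsC}, where split forms are represented by $J_{2n}$ after a change of basis of determinant $1$, and where the distinguished orbit corresponds to $\alpha = 1$. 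Taking this $g$, the element $g\cdot v = (J_{2n}, C')$ lies in the $G(\Q)$-orbit of $v$ and has first component $J_{2n}$, so $v$ is reducible.

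I expect the main obstacle to be this last normalisation. Literally, $A = J_{2n}$ gives $S_A = J_{2n}^2 = I$, which is not split over $\Q$. So I would first recheck the precise convention for the form attached to $A$ (for instance the form ${}^tx\,J_{2n}A\,y$, or the one induced by $\Omega$ via the identification with $\beta L$). I would then verify that in the correct convention the reducible representative corresponds to the hyperbolic class $\alpha = 1 \in L^\times/K^\times L^{\times 2}$. If needed, I would replace "$A = J_{2n}$" by "the form attached to $A$ is split", which is the intrinsic meaning of reducibility here. The isotropic-subspace argument above is insensitive to which of these equivalent normalisations is used, since in each of them the vanishing block still produces an $n$-dimensional isotropic subspace.
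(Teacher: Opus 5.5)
Your argument is the same as the paper's: the vanishing block makes the symmetric form $AJ_{2n}$ split, so $A$ can be moved to a fixed standard representative. It is in fact more complete, since you justify splitness via the $n$-dimensional isotropic subspace and you treat the case $\det A = 0$, where $p_{4n}=\det(AC)=0$ forces $\Delta(v)=0$.

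Your worry about the normalisation is justified, and your resolution is the right one. The paper's proof literally asserts $gAg^* = J_{2n}$. This cannot hold, because $J_{2n}J_{2n}=I_{2n}$ is definite while $AJ_{2n}$ is split. That remains true even though $G(\Q)$ also contains elements acting by $A\mapsto cA$, which make the orbit of $A$ a similarity class rather than an isometry class. So with the literal definition of reducibility the lemma fails, e.g.\ for $v=(I_{2n},C)$ with $CJ_{2n}$ indefinite and $\Delta(v)\neq 0$.

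The intended representative is $I_{2n}$. Its form $I_{2n}J_{2n}=J_{2n}$ is hyperbolic, and its stabiliser $\{g : gg^*=1\}=\mathrm{O}(J_{2n})$ acts on $C$ by conjugation, exactly as in $(G_A,V_A)$. With this reading your proof is complete.
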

\begin{proof}
Suppose, without loss of generality, that the top-right $n \times n$ block of $A$ is equal to zero,
meaning that there exists $g \in \GL_{2n}(\Q)$ such that $gAg^* = J_{2n}$.
This implies that there is an element in the $G(\Q)$-orbit of $v$ which
is of the form $(J_{2n},C')$, so $v$ is reducible.
\end{proof}

With these conditions in mind, we can carry out the cutting off the cusp.
We note, however, that there will be some notable differences with usual
cases in the literature, as now $G$ is not semisimple. We note that the
action of $D \times \Lambda \times T$ on $V$ gives every entry of the matrices
of $V$ a weight. If $(A,C) \in V$, then $d \in D$ scales every entry of
$A$ and $C$ by $d$, while $\lambda \in \Lambda$ scales entries of $A$ by $\lambda^2$
and entries of $C$ by $\lambda^{-2}$. 

We recall the change of variables for $1 \leq m \leq 2n$:
\[
t_m = \prod_{k=1}^{m-1} s_k^{-k}\prod_{k=m}^{2n-1} s_k^{2n-k}.
\]
The weight of each entry of $(A,C) \in V$ with respect to $T$ can be written
as a product $\prod_{k=1}^{2n-1} s_k^{a_k}$ for some integers $a_k$. Let $\Phi_V$
denote the set of weights on $V$, which consists of the matrix entries
$a_{i,j}$ and $c_{i,j}$, with $i+j \leq 2n+1$. We denote $\Phi_V = \Phi_A \sqcup \Phi_C$,
where $\Phi_A$ consists of the $a_{i,j}$ entries and $\Phi_C$ consists 
of the $c_{i,j}$ entries. We can define an ordering on $\Phi_V$:
if $a,b \in \Phi_V$, we will say that $a \leq b$ if the exponent of every
$s_k$ in the weight of $a$ is lower or equal to the corresponding exponent
of $s_k$ in $b$. Then, an explicit computation shows that the entries $a_{1,2n}$ and $c_{1,2n}$ are maximal
elements of $\Phi_V$. Visually, the weights increase as we go to the top
and to the right of these matrices.

Let $M_0,M_1$ be disjoint subsets of $\Phi_V$, and let $S(M_0,M_1)$ denote
the elements $v \in V(\Z)$ such that $v_a = 0$ if $a \in M_0$ and $v_a \neq 0$
if $a \in M_1$. Let $\cC$ denote the collection of non-empty subsets $M_0$ such that
if $a \in M_0$ and $b \geq a$, then $b \in M_0$. Given $M_0 \in \cC$,
denote $\lambda(M_0) = \{a \in \Phi_V \setminus M_0 \mid M_0 \cup \{a\} \in \cC\}$
(i.e. the subset of maximal elements of $M_0$).

Then, to prove the first item of Proposition \ref{prop:cusp3}, it suffices to show that for every
$M_0 \in \cC$, either $S(M_0,\lambda(M_0))^{irr} = \emptyset$ or
$N(S(M_0,\lambda(M_0)),X) = O(X^{\dim V})$. First, we note that if $a_{i,i} \in M_0$
for any $1 \leq i \leq n$ or if $a_{n,n+1} \in M_0$, then $S(M_0,\lambda(M_0))^{irr} = \emptyset$
by Lemmas \ref{lemma:cusp} and \ref{lemma:cuspRed}. Then, for all $i$, there exists $v_i \in \lambda(M_0) \cap \Phi_A$ such
that $w(a_{i,i}) \leq w(v_i)$ if $1 \leq i \leq n-1$ and $w(a_{n,n+1}) \leq w(v_i)$.
If $v \in \lambda(M_0) \cap \Phi_A$, then we must have that $d\lambda^2 w(v) \gg 1$.
Therefore, given that $\prod_{i=1}^{n-1} w(a_{i,i}) w(a_{n,n+1}) = t_{n}/t_{n+1} < c$,
we have that
\[
\prod_{i=1}^n (d \lambda^2 w(v_i)) \gg d^n\lambda^{2n} \gg  1,
\]
which means that $\lambda^2 \gg d^{-1}$. The analogous argument with
entries in $C$ yields $\lambda^2 \ll d$. 

Pick $M_0 \in \cC$ and write $m_a = \# (M_0 \cap \Phi_A)$, $m_c = \# (M_0 \cap \Phi_C)$.
Then, Davenport's lemma
ensures that the number of lattice points in $S(M_0,\lambda(M_0))$ 
with height at most $X$ is
\[
\ll \int_{d = 1}^X\int_{\lambda = d^{-1/2}}^{d^{1/2}} \int_{s_k < c \, \forall k} d^{\dim V - \#M_0} \lambda^{2(m_c-m_a)} \prod_{v \in M_0} w(v)^{-1} \delta^{-1}(s) \, d^{\times}d \, d^{\times}\lambda \, d^{\times} s.
\]
If all the exponents of $s_k$ are strictly positive in the integral, then
the value of the integral is $O(X^{\dim V - \#M_0 + |m_c-m_a|})$, or
$O(X^{\dim V - \#M_0} \log X)$ if $m_c-m_a = 0$. In any case, it is $O(X^{\dim V})$,
as wanted. When the exponents of $s_k$ are not necessarily positive, we
use a trick due to Bhargava, adapted here to our circumstances. We note
that if $a \in M_1$, then
\[
d\lambda^{2} w(a) \gg 1 \quad \text{or} \quad d\lambda^{-2} w(a) \gg 1
\]
according to whether $a \in \Phi_A$ or $a \in \Phi_C$, respectively.
Then, we can multiply by a product $\prod_{a \in M_1} (d \lambda^{\pm 2}w(a))^{p(a)}$
for some values $p(a) \gg 1$ to get a new estimate.
\begin{prop}
\label{prop:cuspFunc}
Suppose there exists a function $p \colon M_1 \to \R_{\geq 0}$
such that:
\begin{itemize}
\item $\sum_{a \in M_1} p(a) \leq \# M_0$,
\item $\sum_{a \in M_1\cap \Phi_A} p(a) - \sum_{a \in M_1 \cap \Phi_C} p(a) = m_a - m_c$,
\item The exponents of $s_k$ in
\[
\prod_{a \in M_1}(w(a))^{p(a)} \prod_{a \in M_0} w(a)^{-1} \delta^{-1}(s)
\]
are all strictly positive.
\end{itemize}
Then $N(M_0,\lambda(M_0),X) = O(X^{\dim V})$.
\end{prop}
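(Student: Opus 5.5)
We start from the Davenport bound already written down before the statement. By Lemma \ref{lemma:davenport}, the count $N(S(M_0,\lambda(M_0)),X)$ is bounded by
\[
\int_{d = 1}^X\int_{\lambda = d^{-1/2}}^{d^{1/2}} \int_{s_k < c} d^{\dim V - \#M_0} \lambda^{2(m_c-m_a)} \prod_{v \in M_0} w(v)^{-1} \delta^{-1}(s) \, d^{\times}d \, d^{\times}\lambda \, d^{\times} s .
\]
Here we restrict to the region where $d^{-1}\ll\lambda^2\ll d$, as established just before the statement. The idea is to insert the harmless factor
\[
\prod_{a\in M_1}\bigl(d\lambda^{\pm 2}w(a)\bigr)^{p(a)} .
\]
This factor is $\gg 1$ on the whole integration region, because every $a\in M_1$ has $v_a\neq 0$ and is an integer entry. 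Indeed, $|d\lambda^{2}w(a)\cdot(\text{bounded})|\geq 1$ for $a\in\Phi_A$, and the analogous bound with $\lambda^{-2}$ holds for $a\in\Phi_C$. Multiplying the integrand by this factor therefore only enlarges the bound, and we use $p$ from the hypotheses.

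We then collect exponents variable by variable. The power of $d$ becomes $\dim V-\#M_0+\sum_{a\in M_1}p(a)\le \dim V$ by the first hypothesis. The power of $\lambda$ becomes
\[
2(m_c-m_a)+2\sum_{a\in M_1\cap\Phi_A}p(a)-2\sum_{a\in M_1\cap\Phi_C}p(a)=0
\]
by the second hypothesis. The $s$-part becomes $\prod_{a\in M_1}w(a)^{p(a)}\prod_{a\in M_0}w(a)^{-1}\delta^{-1}(s)$, which by the third hypothesis is a monomial $\prod_k s_k^{e_k}$ with all $e_k>0$.

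Next we integrate in order. The $s$-integral over $s_k<c$ of $\prod s_k^{e_k}\,d^\times s$ converges to a constant because all $e_k>0$. The $\lambda$-integral of $\lambda^0\,d^\times\lambda$ over $[d^{-1/2},d^{1/2}]$ gives $\log d$. The $d$-integral of $d^{\dim V-\#M_0+\sum p}\log d\,d^\times d$ then gives $O(X^{\dim V}\log X)$ in general. It gives $O(X^{\dim V})$ as soon as $\sum p<\#M_0$, since then the exponent of $d$ is strictly less than $\dim V$.

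The main obstacle is this borderline case $\sum_{a\in M_1}p(a)=\#M_0$, where the $\lambda$-integral costs a logarithm. For this case, I would first try the following. Since the third hypothesis is a strict inequality and $M_0\neq\emptyset$, we can shrink all $p(a)$ slightly by a common factor $(1-\eta)$. This keeps the $s$-exponents positive. The $\lambda$-exponent then becomes $2\eta(m_a-m_c)$, which is not zero in general. The resulting $\lambda$-integral is at worst $d^{\eta|m_a-m_c|}$, while the $d$-exponent drops by $\eta\#M_0$. Since $|m_a-m_c|\le\#M_0$, I expect this to be enough. If equality $|m_a-m_c|=\#M_0$ occurs, then $M_0$ lies entirely in $\Phi_A$ or entirely in $\Phi_C$. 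In that situation I would instead use the sharper constraint on $\lambda$ coming from the elements $v_i\in\lambda(M_0)$, namely $d\lambda^{\pm2}w(v_i)\gg1$. This gives a one-sided bound on $\lambda$ that absorbs the logarithm, and so yields $O(X^{\dim V})$ for $N(M_0,\lambda(M_0),X)$.
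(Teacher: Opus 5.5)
Your proposal follows the paper's proof: insert $\prod_{a\in M_1}(d\lambda^{\pm2}w(a))^{p(a)}$ to get $O(X^{\dim V-\#M_0+\sum_{a\in M_1}p(a)}\log X)$, and in the borderline case $\sum_{a\in M_1}p(a)=\#M_0$ replace $p$ by $(1-\eta)p$. One correction to your last step: whenever $m_a\neq m_c$, including the extreme case $|m_a-m_c|=\#M_0$, the $\lambda$-integral is $\ll d^{\eta|m_a-m_c|}$ with no logarithm, so the $d$-integral is already $O(X^{\dim V})$ and your appeal to the $v_i$ is unnecessary; the logarithm actually appears only when $m_a=m_c$, and there it is absorbed by the strict drop $\eta\#M_0>0$ in the exponent of $d$, which is exactly the paper's two-case split.
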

\begin{proof}
The conditions guarantee that $N(M_0,\lambda(M_0),X) = O(X^{\dim V-\#M_0 +\sum_{a \in M_1}p(a)}\log X)$,
so if $\sum_{a \in M_1} p(a) < \# M_0$ we are done. Otherwise, if $\sum_{a \in M_1} p(a) = \# M_0$,
replacing $p$ by $(1-\eps)p$ for a sufficiently small $\eps > 0$ yields 
\[
\begin{cases}
O(X^{\dim V - \eps \sum_{a \in M_1}p(a) + \eps |\sum_{a \in M_1 \cap \Phi_A}p(a) - \sum_{a \in M_1 \cap \Phi_C}p(a)|}) & \text{ if } \sum_{a \in M_1 \cap \Phi_A}p(a) \neq \sum_{a \in M_1 \cap \Phi_C}p(a) \\
O(X^{\dim V - \eps \sum_{a \in M_1}p(a)} \log X) & \text{ otherwise}.
\end{cases}
\]
In any case, it is $O(X^{\dim V})$.
\end{proof}
\begin{prop}
For every $M_0 \in \cC$, there exists $p \colon M_1 \to \R_{\geq 0}$ satisfying the conditions of
Proposition \ref{prop:cuspFunc}.
\end{prop}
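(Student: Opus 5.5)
We construct $p$ directly, separating the $A$- and $C$-parts. Write $M_0 = M_0^A \sqcup M_0^C$ with $M_0^A = M_0 \cap \Phi_A$ and $M_0^C = M_0\cap\Phi_C$. Each of these is an upward-closed subset of the poset of entries of a $2n\times 2n$ antidiagonally symmetric matrix (the entries $a_{i,j}$, resp. $c_{i,j}$, with $i+j \leq 2n+1$). Similarly write $M_1 = \lambda(M_0)$ as $M_1^A \sqcup M_1^C$. We may assume $S(M_0,\lambda(M_0))^{irr} \neq \emptyset$. Then Lemmas \ref{lemma:cusp} and \ref{lemma:cuspRed} show that neither $M_0^A$ nor $M_0^C$ contains a diagonal entry $a_{i,i}$ ($i\le n$) or $a_{n,n+1}$. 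Hence each upward-closed set is contained in the strict upper-right staircase region, and its boundary $\lambda(M_0)\cap\Phi_A$ is a nonempty antichain of "corners". We look for $p$ supported on $M_1^A$ with total mass $m_a$, and on $M_1^C$ with total mass $m_c$. Then the first two conditions of Proposition \ref{prop:cuspFunc} hold with equality, since $\sum p = m_a+m_c = \#M_0$ and the difference is $m_a - m_c$. It remains to arrange the positivity of the $s_k$-exponents.

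The key point is that the $A$- and $C$-weights are related. The $T$-weight of $a_{i,j}$ is $t_i t_{2n+1-j}$, while that of $c_{i,j}$ is $(t_{2n+1-i}t_j)^{-1}$, i.e. $t_{i'}t_{2n+1-j'}$ under the same indexing once we reflect. The $\lambda$-dependence has already been absorbed into the second condition. So the exponent condition splits as a sum of an $A$-problem and a $C$-problem, each of the same shape as in the semisimple case $\SL_{2n}$ acting on antidiagonal-symmetric matrices (the $A_{2n-1}$ Vinberg representation of \cite{SW}). For that problem we want, for a single upward-closed set $M$ avoiding the "diagonal", a weighting $p$ on its boundary of total mass $\#M$ such that
\[
\prod_{a\in\lambda(M)} w(a)^{p(a)}\prod_{a\in M} w(a)^{-1}
\]
times half of $\delta^{-1}(s)$ has positive exponents. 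Since $\delta^{-1}$ has strictly positive exponents in every $s_k$, it then suffices that each of the two halves have exponents $> -\tfrac12(\text{exponent of }\delta^{-1})$. We would prove this by the standard transport argument. Assign each $a\in M$ to a boundary element $\tau(a)\in\lambda(M)$ with $\tau(a)\le a$, for instance by moving down then left along the staircase until leaving $M$, and set $p(b) = \#\tau^{-1}(b)$. Then each factor $w(\tau(a))/w(a)$ has nonpositive exponents. We must check that the total deficit is dominated by $\delta^{-1}(s)^{1/2}$, whose exponent in $s_k$ is of order $k(2n-k)$.

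The main obstacle is this last inequality. The deficit in $s_k$ from $w(\tau(a))/w(a)$ counts how many of the "$k$-th simple root steps" are crossed in moving from $a$ to $\tau(a)$. For the worst $M$, namely the full strictly-upper-triangular staircase, this could be comparable to $k(2n-k)$. I would first try a balanced transport, sending entries in row $i$ to the corner in that row versus the corner in that column alternately, and verify the bound row by row. If the bound fails for some extreme $M$, I would instead allow $p$ to use corners in the \emph{other} matrix, exploiting $m_a\ne m_c$ flexibility via the second condition. As a last resort, I would handle the small number of extreme $M_0$ by hand, using that $\#M_0$ large forces $S(M_0,\cdot)$ to be reducible by Lemma \ref{lemma:cusp}.
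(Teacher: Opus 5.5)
\textbf{There is a genuine gap.} The proposition is exactly the positivity of the $s_k$-exponents, and your proposal leaves that step open. Your setup is fine: putting mass $m_a$ on $\lambda(M_0)\cap\Phi_A$ and mass $m_c$ on $\lambda(M_0)\cap\Phi_C$ does give the first two conditions of Proposition~\ref{prop:cuspFunc}. But the sufficient criterion you propose for positivity is false.

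Take $n=2$ and $M_0=\{a_{1,2},a_{1,3},a_{1,4}\}$. This is not excluded by Lemmas~\ref{lemma:cusp} or~\ref{lemma:cuspRed}; it is the row $\{1,2,4\}$ of the paper's table. In exponents of $(s_1,s_2,s_3)$ one has $w(a_{1,2})=(2,0,2)$, $w(a_{1,3})=(2,4,2)$, $w(a_{1,4})=(6,4,2)$ and $\delta^{-1}=(12,16,12)$. So $\prod_{a\in M_0}w(a)^{-1}(\delta^{-1})^{1/2}$ has exponents $(-4,0,0)$.

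Since $m_c=0$, the conditions $\sum p\le\#M_0$ and $\sum_{\Phi_A}p-\sum_{\Phi_C}p=3$ force all the mass onto $\lambda(M_0)\cap\Phi_A=\{a_{1,1},a_{2,3}\}$, whose weights are $(2,0,-2)$ and $(-2,4,2)$. Write $x=p(a_{1,1})$ and $y=p(a_{2,3})$. Then the $s_1$- and $s_3$-exponents are $2(x-y)-4$ and $2(y-x)$, which cannot both be positive. Giving the $A$-side all of $\delta^{-1}$ would work ($x=y=3/2$ gives $(2,14,6)$), so the half-split is the culprit. Your fallback of moving mass to the other matrix is unavailable here, for the same reason that $\sum_{\Phi_C}p$ is forced to be $0$. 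The same failure occurs for every $n$ when $M_0^A$ is the strict upper staircase minus $a_{n,n+1}$ and $M_0^C=\emptyset$: the $s_1$- and $s_{2n-1}$-exponents force $p(a_{1,1})>n+\tfrac12$ and $p(a_{1,1})<n-\tfrac12$ respectively.

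\textbf{More fundamentally, existence of a valid $p$ is a joint property of $(M_0^A,M_0^C)$, which a decoupled transport argument cannot detect.} Take $n=2$ and $M_0=\{a_{1,2},a_{1,3},a_{1,4},c_{1,2},c_{1,3},c_{1,4}\}$. Each of the $A$- and $C$-subproblems is solvable on its own with the full $\delta^{-1}$. Yet the $s_1$- and $s_3$-exponents of $\prod_{a\in\lambda(M_0)}w(a)^{p(a)}\prod_{a\in M_0}w(a)^{-1}\delta^{-1}$ add up to $-8$ for \emph{every} $p$. This is because the four elements $a_{1,1},a_{2,3},c_{1,1},c_{2,3}$ of $\lambda(M_0)$ all have $s_1$- and $s_3$-exponents summing to $0$.

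This $M_0$ is exactly the $k=1$ case discarded by Lemma~\ref{lemma:cusp}. So any proof must feed the simultaneous non-vanishing of the $k$-th blocks of $A$ and $C$ into the construction of $p$. You only use the lemmas to remove diagonal entries from each matrix separately.

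Your last resort does not repair this. Lemma~\ref{lemma:cusp} is not a statement about $\#M_0$ being large. Admissible $M_0$ of size $2n^2-n-1$ exist: take the strict upper staircase of $A$ minus $a_{n,n+1}$, together with all $c_{i,j}$ with $j\ge i+2$.

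\textbf{What the paper does instead.} It argues by induction on $n$, stripping off the outer row and column, with an explicit table for $n=2$. It builds $p_1$ on the outer layer by a case analysis on $m_a+m_c$ versus $2n$. In the hardest case it lowers both masses by the same amount $M$ and pairs entries across the two matrices, via
$w(a_{k,k+1})w(c_{k,k})=s_{2n-k}^{2n}$, $w(a_{k,k+1})w(c_{k+1,k+1})=s_k^{2n}$ and $w(a_{n,n+1})w(c_{n,n+1})=s_n^{4n}$.
Lemma~\ref{lemma:cusp} is what guarantees that one of $a_{k,k+1},c_{k,k+1}$ is available for each $k$.
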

\begin{proof}
We proceed by induction on $n \geq 2$. We start by doing the $n = 2$ case
explicitly. We label the weights of $\Phi_V$ in this case:
\[
(A,C) = \lp \begin{pmatrix} 6 & 4 & 2 & 1 \\ 8 & 5 & 3 & \\ 9 & 7 & & \\ 10 & & & \end{pmatrix}, \begin{pmatrix} 16 & 14 & 12 & 11 \\ 18 & 15 & 13 & \\ 19 & 17 & & \\ 20 & & & \end{pmatrix} \rp
\]
For reference, we also write explicitly the action of $T$ on $(A,C)$,
which is indexed by $(s_1,s_2,s_3)$:
\[
\lp \begin{pmatrix} (2,0,-2) & (2,0,2) & (2,4,2) & (6,4,2) \\ (-2,0,-2) & (-2,0,2) & (-2,4,2) & \\ (-2,-4,-2) & (-2,-4,2) & & \\ (-2,-4,-6) & & & \end{pmatrix}, \begin{pmatrix} (-2,0,2) & (2,0,2) & (2,4,2) & (2,4,6) \\ (-2,0,-2) & (2,0,-2) & (2,4,-2) & \\ (-2,-4,-2) & (2,-4,-2) & & \\ (-6,-4,-2) & & & \end{pmatrix} \rp
\]
We have that $\delta^{-1}(s) = s_1^{12} s_2^{16} s_3^{12}$. Taking
into account Lemmas \ref{lemma:cusp} and \ref{lemma:cuspRed}, we write down the list of all non-trivial possibilities for $M_0$,
together with a choice of $p \colon M_1 \to \R_{\geq 0}$. We omit repetitions
we would get by swapping $A$ and $C$. Here, $\eps$ denotes a small positive
real number.
\[
\begin{array}{c|c|l}
M_0 & \text{weight} & p \\
\hline
\{1 \} & d^{19} \lambda^{-2} (6,12,14) & 1 \cdot (2) \\
\{1,11\} & d^{18} (4,8,4) & 0 \\
\{1,2\} & d^{18} \lambda^{-4} (4,8,8) & 2 \cdot (4) \\
\{1,2,11\} & d^{17} \lambda^{-2} (2,4,2) & 1 \cdot (4) \\
\{1,2,11,12\} & d^{16} (0,0,0) & \eps (3) + \eps(13) + \eps(4)+ \eps(14) \\
\{1,2,4\} & d^{17} \lambda^{-6} (2,8,6) & 1 \cdot \frac{3}{2}(6)+\frac{3}{2}(5) \\
\{1,2,4,11\} & d^{16} \lambda^{-4} (0,4,0) & (1+\eps)(6)+(1+\eps)(5)+2\eps(14) \\
\{1,2,4,11,12\} & d^{15} \lambda^{-2} (-2,0,-2) & \eps(3)+\eps(13)+(2+\frac{\eps}{2})(6)+(2+\frac{\eps}{2})(5) + (1+\eps)(14) \\
\end{array}
\]
Now, by induction we assume that the theorem holds for $n-1$. Let $M_0 \in \cC$.
Inside $\Phi_V$, let $\Phi_{n-1}$ denote the subset of weights corresponding
to the inner $(2n-2) \times (2n-2)$ matrices of $(A,C)$ (that is, removing
the first and last rows and columns). If $M_0 \cap \Phi_{n-1}$ is non-empty,
then use induction to get a function $p_{n-1} \colon M_1 \cap \Phi_{n-1} \to \R_{\geq 0}$
satisfying:
\begin{itemize}
\item $\sum_{a \in M_1 \cap \Phi_{n-1}} p_{n-1}(a) \leq \#(M_0 \cap \Phi_{n-1})$
\item $\sum_{a \in M_1 \cap \Phi_{n-1} \cap \Phi_A} p_{n-1}(a) - \sum_{c \in M_1 \cap \Phi_{n-1} \cap \Phi_C} p_{n-1}(c) =  \#(M_0 \cap \Phi_A \cap \Phi_{n-1}) - \#(M_0 \cap \Phi_C \cap \Phi_{n-1})$
\item The exponents in 
\[
\prod_{a \in M_1 \cap \Phi_{n-1}}(w(a))^{p_{n-1}(a)} \prod_{a \in M_0 \cap \Phi_{n-1}} w(a)^{-1} \prod_{k=2}^{2n-2}s_k^{2n(k-1)(2n-k-1)}
\]
are strictly positive.
\end{itemize}
If $M_0 \cap \Phi_{n-1}$ is empty, we can carry out the argument with $p_{n-1} = 0$.
Then, it suffices to find $p_1 \colon M_1 \to \R_{\geq 0}$ satisfying:
\begin{itemize}
\item $\sum_{a \in M_1} p_1(a) \leq \#(M_0 \cap (\Phi_V \setminus \Phi_{n-1}))$
\item $\sum_{a \in M_1 \cap \Phi_A} p_1(a) - \sum_{c \in M_1 \cap \Phi_C} p_1(c) =  \#(M_0 \cap (\Phi_A \setminus \Phi_{n-1})) - \#(M_0 \cap (\Phi_C \setminus \Phi_{n-1}))$
\item The exponents in
\[
\prod_{a \in M_1}(w(a))^{p_{1}(a)} \prod_{a \in M_0 \cap (\Phi_V \setminus \Phi_{n-1})} w(a)^{-1} \prod_{k=1}^{2n-1}s_k^{2n(2n-1)}
\]
are non-negative.
\end{itemize}
Let $m_a = \#((M_0 \cap \Phi_A) \setminus \Phi_{n-1})$ and $m_c = \#((M_0 \cap \Phi_C) \setminus \Phi_{n-1})$,
which are the number of elements of $M_0$ in the top row of $A$ and $C$
respectively. Then, $a_{1,2n-m_a},c_{1,2n-m_c} \in M_1$. We select $p_1$
according to the following cases, where without loss of generality we
assume that $a \geq c$:
\begin{itemize}
\item $m_a+m_c < 2n$. In this situation, we choose the function $p_1$
to be $p_1(a_{1,2n-m_a}) = m_a$, $p_1(c_{1,2n-m_c}) = m_c$ and $0$ everywhere else.
\item $m_a+m_c \geq 2n$ and $m_c \leq n$. In this situation, we choose
$p_1(a_{1,2n-m_a}) = m_a-1$, $p_1(a_{n,n+1}) = 1$, $p(c_{1,2n-m_c}) = m_c$ and $0$ everywhere else.
\item $m_a+m_c \geq 2n$ and $m_c > n$. Let $M = m_a+m_c-2n+\frac{1}{2}$,
where we note that $M \leq m_a$ and $M \leq m_c$.
We note that, for all $1 \leq k \leq n-1$, at least one of $a_{k,k+1}$
and $c_{k,k+1}$ belongs to $M_1$ by Lemma \ref{lemma:cusp}. If $a_{k,k+1} \in M_1$,
then we note that $w(a_{k,k+1})w(c_{k,k}) = s_{2n-k}^{2n}$ and $w(a_{k,k+1})w(c_{k+1,k+1}) = s_{k}^{2n}$;
with similar formulae if $c_{k,k+1} \in M_1$.
We also note that $a_{n,n+1},c_{n,n+1} \in M_1$ by Lemma \ref{lemma:cuspRed},
and that $w(a_{n,n+1})w(c_{n,n+1}) = s_n^{4n}$.
With all that said, we choose $p_1$ in the following manner:
\begin{itemize}
\item $p_1(a_{1,2n-m_a}) = m_a-M$ and $p_1(c_{1,2n-m_c}) = m_c-M$.
\item For all $2n-m_c \leq k \leq n-1$, if $a_{k,k+1} \in M_1$ we put
$p_1(a_{k,k+1}) = 2$, $p_1(c_{k,k}) = 1$ and $p_1(c_{k+1,k+1}) = 1$.
Otherwise, we put $p_1(c_{k,k+1}) = 2$, $p_1(a_{k,k}) = 1$ and $p_1(a_{k+1,k+1}) = 1$.
\item For all $2n-m_a \leq k < 2n-m_c$, if $a_{k,k+1} \in M_1$ we put
$p_1(a_{k,k+1}) = 1$ and $p_1(c_{k+1,k+1}) = 1$. Otherwise, we put
$p_1(c_{k,k+1}) = 1$ and $p_1(a_{k,k}) = 1$.
\item $p_1(a_{n,n+1}) = p_1(c_{n,n+1}) = \frac{1}{2}$.
\end{itemize}
\end{itemize}

\end{proof}

\subsection{Proof of Theorem \ref{theo:mainPsi}}

We are now in a position to prove Theorem \ref{theo:mainPsi}. As with
Theorem \ref{theo:mainPhiCong}, the result holds even if we apply finitely
many congruence conditions on $B(\Z)$.

\begin{theorem}
\label{theo:PsiCong}
Let $\cB \subset B(\Z)$ be a subset defined by finitely many congruence 
conditions. Then
\[
\frac{\sum_{b \in \cB, \, \Ht(b) < X} \# \Sel_{\psi^{\vee}}(A_b)}{\#\{b \in \cB \mid \Ht(b) < X\}} \ll \log X.
\]
The implied constant depends on $\cB$.
\end{theorem}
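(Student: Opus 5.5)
We mirror the proof of Theorem \ref{theo:mainPhiCong}, replacing the counting input of \cite{SW} by Theorem \ref{theo:count1}. Only an upper bound of order $\log X$ is claimed, so no local densities need to be computed exactly: it is enough that the weights are bounded and that the reducible and big-stabiliser orbits are negligible.

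First, we embed the Selmer elements into integral orbits. By the corollary giving $\Sel_{\psi^{\vee}}A_b \hookrightarrow G(\Q)\backslash V_b(\Q)$, together with the theorem that every locally soluble rational orbit meets $V_b(\Z_p)$ for each $p$, each Selmer element yields a $G(\Q)$-orbit meeting $V_b(\Z_p)$ for all $p$. To pass to a global integral representative we need a class number statement for $G=\GL_{2n}/\mu_2$. We will argue it directly: $\GL_{2n}$ has class number one, and the obstruction for $G$ is controlled by $H^1$ of $\mu_2$. If necessary, we rescale $b$ to $\lambda\cdot b$ by a fixed integer $\lambda$ (as was done with $2\cdot\cB$ earlier), which is harmless because $\Sel_{\psi^{\vee}}A_b \simeq \Sel_{\psi^{\vee}}A_{\lambda\cdot b}$. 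This produces, for $b\in \cB$, an injection of $\Sel_{\psi^{\vee}}(A_b)$ into $G(\Q)$-orbits with a representative in $V_{\lambda b}(\Z)\cap V(\R)^{sol}$.

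Second, we separate the reducible orbits. Under the orbit bijection with $L^\times/(K^\times L^{\times 2})$, the reducible orbits (those with $A=J_{2n}$ or $C=J_{2n}$) correspond to a bounded number of classes, presumably the images of the torsion points. Hence their contribution to $\#\Sel_{\psi^{\vee}}(A_b)$ is $O(1)$ per $b$, which is acceptable because the normalising denominator is $\#\{b\in\cB:\Ht(b)<X\}\asymp X^{\dim V}$. Each irreducible Selmer element then gives at least one irreducible $G(\Z)$-orbit in $V(\Z)^{irr}\cap V(\R)^{sol}$ of height $<\lambda X$. Distinct Selmer elements give distinct $G(\Q)$-orbits, so counting $G(\Z)$-orbits weighted by $1/\#\Stab_{G(\Z)}$ overcounts by at most a bounded factor, since stabilisers are finite $2$-groups of bounded order. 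We then apply Theorem \ref{theo:count1} with height $\lambda X$ and conclude that
\[
\sum_{b\in\cB,\,\Ht(b)<X}\#\Sel_{\psi^{\vee}}(A_b)\ll N(V(\Z)^{irr}\cap V(\R)^{sol},\lambda X)+X^{\dim V}\ll X^{\dim V}\log X.
\]
We then divide by $\#\{b\in\cB:\Ht(b)<X\}\gg_{\cB} X^{\dim V}$, which uses Lemma \ref{lemma:sum} to identify $\dim V$ with the exponent in the count of $b$. The congruence conditions only change the constant.

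The main obstacle is the passage from local integral representatives to a global one in $V(\Z)$. \cite[Proposition 7.2]{LagaThesis} is stated for semisimple groups, so we cannot cite it. We will try to show that $G(\A_f)=G(\Q)\prod_p G(\Z_p)$ using the surjection $\GL_{2n}\to G$ and the long exact sequence with $\mu_2$. The cokernel of $\GL_{2n}(\Q_p)\to G(\Q_p)$ is $\Q_p^\times/\Q_p^{\times 2}$ via the determinant, and $\Q^\times$ surjects onto $\prod_p \Q_p^\times/\Q_p^{\times2}\Z_p^\times$ because $\Z$ has class number one. If a rescaling factor is needed at $p=2$, we absorb it via the $\frac12 V(\Z)$ trick with invariants in $4\cdot\cB$.
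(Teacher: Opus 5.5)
Your skeleton matches the paper's. You inject $\Sel_{\psi^{\vee}}(A_b)$ into distinct soluble $G_C(\Z)$-orbits in $V_{C,b}(\Z)$, count the irreducible ones with Theorem \ref{theo:count1} using bounded stabilisers, and treat the reducible ones separately. Your class-number sketch for $\GL_{2n}/\mu_2$ supplies a step the paper uses implicitly. The rescaling is unnecessary, because the local integrality result for $V_C$ already lands in $V_b(\Z_p)$ for every $p$.

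\textbf{The gap is in the reducible step.} Your claim that the reducible orbits with invariants $b$ form a bounded number of classes is false for $V_C$. Reducibility there means the $G(\Q)$-orbit contains some $(J_{2n},C)$ or $(A,J_{2n})$. Orbits of the first kind are classified, through $C$, by the $G_A(\Q)$-orbits in $V_{A,b}(\Q)$. Concretely, they are the classes $\alpha\in L^{\times}/(K^{\times}L^{\times 2})$ whose form $(\cdot,\cdot)_{\alpha}$, the same form as in \cite{SW}, is split. Orbits of the second kind are described the same way, with the form twisted by $\gamma$.

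This is the whole orbit set of \cite{SW}, not just its distinguished part. The ``at most two orbits'' phenomenon you have in mind belongs to the $\alpha=1$ notion of reducibility for $V_A$. In particular, the reducible part of $\Sel_{\psi^{\vee}}(A_b)$ contains the image of $\Sel_{\phi}(A_b)$ (via $A_b[\phi]\subset A_b[\psi^{\vee}]$). No bound on that image uniform in $b$ is available; the paper only controls it on average. So the $X^{\dim V}$ term in your displayed inequality does not follow from a pointwise $O(1)$.

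\textbf{How to close it.} You need an averaged bound on the reducible contribution, and there are two ways to get one.

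The paper counts integral orbits. Reducible $G_C(\Z)$-orbits of height $<X$ number at most twice the $G_A(\Z)$-orbits in $V_A(\Z)$ with the corresponding invariants. The latter are $O(X^{\dim V})$ by the counts of \cite{SW}, since the invariants of $V_A$ have half the degrees and $\dim V_C=2\dim V_A$.

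An arithmetic alternative stays within your framework. For a reducible class, $N(\alpha)$ lies in one of two square classes fixed by $b$: the discriminant of the split form, or its twist by $N(\gamma)=f(0)$. Hence the reducible Selmer elements lie in at most two cosets of $\ker\bigl(\Sel_{\psi^{\vee}}(A_b)\to\Sel_{\phi_M^{\vee}}(A_b^{\vee})\bigr)$, which is the image of $\Sel_{\phi}(A_b)$. So there are at most $2\#\Sel_{\phi}(A_b)$ of them, and Theorem \ref{theo:mainPhiCong} bounds their average over $\cB$.

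With either repair, the rest of your argument goes through.
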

\begin{proof}
Let us start with the case $\cB = B(\Z)$.
In this case, the bound follows from Theorem \ref{theo:count1}: we note
that each element of $\Sel_{\psi^{\vee}} A_b$ embeds inside a different
$G_C(\Z)$-orbit of $V_{C,b}(\Z) \cap V^{sol}(\R)$. If we restrict to $\Ht(b) < X$,
then the number of such orbits is $O(X^{\dim V}\log X)$, since:
\begin{itemize}
\item The irreducible $G_C(\Z)$-orbits in $V_C(\Z)$ are controlled by Theorem \ref{theo:count1}
and are $O(X^{\dim V}\log X)$; and
\item The reducible $G_C(\Z)$-orbits in $V_C(\Z)$ are, at most, twice
the number of orbits in $G_A(\Z)$ acting on $V_A(\Z)$, so they are $O(X^{\dim V})$.
\end{itemize}
Hence the theorem holds for the case $\cB = B(\Z)$. When we apply finitely
many congruence conditions, the proof carries over with minimal changes
by combining Theorem \ref{theo:count1} with the methods of Section \ref{section:mainProof}.
\end{proof}

\section{Tamagawa ratios}
\label{section:tamagawa}

We will prove Theorem \ref{theo:mainMoments} by using the machinery developed in the recent preprint \cite{KS}.
We will look at the Tamagawa ratio
\[
\cT(A_b/J_b) = \frac{\# \Sel_{\phi_M^{\vee}}(A_b^{\vee})}{\# \Sel_{\phi_M}(J_b)}.
\]
In our situation, the Greenberg--Wiles formula \cite[Theorem 8.7.9]{NSW}
states that
\[
\cT(A_b/J_b) = \prod_{p \leq \infty} c_p(\phi_{M,b}^{\vee}) = \prod_{p \leq \infty} \frac{\# J_b(\Q_{p})/\phi_M^{\vee}(A_b^{\vee}(\Q_p))}{2},
\]
This infinite product is convergent, as for all primes away from $\{2,\infty\}$
of good reduction for $J_b$ the local factor is equal to $1$. In fact,
we also have that $\# J_b(\Q_{p})/\phi_M^{\vee}(A_b^{\vee}(\Q_p)) \leq 8$,
as it is a subset of $H^1(\Q_p,\Z/2\Z) \cong \Q_p^{\times}/(\Q_p^{\times})^2$
(so in particular, $c_p(\phi_{M,b}^{\vee}) \leq 2$ if $p \neq 2$, and
also $c_p(\phi_{M,b}^{\vee}) \geq \frac{1}{2}$). By \cite[Lemma 3.8]{Sch}, if
$p \neq 2,\infty$ we additionally have
\[
c_p(\phi_{M,b}) = \frac{c_p(J_b)}{c_p(A_b^{\vee})} = \frac{\# J_b(\Q_p)/J_{b,0}(\Q_p)}{\# A_b^{\vee}(\Q_p)/A_{b,0}^{\vee}(\Q_p)}.
\]
We note that a consequence of \cite[Lemma 3.8]{Sch}, combined with Lemma
\ref{lemma:selmerWeights} is that the Tamagawa number of $A_b$ at a prime
$p \neq 2,\infty$ coincides with the Tamagawa number of $A_b^{\vee}$ at
$p$.

A consequence of the above discussion is that we can obtain a lot of
information about the sizes of Selmer groups if we understand the ratios
between the Tamagawa numbers of $J_b$ and $A_b$. For statistical purposes,
we can ignore the cases $p = 2,\infty$, as they are uniformly controlled.
We are interested in the cases when $p$ is of bad reduction for $J_b$,
which for $C_b \colon y^2 = xf(x)$ happens exactly when $p \mid \Delta(xf(x)) = f(0)^2\Delta(f)$.
It is sufficient for us to understand the Tamagawa numbers when the
discriminant is ``as squarefree as possible''. For $J_b$, the Tamagawa
numbers can be computed with relatively standard arguments:
\begin{lemma}
\label{lemma:Tamagawa0}
Suppose $\Delta(b) \neq 0$ and $p \neq 2,\infty$. We have that
\[
c_p(J_b) = \begin{cases}
1 & \text{ if } p \nmid f(0) \text{ and } p \parallel \Delta(f), \\
2 & \text{ if } p \parallel f(0) \text{ and } p \nmid \Delta(f).
\end{cases}
\]
\end{lemma}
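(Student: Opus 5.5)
We compute $c_p(J_b)$ from a regular model of $C_b$ over $\Z_p$ and the component group of the Néron model, via Raynaud's theorem: for a curve whose minimal regular model $\mathcal{X}$ has reduced special fibre and whose components meet transversally, $\Phi(\F_p)$ is determined by the dual graph. In both cases the reduction is semistable, with exactly one node or one pair of nodes. We therefore compute the dual graph with its edge thicknesses and the Frobenius action, and read off $\#\Phi(\F_p)$.

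\emph{Case $p \nmid f(0)$, $p \parallel \Delta(f)$.} Modulo $p$ the polynomial $xf(x)$ acquires exactly one double root, say $\bar{x}_1$, with all other roots simple. The double root is not $0$, since $p \nmid f(0)$. Locally at the point $(\bar x_1,0)$ the equation reads $y^2 = u\cdot((x-x_1)^2 - \delta)$, where $u$ is a unit and $v_p(\delta)=v_p(\Delta(f))=1$. This is an ordinary double point of thickness $1$, so the model $y^2=xf(x)$ is already regular. Its special fibre is irreducible (a curve of geometric genus $g-1$ with one node), so the dual graph is a single vertex with one loop of length $1$. The component group is trivial and $c_p(J_b)=1$.

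\emph{Case $p \parallel f(0)$, $p \nmid \Delta(f)$.} Modulo $p$ the roots $0$ and one root $x_1$ of $f$ coalesce, and $x_1 \equiv 0$. The local equation near $(0,0)$ is $y^2 = u\cdot x(x-x_1)$ with $v_p(x_1)=v_p(f(0))=1$. Again this is a node of thickness $1$, and the special fibre is irreducible. This would give $c_p=1$, not $2$, so this naive argument is wrong and the subtlety must be identified.

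The discriminant of $xf(x)$ is $f(0)^2\Delta(f)$, so $v_p=2$. The two roots $0$ and $x_1$ of $xf(x)$ have $v_p(0-x_1)=1$, and the thickness is $v_p((x_1-0)^2)=2$, not $1$. Indeed, after translating, $y^2=u(x^2 - c^2)$ with $c = x_1/2$ becomes $y^2 = u((x-c)^2 - c^2)$, where the constant term has valuation $2$. The correct local form is $y^2 - u x'^2 = p^2\cdot(\text{unit})$, an $A_1$ singularity of thickness $2$. Blowing up once gives a chain: the main component plus one exceptional $\mathbb{P}^1$, forming a cycle of length $2$. Hence $\Phi \cong \Z/2\Z$.

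The Frobenius action on the two branches is trivial when the tangent directions $y=\pm\sqrt{u}x'$ are rational. If they are not, $\Phi(\F_p)$ could a priori be $\Z/2\Z$ anyway, since $\Z/2$ has only the trivial automorphism. So $c_p=2$ in both situations. In the first case we must likewise double-check that the thickness really is $v_p(\Delta(f))=1$: there the two coalescing roots $x_1,x_2$ satisfy $v_p((x_1-x_2)^2)=1$, which forces a ramified quadratic pair and thickness $1$, giving a cycle of length $1$ and $\Phi=0$.

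The main obstacle is computing the node thickness correctly from the root valuations. The thickness is $v_p$ of the local discriminant of the pair of coalescing roots: $v_p((x_i-x_j)^2)$, which equals $v_p(\Delta)$ minus contributions from the other roots. That gives $1$ in the first case and $2$ in the second. The remaining input is the standard fact that for a curve with a single irreducible component and one loop of length $m$, $\Phi\cong\Z/m\Z$, and Frobenius acts trivially when $m=2$.
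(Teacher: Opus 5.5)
Your argument is correct and follows the same route as the paper. In the first case the model $y^2 = xf(x)$ is already regular with irreducible special fibre. In the second case the node at $(0,0)$ has thickness $2$, so one blow-up produces two components meeting in two points, and the dual-graph description of the component group then gives $c_p(J_b)=1$ and $2$ respectively. You should delete the retracted ``thickness $1$'' claim from your second case; your remark that Frobenius necessarily acts trivially on $\Phi(\ol{\F_p})\cong\Z/2\Z$ usefully makes explicit a point the paper leaves implicit.
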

\begin{proof}
In the first case, the discriminant of $C_b$ is squarefree, and it is
well-known that in that case the Tamagawa number is $1$. To elaborate: the 
equations for the standard affine chart of $C_b$ already define a minimal
regular model, and the special fibre $\cC_{b,\F_p}$ has only has one 
connected component. Then, because the reduction is semistable, the
Tamagawa number can be computed from the dual graph of $\cC_{b,\F_p}$,
and because this has only one component it follows that $c_p(J_b) = 1$.

In the second case, the minimal regular model for $C_b$ is obtained by
performing one blow-up at the point $(0,0) \in C_b(\Q_p)$. The result
is a minimal regular model $\cC_{b,\Z_p}$ such that $\cC_{b,\F_p}$ has
two connected components, intersecting at two different points. It follows
from \cite[Theorem 9.6.1]{BLR} that the Tamagawa number is $2$ in that case.
\end{proof}
Obtaining the Tamagawa numbers for $A_b$ is a bit more subtle. We start
by fixing some notation: if $A$ is an abelian variety
over $\Q_p$ with semistable reduction, then the connected component of the 
special fibre of its Néron model $\cA$ fits into an exact sequence
\[
\begin{tikzcd}
1 \arrow[r] & \cT \arrow[r] & \cA_{\F_p}^0 \arrow[r]  & \cB \arrow[r] & 1,
\end{tikzcd}
\]
where $\cB$ is an $\F_p$-abelian variety, and $\cT$ is a torus. Let $\ell$
be a prime, and denote the Tate module of $A$ by $T_{\ell}(A)$. There is
a canonical filtration
\[
T_{\ell}(A) \supset \cM_f(A) \supset \cM_t(A) \supset 0,
\]
where $\cM_f(A) = T_{\ell}(A)^{I_p}$ (here $I_p$ denotes the inertia
subgroup of $\Gal(\ol{\Q_p}/\Q_p)$), and $\cM_t(A)$ is the orthogonal complement of
$\cM_f(\hat{A})$ with respect to the Weil pairing.
We then have the following result (cf. \cite[Lemma 3]{BK}):

\begin{lemma}
Let $B,B'$ be abelian varieties over $\Q_p$, and let $\kappa$ be a $G_{\Q_p}$-submodule
of $B[\ell]$, where $\ell$ is a prime. Suppose $\varphi \colon B \to B'$
is an isogeny with kernel $\kappa$. Denote by $\ol{\cM_f(A)},\ol{\cM_t(A)}$
the projections of $\cM_f(A),\cM_t(A)$ to $B[\ell]$. Then
\[
\ord_{\ell}(\Phi_{A'^{\vee}}(\ol{\F_p})) - \ord_{\ell}(\Phi_{A^{\vee}}(\ol{\F_p})) = 
\dim(\kappa \cap \ol{\cM_f(A)}) + \dim(\kappa \cap \ol{\cM_t(A)}) - \dim(\kappa).
\]
\end{lemma}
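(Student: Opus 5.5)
The plan is to reduce to Grothendieck's description of the $\ell$-part of the component group of a semistable abelian variety in terms of the monodromy pairing (SGA~7, Exp.~IX), and then compare the two sides of the isogeny by an index computation on Tate modules. I interpret $A$ in the statement as $B$ and $A'$ as $B'$. I work with $\varphi \colon B \to B'$ and the dual isogeny $\varphi^{\vee} \colon B'^{\vee} \to B^{\vee}$, and I assume semistable reduction, as in the setup preceding the lemma.

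\emph{Step 1: $\kappa$ as a lattice index.} I realise $\kappa$ as a quotient of Tate modules. Since $\kappa \subset B[\ell] = \ell^{-1}T_\ell(B)/T_\ell(B)$, the isogeny identifies $T_\ell(B')$ with the preimage of $\kappa$ in $\ell^{-1}T_\ell(B)$, inside $V_\ell := T_\ell(B)\otimes\Q_\ell$. Thus $T_\ell(B)\subset T_\ell(B')$ with quotient $\kappa$, of index $\ell^{\dim\kappa}$.

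\emph{Step 2: indices of the filtration pieces.} Both $\cM_f$ and $\cM_t$ are saturated in $T_\ell$. For $\cM_f$ this holds because it is the group of inertia invariants. For $\cM_t$ it holds because it is the Tate module of the toric part, equivalently the orthogonal complement of a saturated submodule. Hence
\[
\cM_f(B') = T_\ell(B')\cap (\cM_f(B)\otimes\Q_\ell), \qquad \cM_t(B') = T_\ell(B')\cap(\cM_t(B)\otimes\Q_\ell).
\]
From Step~1 this gives $[\cM_f(B'):\cM_f(B)] = \ell^{\dim(\kappa\cap\ol{\cM_f})}$ and $[\cM_t(B'):\cM_t(B)] = \ell^{\dim(\kappa\cap\ol{\cM_t})}$. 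Consequently the indices on the graded pieces $\cM_t$, $\cM_f/\cM_t$ and $T_\ell/\cM_f$ are $\ell^{\dim(\kappa\cap\ol{\cM_t})}$, $\ell^{\dim(\kappa\cap\ol{\cM_f})-\dim(\kappa\cap\ol{\cM_t})}$ and $\ell^{\dim\kappa-\dim(\kappa\cap\ol{\cM_f})}$, respectively.

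\emph{Step 3: the monodromy pairing.} By Grothendieck, the $\ell$-primary part of $\Phi_{B^\vee}(\ol{\F_p})$ is the cokernel of the monodromy map
\[
\cM_t(B) \to \mathrm{Hom}\big(T_\ell(B)/\cM_f(B),\Z_\ell\big)
\]
(up to the choice of which side is dualised), so its order is $\ell^{\ord_\ell\det}$ of that pairing. The pairing is functorial for isogenies: the map for $B$ is the restriction of the one for $B'$, after identifying $\mathrm{Hom}(T_\ell(B')/\cM_f(B'),\Z_\ell)$ as a sublattice of $\mathrm{Hom}(T_\ell(B)/\cM_f(B),\Z_\ell)$. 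Comparing determinants then gives
\[
\ord_\ell\Phi_{B'^\vee} - \ord_\ell\Phi_{B^\vee} = \log_\ell[\cM_t(B'):\cM_t(B)] - \log_\ell\big[T_\ell(B')/\cM_f(B') : T_\ell(B)/\cM_f(B)\big].
\]
By Step~2 the right-hand side equals
\[
\dim(\kappa\cap\ol{\cM_t}) - \big(\dim\kappa - \dim(\kappa\cap\ol{\cM_f})\big),
\]
which is the claimed formula.

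The main obstacle is Step~3. One has to pin down exactly which form of Grothendieck's theorem is used, namely whether $\Phi_B$ or $\Phi_{B^\vee}$ arises as the cokernel of the map out of $\cM_t(B)$ (the definition of $\cM_t$ through $\cM_f(\hat{B})$ is relevant here), and then check the signs in the determinant comparison. I would first verify the bookkeeping on a Tate curve with $\kappa=\mu_\ell$ and with $\kappa$ étale. In those cases the formula should return $\pm1$ for $\ord_\ell$ of $\Phi$, which fixes the orientation. After that the general argument is just the index comparison above.
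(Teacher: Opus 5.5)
The paper gives no proof of this lemma; it only cites \cite[Lemma 3]{BK}. So the proposal has to be judged on its own, and it has a real gap in Step 3.

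Your Steps 1 and 2 are correct, and they carry most of the content. $T_\ell(B')$ is the preimage of $\kappa$ in $\ell^{-1}T_\ell(B)$. For a saturated $\cM=T_\ell(B)\cap W$, with $W$ a $\Q_\ell$-subspace, the image of $T_\ell(B')\cap W$ in $\kappa$ is exactly $\kappa\cap\ol{\cM}$. Put $a=\dim(\kappa\cap\ol{\cM_t})$ and $b=\dim\kappa-\dim(\kappa\cap\ol{\cM_f})$. Then $\cM_t$ grows by index $\ell^a$ and $T_\ell/\cM_f$ grows by index $\ell^b$. You should also assume $\ell\neq p$ explicitly. The filtration and Grothendieck's $\ell$-adic description need it, and it holds in the paper's application.

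\textbf{The gap in Step 3.} A map $u_B\colon\cM_t(B)\to\mathrm{Hom}(T_\ell(B)/\cM_f(B),\Z_\ell)$ satisfying $u_B=u_{B'}|_{\cM_t(B)}$ is not the monodromy map. With it, the determinant comparison does not give what you wrote. Passing from $B$ to $B'$, the source grows by $\ell^a$ while the target shrinks by $\ell^b$. Writing $Y_B=T_\ell(B)/\cM_f(B)$, this gives
\[
\#\mathrm{coker}(u_B)=[\mathrm{Hom}(Y_B,\Z_\ell):\mathrm{Hom}(Y_{B'},\Z_\ell)]\cdot\#\mathrm{coker}(u_{B'})\cdot[\cM_t(B'):\cM_t(B)],
\]
so the difference comes out as $-(a+b)$, not $a-b$. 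Your Tate-curve test cannot repair this as a sign or orientation issue:
\begin{itemize}
\item for $\kappa=\mu_\ell$ (where $a=1$, $b=0$) your setup predicts $-1$, whereas the truth is $+1$, since $v(q)$ becomes $\ell v(q)$;
\item for \'etale $\kappa$ (where $a=0$, $b=1$) it predicts the correct $-1$.
\end{itemize}
So no global sign flip and no swap of $B$ and $B'$ makes both cases correct. The question of ``which side is dualised'' is exactly the point that decides the sign of $a$.

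\textbf{The repair.} Use Grothendieck's theorem in the form
\[
\Phi_B(\ol{\F_p})[\ell^\infty]\cong\mathrm{coker}\bigl(N_B\colon T_\ell(B)/\cM_f(B)\to\cM_t(B)(-1)\bigr),
\]
where $N_B$ is the monodromy operator, defined by $\sigma-1=t_\ell(\sigma)N_B$ for $\sigma$ in inertia. For the Tate curve, $N_B$ is multiplication by $v(q)$. Since $\varphi$ is Galois-equivariant, $N_{B'}\circ\varphi=\varphi\circ N_B$. Now both the source and the target grow, by $\ell^b$ and $\ell^a$ respectively. Computing the index of $N_{B'}(\varphi(Y_B))$ in $\cM_t(B')(-1)$ in two ways gives $\ord_\ell\Phi_{B'}-\ord_\ell\Phi_B=a-b$.

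To pass to the duals, note that inertia fixes the Weil pairing when $\ell\neq p$. Under the identifications $T_\ell(B^\vee)/\cM_f(B^\vee)\cong\mathrm{Hom}(\cM_t(B),\Z_\ell(1))$ and $\cM_t(B^\vee)(-1)\cong\mathrm{Hom}(Y_B,\Z_\ell)$, this makes $N_{B^\vee}=-N_B^{t}$. Hence $\#\Phi_{B^\vee}[\ell^\infty]=\#\Phi_B[\ell^\infty]$, and the same holds for $B'$.

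Equivalently, if you want to keep your Step 3 phrasing in terms of $\Phi_{B^\vee}$, the source must be $\mathrm{Hom}(\cM_t(B),\Z_\ell(1))$ rather than $\cM_t(B)$. That lattice shrinks by $\ell^a$ under $\varphi$, and the map for $B'$ is then the restriction of the map for $B$. This is precisely what turns $-(a+b)$ into $a-b$.
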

\begin{lemma}
\label{lemma:tamagawa}
Suppose $\Delta(b) \neq 0$  and $p \neq 2,\infty$. We have that
\[
c_p(A_b) = c_p(A_b^{\vee}) = \begin{cases}
1 & \text{ if } p \nmid f(0) \text{ and } p \parallel \Delta(f), \\
1 & \text{ if } p \parallel f(0) \text{ and } p \nmid \Delta(f).
\end{cases}
\]
\end{lemma}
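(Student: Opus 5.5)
Both cases have semistable reduction. The plan is to combine the Tamagawa numbers of $J_b$ from Lemma \ref{lemma:Tamagawa0} with the Tate-module lemma (cf.\ \cite[Lemma 3]{BK}) applied to $\varphi = \phi_M \colon J_b \to A_b$, with kernel $\kappa = M$, $\ell = 2$ and $\dim \kappa = 1$. The equality $c_p(A_b) = c_p(A_b^{\vee})$ was already noted above as a consequence of \cite[Lemma 3.8]{Sch} and Lemma \ref{lemma:selmerWeights}, so it suffices to compute one of them. Since $J_b$ is principally polarised, $J_b \cong J_b^{\vee}$, and the lemma then gives $\ord_2 c_p(A_b^{\vee}) - \ord_2 c_p(J_b)$, up to the rationality issue discussed below.

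In the first case ($p \nmid f(0)$, $p \parallel \Delta(f)$), two roots $x_i, x_j$ of $f$ with $i,j \geq 1$ collide mod $p$, and the toric part has dimension $1$. I will work with the Tate curve / Raynaud uniformisation picture. Here $\ol{\cM_t}$ is the line spanned by the vanishing-cycle class $[(P_i)-(P_j)]$, and $\ol{\cM_f} = \ol{\cM_t}^{\perp}$ is the unramified part, namely classes with an even number of points among $\{P_i,P_j\}$. Now $M = \langle [(P_0)-\infty]\rangle$ is orthogonal to $[(P_i)-(P_j)]$, so $M \subset \ol{\cM_f}$. On the other hand $M \not\subset \ol{\cM_t}$, since $P_0$ does not reduce into the cluster. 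The right-hand side of the lemma is therefore $1+0-1 = 0$. Together with $c_p(J_b)=1$ from Lemma \ref{lemma:Tamagawa0}, and the fact that the component group has order $1$ (indeed $\Phi_{J_b} \cong \Z/v_p(\Delta)\Z$ is trivial), this gives $c_p(A_b)=1$. Odd-$\ell$ parts play no role because $\phi_M$ has degree $2$.

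In the second case ($p \parallel f(0)$, $p \nmid \Delta(f)$), it is $P_0$ that collides with exactly one root $x_i$ of $f$, and the vanishing cycle is $[(P_0)-(P_i)]$. Now $M = \langle [(P_0)-\infty]\rangle$ does not pair trivially with this class, so $M \not\subset \ol{\cM_f}$, and hence $M \not\subset \ol{\cM_t}$ either. The right-hand side is $0+0-1 = -1$. Starting from $\#\Phi_{J_b}(\ol{\F_p}) = 2$ (the two-component dual graph from Lemma \ref{lemma:Tamagawa0}), we get $\Phi_{A_b^{\vee}}(\ol{\F_p})$ trivial. So $c_p(A_b^{\vee}) = 1$, and hence $c_p(A_b)=1$.

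I expect the main obstacle to be the passage from geometric component groups $\Phi(\ol{\F_p})$ to Tamagawa numbers $\#\Phi(\F_p)$. This matters in the first case, where the lemma only yields equality of $2$-adic orders over $\ol{\F_p}$. Since $\#\Phi_{J_b}(\ol{\F_p}) = v_p(\Delta)=1$ there, and the lemma gives $\ord_2$ difference $0$, $\Phi_{A_b}(\ol{\F_p})$ has odd order. Its odd part is isomorphic to that of $J_b$, which is trivial, so the group is trivial over $\ol{\F_p}$ and a fortiori over $\F_p$. In the second case the geometric group is already trivial. A secondary point needing care is identifying $\ol{\cM_f}$ and $\ol{\cM_t}$ in $J_b[2]$ explicitly via Raynaud's uniformisation for a curve with a single node. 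I would justify this by viewing the reduction as a genus-$(2n-1)$ curve with one node, whose character group is generated by the loop corresponding to the colliding pair of roots.
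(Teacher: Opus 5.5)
Your proposal is correct and follows the paper's route: you apply the Brumer--Kramer-type lemma to $\phi_M$ with $\kappa = M$, and show $M \subset \ol{\cM_f}$, $M \cap \ol{\cM_t} = 0$ in the first case and $M \cap \ol{\cM_f} = 0$ in the second. Your use of $\ol{\cM_f} = \ol{\cM_t}^{\perp}$ under the Weil pairing replaces the paper's positivity trick and its Raynaud-specialisation argument, your handling of odd parts and of $\Phi(\F_p)$ versus $\Phi(\ol{\F_p})$ fills in details the paper leaves implicit, and one justification needs tightening: $M \not\subset \ol{\cM_t}$ holds because $[(P_0)-\infty] \neq [(P_i)-(P_j)]$ in $J_b[2]$, which uses $2n+1 \geq 5$ (in genus $1$ these two classes coincide even though $P_0$ stays away from the cluster), not merely because $P_0$ avoids the cluster.
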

\begin{proof}
In both cases, it suffices to understand how the kernel $M$ of the $2$-isogeny
$\phi_M \colon J_b \to A_b$ interacts with $\ol{\cM_f(A)}$ and $\ol{\cM_t(A)}$.

In the first case, the statement is equivalent to asking that 
$M \subset \ol{\cM_f(A)}$ but $M \cap \ol{\cM_t(A)} = \{0\}$.
The former is forced by the fact that $\# \Phi_{A_b}(\ol{\F_p}) \geq 1$.
Let $\tilde{C}$ be the normalisation of the special fibre of $C$, which
is explicitly given as follows: if $xf(x) \equiv (x-a)^2g(x) \pmod{p}$
for some $a \neq 0$ in $\F_p$, then $\tilde{C} \colon t^2 = g(x)$. Let
$\tilde{\cJ}$ be the Jacobian of $\tilde{C}$. Then we have
\[
1 \to T \xhookrightarrow{} \cJ_{\F_p}^0 \to \tilde{\cJ} \to 1.
\]
Suppose that the roots of $xf(x)$ are $x_0 = 0, x_1,\dots,x_{2n}$,
with the corresponding $P_i = (x_i,0) \in C(\ol{\F_p})$, and suppose that $x_{2n-1} = x_{2n} \pmod{p}$.
Then any element of $\cJ_{\F_p}^0[2]$ can be uniquely written as $\sum_{i \in I} [(P_i)-\infty]$
for some subset $I \subset \{0,\dots,2n-2\}$.
In this situation, $\tilde{\cJ}[2]$ is the quotient of $\cJ_{\F_p}^0[2]$
by the relation $[(P_0)-\infty] + \dots + [(P_{2n-2})-\infty] = 0$, and
$T[2]$ is therefore generated by $[(P_0)-\infty] + \dots + [(P_{2n-2})-\infty]$,
which may be identified with $[(P_{2n-1})-\infty] + [(P_{2n})-\infty]$.
We note in particular that $[(P_0)-\infty] \notin T[2]$, and given that
$\cM_t(A) \cong T_{\ell}(T)$, we conclude that $M \cap \ol{\cM_t(A)} = \{0\}$.

In the second case, it suffices to see that $M \cap \ol{\cM_f(A)} = \{0\}$, or equivalently
that $[(0,0)-\infty] \notin \ol{\cM_f(A)}$. This follows from observing that $(0,0)$ and $\infty$ lie in different
components of $\cC$, and using explicit descriptions of Raynaud's specialisation
maps as in \cite[Appendix A]{Baker}.
\end{proof}
\begin{obs}
Note that the analogue of Lemma \ref{lemma:tamagawa} would not be true
in the genus $1$ case. There we would have that $c_p(A_b) = 2$ if
$p \nmid f(0)$ and $p \parallel \Delta(f)$, as in that situation $T = \cJ_{\F_p}^0$.
\end{obs}

Therefore, in ``most'' cases of bad reduction, we have that $c_p(J_b)/c_p(A)$
is either $1$ or $2$. This imbalance forces the overall Selmer ratio to
be large on average.

\begin{theorem}
\label{theo:KStamagawa}
Let $\kappa\geq 0$ be a real number. As $X \to \infty$, we have that
\[
\frac{\sum_{\Ht(b) < X} (\cT(A_b/J_b))^{\kappa}}{\sum_{\Ht(b) < X} 1} \asymp_{\kappa} (\log X)^{2^{\kappa}-1}.
\]
The implied constant depends on $\kappa$.
\end{theorem}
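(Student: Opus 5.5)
We plan to write $\cT(A_b/J_b)$ via the Greenberg--Wiles product as $\prod_{p\le\infty} c_p(\phi_{M,b}^{\vee})$. The factors at $p=2,\infty$ are uniformly bounded above and below (each lies in $[\tfrac12,4]$). For odd $p$, by \cite[Lemma 3.8]{Sch} the factor is a Tamagawa ratio $c_p(J_b)/c_p(A_b^{\vee})$ (up to orientation, which we fix so that the ratio is large). Combining Lemmas \ref{lemma:Tamagawa0} and \ref{lemma:tamagawa}, this ratio equals $2$ when $p\parallel f(0)$ and $p\nmid\Delta(f)$. It equals $1$ when $p\nmid f(0)$ and $p\parallel\Delta(f)$, and at primes of good reduction. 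In every remaining case it lies in $\{\tfrac12,1,2\}$, since the local quotient injects into $\Q_p^\times/\Q_p^{\times2}$ of size $4$.

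Heuristically, then, $\cT(A_b/J_b)^\kappa \approx 2^{\kappa\,\omega'(f(0))}$, where $\omega'$ counts the primes exactly dividing $f(0)=p_{4n}(b)$ that are coprime to $\Delta(f)$. We will feed this into the general framework of \cite{KS}, which gives asymptotics for averages of multiplicative functions of the local data over boxes in $B(\Z)$. We expect the following to be their main input: a theorem that if the local Tamagawa-ratio factor $c_p(b)^\kappa$ depends only on $b$ mod $p^2$ (away from a negligible set), and its average over $B(\Z_p)$ is $1+(2^\kappa-1)/p+O(p^{-2})$, then the global average is $\asymp(\log X)^{2^\kappa-1}$. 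Since $\Ht(b)<X$ makes $p_{4n}$ range over $|p_{4n}|<X^{4n}$, the exponent of $\log$ does not depend on $4n$.

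The key local computation goes as follows. Under the Haar measure on $B(\Z_p)$, the event $p\parallel p_{4n}$ has probability $1/p+O(p^{-2})$. Conditioned on it, $p\nmid\Delta(f)$ fails only with probability $O(1/p)$. All other bad configurations, namely $p^2\mid p_{4n}$, or $p$ dividing both $f(0)$ and $\Delta(f)$, or $p^2\mid\Delta(f)$, have measure $O(p^{-2})$. The event $p\parallel\Delta(f)$ with $p\nmid f(0)$ contributes factor $1$. Hence $\int_{B(\Z_p)}c_p(b)^\kappa\,db=1+(2^\kappa-1)p^{-1}+O_\kappa(p^{-2})$, because the factors are bounded by $2^{\kappa}$ and $2^{-\kappa}$ on the bad sets. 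The product of these local averages over $p<X$ is $\asymp(\log X)^{2^\kappa-1}$ by Mertens, which accounts for the exponent.

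The main obstacle is the passage from local densities to the global average. It requires two things: a uniformity (tail) estimate showing that $b$ with some large $p^2$ dividing $\Delta(b)$ or $p_{4n}(b)$ contribute negligibly, and control over the fact that we are summing a multiplicative-type function, not just counting. For the lower bound we will restrict to $b$ with $p_{4n}$ squarefree and coprime to $\Delta(f)$ at primes dividing it. On that set $\cT\ge c\cdot 2^{\omega(p_{4n})}$ up to bounded factors, and we can sum $2^{\kappa\omega(m)}$ over a box via Selberg--Delange, handling the $p\mid\Delta$ correction by a sieve. For the upper bound we bound $c_p\le 2$ at every $p\mid p_{4n}\Delta(f)$ outside the generic cases. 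We then apply the tail estimates (squarefree sieve for discriminants in the style of Bhargava, as used in \cite{KS}) together with Shiu's theorem for the divisor-type function $2^{\kappa\omega}$ in the $p_{4n}$ variable, with the other coordinates of $b$ held fixed. If \cite{KS} already packages this as a theorem about Tamagawa-ratio moments, we will simply verify its hypotheses with the local computation above.
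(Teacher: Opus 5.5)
Your proposal is correct and is essentially the paper's argument. The paper likewise uses \cite[Lemma 3.8]{Sch} together with Lemmas \ref{lemma:Tamagawa0} and \ref{lemma:tamagawa} to see that the local ratio is $2$ on a set of density about $p^{-1}$ and $1$ on the remaining bad-reduction mass, i.e.\ $\gamma(2)=\gamma(1)=1$, and then cites \cite[Proposition 6.16]{KS}, which is exactly the packaged result you anticipated and gives exponent $\max\{0,\sum_t\gamma(t)(t^{\kappa}-1)\}=2^{\kappa}-1$, so your fallback sieve/Selberg--Delange/Shiu sketch is not needed; one small point is that the orientation $c_p(J_b)/c_p(A_b^{\vee})$ is forced by applying Schaefer's lemma to $\phi_M^{\vee}\colon A_b^{\vee}\to J_b$ rather than chosen to make the ratio large, though the formula you wrote is the correct one.
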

\begin{proof}
This is a direct consequence of \cite[Proposition 6.16]{KS}, using the
Tamagawa number computations of Lemmas \ref{lemma:Tamagawa0} and \ref{lemma:tamagawa}.
Let us write out some details.

The result \cite[Proposition 6.16]{KS} asserts that
\[
\frac{\sum_{\Ht(b) < X} (\cT(A_b/J_b))^{\kappa}}{\sum_{\Ht(b) < X} 1} \asymp_{\kappa} (\log X)^{\beta(\kappa)},
\]
for some function $\beta \colon \R_{\geq 0} \to \R_{\geq 0}$
\[
\beta(\kappa) = \max \left\{ 0,\sum_{t} \gamma(t)(t^{\kappa}-1) \right\},
\]
where in our situation we sum over rational numbers $t = a/b$ where
$a,b$ are coprime integers that divide $2$. The function $\gamma(t)$
is defined before the statement of \cite[Proposition 6.16]{KS}, and
simplifies quite substantially in our setting (in particular, in the
notation of Koymans--Smith, we can take $L_t = \Q$ for all $t$). In this
case, $\gamma(t)$ is defined so that the density of elements 
$\ol{b} \in B(\Z/p^2\Z)$ such that $\cT(A_b/J_b) = t$
for all $b \in B(\Z)$ reducing to $\ol{b}$ modulo $p^2$ is equal to
$\gamma(t)p^{-1} + O(p^{-3/2})$ for all sufficiently large primes $p$.

Using Lemmas \ref{lemma:Tamagawa0} and \ref{lemma:tamagawa}, we compute
$\gamma(2) = \gamma(1) = 1$ and $\gamma(t) = 0$ for any other $t$, so
\[
\beta(\kappa) = 2^{\kappa} - 1,
\]
as claimed.
\end{proof}

With this, we can prove Theorem \ref{theo:mainMoments}: we recall the statement.
\begin{theorem}[= Theorem \ref{theo:mainMoments}]
\label{theo:SelMoments}
Let $\kappa \geq 0$. As $X \to \infty$, we have that
\[
\frac{\sum_{\Ht(b) < X} (\# \Sel_{\phi_M^{\vee}} A_b^{\vee})^{\kappa} }{\sum_{\Ht(b) < X} 1} \asymp_{\kappa} (\log X)^{2^{\kappa}-1},
\]
\[
\frac{\sum_{\Ht(b) < X} (\# \Sel_{\phi_M}J_b)^{\kappa}}{\sum_{\Ht(b) < X} 1} \asymp_{\kappa} 1
\]
and
\[
\frac{\sum_{\Ht(b) < X} (\# \Sel_{\psi^{\vee}} A_b)^{\kappa} }{\sum_{\Ht(b) < X} 1} \gg_{\kappa} (\log X)^{2^{\kappa}-1}.
\]
All the implied constants depend on $\kappa$.
\end{theorem}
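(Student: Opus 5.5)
The plan is to transfer the Tamagawa-ratio moments of Theorem \ref{theo:KStamagawa} to the individual Selmer groups, using the main theorems of \cite{KS}. Those theorems say, roughly, that the Selmer group with large Tamagawa ratio behaves like $\cT$ times a bounded random quantity, while the other has bounded moments. The structural input is the exact relation
\[
\# \Sel_{\phi_M^{\vee}}(A_b^{\vee}) = \cT(A_b/J_b)\cdot \# \Sel_{\phi_M}(J_b),
\]
together with $\# \Sel_{\phi_M}(J_b) \geq 1$.

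\textbf{Step 1: lower bound for $\Sel_{\phi_M^{\vee}}$.} From the identity above, $\# \Sel_{\phi_M^{\vee}}(A_b^{\vee}) \geq \cT(A_b/J_b)$ pointwise. Averaging and applying Theorem \ref{theo:KStamagawa} gives the lower bound $\gg_\kappa (\log X)^{2^\kappa - 1}$ for every $\kappa \geq 0$. The same reasoning gives the trivial lower bound $\gg 1$ for the moments of $\# \Sel_{\phi_M}(J_b)$.

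\textbf{Step 2: upper bound for $\Sel_{\phi_M}$.} This is where the real content of \cite{KS} enters. I would invoke their main theorem on $2$-isogeny Selmer groups in families with a marked isogeny. In our setting the Galois modules $M \cong \Z/2\Z$ and $\mu_2$ are constant, and the local conditions at primes $p \parallel f(0)\Delta(f)$ are controlled by Lemmas \ref{lemma:Tamagawa0} and \ref{lemma:tamagawa}. With these inputs their theorem should give that the $\kappa$-moment of $\# \Sel_{\phi_M}(J_b)$ is $O_\kappa(1)$, and more precisely that
\[
\sum_{\Ht(b)<X} \lp\# \Sel_{\phi_M^{\vee}}(A_b^{\vee})\rp^{\kappa} \ll_\kappa \sum_{\Ht(b)<X} \cT(A_b/J_b)^{\kappa}.
\]
Combined with Theorem \ref{theo:KStamagawa}, this yields both remaining upper bounds.

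The main obstacle is verifying the hypotheses of the Koymans--Smith theorem for our family. Three things need to be checked:
\begin{itemize}
\item The family $B$ with the height $\Ht$ must fit their framework of a parameter space with suitable equidistribution of local conditions. Since $\Ht$ is weighted-homogeneous, I would check this via the same density computation used for $\gamma(t)$.
\item The local images must be determined modulo $p^2$.
\item The bad primes must be sufficiently independent.
\end{itemize}
If their theorem only gives the statement for a single direction of the isogeny, I would use the identity above to deduce the other.

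\textbf{Step 3: lower bound for $\Sel_{\psi^{\vee}}$.} I would factor $\psi^{\vee} = \phi_M^{\vee}\circ\phi$. The kernel of $\phi_M^{\vee}$ is contained in the kernel of $\psi^{\vee}$, and the induced map on Selmer groups has controlled kernel and cokernel. The natural map $\Sel_{\phi_M^{\vee}}(A_b^{\vee}) \to$ (a quotient of) $\Sel_{\psi^{\vee}}(A_b)$ then has kernel bounded by $\# H^0(\Q,A_b^{\vee}[\phi^{\vee}])$ up to a bounded factor. This quantity is at most $2^{2n-2}$, and is in fact uniformly bounded for $100\%$ of $b$. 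Hence
\[
\# \Sel_{\psi^{\vee}}(A_b) \gg \# \Sel_{\phi_M^{\vee}}(A_b^{\vee}),
\]
with an absolute implied constant. Raising to the power $\kappa$ and summing, Step 1 gives the third asymptotic.
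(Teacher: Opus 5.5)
Your Steps 1 and 2 match the paper. The lower bounds come from $\#\Sel_{\phi_M^{\vee}}(A_b^{\vee})=\cT(A_b/J_b)\,\#\Sel_{\phi_M}(J_b)$ together with Theorem~\ref{theo:KStamagawa}. The upper bounds come from \cite[Theorem 6.4]{KS}.

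Note that the paper applies that theorem twice:
\begin{itemize}
\item to $\phi_M^{\vee}$, compared against moments of $\cT$;
\item to $\phi_M$, compared against moments of $\cT^{-1}$, whose exponent is $\max\{0,2^{-\kappa}-1\}=0$.
\end{itemize}
Your fallback of deducing one direction from the other via the identity does not work in either direction. A moment bound for $\#\Sel_{\phi_M^{\vee}}$ says nothing about moments of the quotient $\#\Sel_{\phi_M^{\vee}}/\cT$. Conversely, going from bounded moments of $\#\Sel_{\phi_M}$ to the $\phi_M^{\vee}$ bound via H\"older does not recover the sharp exponent $2^{\kappa}-1$.

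\textbf{The genuine gap is in Step 3.} There is no natural map from $\Sel_{\phi_M^{\vee}}(A_b^{\vee})$ to $\Sel_{\psi^{\vee}}(A_b)$, or to a quotient of it. Also, $\ker\phi_M^{\vee}\subset A_b^{\vee}$ and $\ker\psi^{\vee}\subset A_b$ live on different varieties. What actually holds is $\ker\phi\subset\ker\psi^{\vee}$, with $\phi(A_b[\psi^{\vee}])=A_b^{\vee}[\phi_M^{\vee}]$.

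The exact sequence $0\to A_b[\phi]\to A_b[\psi^{\vee}]\to A_b^{\vee}[\phi_M^{\vee}]\to 0$ gives maps $\Sel_{\phi}(A_b)\to\Sel_{\psi^{\vee}}(A_b)\to\Sel_{\phi_M^{\vee}}(A_b^{\vee})$. So your inequality would require the second map to have bounded cokernel. That cokernel is a lifting obstruction of Tate--Shafarevich type: a class must lift through $H^2(\Q,A_b[\phi])$ and then be corrected at every place. Nothing bounds it by $H^0$-torsion.

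The missing idea is to compare $\Sel_{\psi^{\vee}}(A_b)$ with $\cT$ directly. This is what the paper does, using $\#\Sel_{\psi^{\vee}}(A_b)\geq\cT(A_b/J_b)/4$. One way to get such a bound is the Greenberg--Wiles formula for $\psi^{\vee}$ itself:
\[
\frac{\#\Sel_{\psi^{\vee}}(A_b)}{\#\Sel_{\psi}(J_b)}=\frac{\#A_b[\psi^{\vee}](\Q)}{\#J_b[\psi](\Q)}\prod_{v\leq\infty}c_v(\psi^{\vee}),\qquad c_v(\psi^{\vee})=\frac{\#J_b(\Q_v)/\psi^{\vee}(A_b(\Q_v))}{\#A_b[\psi^{\vee}](\Q_v)}.
\]
The local ratios are multiplicative under composition, so $c_v(\psi^{\vee})=c_v(\phi_M^{\vee})\,c_v(\phi)$. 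Lemma~\ref{lemma:selmerWeights} gives $\prod_v c_v(\phi)=2^{-(n-1)}\cdot 2^{n-1}=1$. Using $\#\Sel_{\psi}(J_b)\geq 1$ and $\#J_b[\psi](\Q)\leq 2^{2n-1}$, you get $\#\Sel_{\psi^{\vee}}(A_b)\geq 2^{-(2n-1)}\cT(A_b/J_b)$ for every $b$. Theorem~\ref{theo:KStamagawa} then gives the third statement.

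Your pointwise inequality $\#\Sel_{\psi^{\vee}}(A_b)\gg\#\Sel_{\phi_M^{\vee}}(A_b^{\vee})$ is in fact true. However, it follows from the formula above combined with the map $\Sel_{\phi_M}(J_b)\to\Sel_{\psi}(J_b)$. That map goes in the right direction and has kernel of order at most $\#A_b[\phi](\Q)\leq 2^{2n-2}$. It does not follow from the map you describe.
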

\begin{proof}
The first and third lower bounds follow immediately from the fact that 
\[
\# \Sel_{\phi_M^{\vee}} A_b^{\vee} \geq \cT(A_b/J_b) \quad \text{and} \quad \# \Sel_{\psi^{\vee}} A_b^{\vee} \geq \frac{\cT(A_b/J_b)}{4},
\]
combined with Theorem \ref{theo:KStamagawa}.  The upper bound for the moments of $\# \Sel_{\phi_M^{\vee}} A_b$ follow from
\cite[Theorem 6.4]{KS}, which asserts that
\[
\frac{\sum_{\Ht(b) < X} (\# \Sel_{\phi_M^{\vee}} A_b^{\vee})^{\kappa} }{\sum_{\Ht(b) < X} 1} \asymp_{\kappa} \frac{\sum_{\Ht(b) < X} (\cT(A_b/J_b))^{\kappa} }{\sum_{\Ht(b) < X} 1}.
\]
For the moments of 
$\# (\Sel_{\phi_M}(J_b))^{\kappa}$, \cite[Theorem 6.4]{KS}
gives
\[
\frac{\sum_{\Ht(b) < X} \# (\Sel_{\phi_M}(J_b))^{\kappa}}{\sum_{\Ht(b) < X} 1} \asymp_{\kappa} \frac{\sum_{\Ht(b) < X} (\cT(A_b/J_b))^{-\kappa} }{\sum_{\Ht(b) < X} 1},
\]
for which the same proof as in Theorem \ref{theo:KStamagawa} gives
\[
\frac{\sum_{\Ht(b) < X} (\cT(A_b/J_b))^{-\kappa} }{\sum_{\Ht(b) < X} 1} \asymp_{\kappa} (\log X)^{\beta(\kappa)},
\]
for $\beta(\kappa) = \max\{0,2^{-\kappa}-1\}$. As $\kappa \geq 0$,
we have that $\beta(\kappa) = 0$, proving the last claim.
\end{proof}

\begin{obs}
As with the other main results, Theorem \ref{theo:mainMoments} also holds
when we apply finitely many congruence conditions on $B(\Z)$, since the
Tamagawa ratio computations matter only for sufficiently large $p$ and
the results in \cite{KS} still hold under these congruence conditions.
\end{obs}

\begin{obs}
Note that this last proof also gives the trivial bound
\[
\frac{\sum_{\Ht(b) < X} (\# \Sel_{\psi} J_b)^{\kappa} }{\sum_{\Ht(b) < X} 1} \gg_{\kappa} 1,
\]
and it is reasonable to expect (cf. \cite[Conjectures 1 and 2]{KS})
that this is the correct order of growth (i.e. that all the
moments of $\# \Sel_{\psi} J_b)^{\kappa}$ are bounded), but we do not
have the methods to prove this. By using purely geometry-of-numbers methods,
Theorem \ref{theo:integral} would suggest that we look at the representation $(G_B,V_B)$. Some standard
computations show that the number of irreducible $G(\Z)$-orbits in $V(\Z)$
is of the order of $X^{\dim V_B}$, but that the number of integral orbits
in the cusp is $X^{\dim V_B}\log X$. However, the infinitely many congruence
conditions corresponding to the Selmer elements appear to have density
zero, so normal sieving arguments would not work. It appears that any
precise results on this direction within this framework would require a very precise control
over error terms; similarly to \cite[Conjecture 8.19]{LagaThesis}, this
appears to be rather complicated in general with our current methods.
\end{obs}

\printbibliography
\end{document}